\newtheorem{theorem}{Theorem}
\newtheorem{lemma}{Lemma}
\newtheorem{corollaryT}[theorem]{Corollary}  
\newtheorem{corollaryL}[lemma]{Corollary}  
\newtheorem{proposition}[lemma]{Proposition}
\newtheorem*{lemmaX}{Lemma} 
\newcommand{\cI}{\mathcal{I}}
\newcommand{\cO}{\mathcal{O}}
\newcommand{\cX}{\mathcal{X}}
\newcommand{\bbR}{\mathbb{R}}
\newcommand{\ourspace}{\bbR^d}
\newcommand{\norm}[1]{\left\| #1 \right\|} 
\newcommand{\inner}[2]{ \left\langle #1 ,  #2 \right\rangle }
\newcommand{\set}[1]{\left\{  #1  \right\}}
\newcommand{\pr}[1]{ \left( #1 \right) }
\newcommand{\vfrac}[2]{#1/#2}
\newcommand{\hfrac}[2]{#1/#2}
\newcommand{\ie}{i.e.}
\newcommand{\ineqstar}{\cI_2}
\newcommand{\ineqsonsecutive}{\cI_1}
\newcommand{\gdtype}{GD-type}
\newcommand{\dseq}[1]{$\textstyle #1$}
\newcommand{\ThetaProprtyUsed}[1]{{\color{black} #1}}
\newcommand{\AlphaDefinitionUsed}[1]{{\color{black} #1}}
\newcommand{\adanag}{\hyperref[alg:AdaNAG]{AdaNAG}}
\newcommand{\adanagS}{Algorithm~\ref{alg:AdaNAG-S}}
\newcommand{\adagd}{\hyperref[alg:AdaGD]{AdaGD}}
\newcommand{\adanagG}{\hyperref[alg:AdaNAG-G]{AdaNAG-G}}
\newcommand{\adagdgOne}{\hyperref[sec:trade_off_theory_practical]{AdaGD$^1$}}
\newcommand{\adagdgSqrt}{\hyperref[sec:trade_off_theory_practical]{AdaGD$^{1/2}$}}
\newcommand{\adagdgZero}{\hyperref[sec:trade_off_theory_practical]{AdaGD$^0$}}
\newcommand{\adanagpracticename}{AdaNAG-G$_{12}$}
\newcommand{\adanagsqrtname}{AdaNAG-G$^{1/2}$}
\newcommand{\adanagpractice}{{\hyperlink{adanag_12}{\color{blue} \adanagpracticename}}}
\newcommand{\adanagsqrt}{\hyperlink{adanag_half}{\color{blue} \adanagsqrtname}}
\newcommand{\ssz}{s}
\newcommand{\talpha}{\alpha}
\newcommand{\oldalpha}{\tilde{\alpha}}
\newcommand{\inverLzero}{\frac{1}{L_0}}
\newcommand{\ratio}{r}
\newcommand{\rzeroAdaNAG}{r} 
\newcommand{\rbound}{r}
\newcommand{\upperboundL}{L}
\newcommand{\localupperboundL}{L}
\newcommand{\gdupperboundL}{L}
\newcommand{\upperboundconstant}{R} 
\newcommand{\Rlimit}{\bar{R}}
\newcommand{\rsBA}{\rho_k^{-1} } 
\newcommand{\rsBAreciprocal}{\rho_k } 
\newcommand{\rsBAgdreciprocal}{\rho_k }
\newcommand{\Vdifferencebound}{\bar{Q}}
\newcommand{\CC}{m}
\newcommand{\segmentparameter}{t}
\newcommand{\epsilonbound}{\bar{\epsilon}}
\newcommand{\NN}{N}
\newcommand{\MM}{K}
\newcommand{\mm}{k}
\begin{document}

\title{An Adaptive and Parameter-Free Nesterov's Accelerated Gradient Method for Convex Optimization} 
\author{Jaewook J. Suh\footnotemark[1] \and Shiqian Ma\footnotemark[1]}
\renewcommand{\thefootnote}{\fnsymbol{footnote}}
\footnotetext[1]{Department of Computational Applied Mathematics and Operations Research, Rice University, Houston, TX, USA.}
\date{ }

\maketitle

\begin{abstract}
   We propose AdaNAG, an adaptive accelerated gradient method based on Nesterov's accelerated gradient method. 
    AdaNAG is line-search-free,  parameter-free, and achieves the accelerated convergence rates 
    $f(x_k) - f_\star = \mathcal{O}\pr{\hfrac{1}{k^2}}$ and 
    $\min_{i\in\set{1,\dots, k}} \norm{\nabla f(x_i)}^2 = \mathcal{O}\pr{\hfrac{1}{k^3}}$ 
    for $L$-smooth convex function $f$. 
    We provide a Lyapunov analysis for the convergence proof of AdaNAG, which additionally enables us to propose a novel adaptive gradient descent (GD) method, AdaGD. 
    AdaGD achieves the non-ergodic convergence rate 
    $f(x_k) - f_\star = \mathcal{O}\pr{\hfrac{1}{k}}$, like the original GD. 
    The analysis of AdaGD also motivated us to propose a generalized AdaNAG that includes practically useful variants of AdaNAG. Numerical results demonstrate that our methods outperform some other recent adaptive methods for representative applications. 
\end{abstract}

\section{Introduction}

With the growing prevalence of large-scale optimization problems, there has been increasing interest in adaptive optimization methods that are line-search-free and parameter-free.  
Initiated by~\cite{MalitskyMishchenko2020_adaptive} and further developed in~\cite{LatafatThemelisStellaPatrinos2024_adaptive, LatafatThemelisPatrinos2024_convergence, MalitskyMishchenko2024_adaptive, ZhouMaYang2024_adabb}, the study of adaptive methods for convex optimization 
has been actively explored in recent years. 
However, research on adaptive accelerated gradient methods remains limited, and the first such algorithm was recently proposed  in~\cite{LiLan2024_simple}.

In this paper, we propose a novel adaptive Nesterov's accelerated gradient method (\adanag) for solving the minimization problem
\begin{equation}\label{original-problem}
    \min_{x \in \mathbb{R}^d} f(x),
\end{equation}
where \( f \colon \mathbb{R}^d \to \mathbb{R} \) is a convex and (locally) smooth function. We use $x_\star$ to denote a minimizer of \eqref{original-problem} and $f_\star$ to denote the optimal value. Our method achieves the accelerated convergence rates 
\( f(x_k) - f_\star = \mathcal{O}\left(\hfrac{1}{k^2}\right) \) and 
\( \min_{i \in \{1, \dots, k\}} \|\nabla f(x_i)\|^2 = \mathcal{O}\left(\hfrac{1}{k^3}\right) \), 
similar to the original Nesterov's accelerated gradient method (NAG) \cite{Nesterov1983_method, KimFessler2018_generalizing, ShiDuJordanSu2021_understanding}. 
We establish the convergence of the method using a novel Lyapunov analysis, which also provides intuition for developing new adaptive methods. Based on this intuition, we further propose a novel adaptive gradient descent method (\adagd) that achieves the non-ergodic convergence rate 
\( f(x_k) - f_\star = \mathcal{O}\left(\hfrac{1}{k}\right) \). 
In addition, we provide a family of algorithms that generalize AdaNAG, and present instances that are both practically useful and theoretically sound. 
We demonstrate the efficiency of our algorithms through several practical scenarios in which they outperform existing adaptive methods.

\subsection{Prior works}

\paragraph{Nesterov acceleration.}
Nesterov's accelerated gradient (NAG) method was introduced in his seminal work \cite{Nesterov1983_method}. NAG achieves an accelerated convergence rate $f(x_k) - f_\star = \cO\pr{{1}/{k^2}}$ when $f$ is a globally smooth convex function, while using only first-order information. 
This rate has been proven to be optimal for this problem setup up to order, as shown in \cite[Theorem~2.1.7]{Nesterov2004_introductory}. 
As the demand for first-order methods grew due to the need for large-scale optimization problems, NAG has been studied in various aspects \cite{SuBoydCandes2014_differential, SuBoydCandes2016_differential, WibisonoWilsonJordan2016_variational, LeeParkRyu2021_geometric, dAspremontScieurTaylor2021_acceleration, ShiDuJordanSu2021_understanding}, and numerous variants have been proposed in recent decades \cite{BeckTeboulle2009_fast, ChambolleDossal2015_convergence, Nesterov2015_universal,  KimFessler2016_optimized, AttouchChbaniPeypouquetRedont2018_fast, KimFessler2021_optimizing, KimYang2023_unifying}, to name a few. 
Optimized gradient method (OGM) \cite{KimFessler2016_optimized, KimFessler2017_convergence} improved the constant of the bound by $2$, and is proven to be the optimal method for $L$-smooth convex function minimization \cite{Drori2017_exact}. FISTA \cite{BeckTeboulle2009_fast} extended NAG to the proximal gradient (prox-grad) algorithm with the same convergence rate. 
The works in \cite{SuBoydCandes2014_differential, SuBoydCandes2016_differential, ChambolleDossal2015_convergence, AttouchChbaniPeypouquetRedont2018_fast} extended the original parameter choice $\theta_k = \frac{k+2}{2}$ in NAG to $\theta_k = \frac{k+p}{p}$ with $p \ge 2$, and still achieved the accelerated rate $f(x_k) - f_\star = \cO\pr{{1}/{k^2}}$.  
In addition, \cite{ChambolleDossal2015_convergence, AttouchChbaniPeypouquetRedont2018_fast} further established iterate convergence when $p > 2$. 
An improved convergence rate for the gradient norm square $\min_{i\in\set{1,\dots, k}} \norm{\nabla f(x_i)}^2 = \mathcal{O}\pr{{1}/{k^3}}$ was provided in \cite{KimFessler2018_generalizing, ShiDuJordanSu2021_understanding}.

Recently, OptISTA \cite{JangGuptaRyu2024_computerassisted} improved the convergence rate of FISTA by a constant factor of $2$, and was proven to be optimal for the prox-grad optimization setup. Unlike NAG and FISTA, which achieve the same rate, the optimal methods OGM and OptISTA have different rates, with the rate of OGM having a slightly better constant than that of OptISTA. 
This difference suggests that we may achieve improved convergence rates by focusing on single-function minimization when designing an algorithm. 
This motivated us to develop a new accelerated adaptive algorithm refined on the single function minimization problem, taking OGM as the reference.

\paragraph{Adaptive methods for convex function minimization.}
The demand for adaptive and parameter-free methods has grown as data size increases and as large-scale optimization problems become more prevalent. The seminal work \cite{MalitskyMishchenko2020_adaptive} initiated the study of adaptive methods for convex minimization problems, without using additional expensive computations such as line search \cite{Goldstein1962_cauchys, LarryArmijo1966_minimization, Nesterov2015_universal}. 
They introduced the {ad}aptive {g}radient {d}escent (AdGD) method, 
which achieves an ergodic convergence rate of $\cO\pr{{1}/{k}}$ for the function value error. Without requiring knowledge of the smoothness parameter or performing line-search computations, AdGD selects the step size based on an approximation of the local smoothness parameter using the last two iterates. 

Introducing AdaPGM, an extension to the prox-grad setup was considered in \cite{OikonomidisLaudeLatafatThemelisPatrinos2024_adaptive}. A generalized version of AdaPGM, AdaPG$^{q,r}$ was introduced in \cite{LatafatThemelisPatrinos2024_convergence}. 
The analysis of the original AdGD was refined in \cite{MalitskyMishchenko2024_adaptive}, where an advanced method with a larger step size (AdGD2) was proposed, along with an extension to the prox-grad setup based on the refined proof.
 An adaptive method based on the Barzilai-Borwein method \cite{BarzilaiBorwein1988_twopoint}, AdaBB, was introduced in \cite{ZhouMaYang2024_adabb}. 
However, none of these adaptive gradient descent-based methods achieve a non-ergodic convergence rate, which was identified as a challenging problem in \cite{MalitskyMishchenko2024_adaptive}.

Recently, an adaptive method that achieves the accelerated convergence rate $f(x_k) - f_\star = \cO\pr{{1}/{k^2}}$, called the {a}uto-{c}onditioned {f}ast {g}radient {m}ethod (AC-FGM), was introduced by \cite{LiLan2024_simple}. 
Focusing on the prox-grad setup, they introduced an additional auxiliary sequence called “prox-centers,” which does not appear in the original Nesterov’s acceleration. 
Furthermore, they extended their analysis to convex problems with H\"{o}lder continuous gradients. 
We focus on the single-function convex minimization problem and propose a novel adaptive accelerated method that does not require such an auxiliary sequence.

\paragraph{Lyapunov analysis.} 
A refined proof technique plays an important role in analyzing adaptive methods, particularly in deriving new step sizes and designing new algorithms as observed in \cite{MalitskyMishchenko2024_adaptive, LatafatThemelisPatrinos2024_convergence}. Among such techniques, Lyapunov analysis is a powerful framework that leverages a nonincreasing potential function, and it has been widely used to analyze a variety of optimization methods, including accelerated methods~\cite{BeckTeboulle2009_fast, BG17, TaylorBach2019_stochastic, dAspremontScieurTaylor2021_acceleration, LeeParkRyu2021_geometric, ParkParkRyu2023_factorsqrt2, SuhRohRyu2022_continuoustime} and adaptive methods~\cite{MalitskyMishchenko2020_adaptive, LatafatThemelisPatrinos2024_convergence, OikonomidisLaudeLatafatThemelisPatrinos2024_adaptive, MalitskyMishchenko2024_adaptive, ZhouMaYang2024_adabb}. 
Lyapunov analysis is especially crucial for adaptive methods, since such analyses typically use only recent iterates $x_k$ and $x_{k+1}$, which provides strong motivation to approximate the local smoothness parameter using recent iterates. 
In this work, we present a novel Lyapunov analysis for an adaptive accelerated method. Our proof is inspired by the Lyapunov analysis of OGM~\cite{ParkParkRyu2023_factorsqrt2, dAspremontScieurTaylor2021_acceleration}. As a consequence of our analysis, we derive a novel convergence result $\min_{i\in\{1,\dots, k\}} \norm{\nabla f(x_i)}^2 = \mathcal{O}\pr{1/k^3}$, which is, to our knowledge, the first such rate established for adaptive methods. Furthermore, we design a new gradient-descent-type algorithm that achieves a non-ergodic convergence rate $f(x_k) - f_\star = \cO(1/k)$, as an application of our Lyapunov analysis.

\begin{table}[H] 
    \centering
    \begin{tabular}{ c|cc|c|c } 
     \toprule
     {} 
     & $f(x) - f_\star$ 
     & non-ergodic 
     & $\min_{i\in\set{1,\dots,k}}\norm{\nabla f(x_i)}^2$ 
     & without momentem \\
     \midrule\midrule
     {AdGD2 \cite{MalitskyMishchenko2024_adaptive}} 
     & $\mathcal{O}\pr{ \vfrac{1}{k} }$
     & $\times$
     & $\mathcal{O}\pr{ \vfrac{1}{k} } $ 
     & \checkmark \\
     {AdaPGM \cite{OikonomidisLaudeLatafatThemelisPatrinos2024_adaptive, LatafatThemelisPatrinos2024_convergence}} 
     & $\mathcal{O}\pr{ \vfrac{1}{k} }$
     & $\times$
     & $\mathcal{O}\pr{ \vfrac{1}{k} } $ 
     & \checkmark \\
     {AdaBB \cite{ZhouMaYang2024_adabb}} 
     & $\mathcal{O}\pr{ \vfrac{1}{k} }$
     & $\times$
     & $\mathcal{O}\pr{ {1}/{k} } $ 
     & \checkmark \\
     \textbf{AdaGD (ours)} 
     & $\mathcal{O}\pr{ \vfrac{1}{k} }$
     & \checkmark
     & $\mathcal{O}\pr{ {1}/{k^{2}} } $ 
     & \checkmark \\
     \midrule
     {AC-FGM \cite{LiLan2024_simple}} 
     & $\mathcal{O}\pr{ \vfrac{1}{k^2} }$
     & \checkmark 
     & $\mathcal{O}\pr{ {1}/{k^{2}} } $  
     & $\times$ \\
     \textbf{AdaNAG (ours)} 
     & $\mathcal{O}\pr{ \vfrac{1}{k^2} }$
     & \checkmark
     & $\mathcal{O}\pr{ {1}/{k^{3}} } $ 
     & $\times$ \\
     \bottomrule
    \end{tabular}
    \captionsetup{width=\linewidth} 
    \caption{Comparison between our methods and previous adaptive methods. In cases where the convergence rate of $\min_{i\in\set{1,\dots,k}}\norm{\nabla f(x_i)}^2$ is not explicitly mentioned, we apply $\norm{\nabla f(x)}^2 \le 2L(f(x) - f_\star)$, assuming the smoothness parameter $L$. 
    }
\end{table}

\subsection{Contributions}
The main contributions of this work are as follows.

\begin{itemize}[leftmargin=*]
    \item 
    We provide a novel accelerated adaptive algorithm, \adanag, that is line-search-free, parameter-free, and achieves the accelerated convergence rate $f(x_k)-f_\star = \cO\pr{ \hfrac{1}{k^2}}$. 
    Compared to AC-FGM, 
    we have a better coefficient of $\norm{x_0-x_\star}^2$ in the upper bound of the convergence rate of \adanag.
    Moreover, \adanag\ achieves a convergence rate for the minimum of the squared gradient norm, 
    \( \min_{i \in \{1, \dots, k\}} \|\nabla f(x_i)\|^2 = \mathcal{O}\left( \hfrac{1}{k^3} \right) \). 
    To the best of our knowledge, the latter is the first result of its kind for adaptive algorithms.
     
    \item 
    We provide a Lyapunov analysis that allows for a concise understanding of the proof for \adanag. Such understanding motivates us to consider a 
    strategy to develop a novel adaptive method, which leads to a novel adaptive gradient descent algorithm, \adagd, with a non-ergodic \( \mathcal{O}\left( \hfrac{1}{k} \right) \) convergence rate. 
    This is the first non-ergodic convergence result for adaptive methods without momentum. 
    \adagd \ also achieves the convergence rate \( \min_{i \in \{1, \dots, k\}} \|\nabla f(x_i)\|^2 = \mathcal{O}\left( \hfrac{1}{k^2} \right) \). 
    
    \item 
    Finally, we provide a family of algorithms that generalize AdaNAG and enable practically useful parameter selection. 
    We introduce our ultimate algorithms, \adanagpractice\ and \adanagsqrt, which are both theoretically sound and practically useful. 
    We demonstrate the efficiency of our algorithms 
    by showing that they outperform prior algorithms, including AC-FGM, when applied to some representative applications.
\end{itemize}

\paragraph{Organization.} 
The rest of the paper is organized as follows. 
In \Cref{sec:preliminary}, we introduce the preliminary concepts for our analysis. 
In \Cref{sec:adanag}, we introduce our adaptive accelerated algorithm~\adanag\ and present the main convergence results, 
whose proofs, based on a Lyapunov analysis, are provided in \Cref{sec:AdaNAG_anaylsis}. 
In \Cref{sec:adagd}, we introduce a novel family of GD-type adaptive algorithms, \adagd. 
In \Cref{sec:adanag-g}, we introduce a family of adaptive algorithms that generalizes \adanag\ and allows for practically useful parameter selection. 
As concrete instances derived from this family, we present \adanagpractice\ and \adanagsqrt, both of which are theoretically sound and practically useful. 
Numerical results are presented in \Cref{sec:experiments}. 
Finally, we draw some concluding remarks in \Cref{sec:conclusion}.

\section{Preliminaries} \label{sec:preliminary} 

$f$ is called \emph{$L$-smooth} if it is differentiable and $\nabla f$ is $L$-Lipschitz continuous, \ie, 
\begin{equation*}
     \norm{ \nabla f(x) - \nabla f(y) } \le L \norm{ x - y }, \qquad \forall x, y \in \ourspace.
\end{equation*}
When $f$ is an $L$-smooth convex function, it is known \cite[Theorem~2.1.5]{Nesterov2004_introductory} that the following inequality holds for all $x,y \in \ourspace$
\begin{equation}    \label{eq:L_smooth_ineq}
    f(y) - f(x) + \inner{\nabla f(y)}{x-y} + \frac{1}{2L} \norm{ \nabla f(x) - \nabla f(y) }^2 \le 0. 
\end{equation}
We say $f$ is \emph{locally smooth} if $\nabla f$ is locally Lipschitz continuous, \ie, if for every compact set $K\subset \ourspace$ there exists $L_K>0$ such that
\begin{equation*}
    \norm{ \nabla f(x) - \nabla f(y) } \le L_K \norm{ x - y }, \qquad \forall x, y \in K.
\end{equation*}
We refer to $L_K$ as the \emph{(local) smoothness parameter} of $f$ on $K$. 
It is clear that $L$-smoothness implies local smoothness. 
It is natural to ask whether a locally smooth convex function also satisfies an inequality similar to \eqref{eq:L_smooth_ineq}. 
The answer is positive. However, to the best of our knowledge, this is not immediate from known facts for globally smooth functions. Therefore, we state it as a lemma, along with a useful corollary that will be used 
when we approximate the local smoothness parameter 
in our later arguments.  
We provide the proofs in \Cref{appendix:proofs_for_preliminary}.

\begin{lemma} \label{lemma:local_smoothness}
    Suppose $f$ is a locally smooth convex function and $K\subset\bbR^d$ is a compact set. Let \( K \subset \bar{B}_R(c) \), where \( \bar{B}_R(c) = \{ x \mid \| x - c \| \leq R \} \). 
    Let $\bar{L}_K>0$ be a smoothness parameter of $f$ on $\bar{B}_{3R}(c)$. 
    Then the following inequality is tr
    \begin{equation*}
        f(y) - f(x) + \inner{\nabla f(y)}{x-y} + \frac{1}{2\bar{L}_K} \norm{ \nabla f(x) - \nabla f(y) }^2 \le 0. 
    \end{equation*}
   Note that when $f$ is a (global) $L$-smooth convex function, the above statements hold with $\bar{L}_{K} = L$. 
\end{lemma}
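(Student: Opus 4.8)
The plan is to localize the textbook proof of \eqref{eq:L_smooth_ineq}: that proof only ever evaluates $f$ and $\nabla f$ at $x$, at $y$, and at one auxiliary point obtained from a single gradient step, so it suffices to check that this auxiliary point never leaves the ball $\bar{B}_{3R}(c)$ on which $f$ is $\bar{L}_K$-smooth. Fix $x,y\in K$ and set $\phi(z) := f(z) - \inner{\nabla f(y)}{z}$ for $z\in\ourspace$. Since $f$ is convex on all of $\ourspace$, so is $\phi$, and $\nabla\phi(z) = \nabla f(z) - \nabla f(y)$; in particular $\nabla\phi(y)=0$, so $y$ is a global minimizer of $\phi$ and $\phi(y)\le\phi(z)$ for every $z\in\ourspace$. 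Note also that on $\bar{B}_{3R}(c)$ the gradient $\nabla\phi$ is Lipschitz with the same constant $\bar{L}_K$ as $\nabla f$, since the two differ by a constant vector.

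The key step is to introduce the point $z^+ := x - \frac{1}{\bar{L}_K}\bigl(\nabla f(x) - \nabla f(y)\bigr) = x - \frac{1}{\bar{L}_K}\nabla\phi(x)$ and to verify $z^+\in\bar{B}_{3R}(c)$. Because $x,y\in K\subset\bar{B}_R(c)$ we have $\norm{x-y}\le\norm{x-c}+\norm{y-c}\le 2R$, and since $x,y$ also lie in $\bar{B}_{3R}(c)$ the local smoothness estimate gives $\norm{\nabla f(x)-\nabla f(y)}\le\bar{L}_K\norm{x-y}\le 2\bar{L}_K R$. Hence $\norm{z^+-c}\le\norm{x-c}+\frac{1}{\bar{L}_K}\norm{\nabla f(x)-\nabla f(y)}\le R+2R=3R$, as desired. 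I expect this containment to be the only delicate point of the argument; it is precisely why the radius is inflated by the factor $3$ (one $R$ for $x$ itself, and $2R$ for the worst-case gradient step), and it is exactly the bookkeeping that must be added relative to the global case, where no such check is needed.

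With $z^+$ in hand, the rest is routine. Since $\bar{B}_{3R}(c)$ is convex and contains both $x$ and $z^+$, the segment $[x,z^+]$ stays inside it, so the descent inequality for the $\bar{L}_K$-smooth function $\phi$ (obtained by integrating $\nabla\phi$ along that segment) applies: $\phi(z^+)\le\phi(x)+\inner{\nabla\phi(x)}{z^+-x}+\frac{\bar{L}_K}{2}\norm{z^+-x}^2$. Substituting $z^+-x=-\frac{1}{\bar{L}_K}\nabla\phi(x)$ collapses the right-hand side to $\phi(x)-\frac{1}{2\bar{L}_K}\norm{\nabla\phi(x)}^2$, and combining with $\phi(y)\le\phi(z^+)$ yields $\phi(y)\le\phi(x)-\frac{1}{2\bar{L}_K}\norm{\nabla f(x)-\nabla f(y)}^2$. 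Unwinding the definition of $\phi$ and rearranging gives exactly the claimed inequality. Finally, the closing remark is immediate: if $f$ is globally $L$-smooth, then it is in particular $L$-smooth on every $\bar{B}_{3R}(c)$, so one may take $\bar{L}_K=L$ and recover \eqref{eq:L_smooth_ineq}.
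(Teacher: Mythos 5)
Your proof is correct and takes essentially the same route as the paper's: define the tilted function $\phi(z)=f(z)-\langle\nabla f(y),z\rangle$ (the paper calls it $\tilde f_y$), observe that $y$ is its global minimizer, introduce the one-step point $z^+=x-\frac{1}{\bar L_K}\nabla\phi(x)$, verify $z^+\in\bar B_{3R}(c)$, and apply the local descent inequality. The only cosmetic difference is in the bookkeeping used to show $z^+$ stays in the ball (you bound $\|x-y\|\le 2R$ directly, the paper uses $\|x^+-x\|\le\|x-y\|$ first), which is immaterial.
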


\begin{corollaryL}   \label{cor:L_k_bound}
    Let $f$ be a locally smooth convex function. Then following statements are true:
    \begin{itemize}
        \item [(i)] For all $x,y\in\ourspace$, if $f(y) - f(x) + \inner{\nabla f(y)}{x-y}=0$, then $\norm{ \nabla f(x) - \nabla f(y) }^2 = 0$. 
        \item [(ii)] Let $K\subset\ourspace$ be a compact set. Then there exists $\bar{L}_K>0$ such that 
    \begin{equation*}
        0\le -\frac{ \frac{1}{2} \norm{ \nabla f(x) - \nabla f(y) }^2 }
            { f(y) - f(x) + \inner{\nabla f(y)}{x-y} }
        \le \bar{L}_K, 
        \quad \forall x,y \in K \mbox{ with } f(y) - f(x) + \inner{\nabla f(y)}{x-y}\ne0.
    \end{equation*}  
    \end{itemize}  
\end{corollaryL}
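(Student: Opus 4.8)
The plan is to derive both statements from Lemma~\ref{lemma:local_smoothness} by choosing the auxiliary ball appropriately. For part~(i), fix $x,y$ with $f(y)-f(x)+\inner{\nabla f(y)}{x-y}=0$, and apply Lemma~\ref{lemma:local_smoothness} with the compact set $K=\{x,y\}$: choosing any $R$ and center $c$ with $\{x,y\}\subset\bar{B}_R(c)$ (e.g.\ $c=\tfrac12(x+y)$, $R=\tfrac12\norm{x-y}$), we obtain a smoothness parameter $\bar{L}_K>0$ of $f$ on $\bar{B}_{3R}(c)$ and the inequality $f(y)-f(x)+\inner{\nabla f(y)}{x-y}+\tfrac{1}{2\bar{L}_K}\norm{\nabla f(x)-\nabla f(y)}^2\le0$. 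Since the first three terms vanish by assumption, this forces $\tfrac{1}{2\bar{L}_K}\norm{\nabla f(x)-\nabla f(y)}^2\le0$, and since $\bar{L}_K>0$ we conclude $\norm{\nabla f(x)-\nabla f(y)}^2=0$.

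For part~(ii), let $K\subset\ourspace$ be compact. Since $K$ is bounded, it is contained in some closed ball $\bar{B}_R(c)$; apply Lemma~\ref{lemma:local_smoothness} to this $K$ to obtain a single $\bar{L}_K>0$ (a smoothness parameter of $f$ on $\bar{B}_{3R}(c)$) such that the quadratic inequality of the lemma holds for \emph{all} $x,y\in K$ (the point here is that one ball, hence one constant $\bar{L}_K$, works uniformly over $K$). Now take any $x,y\in K$ with $D:=f(y)-f(x)+\inner{\nabla f(y)}{x-y}\ne0$. By convexity of $f$ we have $D=f(y)-\big(f(x)+\inner{\nabla f(x)}{y-x}\big)+\inner{\nabla f(y)-\nabla f(x)}{x-y}$; more directly, the standard gradient-inequality argument shows $D\le0$ always, and combined with $D\ne0$ this gives $D<0$, so the denominator in the displayed ratio is strictly negative and the ratio $-\tfrac{\frac12\norm{\nabla f(x)-\nabla f(y)}^2}{D}$ is well-defined and $\ge0$. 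Rearranging the lemma's inequality $D+\tfrac{1}{2\bar{L}_K}\norm{\nabla f(x)-\nabla f(y)}^2\le0$ gives $\tfrac12\norm{\nabla f(x)-\nabla f(y)}^2\le-\bar{L}_K D$, and dividing by $-D>0$ yields $-\tfrac{\frac12\norm{\nabla f(x)-\nabla f(y)}^2}{D}\le\bar{L}_K$, which is exactly the upper bound claimed.

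The main obstacle — really the only nontrivial point — is establishing $D\le0$ for a locally smooth convex function, i.e.\ that $f(y)\le f(x)+\inner{\nabla f(x)}{y-x}$-type reasoning gives $f(y)-f(x)+\inner{\nabla f(y)}{x-y}\le0$. For globally smooth convex $f$ this is immediate from \eqref{eq:L_smooth_ineq} (drop the nonnegative gradient term), but under only local smoothness one should instead note that this quantity equals $-\big(f(x)-f(y)-\inner{\nabla f(y)}{x-y}\big)$, and convexity alone gives $f(x)\ge f(y)+\inner{\nabla f(y)}{x-y}$, so $D\le0$ with no smoothness needed at all. Thus the sign of the denominator comes for free from convexity, and local smoothness is used only to produce the uniform constant $\bar{L}_K$ controlling the numerator. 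One should double-check the degenerate possibility in~(ii) that $\nabla f(x)=\nabla f(y)$ while $D\ne0$: then the ratio is $0$, which is trivially in $[0,\bar{L}_K]$, so nothing breaks. Finally, the lower bound $0\le$(ratio) in~(ii) is immediate: the numerator is $\le0$ and, as shown, $D<0$.
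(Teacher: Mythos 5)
Your proof is correct and follows essentially the same route as the paper: both parts are deduced from Lemma~\ref{lemma:local_smoothness} applied to a compact set containing $x,y$, with the sign of the denominator $f(y)-f(x)+\inner{\nabla f(y)}{x-y}\le 0$ coming from convexity alone. The only difference is that you spell out the choice of covering ball and verify the rearrangement explicitly, whereas the paper states these steps tersely; one small slip is that in your final remark on the lower bound the numerator $\tfrac12\norm{\nabla f(x)-\nabla f(y)}^2$ is $\ge 0$ (not $\le 0$), but since $D<0$ the ratio $-\tfrac{\frac12\norm{\cdot}^2}{D}$ is still $\ge 0$, so the conclusion is unaffected.
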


In our analysis, we need an initial guess (say, $L_0$) to the 
smoothness parameter $L$ such that $0 < L_0 \leq L$. This can be done by choosing two arbitrary points $x_0$, $\tilde{x}_{0}$ with $x_0\neq \tilde{x}_{0}$ and defining 
\begin{equation} \label{eq:initial_guess}
    L_0 = \frac{\norm{ \nabla f(x_0) - \nabla f(\tilde{x}_{0}) }}{ \norm{ x_0 - \tilde{x}_{0} } }.
\end{equation}
Moreover, we adopt the convention $\frac{0}{0} = 0$ and $\frac{a}{0} = +\infty$ for all $a > 0$ when the denominator is zero.

\section{Our AdaNAG Algorithm} \label{sec:adanag}

One typical iteration of OGM and NAG can be written in the following form: 
\begin{equation*}
    \begin{aligned}
        y_{k+1} = x_k - s_{k} \nabla f(x_k), \quad \,\,
        z_{k+1} = z_k - s_{k} \alpha_{k} \theta_k \nabla f(x_k), \quad \,\,
        x_{k+1} = \pr{ 1 - \frac{1}{\theta_{k+1}} } y_{k+1} + \frac{1}{\theta_{k+1}} z_{k+1}.
    \end{aligned}
\end{equation*}
Both methods consider constant choice $s_{k} = \frac{1}{L}$. 
As studied in \cite{KimFessler2016_optimized, ParkParkRyu2023_factorsqrt2}, this reduces to OGM when $\alpha_{k} = 2$, and reduces to NAG when $\alpha_{k} = 1$. 
We want to find proper $s_{k}$, $\alpha_{k}$ without knowing $L$ such that similar proof structure of OGM works, so to achieve similar accelerated convergence rate.

\subsection{AdaNAG}
Our \emph{\textbf{Ada}ptive \textbf{N}esterov's \textbf{A}ccelerated \textbf{G}radeint (AdaNAG)} method is described in Algorithm \ref{alg:AdaNAG}.
The convergence results of \adanag\ are summarized in Theorem~\ref{thm:main_tight}, and its proof is given in \Cref{sec:AdaNAG_anaylsis}.

\begin{algorithm}[ht] 
     \caption{\textbf{Ada}ptive \textbf{N}esterov \textbf{A}ccelerated \textbf{G}radient (AdaNAG)}
     \label{alg:AdaNAG} 
     \begin{algorithmic}[1]
     \STATE \textbf{Input:} $x^0=z^0 \in \mathbb{R}^d$, 
     $\ssz_{0} >0$. 
        \STATE \textbf{Define}
        \begin{align} \label{eq:theta}
            \theta_k &= \begin{cases}
               1 & \text{ if } k=0 
               \\ 
                \frac{1}{2} {\scriptstyle \pr{ 1 + \sqrt{1 + 4\theta_{k-1}^2}} }, \quad\,\,  & \text{ if } k\ge1,
            \end{cases}
            \qquad 
            \alpha_k = \begin{cases}
                \frac{2\theta_2}{\theta_2-1} \pr{ \frac{1}{\alpha_3} + \frac{1}{\alpha_2^2} - \frac{1}{\alpha_1} }^{-1} & \text{ if } k=0 \\ 
                \frac{1}{2} \pr{ 1 - \frac{1}{\theta_{k+2}} } & \text{ if } k\ge1
            \end{cases}
        \end{align}
       \FOR{$k = 0,1,\dots$}   
      \STATE 
       \begin{equation}     \label{eq:AdaNAG}  
            \begin{aligned}
            y_{k+1} &= x_k -  \ssz_{k} \nabla f(x_k) \\
            z_{k+1} &= z_k - \ssz_{k} \talpha_{k} \theta_{k+2} \nabla f(x_k) \\
            x_{k+1} &= \pr{ 1 - \frac{1}{\theta_{k+3}} } y_{k+1} + \frac{1}{\theta_{k+3}} z_{k+1}  
            \end{aligned}
        \end{equation} \vspace{-2mm}
        \begin{align}\label{define-Lk+1}
            L_{k+1} &= -\frac{ \frac{1}{2} \norm{ \nabla f(x_{k+1}) - \nabla f(x_{k}) }^2 }            { f(x_{k+1}) - f(x_{k}) + \inner{\nabla f(x_{k+1})}{x_{k}-x_{k+1}} }
        \end{align}
        \begin{equation} \label{eq:step_size_rule} \qquad\,\,
            \ssz_{k+1} = \begin{cases}
                \min\set{ \frac{\alpha_0}{\alpha_1} \frac{\theta_2}{\theta_3(\theta_3-1)} \ssz_{0},  \,\,\frac{ \alpha_{2}^2 \alpha_{3}}{\alpha_{3} + \alpha_{2}^2} \frac{1}{\alpha_{1}} \frac{1}{L_{1}}  } \qquad\qquad & \text{if }  k=0  \\
                \min\set{ \frac{\alpha_{k}}{\alpha_{k+1}}  \ssz_{k},  \,\, \frac{ \alpha_{k}^2 }{ { \alpha_{k+1} + \alpha_{k}^2 } } \frac{1}{L_{k+1}}  } & \text{if }  k\ge 1
            \end{cases}
        \end{equation}
       \ENDFOR
     \end{algorithmic}
\end{algorithm}

\begin{theorem} 
    \label{thm:main_tight}
    Let $f$ be an $L$-smooth convex function. 
    Suppose $\set{x_k}_{k\ge0}$ is a sequence generated by \adanag\ (Algorithm \ref{alg:AdaNAG}) 
    with $s_0 = 0.4255 \frac{1}{L_0}$ where $L_0$ defined as in \eqref{eq:initial_guess}.
    Denote 
    $\upperboundconstant =  \norm{x_{0} - x_\star}^2 +  0.14  \frac{1}{L_0} \pr{\frac{1}{L_0} - \frac{2}{L} } \norm{\nabla f(x_0)}^2$.  
    Then the following convergence rate results hold.
    \begin{align}
        f(x_k) - f_\star
            &\le \frac{5.5 \upperboundL}{  \theta_{k+2}^2}  \upperboundconstant 
            \le \frac{ 22 \upperboundL}{  (k+4)^2}  \upperboundconstant
            = \cO\pr{ \frac{\upperboundL}{k^2} }, \label{thm:main_tight-1}
        \\
        \min_{i\in\set{1,\dots, k}} \norm{\nabla f(x_i)}^2
            &\le \frac{120 \upperboundL^2}{\sum_{i=1}^{k}  \theta_{i+2}^2}  \upperboundconstant
            \le \frac{1440 \upperboundL^2}{k \left(k^2+12 k+47\right)} \upperboundconstant = \cO\pr{ \frac{\upperboundL^2}{k^3} }.\nonumber
    \end{align}
\end{theorem}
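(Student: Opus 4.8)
The plan is to prove Theorem~\ref{thm:main_tight} by a Lyapunov (potential‑function) argument patterned on the energy‑function analysis of OGM in~\cite{ParkParkRyu2023_factorsqrt2, dAspremontScieurTaylor2021_acceleration}, with the constant step $1/L$ replaced by the adaptive sequence $\ssz_k$ and the global smoothness constant replaced, at step $k$, by the local estimate $L_{k+1}$ from \eqref{define-Lk+1}. First I record the identities that drive the algebra: from \eqref{eq:theta}, $\theta_k(\theta_k-1)=\theta_{k-1}^2$; and $\alpha_k=\tfrac12\pr{1-\tfrac1{\theta_{k+2}}}$ for $k\ge1$ gives $2\alpha_k\theta_{k+2}=\theta_{k+2}-1$, hence $2\alpha_k\theta_{k+2}^2=\theta_{k+1}^2$. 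These let one rewrite the $x$-update of \eqref{eq:AdaNAG} as a convex combination of $y_{k+1}$ and $z_{k+1}$ that meshes with the $z$-update, exactly as the classical $\theta_k$-identity does in the NAG/OGM analysis. One also uses $0<L_0\le L$, immediate from \eqref{eq:initial_guess} and $L$-smoothness, and $0\le L_{k+1}\le L$ for all $k$, which follows from \eqref{eq:L_smooth_ineq} and convexity (equivalently, Corollary~\ref{cor:L_k_bound} applied to $x_k,x_{k+1}$).

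The potential I would use has the form
\[
    V_k \;=\; a_k\pr{f(x_k)-f_\star} \;+\; b_k\norm{\nabla f(x_k)}^2 \;+\; \tfrac12\norm{z_k-x_\star}^2,
\]
where $a_k$ is, up to a fixed positive scaling, $\ssz_k\theta_{k+2}^2$, with $b_k\ge0$ for $k\ge1$ while $b_0$ may be negative (precisely when $L_0>L/2$); the exact weights are reverse‑engineered from the monotonicity requirement. Evaluating $V_0$ (recall $z_0=x_0$) and invoking the smoothness‑plus‑convexity inequality at $(x_0,x_\star)$ — whose $-\tfrac1{2L}\norm{\nabla f(x_0)}^2$ term is the source of the $-2/L$ in the coefficient — produces $\upperboundconstant$ up to that scaling, and in particular explains why $\upperboundconstant$ can be smaller than $\norm{x_0-x_\star}^2$, giving the improved constant claimed in the contributions. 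The heart of the proof is then $V_{k+1}\le V_k$ for $k\ge1$: substitute the three updates of \eqref{eq:AdaNAG}, expand $\tfrac12\norm{z_{k+1}-x_\star}^2$, bound $f(x_{k+1})-f_\star$ from above via the local‑smoothness relation \eqref{define-Lk+1} for the pair $(x_k,x_{k+1})$ together with convexity at $x_\star$ and at the auxiliary points, and collect everything into a quadratic form in $\nabla f(x_k)$, $\nabla f(x_{k+1})$ and $z_k-x_\star$ whose coefficients are explicit in $\ssz_k,\ssz_{k+1},\alpha_k,\alpha_{k+1},L_{k+1}$ and the $\theta$'s. The two branches of \eqref{eq:step_size_rule} are exactly what make this form nonpositive: $\ssz_{k+1}\le\tfrac{\alpha_k}{\alpha_{k+1}}\ssz_k$ yields $\ssz_{k+1}\alpha_{k+1}\le\ssz_k\alpha_k$, matching the $z$-couplings across consecutive steps, while $\ssz_{k+1}\le\tfrac{\alpha_k^2}{\alpha_{k+1}+\alpha_k^2}\tfrac1{L_{k+1}}$ yields $\ssz_{k+1}L_{k+1}(\alpha_{k+1}+\alpha_k^2)\le\alpha_k^2$, which is precisely what is needed to absorb the smoothness slack $\tfrac1{2L_{k+1}}\norm{\nabla f(x_{k+1})-\nabla f(x_k)}^2$ and the $\norm{\nabla f(x_k)}^2$ terms into a perfect square, leaving a nonnegative remainder carried by $b_{k+1}\norm{\nabla f(x_{k+1})}^2$. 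The base case $V_1\le V_0$ is handled separately, and the dedicated $k=0$ cases of \eqref{eq:theta} and \eqref{eq:step_size_rule} and the choice $\ssz_0=0.4255/L_0$ — in particular the closed form of $\alpha_0$ — are reverse‑engineered so that the first‑step analogue of the inequality holds, again using $L_0\le L$ and smoothness‑plus‑convexity at $(x_0,x_\star)$ for the $\norm{\nabla f(x_0)}^2$ term.

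It then remains to lower‑bound $\ssz_k$. Multiplying \eqref{eq:step_size_rule} by $\alpha_{k+1}$ gives, for $k\ge1$, the clean recursion $\ssz_{k+1}\alpha_{k+1}=\min\set{\ssz_k\alpha_k,\ \tfrac{\alpha_{k+1}\alpha_k^2}{\alpha_{k+1}+\alpha_k^2}\tfrac1{L_{k+1}}}$; since $L_{k+1}\le L$ and $\tfrac{\alpha_{k+1}\alpha_k^2}{\alpha_{k+1}+\alpha_k^2}$ is bounded below by a positive constant (it increases toward $\tfrac16$, as $\alpha_k\nearrow\tfrac12$), and since $\ssz_1\alpha_1\gtrsim 1/L$ (from the $k=0$ rule and $\ssz_0\ge0.4255/L$), an induction gives $\ssz_k\alpha_k\ge c'/L$, hence $\ssz_k\ge c/L$ with an explicit $c>0$. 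From $V_k\le V_0$, $V_k\ge a_k(f(x_k)-f_\star)$ and $a_k\gtrsim\ssz_k\theta_{k+2}^2\gtrsim\theta_{k+2}^2/L$, we obtain $f(x_k)-f_\star\le\tfrac{5.5L}{\theta_{k+2}^2}\upperboundconstant$, and the induction bound $\theta_j\ge\tfrac{j+2}{2}$ (from $\theta_j(\theta_j-1)=\theta_{j-1}^2$) gives $\theta_{k+2}^2\ge\tfrac{(k+4)^2}{4}$, hence \eqref{thm:main_tight-1}. For the gradient‑norm rate, summing the decrease inequalities over $i=1,\dots,k$ (with the base step) leaves $\sum_{i=1}^{k}w_i\norm{\nabla f(x_i)}^2\le V_0$ where $w_i$ is a positive multiple of $\ssz_i^2\theta_{i+2}^2\gtrsim\theta_{i+2}^2/L^2$; therefore $\min_{1\le i\le k}\norm{\nabla f(x_i)}^2\le V_0/\sum_i w_i\le\tfrac{120L^2}{\sum_{i=1}^{k}\theta_{i+2}^2}\upperboundconstant$, and $\sum_{i=1}^{k}\theta_{i+2}^2\ge\tfrac14\sum_{i=1}^{k}(i+4)^2\ge\tfrac{k(k^2+12k+47)}{12}$ completes the estimates.

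The main obstacle is the monotonicity step $V_{k+1}\le V_k$: identifying the exact weights $a_k,b_k$ and checking that, with the specific $\theta_k$, $\alpha_k$ and the two‑branch rule \eqref{eq:step_size_rule}, the assembled quadratic form in $\nabla f(x_k)$, $\nabla f(x_{k+1})$, $z_k-x_\star$ is genuinely nonpositive after completing the square — i.e.\ that the two step‑size constraints supply exactly, not merely approximately, the coefficients that cancel the cross‑terms. The awkward rational constants in the $k=0$ definitions are a symptom that these identities are tight and leave no slack, so the computation has to be done exactly rather than by crude bounding; by comparison the base case, the lower bound on $\ssz_k$, and the extraction of the two rates are routine bookkeeping.
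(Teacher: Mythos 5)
Your proposal has the right overall skeleton (Lyapunov decrease, step-size lower bound, extraction of both rates, and the correct reading of the two branches of \eqref{eq:step_size_rule}), but your Lyapunov function is misspecified in a way that breaks the core monotonicity argument. You write $V_k = a_k(f(x_k)-f_\star) + b_k\|\nabla f(x_k)\|^2 + \tfrac12\|z_k - x_\star\|^2$, with the same iteration index on $f$, $\nabla f$, and $z$. The paper instead uses the \emph{shifted} index on $z$, namely $V_k = s_{k+1}A_k(f(x_k)-f_\star) + \tfrac12 s_k^2 B_k\|\nabla f(x_k)\|^2 + \tfrac12\|z_{k+1}-x_\star\|^2$ (see \eqref{eq:Lyapunov}), and explicitly flags this shift as ``a crucial point'' in Section~\ref{sec:proof_core_points}(\textit{ii}). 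The reason is structural: when you expand $V_{k+1}-V_k$, the $z$-difference must link to the other terms through the algorithm's bridge identity $x_{k+1}-z_{k+1}=-(\theta_{k+3}-1)(x_{k+1}-x_k+s_k\nabla f(x_k))$, which ties $z_{k+1}$ to $x_k$ and $x_{k+1}$ only. With your unshifted version, the $z$-difference produces $-s_k\alpha_k\theta_{k+2}\langle\nabla f(x_k),\,z_k-x_\star\rangle$, and the only way to decompose $z_k$ is through the iteration-$(k-1)$ update, introducing $x_{k-1}$ and $\nabla f(x_{k-1})$ which appear in neither $V_k$ nor $V_{k+1}$. Equivalently, after using smoothness and convexity you are left with a genuinely nonzero linear term in $z_k-x_\star$ with no negative quadratic term to dominate it, so the ``quadratic form in $\nabla f(x_k)$, $\nabla f(x_{k+1})$ and $z_k-x_\star$'' you propose cannot be uniformly nonpositive. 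Since, as you say yourself, these identities ``are tight and leave no slack,'' this index mismatch is fatal rather than cosmetic.

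A secondary inaccuracy worth noting: you identify $a_k$ as ``up to a fixed positive scaling, $s_k\theta_{k+2}^2$,'' but the working weight is $s_{k+1}\alpha_{k+1}\theta_{k+2}^2$, which is not a constant multiple of $s_k\theta_{k+2}^2$ (both $\alpha_{k+1}$ and $s_{k+1}/s_k$ vary along the run). This matters because your claimed recursion $s_{k+1}\alpha_{k+1}=\min\{s_k\alpha_k,\,\dots\}$ — which is correct — is designed precisely to control $s_{k+1}\alpha_{k+1}$, and the Lyapunov decrease only meshes with the step-size rule if the $f$-weight is built from $s_{k+1}\alpha_{k+1}$, not from $s_k$ alone. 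Once you shift the $z$ index and use the correct $a_k=s_{k+1}A_k$, $b_k=\tfrac12 s_k^2 B_k$ with $A_k=\alpha_{k+1}\theta_{k+2}^2$, $B_k=\alpha_k^2\theta_{k+2}^2$ (and the special $k=0$ adjustment), the rest of your sketch — the two-branch case analysis, the lower bound $s_k\alpha_k\ge \tfrac{\alpha_2^2\alpha_3}{\alpha_3+\alpha_2^2}\tfrac1L$, and the two rate extractions — lines up with the paper's argument.
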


Note that when the denominator of $L_{k+1}$ in \eqref{define-Lk+1} is zero, the numerator is also zero by Corollary~\ref{cor:L_k_bound}, so we have $L_{k+1}=0$ from our arithmetic convention. 
There are several useful properties of $\theta_k$, $\alpha_k$ and $s_k$ and we summarize them in Lemma \ref{lem:theta_properties}, whose proof is given in \Cref{appendix:theta_properties_proof}.

\begin{lemma}   \label{lem:theta_properties}
    Suppose $\set{\theta_k}_{k\ge0}$, $\set{\alpha_k}_{k\ge0}$ and $\set{s_k}_{k\ge0}$ are defined as in \eqref{eq:theta} and \eqref{eq:step_size_rule}. 
    Then $\theta_k$ is an increasing sequence that satisfies $\theta_k\ge\frac{k+2}{2}$ for $k\ge0$. 
    Furthermore, the following inequality holds
    \begin{equation}    \label{eq:theta_motivation}
        \theta_{k+1}(\theta_{k+1}-1) - \theta_k^2 \le 0, \qquad \forall k\ge0.
    \end{equation}
    Thus, $\alpha_{k+1}\ge\alpha_k$ for $k\ge1$ and $\lim_{k\to\infty}\alpha_k=\frac{1}{2}$. 
    Moreover, $s_{k+1}\le s_k$ for $k\ge0$. 
\end{lemma}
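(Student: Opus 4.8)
The plan is to verify each assertion of Lemma~\ref{lem:theta_properties} essentially in the order it is stated, treating the recursion for $\theta_k$ as the backbone and deriving everything else as algebraic consequences.

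First I would handle the claims about $\set{\theta_k}$. Monotonicity is immediate from the recursion $\theta_k = \tfrac12(1+\sqrt{1+4\theta_{k-1}^2})$: since $\sqrt{1+4\theta_{k-1}^2} > 2\theta_{k-1} - 1$ whenever $\theta_{k-1} \ge 1$ (square both sides), we get $\theta_k > \theta_{k-1}$, and an easy induction keeps $\theta_k \ge 1$. For the lower bound $\theta_k \ge \tfrac{k+2}{2}$ I would use induction: the base cases $\theta_0 = 1$, $\theta_1 = \tfrac12(1+\sqrt5) \ge \tfrac32$ hold, and from the defining quadratic identity $\theta_k^2 - \theta_k = \theta_{k-1}^2$ one gets $\theta_k = \tfrac12 + \sqrt{\theta_{k-1}^2 + \tfrac14} \ge \tfrac12 + \theta_{k-1} \ge \tfrac12 + \tfrac{k+1}{2} = \tfrac{k+2}{2}$. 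This same rearrangement, $\theta_k^2 = \theta_k(\theta_k - 1) + \theta_{k-1}^2$ — or equivalently $\theta_{k}(\theta_k-1) = \theta_{k-1}^2$ for $k \ge 1$ — is exactly what makes \eqref{eq:theta_motivation} hold; I would note that for $k \ge 1$ it is an equality (so the inequality is trivial), and for $k=0$ it reads $\theta_1(\theta_1-1) - \theta_0^2 = \tfrac{1+\sqrt5}{2}\cdot\tfrac{\sqrt5-1}{2} - 1 = 1 - 1 = 0$, so the inequality holds there too.

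Next, the monotonicity and limit of $\set{\alpha_k}_{k\ge1}$. Here $\alpha_k = \tfrac12(1 - \tfrac{1}{\theta_{k+2}})$ for $k \ge 1$, so $\alpha_{k+1} \ge \alpha_k$ is just the statement that $k \mapsto \theta_{k+2}$ is increasing, already established. Since $\theta_k \to \infty$ (from $\theta_k \ge \tfrac{k+2}{2}$), we get $\alpha_k \to \tfrac12$ directly. I would remark that the $k=0$ value of $\alpha_0$ is defined by a separate formula and is not claimed to fit the monotone pattern, so nothing further is needed there.

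Finally, the step-size monotonicity $s_{k+1} \le s_k$ for $k \ge 0$. For $k \ge 1$ the update \eqref{eq:step_size_rule} takes a minimum one of whose arguments is $\tfrac{\alpha_k}{\alpha_{k+1}} s_k$; since $\alpha_{k+1} \ge \alpha_k > 0$ (using $\theta_{k+2} > 1$ so $\alpha_k > 0$), that argument is $\le s_k$, hence $s_{k+1} \le s_k$. For $k = 0$ I would similarly observe that the first argument of the minimum in the $k=0$ branch is a constant multiple of $s_0$, and check that this multiplicative factor $\tfrac{\alpha_0}{\alpha_1}\tfrac{\theta_2}{\theta_3(\theta_3-1)}$ is at most $1$ — using $\theta_3(\theta_3-1) = \theta_2^2$ from the identity above, this factor equals $\tfrac{\alpha_0}{\alpha_1}\tfrac{1}{\theta_2}$, and one plugs in the explicit small-index values $\theta_1,\theta_2,\theta_3$, the closed form for $\alpha_1,\alpha_2,\alpha_3$, and the definition of $\alpha_0$ to see it is $\le 1$. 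The main obstacle I anticipate is precisely this last bookkeeping step: the definition of $\alpha_0$ involves $\tfrac{1}{\alpha_3} + \tfrac{1}{\alpha_2^2} - \tfrac{1}{\alpha_1}$, and one must confirm this quantity is positive (so $\alpha_0 > 0$ and the $s_0$-coefficient is well-behaved) and then grind the resulting rational expression in $\sqrt5$-type surds down to a clean numeric inequality; I would do this by computing $\theta_2 = \tfrac{1+\sqrt5}{2}$ exactly, $\theta_3$ from $\theta_3^2 - \theta_3 = \theta_2^2$, and then $\alpha_1,\alpha_2,\alpha_3$ numerically to enough precision, verifying the sign and the $\le 1$ bound with a small margin.
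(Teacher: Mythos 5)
Your proof matches the paper's: monotonicity and the bound $\theta_k \ge \tfrac{k+2}{2}$ from $\theta_{k+1} \ge \theta_k + \tfrac12$, the algebraic identity $\theta_{k+1}^2 - \theta_{k+1} = \theta_k^2$ giving \eqref{eq:theta_motivation} with equality, $\alpha$-monotonicity and the limit from $\theta_k \uparrow \infty$, and step-size monotonicity via $\alpha_k/\alpha_{k+1}\le 1$ for $k\ge 1$ plus a numerical check of the constant at $k=0$ (exactly what the paper delegates to Table~\ref{tab:numerical_values}, where that constant is recorded as $0.6745 < 1$). One small slip: you write $\theta_2 = \tfrac{1+\sqrt5}{2}$, but that is $\theta_1$; in fact $\theta_2 = \tfrac12(1+\sqrt{7+2\sqrt5}) \approx 2.193$, so the surd is not as clean as you hope — but since your plan falls back to numerical evaluation with a margin, the argument still goes through.
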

Note that the definition of $\theta_k$ in \eqref{eq:theta} is the same as the one used for the original NAG \cite{Nesterov1983_method}, which satisfies \eqref{eq:theta_motivation} as an equality. 
The inequality \eqref{eq:theta_motivation} is often considered a core property of $\theta_k$ in studies of variants of NAG 
\cite{BeckTeboulle2009_fast, ChambolleDossal2015_convergence, ParkParkRyu2023_factorsqrt2}, 
and will play a crucial role in our convergence analysis. 
As an additional comment, we note that
\begin{equation}    \label{eq:rational_theta}
    \theta_k = \frac{k+2}{2}    
\end{equation}
is also often considered an alternative to \eqref{eq:theta}, as it behaves similarly but is simpler and more intuitive. 
We show that a simplified variant of \adanag\ obtained by replacing \eqref{eq:theta} with \eqref{eq:rational_theta} can also achieve a similar convergence rate in \Cref{sec:adanag-s}.

\subsection{Comparison with AC-FGM}

AC-FGM is another adaptive method that achieves accelerated convergence rate, studied in \cite{LiLan2024_simple}. 
When $h=0$, $\mathcal{X}=\bbR^d$, AC-FGM reduces to:
\begin{equation}   \label{eq:AC-FGM} \tag{AC-FGM}
    \begin{aligned}
        z_{k+1} &=  y_{k} - \eta_{k+1} \nabla f(x_{k}) \\
        y_{k+1} &= (1-\beta_{k+1})y_{k} + \beta_{k+1} z_{k+1} \\
        x_{k+1} &= \frac{\tau_{k+1}}{1+\tau_{k+1}} x_{k} + \frac{1}{1+\tau_{k+1}} z_{k+1}.
    \end{aligned}
\end{equation}
Note that \ref{eq:AC-FGM} has a different form from our \adanag, 
and therefore, the mechanism of the coefficients differs. 
The concrete choice of parameters in AC-FGM are (\cite[Corollary~1]{LiLan2024_simple}):  
\begin{equation*}
    \begin{aligned}
        \tau_k &= \begin{cases}
            0   &  \text{if } k=1 \\
            \frac{k}{2} & \text{if } k\ge2, 
        \end{cases}
        \qquad\quad
        \beta_k = \begin{cases}
            0   &  \text{if } k=1 \\
            \beta & \text{if } k\ge2,
        \end{cases}
        \qquad\quad
        \eta_k = \begin{cases}
            \min\set{ (1-\beta) \eta_1, \,\, \frac{1}{4L_1} }   &  \text{if } k = 2 \\
            \min\set{ \eta_2, \,\, \frac{1}{4L_2} }   &  \text{if } k = 3 \\
            \min\set{ \frac{k}{k-1} \eta_{k-1}, \,\, \frac{k-1}{8L_{k-1}} }   &  \text{if } k \ge 4, 
        \end{cases}
    \end{aligned}
\end{equation*}
with $L_{k}$ defined in the same way as \eqref{define-Lk+1}. Then \ref{eq:AC-FGM} has the convergence rate
\begin{equation}\label{AC-FGM-main}
    f(x_k) - f_\star 
    \le \frac{ ({12}/{\beta}) \hat{L}_k}{(k+1)(k+2)} 
    \pr{ \norm{ x_0 - x_\star }^2 + \beta \pr{ \frac{5 \eta_2 L_1}{2} - \frac{\eta_2}{\eta_1} } \eta_1^2 \norm{ \nabla f(x_0) }^2 },
\end{equation}
where $\eta_1>0$ is arbitrary and $\hat{L}_k =  \max\big\{\frac{1}{4(1-\beta)\eta_1}, L_1, \dots, L_k\big\}$, $\beta \in \big( 0 , 1 - \sqrt{6}/{3} \big]$. 
The coefficient of $\norm{\nabla f(x_0)}^2$ differs depending on the relationship between $\eta_1$, $L_1$, and $\eta_2$.  
Thus, this coefficient fundamentally contains some uncertainty, depending on how well we estimated the initial step size $\eta_1$ and the point $x_1$, which we do not have full control over.  
The authors provided a line search technique to adjust this coefficient, but in \cite[\S~4.4]{LiLan2024_simple}, they presented an ablation study showing that such an adjustment does not make a significant difference in various practical scenarios.  
In this context, we believe there is no absolute way to compare this coefficient, or that this coefficient is not crucial to the algorithm's behavior.
Therefore, we focus more on the coefficient of $\norm{x_0 - x_\star}^2$.  
In \cite{LiLan2024_simple}, they choose $\beta = 1 - \frac{\sqrt{6}}{3}$ for the experiments, which also results in the tightest coefficient of $\norm{x_0 - x_\star}^2$, which is ${12}/{(1 - \frac{\sqrt{6}}{3})} \approx 65.39.$ 
This is nearly three times larger compared to the coefficient of $\norm{x_0 - x_\star}^2$ in \eqref{thm:main_tight-1}, which is $22$.

\section{Convergence analysis of AdaNAG}     \label{sec:AdaNAG_anaylsis}

We note that the proofs in this section only assume $f$ to be locally smooth rather than globally smooth, except for Corollary~\ref{lemma:stepsize_lowerbound_L_smooth} and Proposition~\ref{prop:adanag_gradient_norm}. 
On the other hand, Corollary~\ref{lemma:stepsize_lowerbound_L_smooth} and Proposition~\ref{prop:adanag_gradient_norm} can even be achieved under the locally smooth condition if we consider a small perturbation to \eqref{eq:step_size_rule} as 
$\ssz_{k+1} = \min\big\{ \frac{\alpha_{k}}{\alpha_{k+1}} \ssz_{k}, \,\, \frac{ \alpha_{k}^2 }{ \alpha_{k+1} + \alpha_{k}^2 (1+10^{-6}) } \frac{1}{L_{k+1}} \big\}$. 
Since the value $10^{-6}$ is negligibly small compared to $\alpha_k^2$ and $\alpha_{k+1}$, the overall behavior of the perturbed variant remains the same as that of \adanag, including the convergence guarantee. 
However, since the analysis requires more technical arguments, we provide the formal discussion of this variant and its proof in \Cref{sec:locally_smooth_extension}.

\subsection{Motivation of the Lyapunov function and $L_{k+1}$} \label{sec:motivation_of_lyapunov_L}

Since we want to design an adaptive method that mimics OGM \cite{KimFessler2016_optimized, KimFessler2017_convergence}, it is worth briefly reviewing the proof structure of OGM. 
OGM considers Lyapunov function of the form 
\begin{equation}    \label{eq:OGM_lyapunov}
    V_k^{\text{OGM}} = \frac{1}{L} 2 \theta_{k+1}(\theta_{k+1}-1) \pr{ f(x_k) - f_\star } - \frac{1}{L^2} \theta_{k}^2 \norm{ \nabla f(x_k) }^2 + \frac{1}{2} \norm{ z_{k+1} - x_\star }^2.
\end{equation}
The core part of the convergence proof for OGM is to show:
\begin{equation}\label{OGM-Lyapunov-decrease}
    \begin{aligned}
        V_{k+1}^{\text{OGM}} - V_k^{\text{OGM}} 
        = \frac{2}{L} \theta_{k+1}(\theta_{k+1}-1) \ineqsonsecutive + \frac{2}{L} \theta_{k+1} \ineqstar 
         \le 0,
    \end{aligned}
\end{equation}
where $\ineqsonsecutive = f(x_{k+1}) -  f(x_k) + \inner{ \nabla f(x_{k+1}) }{ x_k - x_{k+1} } + \frac{1}{2L} \norm{ \nabla f(x_{k})-\nabla f(x_{k+1}) }^2$ and 
$\ineqstar= f(x_{k+1}) - f_\star - \inner{\nabla f(x_{k+1})}{x_{k+1} - x_\star} + \frac{1}{2L} \norm{ \nabla f(x_{k+1}) }^2$. 
For details, see \cite[\S~4.3]{dAspremontScieurTaylor2021_acceleration} or \cite{ParkParkRyu2023_factorsqrt2}. 
Note $\ineqsonsecutive$ and $\ineqstar$ are nonpositive since $f$ is $L$-smooth convex by \eqref{eq:L_smooth_ineq}. 
We want to set the parameters to construct a similar proof for \eqref{OGM-Lyapunov-decrease} when $L$ is not known and is replaced by some estimation $L_{k+1}$. 
A good way of doing it is to simply \emph{define} it as nonpositive. 
That is, define $L_{k+1}$ as in \eqref{define-Lk+1} 
and make $\ineqsonsecutive$ zero.
The idea of defining an adaptive step size motivated by the proof of an optimal non-adaptive method was also considered in \cite[\S~6]{SuhParkRyu2023_continuoustime}. 
We note that this specific definition of $L_{k+1}$ was also previously considered in \cite{LiLan2024_simple}. 

However, we cannot treat $\ineqstar$ using the same approach because it requires information about the optimal point and value, $f_\star$ and $x_\star$. 
Thus instead, we forgo the term $\frac{2}{L} \theta_{k+1} \times \frac{1}{2L} \norm{ \nabla f(x_{k+1}) }^2$ and use the relaxed inequality
$f(x_{k+1}) - f_\star - \inner{\nabla f(x_{k+1})}{x_{k+1} - x_\star} \le 0$.
Then, the $\norm{ \nabla f(x_{k+1}) }^2$ term should be accounted for somewhere.  
This requires us to adjust the definition of the Lyapunov function by flipping the sign of the $\norm{ \nabla f(x_{k}) }^2$ term. 
This ultimately leads to our Lyapunov function for \adanag. 
The details will be provided in the next section.

\subsection{Convergence analysis of AdaNAG}

The following theorem is the core part of our convergence analysis and one of the main contributions of our paper.  

\begin{theorem} \label{lemma:lyapunov_analysis}
    For \adanag\ (Algorithm \ref{alg:AdaNAG}), define Lyapunov function as 
    \begin{equation}    \label{eq:Lyapunov}
        V_{k}  
        = \ssz_{k+1} A_k \pr{ f(x_k) - f_\star } 
        +  \frac{1}{2} \ssz_{k}^2  B_k \norm{ \nabla f(x_k) }^2
        + \frac{1}{2} \norm{ z_{k+1} - x_\star }^2, \qquad \forall k\ge0,
    \end{equation} 
    where $A_k$, $B_k$ are defined as
    \begin{equation}   \label{eq:A_B_AdaNAG}
        \begin{aligned} \,\,
            A_k := \alpha_{k+1} \theta_{k+3} \pr{ \theta_{k+3} - 1 }, \qquad
            B_k := 
            \begin{cases}
                \oldalpha_0 \talpha_0 \theta_2^2   & \text{ if } k=0 \\
               \alpha_{k}^2 \theta_{k+2}^2  & \text{ if } k\ge1,
            \end{cases}
            \qquad \mbox{ with }\,\,
            \oldalpha_0 := \frac{1}{2} \pr{ 1 - \frac{1}{\theta_2} }.
        \end{aligned}
    \end{equation}  
    Then for a locally smooth function $f$, we have 
    \begin{equation*}
        V_{k+1} - V_k 
        \le -\frac{1}{2} \min\set{ s_k^2 B_k, \frac{\ssz_{k+1}}{L_{k+1}} A_k  } \norm{ \nabla f(x_k) }^2 \le 0, \qquad k\ge0.
    \end{equation*} 
\end{theorem}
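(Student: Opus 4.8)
The plan is to follow the Lyapunov argument for OGM recalled in \Cref{sec:motivation_of_lyapunov_L}, replacing $L$ by the adaptive estimate $L_{k+1}$ and dropping the $\tfrac1{2L}\norm{\nabla f(x_{k+1})}^2$ piece of the ``$\cI_2$'' inequality. First I would expand $V_{k+1}-V_k$ directly from the updates \eqref{eq:AdaNAG}: the squared-distance term telescopes through $z_{k+2}=z_{k+1}-s_{k+1}\alpha_{k+1}\theta_{k+3}\nabla f(x_{k+1})$, and substituting $z_{k+1}=\theta_{k+3}x_{k+1}-(\theta_{k+3}-1)y_{k+1}$ together with $y_{k+1}=x_k-s_k\nabla f(x_k)$ rewrites $z_{k+1}-x_\star=(x_{k+1}-x_\star)-(\theta_{k+3}-1)\bigl((x_k-x_{k+1})-s_k\nabla f(x_k)\bigr)$. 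This isolates two ``inequality templates'', $\cI_1^{L_{k+1}}:=f(x_{k+1})-f(x_k)+\inner{\nabla f(x_{k+1})}{x_k-x_{k+1}}+\tfrac1{2L_{k+1}}\norm{\nabla f(x_{k+1})-\nabla f(x_k)}^2$ and $\tilde{\cI}_2:=f(x_{k+1})-f_\star-\inner{\nabla f(x_{k+1})}{x_{k+1}-x_\star}$. By the definition of $L_{k+1}$ in \eqref{define-Lk+1} we have $\cI_1^{L_{k+1}}=0$ exactly (the degenerate case $L_{k+1}=0$, which forces $\nabla f(x_{k+1})=\nabla f(x_k)$, is handled via \Cref{cor:L_k_bound}(i) and the arithmetic conventions), and $\tilde{\cI}_2\le0$ is merely convexity — which is why only local smoothness is needed here.

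Next, I would collect all terms using the explicit formulas \eqref{eq:theta}--\eqref{eq:A_B_AdaNAG}, notably $A_k=\alpha_{k+1}\theta_{k+3}(\theta_{k+3}-1)$, $B_{k+1}=\alpha_{k+1}^2\theta_{k+3}^2$, $\alpha_{k+1}=\tfrac12(1-1/\theta_{k+3})$, and the $\theta$-recursion of \eqref{eq:theta}, and after adding and subtracting $\tfrac{s_{k+1}A_k}{2L_{k+1}}\norm{\nabla f(x_{k+1})-\nabla f(x_k)}^2$ arrive at
\begin{equation*}
V_{k+1}-V_k = s_{k+1}A_k\,\cI_1^{L_{k+1}} + s_{k+1}\alpha_{k+1}\theta_{k+3}\,\tilde{\cI}_2 + \theta_{k+3}^2\bigl(\alpha_{k+2}s_{k+2}-\alpha_{k+1}s_{k+1}\bigr)\bigl(f(x_{k+1})-f_\star\bigr) + Q,
\end{equation*}
where $Q=a\norm{\nabla f(x_{k+1})}^2+b\inner{\nabla f(x_{k+1})}{\nabla f(x_k)}+c\norm{\nabla f(x_k)}^2$ with $a=s_{k+1}^2B_{k+1}-\tfrac{s_{k+1}A_k}{2L_{k+1}}$, $b=s_{k+1}A_k\bigl(\tfrac1{L_{k+1}}-s_k\bigr)$, and $c=-\tfrac12 s_k^2 B_k-\tfrac{s_{k+1}A_k}{2L_{k+1}}$. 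The first bracket vanishes, the second is $\le0$ by convexity, and the third is $\le0$ because $f(x_{k+1})\ge f_\star$ and the step-size rule \eqref{eq:step_size_rule} guarantees $\alpha_{k+2}s_{k+2}\le\alpha_{k+1}s_{k+1}$; hence $V_{k+1}-V_k\le Q$, and it remains to bound the quadratic form $Q$ in the gradients.

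The key structural fact for bounding $Q$ is $\tfrac{A_k}{2B_{k+1}}=1$, which follows at once from the formulas for $\alpha_{k+1}$, $A_k$, $B_{k+1}$; since \eqref{eq:step_size_rule} gives $s_{k+1}L_{k+1}\le\tfrac{\alpha_k^2}{\alpha_{k+1}+\alpha_k^2}<1$, this makes $a=-s_{k+1}B_{k+1}\bigl(\tfrac1{L_{k+1}}-s_{k+1}\bigr)<0$, so completing the square in $\nabla f(x_{k+1})$ yields $Q\le\bigl(c-\tfrac{b^2}{4a}\bigr)\norm{\nabla f(x_k)}^2$. I would then split on which branch attains the minimum in \eqref{eq:step_size_rule}: in the branch $s_{k+1}=\tfrac{\alpha_k}{\alpha_{k+1}}s_k$ one checks $c-\tfrac{b^2}{4a}\le-\tfrac12 s_k^2 B_k$, and in the branch $s_{k+1}L_{k+1}=\tfrac{\alpha_k^2}{\alpha_{k+1}+\alpha_k^2}$ one checks $c-\tfrac{b^2}{4a}\le-\tfrac{s_{k+1}}{2L_{k+1}}A_k$; either way $Q\le-\tfrac12\min\set{s_k^2 B_k,\ \tfrac{s_{k+1}}{L_{k+1}}A_k}\norm{\nabla f(x_k)}^2$, which also gives the trailing $\le0$. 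The case $k=0$ is run the same way with shifted indices and the bespoke constants $\alpha_0,\oldalpha_0,B_0$ and first branch of \eqref{eq:step_size_rule}, which are engineered precisely so that $\tfrac{A_0}{2B_1}=1$ and the analogous decomposition holds. The main obstacle I anticipate is exactly this last step: after substituting the explicit step-size values, $c-\tfrac{b^2}{4a}$ becomes a rational expression in $\alpha_k,\alpha_{k+1},s_k,L_{k+1}$ whose two case-bounds reduce to inequalities among the $\theta_j$'s and $\alpha_j$'s that are provable from \Cref{lem:theta_properties}, and confirming that the threshold $\tfrac{\alpha_k^2}{\alpha_{k+1}+\alpha_k^2}$ in the step-size rule is exactly what makes both bounds valid is delicate bookkeeping even though each individual manipulation is elementary.
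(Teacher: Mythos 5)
Your overall architecture matches the paper's proof closely: the same Lyapunov decomposition, the same ``inequality templates'' $\cI_1^{L_{k+1}}=0$ and $\tilde{\cI}_2\le0$, the same reduction to bounding a quadratic form $Q$ in the two gradients, and the same structural identity $A_k/(2B_{k+1})=1$ (which is $\rho_k=1$ in the paper's notation, \eqref{eq:AdaNAG_AB_1}). The telescoping term and the expressions for $a,b,c$ all check out.

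There is, however, a genuine error in the final case split. You split on which branch of the min in \eqref{eq:step_size_rule} is attained, claiming that in the branch $s_{k+1}L_{k+1}=\frac{\alpha_k^2}{\alpha_{k+1}+\alpha_k^2}$ one has $c-\frac{b^2}{4a}\le-\frac{s_{k+1}}{2L_{k+1}}A_k$. But this split does not coincide with the sign change of the cross coefficient $b=s_{k+1}A_k\bigl(\tfrac1{L_{k+1}}-s_k\bigr)$, which is the actual hinge. The second branch can be active while $s_kL_{k+1}<1$: it only forces $s_kL_{k+1}\ge\frac{\alpha_k\alpha_{k+1}}{\alpha_{k+1}+\alpha_k^2}$, which is strictly less than $\tfrac12$ for the $\alpha$'s of \adanag. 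Working out $b^2\le 4a\bigl(c+\frac{s_{k+1}A_k}{2L_{k+1}}\bigr)$ in that branch (using $A_k=2B_{k+1}$, $B_k/A_k=\alpha_k^2/\alpha_{k+1}$, and the exact value of $s_{k+1}L_{k+1}$), the required discriminant condition reduces precisely to $(1-s_kL_{k+1})^2\le (s_kL_{k+1})^2$, i.e. $s_kL_{k+1}\ge\tfrac12$. So on the nonempty range $s_kL_{k+1}\in\bigl[\tfrac{\alpha_k\alpha_{k+1}}{\alpha_{k+1}+\alpha_k^2},\tfrac12\bigr)$ your branch-2 bound simply fails. (Your branch-1 bound is fine: branch 1 forces $s_kL_{k+1}<1$ and $\rho_k s_{k+1}L_{k+1}\le s_kL_{k+1}$, after which $(1-s_kL_{k+1})^2\le 1-s_kL_{k+1}\le 1-\rho_k s_{k+1}L_{k+1}$ closes the argument.)

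The paper (\Cref{lem:determinant}) instead splits on $s_kL_{k+1}\le1$ versus $s_kL_{k+1}\ge1$, independently of which option in \eqref{eq:step_size_rule} is tight, and uses \emph{both} inequalities implied by the min in both cases. In the regime where your argument breaks, the correct bound is the other one, $Q\le-\tfrac12 s_k^2B_k\|\nabla f(x_k)\|^2$, which holds whenever $s_kL_{k+1}\le1$ and is still enough to get the min on the right-hand side of the theorem. So the theorem's conclusion is safe, but you need to reorganize the case split around the sign of $1-s_kL_{k+1}$ rather than around which arm of the step-size min fires.
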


\begin{proof}
    We first consider the case when $\nabla f(x_k) \ne \nabla f(x_{k+1})$,  
    which implies that the denominator of $L_{k+1}$ is nonzero by Corollary~\ref{cor:L_k_bound}.
    The proof can be broadly summarized into four parts:

    \paragraph{Part 1.} (The proof is given in \Cref{lem:ineq:AdaNAG_generalized_stepsize}). Show the following inequalities hold for $k\ge0$. Note that these are the core properties of the step size and parameters.      
    \begin{subequations}    \label{ineq:AdaNAG_generalized_stepsize}
        \begin{align}
            \ssz_{k+2} &\le \frac{A_{k} + \alpha_{k+1}\theta_{k+3}}{A_{k+1}}  s_{k+1} \label{ineq:AdaNAG_generalized_stepsize-a}\\
            \ssz_{k+1} &\le \rsBA \ssz_{k}    \label{ineq:AdaNAG_generalized_stepsize-b}\\
            \ssz_{k+1} &\le \pr{ \frac{A_k}{B_k} + \rsBAreciprocal }^{-1} \frac{1}{L_{k+1}},\label{ineq:AdaNAG_generalized_stepsize-c}
        \end{align}
    \end{subequations}
    where
    \begin{equation}    \label{eq:adanag_r_definition}
        \rsBAreciprocal = \frac{B_{k+1} + \alpha_{k+1}^2\theta_{k+3}^2}{A_k}.
    \end{equation}

    \paragraph{Part 2.} (The proof is given in \Cref{lem:lyapunov_diff}). Show the following equality holds for $k\ge0$:
    \begin{equation} \label{eq:lyapunov_diff}
        \begin{aligned}
            &{V}_{k+1} - {V}_k  \\
            &= ( \ssz_{k+1} (A_k + \alpha_{k+1}\theta_{k+3} ) - \ssz_{k+2} A_{k+1} ) \pr{ f_\star - f(x_{k+1}) }   \\ &\phantom{=}
            + \ssz_{k+1}\alpha_{k+1}\theta_{k+3} \pr{ f(x_{k+1}) - f_\star - \inner{\nabla f(x_{k+1})}{x_{k+1} - x_\star} } \\ &\phantom{=}
            + \ssz_{k+1} A_k \pr{ f(x_{k+1}) -  f(x_k) + \inner{ \nabla f(x_{k+1}) }{ x_k - x_{k+1} } + \frac{1}{2L_{k+1}} \norm{ \nabla f(x_{k})-\nabla f(x_{k+1}) }^2 } \\ &\phantom{=}
             + Q_k, 
        \end{aligned}
    \end{equation}
    where $Q_k$ is defined as 
    \begin{align}   \label{eq:adanag_lyapunov_diff_quadratic}
        Q_k
        = &- \frac{\ssz_{k+1} A_k}{2L_{k+1}} \Bigg[  \pr{1 - \rsBAreciprocal  \ssz_{k+1}  L_{k+1} } \norm{ \nabla f(x_{k+1}) }^2
         \\    \nonumber
        &\quad\qquad -  2 \pr{ 1 - \ssz_{k} L_{k+1} } \inner{\nabla f(x_{k+1})}{  \nabla f(x_k) }  
        + \pr{ 1 +   \frac{B_k}{A_k} \frac{ \ssz_{k}^2 }{ \ssz_{k+1}} L_{k+1}  } \norm{ \nabla f(x_{k}) }^2
         \Bigg]. 
    \end{align}

    \paragraph{Part 3.} (The proof is given in \Cref{lem:determinant}). 
    Show the following inequality holds 
    \begin{equation}    \label{ineq:determinant}
    \qquad\qquad Q_k \le \Vdifferencebound_k, \qquad \forall k\ge0, 
    \end{equation} 
    where
    \begin{equation*}
    \Vdifferencebound_k := \begin{cases}
            -\frac{1}{2} s_k^2 B_k \norm{ \nabla f(x_k) }^2 \qquad & \text{ if } s_kL_{k+1} \le 1 \\
            -\frac{1}{2} \frac{\ssz_{k+1}}{L_{k+1}} A_k \norm{ \nabla f(x_k) }^2 & \text{ if } s_kL_{k+1} \ge 1.
        \end{cases}
    \end{equation*}

    \paragraph{Part 4.} 
    Finally, we show that the first three lines on the right hand side of \eqref{eq:lyapunov_diff} are nonpositive:
    \begin{itemize}
        \item $\ssz_{k+1}(A_k + \alpha_{k+1}\theta_{k+3}) - s_{k+2} A_{k+1} \ge 0$ 
        follows from \eqref{ineq:AdaNAG_generalized_stepsize-a}. 
        \item $f(x_{k+1}) - f_\star - \inner{\nabla f(x_{k+1})}{x_{k+1} - x_\star} \le 0$ holds by the convexity of $f$. 
        \item $f(x_{k+1}) -  f(x_k) + \inner{ \nabla f(x_{k+1}) }{ x_k - x_{k+1} } + \frac{1}{2L_{k+1}} \norm{ \nabla f(x_{k})-\nabla f(x_{k+1})}^2 = 0$ holds by \eqref{define-Lk+1}. 
    \end{itemize}
    Hence, we obtain that 
    \begin{equation}    \label{ineq:core_is_quadratic}
        V_{k+1} - V_k \leq Q_k, \qquad \forall k\ge0,
    \end{equation}
    and the desired conclusion for the case $\nabla f(x_k) \neq \nabla f(x_{k+1})$ follows from \textbf{Part 3}. 

\paragraph{When $\nabla f(x_k) = \nabla f(x_{k+1})$.} 
We have $L_{k+1} = 0$ in this case, 
and hence $s_k L_{k+1} = 0 \leq 1$. 
Now, by conducting the same calculation as in \textbf{Part 2} but without the term $\frac{\ssz_{k+1} A_k}{2L_{k+1}} \norm{ \nabla f(x_{k+1}) - \nabla f(x_k) }^2$, 
we obtain the same equation as \eqref{eq:lyapunov_diff}, except that this term is omitted:
\begin{equation*} 
    \begin{aligned}
        &{V}_{k+1} - {V}_k   \\
        &=  \pr{ \ssz_{k+1} (A_k + \alpha_{k+1}\theta_{k+3}) - s_{k+2} A_{k+1}  } \pr{ f_\star - f(x_{k+1}) } \\ &\quad
        + \ssz_{k+1}\alpha_{k+1}\theta_{k+3} \pr{ f(x_{k+1}) - f_\star - \inner{\nabla f(x_{k+1})}{x_{k+1} - x_\star} } \\ &\quad
        + \ssz_{k+1} A_k \pr{ f(x_{k+1}) -  f(x_k) + \inner{ \nabla f(x_{k+1}) }{ x_k - x_{k+1} } } \\ &\quad
        + \frac{1}{2} \ssz_{k+1}^2 \pr{ B_{k+1}+\alpha_{k+1}^2\theta_{k+3}^2 } \norm{ \nabla f(x_{k+1}) }^2
        - \frac{1}{2} \ssz_{k}^2 B_k  \norm{ \nabla f(x_{k}) }^2 
        - \ssz_{k} \ssz_{k+1} A_k \inner{\nabla f(x_{k+1})}{  \nabla f(x_k) } \\ 
        &\le \frac{1}{2} \ssz_{k+1}^2 \pr{ B_{k+1}+\alpha_{k+1}^2\theta_{k+3}^2 } \norm{ \nabla f(x_{k+1}) }^2 
        - \frac{1}{2} \ssz_{k}^2 B_k  \norm{ \nabla f(x_{k}) }^2 
        - \ssz_{k} \ssz_{k+1} A_k \inner{\nabla f(x_{k+1})}{  \nabla f(x_k) } \\ 
    &= \frac{1}{2} \ssz_{k+1} A_k  \pr{ \frac{B_{k+1} + \alpha_{k+1}^2\theta_{k+3}^2}{A_k} \ssz_{k+1}  - \ssz_{k}  } \norm{ \nabla f(x_{k}) }^2 
         - \frac{1}{2} \ssz_{k}^2 B_k \norm{ \nabla f(x_{k}) }^2 - \frac{1}{2} s_k s_{k+1} A_k \norm{ \nabla f(x_{k}) }^2 \\ 
         & \le - \frac{1}{2} \ssz_{k}^2 B_k \norm{ \nabla f(x_{k}) }^2,
    \end{aligned}
\end{equation*}
where the second inequality is due to 
\eqref{ineq:AdaNAG_generalized_stepsize-b}.  
\end{proof}

\begin{lemma}   \label{lem:ineq:AdaNAG_generalized_stepsize}
    Inequalities \eqref{ineq:AdaNAG_generalized_stepsize} hold for $k\ge0$.
\end{lemma}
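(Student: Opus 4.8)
The plan is to read off each of \eqref{ineq:AdaNAG_generalized_stepsize-a}--\eqref{ineq:AdaNAG_generalized_stepsize-c} directly from the step-size rule \eqref{eq:step_size_rule}: for each one I keep the relevant term inside the $\min$ defining $s_{k+1}$ (resp.\ $s_{k+2}$) and check that its coefficient is dominated by the coefficient on the right-hand side of the claimed inequality, once $A_k$, $B_k$ and $\rho_k$ are put in closed form. The key simplification is that $\rho_k = 1$ for all $k\ge0$: from \eqref{eq:A_B_AdaNAG} we have $B_{k+1} = \alpha_{k+1}^2\theta_{k+3}^2$, and from \eqref{eq:theta} we have $2\alpha_{k+1} = 1-\theta_{k+3}^{-1} = (\theta_{k+3}-1)/\theta_{k+3}$, so \eqref{eq:adanag_r_definition} gives
\[
    \rho_k = \frac{B_{k+1}+\alpha_{k+1}^2\theta_{k+3}^2}{A_k} = \frac{2\alpha_{k+1}^2\theta_{k+3}^2}{\alpha_{k+1}\theta_{k+3}(\theta_{k+3}-1)} = \frac{2\alpha_{k+1}\theta_{k+3}}{\theta_{k+3}-1} = 1 .
\]
Hence \eqref{ineq:AdaNAG_generalized_stepsize-b} is simply $s_{k+1}\le s_k$, and the bound in \eqref{ineq:AdaNAG_generalized_stepsize-c} simplifies to $\bigl(\tfrac{A_k}{B_k}+1\bigr)^{-1}\tfrac{1}{L_{k+1}}$. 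I will also use $\oldalpha_0 = \tfrac{\theta_2-1}{2\theta_2}$, so that the definition of $\alpha_0$ in \eqref{eq:theta} rearranges to the identity $\tfrac{1}{\oldalpha_0\alpha_0} = \tfrac{1}{\alpha_3}+\tfrac{1}{\alpha_2^2}-\tfrac{1}{\alpha_1}$, which is exactly what makes $A_0/B_0$ line up with the $k=0$ branch of \eqref{eq:step_size_rule}.

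\textbf{The case $k\ge1$.}
Here all three inequalities collapse to the core property \eqref{eq:theta_motivation}. From \eqref{eq:step_size_rule} we have $s_{k+2}\le\tfrac{\alpha_{k+1}}{\alpha_{k+2}}s_{k+1}$, $s_{k+1}\le\tfrac{\alpha_k}{\alpha_{k+1}}s_k$, and $s_{k+1}\le\tfrac{\alpha_k^2}{\alpha_{k+1}+\alpha_k^2}\tfrac{1}{L_{k+1}}$. Using $A_{k+1} = \alpha_{k+2}\theta_{k+4}(\theta_{k+4}-1)$ and $A_k+\alpha_{k+1}\theta_{k+3} = \alpha_{k+1}\theta_{k+3}^2$, inequality \eqref{ineq:AdaNAG_generalized_stepsize-a} reduces to $\theta_{k+4}(\theta_{k+4}-1)\le\theta_{k+3}^2$; \eqref{ineq:AdaNAG_generalized_stepsize-b} follows from $\alpha_k\le\alpha_{k+1}$ (Lemma~\ref{lem:theta_properties}); and, using $B_k = \alpha_k^2\theta_{k+2}^2$, inequality \eqref{ineq:AdaNAG_generalized_stepsize-c} reduces to $\tfrac{A_k}{B_k}\le\tfrac{\alpha_{k+1}}{\alpha_k^2}$, i.e.\ $\theta_{k+3}(\theta_{k+3}-1)\le\theta_{k+2}^2$. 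Each is an instance of \eqref{eq:theta_motivation}. (When $L_{k+1}=0$, the convention $a/0=+\infty$ makes \eqref{ineq:AdaNAG_generalized_stepsize-c} vacuous and gives $s_{k+1}=\tfrac{\alpha_k}{\alpha_{k+1}}s_k$, so \eqref{ineq:AdaNAG_generalized_stepsize-a}--\eqref{ineq:AdaNAG_generalized_stepsize-b} are unaffected.)

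\textbf{The case $k=0$.}
The same pattern applies after inserting the explicit $\alpha_0$. Inequality \eqref{ineq:AdaNAG_generalized_stepsize-a} at $k=0$ uses $s_2\le\tfrac{\alpha_1}{\alpha_2}s_1$ and involves no $\alpha_0$, so it is again the instance $\theta_4(\theta_4-1)\le\theta_3^2$ of \eqref{eq:theta_motivation}. For \eqref{ineq:AdaNAG_generalized_stepsize-b} at $k=0$, $s_1\le\tfrac{\alpha_0}{\alpha_1}\tfrac{\theta_2}{\theta_3(\theta_3-1)}s_0$ and $A_0 = \alpha_1\theta_3(\theta_3-1)$, so $s_1\le s_0$ reduces to $\alpha_0\theta_2\le A_0$, i.e.\ (via the identity for $\alpha_0$) to the numerical inequality $2\theta_2^2\le\alpha_1\theta_3(\theta_3-1)(\theta_2-1)\bigl(\tfrac{1}{\alpha_3}+\tfrac{1}{\alpha_2^2}-\tfrac{1}{\alpha_1}\bigr)$. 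For \eqref{ineq:AdaNAG_generalized_stepsize-c} at $k=0$, $s_1\le\tfrac{\alpha_2^2\alpha_3}{\alpha_3+\alpha_2^2}\tfrac{1}{\alpha_1}\tfrac{1}{L_1}$, and after the same substitution, writing $P:=\theta_3(\theta_3-1)/\theta_2^2$ and $S:=\tfrac{1}{\alpha_2^2}+\tfrac{1}{\alpha_3}$, the claim reduces to $(P-1)(\alpha_1 S-1)\le0$; since $P\le1$ by \eqref{eq:theta_motivation}, it suffices that $\alpha_1 S\ge1$. Finally, the two displayed numerical facts (together with $\alpha_1 S\ge1$ and the positivity $\tfrac{1}{\alpha_3}+\tfrac{1}{\alpha_2^2}>\tfrac{1}{\alpha_1}$ needed for $\alpha_0>0$) are verified by direct computation from \eqref{eq:theta} --- using $\alpha_k<\tfrac12$ (so $\alpha_2^{-2}>4$, $\alpha_3^{-1}>2$, hence $S>6$) and the explicit small values $\theta_1=\tfrac{1+\sqrt5}{2},\ \theta_2,\dots,\theta_5$ (numerically $\theta_2\approx2.19$, $\theta_3\approx2.75$, $\theta_4\approx3.29$, $\theta_5\approx3.83$, $\alpha_1\approx0.318$).

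\textbf{Main obstacle.}
The only genuinely delicate point is the handful of $k=0$ numerical checks: the reduced inequality for \eqref{ineq:AdaNAG_generalized_stepsize-b} has only modest slack, so the crude bound $\theta_k\ge(k+2)/2$ alone does not close it and one must use (essentially) the exact values of $\theta_1,\dots,\theta_5$. Everything for $k\ge1$, by contrast, is a mechanical reduction to \eqref{eq:theta_motivation} via the closed forms of $A_k$, $B_k$, and $\rho_k$.
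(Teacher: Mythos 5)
Your proof is correct and follows essentially the same route as the paper's: establish $\rho_k = 1$, reduce each of \eqref{ineq:AdaNAG_generalized_stepsize-a}--\eqref{ineq:AdaNAG_generalized_stepsize-c} for $k\ge1$ to the core property $\theta_{k+1}(\theta_{k+1}-1)\le\theta_k^2$, and close the $k=0$ branches with numerical checks. The only cosmetic differences are that you make explicit the positivity requirement $\alpha_0>0$ (which the paper uses implicitly when inverting both sides in its $k=0$ argument for \eqref{ineq:AdaNAG_generalized_stepsize-c}) and that you explicitly note the degenerate case $L_{k+1}=0$.
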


\begin{proof}
    Recall that $A_k$ and $B_k$ are defined as in \eqref{eq:A_B_AdaNAG}. 
    From \eqref{eq:step_size_rule} and \eqref{eq:theta_motivation}, we have 
    \begin{equation*}
        \ssz_{k+2} 
        \le \frac{\alpha_{k+1}}{\alpha_{k+2}} \ssz_{k+1}
        \ThetaProprtyUsed{\le} \frac{\alpha_{k+1}}{\alpha_{k+2}} \frac{\theta_{k+3}^2}{\theta_{k+4}\pr{\theta_{k+4}-1}} \ssz_{k+1}
        = \frac{A_k + \alpha_{k+1}\theta_{k+3}}{A_{k+1}} \ssz_{k+1}, 
        \qquad \forall k\geq 0,
    \end{equation*}
    which proves \eqref{ineq:AdaNAG_generalized_stepsize-a}.
    Next, from \eqref{eq:theta}, 
    we have
    \begin{equation}    \label{eq:AdaNAG_AB_1}
        \rsBAreciprocal
        = \frac{A_k}{B_{k+1} + \alpha_{k+1}^2\theta_{k+3}^2}
        = \frac{\alpha_{k+1}\theta_{k+3}(\theta_{k+3}-1)}{2\alpha_{k+1}^2\theta_{k+3}^2}
        \AlphaDefinitionUsed{=} \frac{\frac{1}{2}\pr{ 1 - \frac{1}{\theta_{k+3}} }}{\alpha_{k+1}}
        = 1.
    \end{equation}
    Thus \eqref{ineq:AdaNAG_generalized_stepsize-b} is equivalent to $s_{k+1} \le s_k$ for $k \geq 0$,  
    which holds true by \Cref{lem:theta_properties}. This proves \eqref{ineq:AdaNAG_generalized_stepsize-b}.  
    Lastly, from \eqref{eq:theta_motivation} we have 
    \begin{equation*}
        \frac{A_k}{B_k}
        = \frac{\alpha_{k+1}}{\alpha_{k}^2} \frac{\theta_{k+3}(\theta_{k+3}-1)}{\theta_{k+2}^2}
        \ThetaProprtyUsed{\le} \frac{\alpha_{k+1}}{\alpha_{k}^2}, \qquad \forall k\ge1,
    \end{equation*}
    which, together with  \eqref{eq:AdaNAG_AB_1}, yields
    \begin{equation*}
        \pr{ \frac{A_k}{B_k} + \rsBAreciprocal }^{-1}
        \AlphaDefinitionUsed{=} \pr{ \frac{A_k}{B_k} + 1 }^{-1}
        \ThetaProprtyUsed{\ge} \pr{ \frac{\alpha_{k+1}}{\alpha_{k}^2} + 1 }^{-1}
        = \frac{ \alpha_{k}^2 }{ \alpha_{k+1} + \alpha_{k}^2 }, \qquad \forall k\ge1.
    \end{equation*}
    For $k=0$, from \eqref{eq:theta} and \eqref{eq:A_B_AdaNAG} 
    we know $\alpha_0 = \frac{1}{\oldalpha_0} \pr{ \frac{1}{\alpha_3} + \frac{1}{\alpha_2^2} - \frac{1}{\alpha_1} }^{-1}$,
    which, together with \eqref{eq:theta_motivation}, yields
    \begin{equation*}
        \frac{ \alpha_{2}^2 \alpha_{3}}{\alpha_{3} + \alpha_{2}^2} \frac{1}{\alpha_{1}}
        \AlphaDefinitionUsed{ = } \pr{\frac{\alpha_1}{\oldalpha_0\alpha_0} + 1}^{-1}
        \ThetaProprtyUsed{\le } \pr{ \frac{\alpha_1}{\oldalpha_0\alpha_0} \frac{\theta_3(\theta_3-1)}{ \theta_2^2} + 1}^{-1} 
        = \pr{ \frac{A_0}{B_0} + \rho_0 }^{-1}.
    \end{equation*}
    Combining the above two inequalities with \eqref{eq:step_size_rule} proves \eqref{ineq:AdaNAG_generalized_stepsize-c}. 
\end{proof}

\begin{lemma}   \label{lem:lyapunov_diff}
    Equality \eqref{eq:lyapunov_diff} holds for $k\ge0$.
\end{lemma}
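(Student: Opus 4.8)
The plan is to prove \eqref{eq:lyapunov_diff} by directly expanding $V_{k+1}-V_k$ from the definition \eqref{eq:Lyapunov} and the AdaNAG updates \eqref{eq:AdaNAG}, and then regrouping the resulting terms into the three building blocks that appear on the right-hand side of \eqref{eq:lyapunov_diff}: the convexity term, the ``smoothness'' term associated with \eqref{define-Lk+1}, and the leftover quadratic $Q_k$. Because \eqref{eq:lyapunov_diff} involves $1/L_{k+1}$, I would run the argument under the assumption $\nabla f(x_k)\neq\nabla f(x_{k+1})$, which by Corollary~\ref{cor:L_k_bound} forces the denominator in \eqref{define-Lk+1} to be nonzero and hence $L_{k+1}>0$; this is precisely the case in which this lemma is invoked in the proof of \Cref{lemma:lyapunov_analysis}.

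First I would split $V_{k+1}-V_k$ into a function-value part, a gradient-norm part $\frac12 s_{k+1}^2 B_{k+1}\norm{\nabla f(x_{k+1})}^2 - \frac12 s_k^2 B_k\norm{\nabla f(x_k)}^2$, and a squared-distance part $\frac12\norm{z_{k+2}-x_\star}^2 - \frac12\norm{z_{k+1}-x_\star}^2$. Expanding the square in the distance part via the $z$-update of \eqref{eq:AdaNAG} gives $-s_{k+1}\alpha_{k+1}\theta_{k+3}\inner{\nabla f(x_{k+1})}{z_{k+1}-x_\star} + \frac12 s_{k+1}^2\alpha_{k+1}^2\theta_{k+3}^2\norm{\nabla f(x_{k+1})}^2$. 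The key algebraic move is then to eliminate $z_{k+1}$: from the $x$- and $y$-updates in \eqref{eq:AdaNAG} one reads off $z_{k+1}=\theta_{k+3}x_{k+1}-(\theta_{k+3}-1)y_{k+1}$ and $y_{k+1}=x_k-s_k\nabla f(x_k)$, and one splits $\inner{\nabla f(x_{k+1})}{x_k-x_\star}=\inner{\nabla f(x_{k+1})}{x_k-x_{k+1}}+\inner{\nabla f(x_{k+1})}{x_{k+1}-x_\star}$. Using $A_k=\alpha_{k+1}\theta_{k+3}(\theta_{k+3}-1)$ from \eqref{eq:A_B_AdaNAG}, the distance part becomes $-s_{k+1}\alpha_{k+1}\theta_{k+3}\inner{\nabla f(x_{k+1})}{x_{k+1}-x_\star} + s_{k+1}A_k\inner{\nabla f(x_{k+1})}{x_k-x_{k+1}} - s_k s_{k+1}A_k\inner{\nabla f(x_{k+1})}{\nabla f(x_k)} + \frac12 s_{k+1}^2\alpha_{k+1}^2\theta_{k+3}^2\norm{\nabla f(x_{k+1})}^2$, which already supplies the two inner products $\inner{\nabla f(x_{k+1})}{x_{k+1}-x_\star}$ and $\inner{\nabla f(x_{k+1})}{x_k-x_{k+1}}$ of lines~2 and~3 of \eqref{eq:lyapunov_diff}, together with purely quadratic gradient terms.

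Next comes the coefficient bookkeeping. Pairing $-s_{k+1}\alpha_{k+1}\theta_{k+3}\inner{\nabla f(x_{k+1})}{x_{k+1}-x_\star}$ with $s_{k+1}\alpha_{k+1}\theta_{k+3}(f(x_{k+1})-f_\star)$, and $s_{k+1}A_k\inner{\nabla f(x_{k+1})}{x_k-x_{k+1}}$ with $s_{k+1}A_k(f(x_{k+1})-f(x_k))$, one checks that the net coefficient of $f(x_{k+1})$ collapses — from the term coming out of $V_{k+1}$ plus the two just introduced — to $s_{k+2}A_{k+1}$, that of $f(x_k)$ to $-s_{k+1}A_k$, and that of $f_\star$ to $s_{k+1}A_k-s_{k+2}A_{k+1}$; the residual function-value terms are then exactly the first line $(s_{k+1}(A_k+\alpha_{k+1}\theta_{k+3})-s_{k+2}A_{k+1})(f_\star-f(x_{k+1}))$ of \eqref{eq:lyapunov_diff}. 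What is left is the quadratic form $\frac12 s_{k+1}^2(B_{k+1}+\alpha_{k+1}^2\theta_{k+3}^2)\norm{\nabla f(x_{k+1})}^2 - s_k s_{k+1}A_k\inner{\nabla f(x_{k+1})}{\nabla f(x_k)} - \frac12 s_k^2 B_k\norm{\nabla f(x_k)}^2$; adding and subtracting $\frac{s_{k+1}A_k}{2L_{k+1}}\norm{\nabla f(x_{k+1})-\nabla f(x_k)}^2$, one copy completes the smoothness block $f(x_{k+1})-f(x_k)+\inner{\nabla f(x_{k+1})}{x_k-x_{k+1}}+\frac{1}{2L_{k+1}}\norm{\nabla f(x_k)-\nabla f(x_{k+1})}^2$ in line~3 (which vanishes by \eqref{define-Lk+1}), and the rest, after factoring out $-\frac{s_{k+1}A_k}{2L_{k+1}}$ and using $A_k\rho_k=B_{k+1}+\alpha_{k+1}^2\theta_{k+3}^2$ from \eqref{eq:adanag_r_definition}, matches $Q_k$ of \eqref{eq:adanag_lyapunov_diff_quadratic} term by term — the $\norm{\nabla f(x_{k+1})}^2$, $\inner{\nabla f(x_{k+1})}{\nabla f(x_k)}$ and $\norm{\nabla f(x_k)}^2$ coefficients becoming $1-\rho_k s_{k+1}L_{k+1}$, $-2(1-s_kL_{k+1})$ and $1+\frac{B_k}{A_k}\frac{s_k^2}{s_{k+1}}L_{k+1}$, respectively.

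There is no conceptual obstacle; the real work is the bookkeeping above, and the two places to be careful are: that the three separate sources of an $f(x_{k+1})$ term telescope to the single coefficient $s_{k+2}A_{k+1}$, and that the leftover quadratic matches \eqref{eq:adanag_lyapunov_diff_quadratic} after completing the square with the added-and-subtracted $\norm{\nabla f(x_{k+1})-\nabla f(x_k)}^2$. One minor point: for the base case $k=0$ one has $B_0=\oldalpha_0\talpha_0\theta_2^2$ rather than $\alpha_0^2\theta_2^2$, but the derivation never uses the explicit form of $B_k$ — only $B_{k+1}$ with $k+1\ge1$, which always equals $\alpha_{k+1}^2\theta_{k+3}^2$ — so keeping $B_k$ as a symbol makes the computation uniform over all $k\ge0$.
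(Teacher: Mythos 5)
Your proposal is correct and follows essentially the same route as the paper's proof of \Cref{lem:lyapunov_diff}: expand the squared-distance difference via the $z$-update, eliminate $z_{k+1}$ through the $x$- and $y$-updates, and add and subtract $\frac{s_{k+1}A_k}{2L_{k+1}}\norm{\nabla f(x_{k+1})-\nabla f(x_k)}^2$ to simultaneously form the smoothness block and $Q_k$. Your remarks about the running assumption $\nabla f(x_k)\neq\nabla f(x_{k+1})$ (which is indeed in force where the lemma is invoked) and about $B_0$ never being used explicitly are both accurate.
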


\begin{proof}
    First, from \eqref{eq:AdaNAG} we know  
    \begin{equation*}
        z_{k+2} - z_{k+1} = -\ssz_{k+1}\alpha_{k+1} \theta_{k+3} \nabla f(x_{k+1}),
    \end{equation*}
    and thus we have
    \begin{align}    \label{eq:z_square_diff}
            \frac{1}{2} \norm{ z_{k+2} - x_\star }^2 - \frac{1}{2} \norm{ z_{k+1} - x_\star }^2  
            &= \frac{1}{2} \inner{ z_{k+2} - z_{k+1} }{ z_{k+1} + z_{k+2} - 2 x_{\star}  }  \\ 
            &= \frac{1}{2} \inner{ z_{k+2} - z_{k+1} }{ 2 \pr{ z_{k+1} - x_\star } + \pr{ z_{k+2} - z_{k+1} }  } \nonumber \\ 
            &= - \ssz_{k+1}\alpha_{k+1} \theta_{k+3} \inner{ \nabla f(x_{k+1}) }{ z_{k+1} - x_\star } + \frac{1}{2} \ssz_{k+1}^2\alpha_{k+1}^2 \theta_{k+3}^2 \norm{ \nabla f(x_{k+1}) }^2.  \nonumber 
    \end{align}
    Combining \eqref{eq:z_square_diff} with the following identity, 
    \begin{equation*}
        \begin{aligned}
            s_{k+2} A_{k+1} \pr{ f(x_{k+1}) - f_\star } -  \ssz_{k+1} A_k \pr{ f(x_k) - f_\star }
            &= ( \ssz_{k+1} (A_k + \alpha_{k+1}\theta_{k+3} ) - s_{k+2} A_{k+1} ) \pr{ f_\star - f(x_{k+1}) }   \\ &\quad
            + \ssz_{k+1}\alpha_{k+1}\theta_{k+3} \pr{ f(x_{k+1}) - f_\star - \inner{\nabla f(x_{k+1})}{x_{k+1} - x_\star} }   \\ &\quad
            + \ssz_{k+1} A_k \pr{ f(x_{k+1}) - f_\star } -  \ssz_{k+1} A_k \pr{ f(x_k) - f_\star } \\&\quad 
            + \ssz_{k+1}\alpha_{k+1} \theta_{k+3} \inner{\nabla f(x_{k+1})}{x_{k+1} - x_\star},
        \end{aligned}
    \end{equation*} 
    we have 
    \begin{align} \label{eq:lyapunov_difference_without_grad}
        &s_{k+2} A_{k+1} \pr{ f(x_{k+1}) - f_\star } + \frac{1}{2} \norm{ z_{k+2} - x_\star }^2 - \pr{ \ssz_{k+1} A_k \pr{ f(x_k) - f_\star } + \frac{1}{2} \norm{ z_{k+1} - x_\star }^2 } \nonumber \\
        &= ( \ssz_{k+1} (A_k + \alpha_{k+1}\theta_{k+3} ) - s_{k+2} A_{k+1} ) \pr{ f_\star - f(x_{k+1}) }   \\ &\quad \nonumber   
        + \ssz_{k+1}\alpha_{k+1}\theta_{k+3} \pr{ f(x_{k+1}) - f_\star - \inner{\nabla f(x_{k+1})}{x_{k+1} - x_\star} }   \\ &\quad
        + \ssz_{k+1} A_k \pr{ f(x_{k+1}) - f(x_k) } 
        + \ssz_{k+1}\alpha_{k+1} \theta_{k+3} \inner{\nabla f(x_{k+1})}{x_{k+1} - z_{k+1}} + \frac{1}{2} \ssz_{k+1}^2 \alpha_{k+1}^2\theta_{k+3}^2  \norm{ \nabla f(x_{k+1}) }^2.  \nonumber 
    \end{align}
    The first two lines of \eqref{eq:lyapunov_difference_without_grad} are the desired terms in \eqref{eq:lyapunov_diff}. 
    We focus on reformulating the terms in the last line.  
    From \eqref{eq:AdaNAG}, we have
    \begin{equation*}
        x_{k+1} - z_{k+1} 
        = \pr{ \theta_{k+3} - 1 } \pr{ y_{k+1} - x_{k+1} }
        = -\pr{ \theta_{k+3} - 1 } \pr{ x_{k+1} - x_k + \ssz_{k} \nabla f(x_k) }.
    \end{equation*} 
    Plugging this into the last line of \eqref{eq:lyapunov_difference_without_grad}, we have 
    \begin{align}
        &\ssz_{k+1} A_k \pr{ f(x_{k+1}) - f(x_k) } 
            + \ssz_{k+1}\alpha_{k+1} \theta_{k+3} \inner{\nabla f(x_{k+1})}{x_{k+1} - z_{k+1}} + \frac{1}{2} \ssz_{k+1}^2 \alpha_{k+1}^2\theta_{k+3}^2 \norm{ \nabla f(x_{k+1}) }^2 \nonumber\\
        &= \ssz_{k+1} A_k \pr{ f(x_{k+1}) -  f(x_k) + \frac{1}{2L_{k+1}} \norm{ \nabla f(x_{k}) - \nabla f(x_{k+1}) }^2 }  \nonumber\\&\quad 
        -  \ssz_{k+1} \underbrace{  \alpha_{k+1} \theta_{k+3} \pr{ \theta_{k+3} - 1 } }_{=A_k} \inner{\nabla f(x_{k+1})}{ x_{k+1} - x_k + \ssz_{k} \nabla f(x_k) } \nonumber\\&\quad
        - \frac{\ssz_{k+1} A_k}{2L_{k+1}} \norm{ \nabla f(x_{k}) - \nabla f(x_{k+1}) }^2  + \frac{1}{2} \ssz_{k+1}^2 \alpha_{k+1}^2\theta_{k+3}^2 \norm{ \nabla f(x_{k+1}) }^2  \label{eq:lyapunov_difference_without_grad-2nd-line} \\
        &= \ssz_{k+1} A_k \pr{ f(x_{k+1}) -  f(x_k) + \inner{ \nabla f(x_{k+1}) }{ x_k - x_{k+1} } + \frac{1}{2L_{k+1}} \norm{ \nabla f(x_{k})-\nabla f(x_{k+1}) }^2 } \nonumber\\&\quad 
        + \ssz_{k+1} A_k \pr{ \frac{1}{L_{k+1}} - \ssz_{k} } \inner{\nabla f(x_{k+1})}{  \nabla f(x_k) } \nonumber\\&\quad
        - \frac{\ssz_{k+1} A_k}{2L_{k+1}} \pr{ 1 -    \frac{\alpha_{k+1}^2\theta_{k+3}^2 }{A_k} \ssz_{k+1} L_{k+1} } \norm{ \nabla f(x_{k+1}) }^2 - \frac{\ssz_{k+1} A_k}{2L_{k+1}} \norm{ \nabla f(x_{k}) }^2.\label{eq:lyapunov_difference_without_grad-3rd-line}
    \end{align} 
    From the definition of $V_k$ we know 
    \begin{equation*}
        \begin{aligned}
            &s_{k+2} A_{k+1} \pr{ f(x_{k+1}) - f_\star } + \frac{1}{2} \norm{ z_{k+2} - x_\star }^2 - \pr{ \ssz_{k+1} A_k \pr{ f(x_k) - f_\star } + \frac{1}{2} \norm{ z_{k+1} - x_\star }^2 } \\
            &= V_{k+1} - \frac{1}{2} \ssz_{k+1}^2 B_{k+1} \norm{ \nabla f(x_{k+1}) }^2 - \pr{ V_k - \frac{1}{2} \ssz_{k}^2 B_{k} \norm{ \nabla f(x_k) }^2 }, \qquad \forall k\ge0,
        \end{aligned}
    \end{equation*}
    which, together with \eqref{eq:lyapunov_difference_without_grad} and \eqref{eq:lyapunov_difference_without_grad-3rd-line}, yields the desired equation \eqref{eq:lyapunov_diff}. 
\end{proof}

\begin{lemma}   \label{lem:determinant}
    Inequality \eqref{ineq:determinant} holds for $k \geq 0$.  
\end{lemma}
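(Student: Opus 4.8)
The statement to prove is $Q_k\le\bar{Q}_k$ for $k\ge0$, where $Q_k$ is the quadratic expression \eqref{eq:adanag_lyapunov_diff_quadratic} and $\bar{Q}_k$ is the piecewise bound appearing in \eqref{ineq:determinant}. I will work under the assumption $\nabla f(x_k)\ne\nabla f(x_{k+1})$, which is the only regime in which \eqref{ineq:determinant} is invoked in the proof of \Cref{lemma:lyapunov_analysis} (the equal-gradients case is handled there directly); by \Cref{cor:L_k_bound}(i) this forces $L_{k+1}>0$, while $s_{k+1}>0$ (inductively from $s_0>0$), $A_k>0$, and $B_k>0$ hold for every $k\ge0$, so the prefactor $\frac{s_{k+1}A_k}{2L_{k+1}}$ in $Q_k$ is strictly positive. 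Using $\rho_k=1$ from \eqref{eq:AdaNAG_AB_1}, abbreviate $a:=1-s_{k+1}L_{k+1}$, $b:=1-s_kL_{k+1}$, and $d:=1+\tfrac{B_k}{A_k}\tfrac{s_k^2 L_{k+1}}{s_{k+1}}$, so that the bracket in \eqref{eq:adanag_lyapunov_diff_quadratic} is $a\norm{\nabla f(x_{k+1})}^2-2b\inner{\nabla f(x_{k+1})}{\nabla f(x_k)}+d\norm{\nabla f(x_k)}^2$; note also the identity $d-1=\tfrac{B_k}{A_k}\tfrac{(s_kL_{k+1})^2}{s_{k+1}L_{k+1}}$, which is exactly the coefficient of $\norm{\nabla f(x_k)}^2$ that appears when $\bar{Q}_k$ in the case $s_kL_{k+1}\le1$ is divided by the prefactor. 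Dividing $Q_k\le\bar{Q}_k$ by $-\tfrac{s_{k+1}A_k}{2L_{k+1}}$ (reversing the inequality) and cancelling the appropriate multiple of $\norm{\nabla f(x_k)}^2$, the claim reduces to
\begin{align*}
    &a\norm{\nabla f(x_{k+1})}^2-2b\inner{\nabla f(x_{k+1})}{\nabla f(x_k)}+\norm{\nabla f(x_k)}^2\ge0 \quad\text{if } s_kL_{k+1}\le1,\\
    &a\norm{\nabla f(x_{k+1})}^2-2b\inner{\nabla f(x_{k+1})}{\nabla f(x_k)}+(d-1)\norm{\nabla f(x_k)}^2\ge0 \quad\text{if } s_kL_{k+1}\ge1.
\end{align*}

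Each left-hand side is a quadratic form in the pair $(\nabla f(x_{k+1}),\nabla f(x_k))$ and is therefore nonnegative for all gradient values if and only if the associated $2\times2$ symmetric coefficient matrix — with diagonal entries $(a,1)$, respectively $(a,d-1)$, and off-diagonal entry $-b$ — is positive semidefinite. Since I will verify $a>0$, positive semidefiniteness is equivalent to a single determinant condition: $a\ge b^2$ in the first case and $a(d-1)\ge b^2$ in the second (the latter then forcing $d-1\ge0$ as well). The positivity $a>0$ follows from \eqref{ineq:AdaNAG_generalized_stepsize-c} together with $\rho_k=1$, which gives $s_{k+1}L_{k+1}\le\tfrac{B_k}{A_k+B_k}<1$.

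It remains to check the two determinant conditions; this is the step to get right, but once the reductions above are in place it is short. Write $t:=s_{k+1}L_{k+1}$ and $\tau:=s_kL_{k+1}$, so that $0<t\le\tau$ by \eqref{ineq:AdaNAG_generalized_stepsize-b}, and $a=1-t$, $b=1-\tau$, $d-1=\tfrac{B_k}{A_k}\tfrac{\tau^2}{t}$. In the regime $\tau\le1$, from $\tau(1-\tau)\ge0$ we get $b^2=(1-\tau)^2\le1-\tau\le1-t=a$, so $a\ge b^2$. In the regime $\tau\ge1$, rewrite \eqref{ineq:AdaNAG_generalized_stepsize-c} (with $\rho_k=1$) as $\tfrac{B_k}{A_k}\tfrac{1-t}{t}\ge1$; then $a(d-1)=\tfrac{B_k}{A_k}\tfrac{1-t}{t}\,\tau^2\ge\tau^2$, and $\tau^2\ge(\tau-1)^2=b^2$ since $\tau\ge1\ge\tfrac12$, hence $a(d-1)\ge b^2$. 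In both cases the relevant matrix is positive semidefinite, which yields the displayed inequalities and therefore $Q_k\le\bar{Q}_k$.

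No separate handling of $k=0$ is required: the only inputs are the step-size inequalities \eqref{ineq:AdaNAG_generalized_stepsize-b}--\eqref{ineq:AdaNAG_generalized_stepsize-c}, the value $\rho_k=1$, and the positivity of $A_k,B_k,s_{k+1},L_{k+1}$, all of which are valid for every $k\ge0$. The only genuine obstacle is organizational — arranging the algebra (the $\rho_k=1$ simplification and the identity for $d-1$) so that the two cases collapse to the elementary inequalities $\tau(1-\tau)\ge0$ and $\tau\ge\tfrac12$; after that, there is no real estimation left to do.
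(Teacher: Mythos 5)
Your proof is correct and, after unwinding the notation ($t = s_{k+1}L_{k+1}$, $\tau = s_kL_{k+1}$, and the $2\times2$-PSD reformulation in place of the nonpositive-discriminant phrasing), it is essentially the same argument as the paper's: both cases use $\rho_k = 1$, the case $\tau\le1$ follows from \eqref{ineq:AdaNAG_generalized_stepsize-b} via the elementary bound $(1-\tau)^2\le1-\tau\le1-t$, and the case $\tau\ge1$ follows from \eqref{ineq:AdaNAG_generalized_stepsize-c} rewritten as $\tfrac{B_k}{A_k}\tfrac{1-t}{t}\ge1$ together with $\tau^2\ge(\tau-1)^2$. Your remark that $L_{k+1}>0$ is the only regime where the lemma is invoked aligns with the paper, which implicitly assumes the same by multiplying through by $\tfrac{2L_{k+1}}{s_{k+1}A_k}$.
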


\begin{proof} 
    Note that \eqref{ineq:determinant} is equivalent to 
    \begin{equation*}    
        \begin{aligned}
            \frac{2L_{k+1}}{\ssz_{k+1} A_k} Q_k + \frac{B_k}{A_k} \frac{ \ssz_{k}^2 }{ \ssz_{k+1}} L_{k+1} \norm{ \nabla f(x_k) }^2 &\le 0 
              \qquad \text{ if } s_kL_{k+1} \le 1 \\
            \frac{2L_{k+1}}{\ssz_{k+1} A_k} Q_k + \norm{ \nabla f(x_k) }^2 &\le 0  \qquad \text{ if } s_kL_{k+1} \ge 1.
        \end{aligned}
    \end{equation*}
    Moreover, by recalling the definition of $Q_k$ in \eqref{eq:adanag_lyapunov_diff_quadratic}, we know:
    \begin{equation*}
        \begin{aligned}
            \frac{2L_{k+1}}{\ssz_{k+1} A_k} Q_k 
            &= - \pr{1 - \rsBAreciprocal  \ssz_{k+1}  L_{k+1} } \norm{ \nabla f(x_{k+1}) }^2    \\   
            &\phantom{=} + 2 \pr{ 1 - \ssz_{k} L_{k+1} } \inner{\nabla f(x_{k+1})}{  \nabla f(x_k) }
            - \pr{ 1 +   \frac{B_k}{A_k} \frac{ \ssz_{k}^2 }{ \ssz_{k+1}} L_{k+1}  } \norm{ \nabla f(x_{k}) }^2.
        \end{aligned}
    \end{equation*}
    First, we focus on the coefficient of the term $\norm{\nabla f(x_{k+1})}^2$ in the above expression. 
    From \eqref{ineq:AdaNAG_generalized_stepsize-c}:
    \begin{equation*}   \qquad\qquad
        1 - \rsBAreciprocal \ssz_{k+1}  L_{k+1}
        \ge 1 - \rsBAreciprocal \pr{ \frac{A_k}{B_k} + \rsBAreciprocal }^{-1}
        \ge 0, \qquad \forall k\ge0.
    \end{equation*}
    Therefore, our goal reduces to showing that the discriminant of the quadratic form is nonpositive:
    \begin{equation*} 
        \begin{aligned}
            \pr{ 1 - \ssz_{k} L_{k+1} }^2 -  \pr{1 - \rsBAreciprocal  \ssz_{k+1}  L_{k+1} } &\le 0 \qquad \text{ if } s_kL_{k+1} \le 1, \\
            \pr{ 1 - \ssz_{k} L_{k+1} }^2 -  \pr{1 - \rsBAreciprocal  \ssz_{k+1}  L_{k+1} } \frac{B_k}{A_k} \frac{ \ssz_{k}^2 }{ \ssz_{k+1}} L_{k+1}&\le 0 \qquad \text{ if } s_kL_{k+1} \ge 1.
        \end{aligned}
    \end{equation*}
    The proof for each case is as follows.

\begin{itemize}[leftmargin=*]
    \item $s_kL_{k+1}\le1$. 
        From \eqref{ineq:AdaNAG_generalized_stepsize-b} we obtain
        \begin{equation*}
            \rsBAreciprocal \ssz_{k+1}  L_{k+1} 
             \le \ssz_{k}L_{k+1}, \qquad \forall k\ge0. 
        \end{equation*}
        For arbitrary $\delta_1, \delta_2\in[0,1]$, if $\delta_2\le \delta_1$ we know
        \begin{equation}    \label{ineq:when_sL_smaller_1}
            (1-\delta_1)^2
            - (1-\delta_2)
            \le (1-\delta_1) - (1-\delta_2) = \delta_2 - \delta_1 \le 0. 
        \end{equation}
        We obtain the desired result by substituting 
        $\delta_1 = \ssz_{k}L_{k+1}$ and $\delta_2 =  \rsBAreciprocal \ssz_{k+1}  L_{k+1}$.
    \item $s_kL_{k+1}\ge1$. 
        From \eqref{ineq:AdaNAG_generalized_stepsize-c} we know
        \begin{equation*} 
            \frac{1}{\ssz_{k+1}L_{k+1}} \ge \frac{A_k}{B_k} + \rsBAreciprocal, \qquad \forall k\ge0,
        \end{equation*} 
        which, together with the fact that $1 - 2\ssz_k L_{k+1} \leq 0$, yields
        \begin{equation*}    
            \begin{aligned}
                &\pr{1 - \rsBAreciprocal \ssz_{k+1}  L_{k+1} } \frac{ B_k }{ A_k } \frac{ \ssz_{k}^2 }{ \ssz_{k+1} } L_{k+1}  
            =  \frac{ B_k }{ A_k } \pr{ \frac{1}{\ssz_{k+1}L_{k+1}}  -  \rsBAreciprocal  } \ssz_{k}^2 L_{k+1}^2
            \ge \ssz_{k}^2 L_{k+1}^2
            \ge  \pr{ 1 - \ssz_{k} L_{k+1} }^2.
            \end{aligned}
        \end{equation*}
\end{itemize}
This completes the proof. 
\end{proof}

Since $V_0$ involves $x_1$ and $z_1$, which are not the initial points, we need to define $V_{-1}$ and prove $V_0\le V_{-1}$. This is done in Lemma \ref{lemma:lyapunov_analysis_k0}.
\begin{lemma} \label{lemma:lyapunov_analysis_k0}
    Define 
    \begin{equation*}
        {V}_{-1} = \frac{1}{2} \norm{x_{0} - x_\star}^2 +  \frac{1}{2} \ssz_{0}^2  \pr{   \oldalpha_0 + \talpha_0 } \talpha_0 \theta_{2}^2 \norm{ \nabla f(x_{0}) }^2.
    \end{equation*} 
    Let $f$ be a locally smooth convex function, and $L$ be a smoothness parameter of $f$ on $\bar{B}_{3\|x_0-x_\star\|}(x_\star)$. 
    Then, the following inequality holds
    \begin{equation*}
        V_0 \le V_{-1} -\frac{\ssz_{0} \talpha_0 \theta_{2}}{2L} \norm{ \nabla f(x_{0}) }^2. 
    \end{equation*}
\end{lemma}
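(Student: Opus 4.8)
The plan is to compute $V_0 - V_{-1}$ directly, reduce it to a single function-value-plus-inner-product expression, and then close it using nonnegativity of $f(x_0)-f_\star$ together with the (local) smoothness inequality supplied by Lemma~\ref{lemma:local_smoothness}. Since AdaNAG is initialized with $x_0 = z_0$, the $z$-update in \eqref{eq:AdaNAG} reads $z_1 = x_0 - s_0\alpha_0\theta_2\nabla f(x_0)$, so
\[
\tfrac12\norm{z_1-x_\star}^2 = \tfrac12\norm{x_0-x_\star}^2 - s_0\alpha_0\theta_2\inner{\nabla f(x_0)}{x_0-x_\star} + \tfrac12 s_0^2\alpha_0^2\theta_2^2\norm{\nabla f(x_0)}^2 .
\]
Substituting this into the definition of $V_0$ in \eqref{eq:Lyapunov} (with $A_0,B_0$ from \eqref{eq:A_B_AdaNAG}) and using $B_0 = \oldalpha_0\alpha_0\theta_2^2$, so that $\tfrac12 s_0^2 B_0 + \tfrac12 s_0^2\alpha_0^2\theta_2^2 = \tfrac12 s_0^2(\oldalpha_0+\alpha_0)\alpha_0\theta_2^2$ — which is exactly the $\norm{\nabla f(x_0)}^2$-coefficient appearing in $V_{-1}$ — the $\norm{\nabla f(x_0)}^2$ and $\norm{x_0-x_\star}^2$ terms should recombine into $V_{-1}$, leaving the clean identity
\[
V_0 - V_{-1} = s_1 A_0\pr{f(x_0)-f_\star} - s_0\alpha_0\theta_2\inner{\nabla f(x_0)}{x_0-x_\star}.
\]

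Next I would bound the two surviving terms. The $k=0$ branch of the step-size rule \eqref{eq:step_size_rule} gives $s_1 \le \tfrac{\alpha_0}{\alpha_1}\tfrac{\theta_2}{\theta_3(\theta_3-1)} s_0$, and since $A_0 = \alpha_1\theta_3(\theta_3-1)$ by \eqref{eq:A_B_AdaNAG}, this is $s_1 A_0 \le s_0\alpha_0\theta_2$. As $f(x_0)-f_\star\ge 0$, multiplying yields $s_1 A_0(f(x_0)-f_\star) \le s_0\alpha_0\theta_2(f(x_0)-f_\star)$, hence
\[
V_0 - V_{-1} \le s_0\alpha_0\theta_2\pr{ f(x_0)-f_\star - \inner{\nabla f(x_0)}{x_0-x_\star} }.
\]
To finish, I would apply Lemma~\ref{lemma:local_smoothness} with $K = \{x_0,x_\star\}$, which lies in $\bar{B}_R(x_\star)$ for $R = \norm{x_0-x_\star}$; by hypothesis $L$ is a smoothness parameter of $f$ on $\bar{B}_{3R}(x_\star)$, so the inequality of that lemma holds with $\bar{L}_K = L$, and evaluated at $x = x_\star$, $y = x_0$ (using $\nabla f(x_\star)=0$) it states $f(x_0)-f_\star - \inner{\nabla f(x_0)}{x_0-x_\star} + \tfrac{1}{2L}\norm{\nabla f(x_0)}^2 \le 0$. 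Plugging this into the previous display produces exactly $V_0 - V_{-1} \le -\tfrac{s_0\alpha_0\theta_2}{2L}\norm{\nabla f(x_0)}^2$, which is the assertion.

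The arithmetic is short, so there is no serious obstacle; the two places that need care are (i) checking that the $B_0$-term together with the $\tfrac12 s_0^2\alpha_0^2\theta_2^2$-term produced by expanding $\norm{z_1-x_\star}^2$ exactly reconstructs the $\norm{\nabla f(x_0)}^2$-coefficient of $V_{-1}$, which is what makes the identity for $V_0-V_{-1}$ this clean, and (ii) ensuring that the smoothness inequality \eqref{eq:L_smooth_ineq} is legitimately available even though $f$ is only assumed locally smooth — which is precisely why the statement assumes $L$ is a smoothness parameter on the enlarged ball $\bar{B}_{3\norm{x_0-x_\star}}(x_\star)$ and why Lemma~\ref{lemma:local_smoothness} is invoked rather than \eqref{eq:L_smooth_ineq} directly.
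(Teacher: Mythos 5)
Your proof is correct and takes essentially the same route as the paper: expand $\tfrac12\|z_1-x_\star\|^2$, recognize that the $\|\nabla f(x_0)\|^2$ coefficients recombine to give $V_{-1}$, bound $s_1 A_0 \le s_0\alpha_0\theta_2$ via the $k=0$ branch of \eqref{eq:step_size_rule}, and close with Lemma~\ref{lemma:local_smoothness} at $(x,y)=(x_\star,x_0)$ using $\nabla f(x_\star)=0$. The only cosmetic difference is that the paper splits $s_1 A_0(f(x_0)-f_\star)$ into the telescoping form $(s_0\alpha_0\theta_2 - s_1 A_0)(f_\star - f(x_0)) + s_0\alpha_0\theta_2(f(x_0)-f_\star)$ before dropping the first (nonpositive) term, whereas you bound it in one step; these are equivalent.
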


\begin{proof} 
    Using \eqref{eq:z_square_diff} with $k=-1$, we have (note $x_0=z_0$):
    \begin{equation*}
        \begin{aligned}
            \frac{1}{2} \norm{ z_{1} - x_\star }^2  - \frac{1}{2} \norm{ x_{0} - x_\star }^2 
            &= - \ssz_{0} \talpha_0 \theta_{2} \inner{ \nabla f(x_{0}) }{ x_0- x_\star } + \frac{1}{2} \ssz_{0}^2 \talpha_0^2 \theta_{2}^2 \norm{ \nabla f(x_{0}) }^2,
        \end{aligned}
    \end{equation*}
    which implies 
\begin{equation*}
    \begin{aligned}
        V_{0} - V_{-1} 
        &= \ssz_{1} \talpha_1 \theta_{3} \pr{ \theta_{3} - 1 } \pr{ f(x_{0}) - f_\star } 
        + \frac{1}{2} \norm{ z_{1} - x_\star }^2 
        + \frac{1}{2} \ssz_{0}^2 \oldalpha_0 \talpha_0 \theta_{2}^2 \norm{ \nabla f(x_{0}) }^2  \\&\quad
        - \pr{ \frac{1}{2} \norm{x_{0} - x_\star}^2 +  \frac{1}{2} \ssz_{0}^2  \oldalpha_0 \talpha_0 \theta_{2}^2 \norm{ \nabla f(x_{0}) }^2 +  \frac{1}{2} \ssz_{0}^2  \talpha_0^2 \theta_{2}^2 \norm{ \nabla f(x_{0}) }^2 } \\
        &= \pr{ \ssz_{0} \talpha_0 \theta_{2} - \ssz_{1} \talpha_1 \theta_{3} \pr{ \theta_{3} - 1 } } \pr{ f_\star - f(x_{0}) } \\&\quad 
        +  \ssz_{0} \talpha_0 \theta_{2} \pr{ f(x_{0}) - f_\star - \inner{ \nabla f(x_{0}) }{ x_{0} - x_\star } } \\
        & \le \ssz_{0} \talpha_0 \theta_{2} \pr{ f(x_{0}) - f_\star - \inner{ \nabla f(x_{0}) }{ x_{0} - x_\star } } \\
        & \le -\frac{\ssz_{0} \talpha_0 \theta_{2}}{2L} \norm{ \nabla f(x_{0}) }^2,
    \end{aligned}
\end{equation*}
where the first inequality is due to \eqref{eq:step_size_rule},  
and the second inequality is due to \Cref{lemma:local_smoothness}.  
\end{proof}

Next, we establish a lower bound for $\ssz_{k} \alpha_{k}$. 

\begin{lemma} \label{lemma:stepsize_lowerbound}
    Let $f$ be a locally smooth convex function. 
    Suppose $\set{s_k}_{k\ge0}$ is generated by \adanag.  
    Denote $r_0 = \frac{\theta_3(\theta_3-1)}{\theta_2} \frac{1}{\alpha_{0}} \frac{ \alpha_{2}^2 \alpha_{3}}{\alpha_{3} + \alpha_{2}^2}$ and set $s_0 = r_0 \inverLzero$ for some arbitrary $L_0 >0$.  
    Define
    \begin{equation}    \label{eq:large_S}
        S_{k} = \min\set{ \inverLzero ,\frac{1}{L_{1}}, \dots, \frac{1}{L_{k}} }.
    \end{equation}
    Then 
    \begin{equation*}
        \ssz_k \ge \frac{ \alpha_{2}^2 \alpha_{3}}{\alpha_{3} + \alpha_{2}^2} \frac{1}{\alpha_{k}}  S_{k}, \qquad \forall k\ge 1.
    \end{equation*} 
\end{lemma}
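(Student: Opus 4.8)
The claim is a lower bound on the adaptive step size $s_k$, and the step-size rule \eqref{eq:step_size_rule} is a two-term minimum — one term carrying the previous step size $s_k$, one term carrying $1/L_{k+1}$ — so the natural proof is induction on $k \ge 1$, with the two arguments of the $\min$ handled separately at each step.

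\textbf{Base case $k = 1$.} I would unfold the $k=0$ branch of \eqref{eq:step_size_rule}. The definitions $r_0 = \frac{\theta_3(\theta_3-1)}{\theta_2}\frac{1}{\alpha_0}\frac{\alpha_2^2\alpha_3}{\alpha_3+\alpha_2^2}$ and $s_0 = r_0/L_0$ are chosen precisely so that the first argument of the $\min$ collapses to $\frac{\alpha_2^2\alpha_3}{\alpha_3+\alpha_2^2}\frac{1}{\alpha_1}\frac{1}{L_0}$, which has the same shape as the second argument $\frac{\alpha_2^2\alpha_3}{\alpha_3+\alpha_2^2}\frac{1}{\alpha_1}\frac{1}{L_1}$. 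Hence $s_1 = \frac{\alpha_2^2\alpha_3}{\alpha_3+\alpha_2^2}\frac{1}{\alpha_1}\min\{1/L_0,1/L_1\} = \frac{\alpha_2^2\alpha_3}{\alpha_3+\alpha_2^2}\frac{1}{\alpha_1}S_1$, so the bound holds at $k=1$ — in fact with equality, which shows the constant is sharp there.

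\textbf{Inductive step.} Assume the bound at some $k\ge1$ and use the $k\ge1$ branch of \eqref{eq:step_size_rule}. For the first term, the induction hypothesis together with monotonicity of $S_k$ (nonincreasing by definition) gives $\frac{\alpha_k}{\alpha_{k+1}}s_k \ge \frac{\alpha_2^2\alpha_3}{\alpha_3+\alpha_2^2}\frac{1}{\alpha_{k+1}}S_k \ge \frac{\alpha_2^2\alpha_3}{\alpha_3+\alpha_2^2}\frac{1}{\alpha_{k+1}}S_{k+1}$. For the second term, using $S_{k+1}\le 1/L_{k+1}$, it is enough to establish the parameter inequality $\frac{\alpha_k^2}{\alpha_{k+1}+\alpha_k^2}\ge \frac{\alpha_2^2\alpha_3}{\alpha_3+\alpha_2^2}\frac{1}{\alpha_{k+1}}$, equivalently $\frac{1}{\alpha_k^2}+\frac{1}{\alpha_{k+1}}\le \frac{1}{\alpha_2^2}+\frac{1}{\alpha_3}$. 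Since $\alpha_k$ is nondecreasing for $k\ge1$ (Lemma~\ref{lem:theta_properties}), the left-hand side is nonincreasing in $k$, so this holds for $k\ge 2$; taking the minimum of the two bounds then closes the induction.

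\textbf{Expected main obstacle.} The constant $\frac{\alpha_2^2\alpha_3}{\alpha_3+\alpha_2^2}$ is calibrated at the index $2$, so the genuinely delicate point is the parameter inequality $\frac{1}{\alpha_k^2}+\frac{1}{\alpha_{k+1}}\le \frac{1}{\alpha_2^2}+\frac{1}{\alpha_3}$ at the smallest index appearing in the recursion — the transition producing $s_2$ — which must be verified directly (or absorbed by treating $k=2$ as a second base case), since monotonicity of $\alpha_k$ alone does not obviously dominate it there. A secondary source of care is the index bookkeeping around $k=0,1$: there \eqref{eq:step_size_rule} has its special form and $\alpha_0$ — defined through \eqref{eq:theta} — enters only via the collapse of $r_0$. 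Once these are pinned down, the remaining computations are routine manipulations of the identities already recorded in Lemmas~\ref{lem:theta_properties} and~\ref{lem:ineq:AdaNAG_generalized_stepsize}.
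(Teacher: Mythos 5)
Your approach matches the paper's --- same induction, same base case $k=1$, same key parameter inequality --- and your reciprocal reformulation is a genuine simplification: the paper proves $\frac{\alpha_{k+1}}{\alpha_{k+1}/\alpha_k^2+1}$ is increasing via a chain of intermediate monotonicity claims (through $\frac{\alpha_{k+1}}{\alpha_k}$, then $\frac{\theta_{k+2}}{\theta_{k+3}}$), whereas the reciprocal of that quantity is exactly $\frac{1}{\alpha_k^2}+\frac{1}{\alpha_{k+1}}$, whose monotonicity follows at once from $\alpha_k$ nondecreasing (\Cref{lem:theta_properties}). But the \emph{obstacle} you flag at the end is not something a routine check will patch; your own observation shows it fails. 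Since $\alpha_k$ is strictly increasing, $\frac{1}{\alpha_k^2}+\frac{1}{\alpha_{k+1}}$ is strictly decreasing, so at $k=1$ the needed inequality runs strictly the wrong way: $\frac{1}{\alpha_1^2}+\frac{1}{\alpha_2}>\frac{1}{\alpha_2^2}+\frac{1}{\alpha_3}$ (both summands exceed their counterparts; numerically $\approx 12.75$ versus $\approx 10.95$, i.e.\ $\frac{\alpha_1^2}{\alpha_2+\alpha_1^2}\approx 0.225 < 0.262 \approx \frac{\alpha_2^2\alpha_3}{\alpha_3+\alpha_2^2}\frac{1}{\alpha_2}$). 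Hence the $1/L_2$ branch of the $\min$ defining $s_2$ cannot be bounded below by the claimed constant, and the induction cannot be closed at the transition $s_1\to s_2$. A concrete counterexample: for a one-dimensional quadratic $f$ one has $L_k\equiv L$ and $S_k\equiv 1/L$, yet the algorithm gives $s_2=\frac{\alpha_1^2}{\alpha_2+\alpha_1^2}\frac{1}{L}\approx 0.225/L$, strictly below the claimed $\frac{\alpha_2^2\alpha_3}{\alpha_3+\alpha_2^2}\frac{1}{\alpha_2}\frac{1}{L}\approx 0.262/L$.

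This hole is present in the paper's own proof too: \eqref{ineq:core_of_stepsize_lowerbound} is established there only for $k\ge 2$, and the inductive step is phrased as ``given $s_k$, conclude $s_{k+1}$'' for $k\ge 2$, so $s_2$ is never actually derived from the base case $s_1$. The minimal repair, keeping the algorithm fixed, is to replace the constant $\frac{\alpha_2^2\alpha_3}{\alpha_3+\alpha_2^2}$ in this lemma (and downstream in Corollary~\ref{lemma:stepsize_lowerbound_L_smooth} and the constants of \Cref{thm:main_tight}) by the strictly smaller $\frac{\alpha_1^2\alpha_2}{\alpha_2+\alpha_1^2}$, the value of $\frac{\alpha_k^2\alpha_{k+1}}{\alpha_{k+1}+\alpha_k^2}$ at $k=1$. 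With that choice the base case still holds ($s_1$ already meets the larger $k=2$ constant with equality), and the parameter inequality becomes $\frac{1}{\alpha_k^2}+\frac{1}{\alpha_{k+1}}\le \frac{1}{\alpha_1^2}+\frac{1}{\alpha_2}$, which holds for every $k\ge 1$ by your monotonicity argument, so the induction closes. The rates in \Cref{thm:main_tight} survive; only the multiplicative constants degrade slightly.
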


\begin{proof}
    Proof by induction. 
    \begin{itemize}[leftmargin=*]
        \item $k=1$. 
            Applying \eqref{eq:step_size_rule} and the assumption $s_0 = r_0 \inverLzero$, 
            we obtain:
            \begin{equation*}
                \begin{aligned}
                    \ssz_{1}
                = \min\set{ \frac{\alpha_0}{\alpha_1} \frac{\theta_2}{\theta_3(\theta_3-1)} \ssz_{0},  \,\,  \frac{ \alpha_{2}^2 \alpha_{3}}{\alpha_{3} + \alpha_{2}^2} \frac{1}{\alpha_{1}} \frac{1}{L_{1}}  } 
                = \frac{ \alpha_{2}^2 \alpha_{3}}{\alpha_{3} + \alpha_{2}^2} \frac{1}{\alpha_{1}} \min\set{  \frac{1}{L_0},  \,\,   \frac{1}{L_{1}}  }  
                = \frac{ \alpha_{2}^2 \alpha_{3}}{\alpha_{3} + \alpha_{2}^2} \frac{1}{\alpha_{1}} S_{1}.
                \end{aligned}
            \end{equation*}
        \item $k\ge 2$. 
        We first show $\frac{\alpha_{k+1}}{\frac{\alpha_{k+1}}{\alpha_{k}^2} + 1}$ is increasing for $k\ge2$. 
        By \Cref{lem:theta_properties}, the numerator $\alpha_{k+1}$ is increasing, it is suffices to show the denominator $\frac{\alpha_{k+1}}{\alpha_{k}^2}+1$ is decreasing.     
        Since $\frac{1}{\alpha_{k}}$ is decreasing, it is enough to show $\frac{\alpha_{k+1}}{\alpha_k}$ is decreasing. 
        Observe that for $k\ge2$, we have
        \begin{equation*}
            \frac{\alpha_{k+1}}{\alpha_k} 
            = \frac{1-\frac{1}{\theta_{k+3}}}{1-\frac{1}{\theta_{k+2}}} 
            = 1 + \frac{\frac{1}{\theta_{k+2}} - \frac{1}{\theta_{k+3}} }{1-\frac{1}{\theta_{k+2}}}
            = 1 + \frac{ 1 - \frac{\theta_{k+2}}{\theta_{k+3}} }{\theta_{k+2}-1}.
        \end{equation*}
        Since $\theta_{k+2}-1$ is increasing, it is suffices to show $1 - \frac{\theta_{k+2}}{\theta_{k+3}}$ is decreasing.   
        Observing
        \[
            \frac{\theta_{k+2}}{\theta_{k+3}} = \frac{\theta_{k+2}}{\frac{1}{2} + \sqrt{\frac{1}{4} + \theta_{k+2}^2}} 
            = \pr{ \frac{1}{2\theta_{k+2}} + \sqrt{\frac{1}{4\theta_{k+2}^2} + 1} }^{-1},
        \]
        we see $\frac{\theta_{k+2}}{\theta_{k+3}}$ is increasing since $\theta_k$ is increasing by \Cref{lem:theta_properties}. 
        Therefore, $1 - \frac{\theta_{k+2}}{\theta_{k+3}}$ is decreasing, we conclude  $\frac{\alpha_{k+1}}{\frac{\alpha_{k+1}}{\alpha_{k}^2} + 1}$  is increasing. 
        As a result, we obtain \vspace{-3mm}
        \begin{equation}    
        \label{ineq:core_of_stepsize_lowerbound}
            \frac{\alpha_{2}^2\alpha_{3}}{\alpha_{3} + \alpha_{2}^2}
            = \frac{\alpha_{3}}{\frac{\alpha_{3}}{\alpha_{2}^2} + 1}
            \le 
            \frac{\alpha_{k+1}}{\frac{\alpha_{k+1}}{\alpha_{k}^2} + 1}
            = \frac{\alpha_{k}^2}{\alpha_{k+1} + \alpha_{k}^2} \alpha_{k+1}, \qquad \forall k \ge 2.
        \end{equation}
        Applying the induction hypothesis $\ssz_k \ge \frac{ \alpha_{2}^2 \alpha_{3}}{\alpha_{3} + \alpha_{2}^2} \frac{1}{\alpha_{k}}  S_{k}$ 
        and \eqref{ineq:core_of_stepsize_lowerbound}, we conclude:  
        \begin{equation*}
            \begin{aligned}
                \ssz_{k+1} 
            &= \min\set{ \frac{\alpha_{k}}{\alpha_{k+1}} \ssz_{k},  \,\, \frac{ \alpha_{k}^2 }{ { \alpha_{k+1} + \alpha_{k}^2 } } \frac{1}{L_{k+1}}  } \\
            &\ge \min\set{    \frac{\alpha_{2}^2\alpha_{3}}{\alpha_{3} + \alpha_{2}^2} \frac{1}{\alpha_{k+1}} S_{k},  \,\,  \frac{\alpha_{2}^2\alpha_{3}}{\alpha_{3} + \alpha_{2}^2} \frac{1}{\alpha_{k+1}} \frac{1}{L_{k+1} } }  
            =  \frac{\alpha_{2}^2\alpha_{3}}{\alpha_{3} + \alpha_{2}^2} \frac{1}{\alpha_{k+1}} S_{k+1}.
            \end{aligned}
        \end{equation*}
    \end{itemize}
    This completes the proof.
\end{proof}

\begin{corollaryL}  \label{lemma:stepsize_lowerbound_L_smooth}
    Suppose $f$ is an $L$-smooth convex function. 
    Set $s_0$ as in \Cref{lemma:stepsize_lowerbound} with $L_0$ as in \eqref{eq:initial_guess}. 
    Then 
    \begin{equation*}
        \ssz_k \ge \frac{ \alpha_{2}^2 \alpha_{3}}{\alpha_{3} + \alpha_{2}^2} \frac{1}{\alpha_{k}} \frac{1}{\upperboundL}, \qquad \forall k\ge 1.
    \end{equation*}
\end{corollaryL}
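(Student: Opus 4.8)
The plan is to deduce this directly from \Cref{lemma:stepsize_lowerbound}, which already gives
$\ssz_k \ge \frac{\alpha_2^2\alpha_3}{\alpha_3+\alpha_2^2}\frac{1}{\alpha_k} S_k$ with $S_k = \min\{\inverLzero, 1/L_1, \dots, 1/L_k\}$ (note that the choice of $s_0$ here is exactly the one required in that lemma). Hence it suffices to show $S_k \ge \frac{1}{\upperboundL}$, i.e.\ that $L_0 \le L$ and $L_j \le L$ for every $j\in\{1,\dots,k\}$. Substituting $S_k \ge 1/L$ into the bound from \Cref{lemma:stepsize_lowerbound} then yields the claim.

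First, for $L_0$: by \eqref{eq:initial_guess} the quantity $L_0$ is a difference quotient of $\nabla f$ between the two distinct points $x_0$ and $\tilde{x}_0$, so $L_0 \le L$ is immediate from the $L$-Lipschitz continuity of $\nabla f$ (and $L_0>0$ since $x_0\neq\tilde{x}_0$, which makes $s_0$ finite and the construction meaningful).

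Next, for $L_j$ with $j\ge1$: apply the $L$-smooth convex inequality \eqref{eq:L_smooth_ineq} with $x=x_{j-1}$ and $y=x_j$ to get
\begin{equation*}
    f(x_j) - f(x_{j-1}) + \inner{\nabla f(x_j)}{x_{j-1}-x_j} \le -\frac{1}{2L}\norm{\nabla f(x_{j-1}) - \nabla f(x_j)}^2 \le 0 .
\end{equation*}
If the denominator of $L_j$ in \eqref{define-Lk+1} is nonzero, it is therefore strictly negative, and dividing the displayed inequality through gives $L_j \le L$. If that denominator is zero, then by \Cref{cor:L_k_bound}(i) the numerator vanishes as well, so $L_j = 0 \le L$ by our arithmetic convention $\frac{0}{0}=0$. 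In either case $1/L_j \ge 1/L$.

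Combining these facts gives $S_k = \min\{\inverLzero, 1/L_1, \dots, 1/L_k\} \ge \frac{1}{\upperboundL}$, which completes the argument. There is no genuine obstacle here: the content is simply that under a global $L$-smoothness assumption the initial guess $L_0$ and every locally estimated constant $L_j$ are dominated by $L$, so the adaptive step sizes inherit the lower bound one would have with the constant step $1/L$. The only mild care needed is the treatment of degenerate cases ($L_0=0$ is excluded by $x_0\neq\tilde x_0$; a vanishing denominator in $L_j$ is handled via \Cref{cor:L_k_bound}(i) and the $\frac00=0$ convention).
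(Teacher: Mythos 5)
Your proposal is correct and follows essentially the same route as the paper: invoke \Cref{lemma:stepsize_lowerbound}, then show $S_k \ge 1/L$ by arguing $L_0 \le L$ (from the Lipschitz difference quotient) and $L_j \le L$ for $j \ge 1$ (from the $L$-smooth convexity inequality, with the degenerate $\frac{0}{0}=0$ case handled by \Cref{cor:L_k_bound}(i)). The only cosmetic difference is that the paper cites \Cref{lemma:local_smoothness} to get $L_j \le L$, whereas you apply \eqref{eq:L_smooth_ineq} directly; these are the same fact since the lemma reduces to \eqref{eq:L_smooth_ineq} with $\bar{L}_K = L$ in the globally smooth case.
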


\begin{proof}
    From \Cref{lemma:local_smoothness}, we have $\frac{1}{L_k} \ge \frac{1}{L}$ for $k \ge 1$ and also $\frac{1}{L_0}\ge \frac{1}{L}$ by its definition \eqref{eq:initial_guess}.
    Recalling the definition \eqref{eq:large_S}, we obtain $S_k \ge \frac{1}{\upperboundL}$ for $k \ge 0$.
    We obtain the desired conclusion by \Cref{lemma:stepsize_lowerbound}.
\end{proof}

Finally, we are ready to prove the statements of our main theorem. We state and prove generalized results that directly imply it. 
\begin{proposition} \label{prop:proof_of_thm_1_2}
    Suppose $f$ is a locally smooth convex function. 
    Define $L$ as in \Cref{lemma:lyapunov_analysis_k0} and set $s_0$ as in \Cref{lemma:stepsize_lowerbound}.  
    Denote $\upperboundconstant = 2\big( V_{-1} -\frac{\ssz_{0} \talpha_0 \theta_{2}}{2L} \norm{ \nabla f(x_{0}) }^2 \big)$, which is equivalent to
    \[
        \upperboundconstant 
        = \norm{x_{0} - x_\star}^2 +  r_{0}^2 \talpha_0 \pr{   \oldalpha_0 + \talpha_0 } \theta_{2}^2 \inverLzero \pr{ \inverLzero  - \frac{1}{ r_{0}  \pr{   \oldalpha_0 + \talpha_0 } \theta_{2}} \frac{ 1}{L} } \norm{ \nabla f(x_{0}) }^2,
    \]
    when $s_0=r_0\frac{1}{L_0}$. 
    Suppose $\set{x_k}_{k\ge0}$ is generated by \adanag. 
    Then, the following inequality is true: 
    \begin{equation*}
        \begin{aligned}
            f(x_k) - f_\star  
        \le \frac{\alpha_{3} + \alpha_{2}^2}{ 2\alpha_{2}^2 \alpha_{3}} \frac{1/S_{{k+1}}}{\theta_{k+3} \pr{ \theta_{k+3} - 1 } } \upperboundconstant.
        \end{aligned}
    \end{equation*}
\end{proposition}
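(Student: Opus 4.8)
The plan is to assemble the three ingredients already established: the one-step Lyapunov decrease (\Cref{lemma:lyapunov_analysis}), the initialization estimate $V_0\le V_{-1}-\frac{\ssz_0\alpha_0\theta_2}{2L}\norm{\nabla f(x_0)}^2$ (\Cref{lemma:lyapunov_analysis_k0}), and the step-size lower bound (\Cref{lemma:stepsize_lowerbound}).

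First I would telescope. Since \Cref{lemma:lyapunov_analysis} gives $V_{k+1}\le V_k$ for all $k\ge0$, chaining it with \Cref{lemma:lyapunov_analysis_k0} yields, for every $k\ge0$,
\[
    V_k \;\le\; V_0 \;\le\; V_{-1}-\frac{\ssz_0\alpha_0\theta_2}{2L}\norm{\nabla f(x_0)}^2 \;=\; \tfrac{1}{2}\, R,
\]
by the definition of $R$; here $L$ is taken as in \Cref{lemma:lyapunov_analysis_k0}, namely a smoothness parameter of $f$ on $\bar{B}_{3\norm{x_0-x_\star}}(x_\star)$, which is precisely the hypothesis that lemma needs.

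Next I would read off the function-value gap. The coefficients $A_k=\alpha_{k+1}\theta_{k+3}(\theta_{k+3}-1)$ and $\ssz_{k+1}$ are strictly positive: the former because $\theta_{k+3}>1$ and $\alpha_{k+1}>0$ by \Cref{lem:theta_properties}, the latter by an induction on the step-size rule \eqref{eq:step_size_rule}, which uses that $L_{k+1}<\infty$ (if the denominator in \eqref{define-Lk+1} vanishes, so does the numerator by \Cref{cor:L_k_bound}, whence $L_{k+1}=0$ under the convention $\tfrac{0}{0}=0$). Moreover $B_k\ge0$, $\tfrac{1}{2}\norm{z_{k+1}-x_\star}^2\ge0$, and $f(x_k)\ge f_\star$. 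Hence, dropping the last two terms of \eqref{eq:Lyapunov},
\[
    \ssz_{k+1}A_k\,\bigl(f(x_k)-f_\star\bigr)\;\le\; V_k \;\le\; \tfrac{1}{2}\, R,
\]
so $f(x_k)-f_\star\le R/(2\ssz_{k+1}A_k)$. Applying \Cref{lemma:stepsize_lowerbound} with index $k+1\ge1$ gives $\ssz_{k+1}\ge\frac{\alpha_2^2\alpha_3}{\alpha_3+\alpha_2^2}\frac{1}{\alpha_{k+1}}S_{k+1}$; the factor $\alpha_{k+1}$ cancels against the one in $A_k$, yielding $\ssz_{k+1}A_k\ge\frac{\alpha_2^2\alpha_3}{\alpha_3+\alpha_2^2}S_{k+1}\,\theta_{k+3}(\theta_{k+3}-1)$. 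Substituting this into the previous inequality produces exactly the asserted bound.

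It then remains to check that the displayed closed form of $R$ coincides with $2\bigl(V_{-1}-\frac{\ssz_0\alpha_0\theta_2}{2L}\norm{\nabla f(x_0)}^2\bigr)$: insert the expression for $V_{-1}$ from \Cref{lemma:lyapunov_analysis_k0}, set $\ssz_0=r_0\,\frac{1}{L_0}$, and factor $r_0^2\,\alpha_0(\oldalpha_0+\alpha_0)\,\theta_2^2\,\frac{1}{L_0}$ out of the coefficient of $\norm{\nabla f(x_0)}^2$; this is a purely algebraic rearrangement. Overall the proposition is an assembly step with no genuine obstacle — all the difficulty already resides in the cited lemmas — and the only points needing care are the strict positivity of $A_k$ and $\ssz_{k+1}$ (so that the dropped terms are legitimately nonnegative and the final division is valid) together with the bookkeeping in this last identity.
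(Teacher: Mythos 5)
Your argument is correct and mirrors the paper's proof exactly: telescope $V_k\le V_0\le V_{-1}-\frac{s_0\alpha_0\theta_2}{2L}\|\nabla f(x_0)\|^2=\tfrac12 R$ via Theorem~\ref{lemma:lyapunov_analysis} and Lemma~\ref{lemma:lyapunov_analysis_k0}, drop the nonnegative $B_k$ and $\|z_{k+1}-x_\star\|^2$ terms of $V_k$ to isolate $s_{k+1}A_k(f(x_k)-f_\star)$, and then invoke Lemma~\ref{lemma:stepsize_lowerbound} to lower-bound $s_{k+1}A_k$. The extra remarks on positivity of $A_k$ and $s_{k+1}$ and the algebraic rearrangement of $R$ are implicit in the paper but are correct and harmless to make explicit.
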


\begin{proof}      
    From \Cref{lemma:lyapunov_analysis}, Lemma~\ref{lemma:lyapunov_analysis_k0}, and $s_0=r_0\frac{1}{L_0}$, we know that
    \begin{equation*}
        \ssz_{k+1}\alpha_{k+1} \theta_{k+3} \pr{ \theta_{k+3} - 1 } (f(x_k) - f_\star) \le V_k \le \dots \le V_0 \le V_{-1} -\frac{\ssz_{0} \talpha_0 \theta_{2}}{2L} \norm{ \nabla f(x_{0}) }^2
        = \frac{1}{2}\upperboundconstant.
    \end{equation*} 
    Thus we obtain 
    \begin{equation*}
        f(x_k) - f_\star \le  \frac{1}{2\ssz_{k+1}\alpha_{k+1} \theta_{k+3} \pr{ \theta_{k+3} - 1 } } \upperboundconstant,
    \end{equation*}
    which, combining with  Lemma~\ref{lemma:stepsize_lowerbound}, yields the desired result.  
\end{proof}

\begin{proposition} \label{prop:adanag_gradient_norm}
    Let $f$ be an $L$-smooth convex function. 
    Set $s_0$ as in Corollary~\ref{lemma:stepsize_lowerbound_L_smooth} 
    and suppose $\set{x_k}_{k\ge0}$ is generated by \adanag.  
    Denote  
    $\upperboundconstant$ as in Proposition~\ref{prop:proof_of_thm_1_2}. 
    Then, the following inequality is true:
    \begin{equation}    \label{ineq:z_converges}
        \sum_{i=1}^{k}  \theta_{i+3} \pr{ \theta_{i+3} - 1 }  \norm{ \nabla f(x_i) }^2 
        \le 
         \upperboundL^2 \pr{ \frac{\alpha_{3} + \alpha_{2}^2}{ \alpha_{2}^2 \alpha_{3}}  }^2  
        \upperboundconstant.
    \end{equation}
    As a result, we have 
    \begin{equation*}
        \min_{i\in\set{1,\dots, k}} \norm{\nabla f(x_i)}^2
        \le \frac{\upperboundL^2}{\sum_{i=1}^{k}  \theta_{i+3} \pr{ \theta_{i+3} - 1 }}  \pr{ \frac{\alpha_{3} + \alpha_{2}^2}{ \alpha_{2}^2 \alpha_{3}}  }^2 
        \upperboundconstant
        = \cO\pr{ \frac{\upperboundL^2}{k^3} }.
    \end{equation*}
\end{proposition}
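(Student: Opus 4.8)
The plan is to extract a telescoped sum of squared gradient norms from the per-step Lyapunov decrease in \Cref{lemma:lyapunov_analysis}, and then lower-bound the accumulated coefficients using the properties of $\theta_k$ and $\alpha_k$ from \Cref{lem:theta_properties}. First I would revisit the decrease inequality $V_{k+1}-V_k \le -\frac12 \min\{s_k^2 B_k, \tfrac{s_{k+1}}{L_{k+1}} A_k\}\norm{\nabla f(x_k)}^2$ and observe that in the $L$-smooth case the lower bound on the step size from Corollary~\ref{lemma:stepsize_lowerbound_L_smooth}, together with $L_{k+1}\le L$ (from \Cref{lemma:local_smoothness}) and $s_k L_{k+1}$ being comparable to $1$, lets me replace the $\min$ by a single explicit multiple of $\frac{1}{L^2}\theta_{k+2}^2$ (recall $B_k=\alpha_k^2\theta_{k+2}^2$ and $s_k\alpha_k\ge \frac{\alpha_2^2\alpha_3}{\alpha_3+\alpha_2^2}\frac1L$). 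Concretely I expect $s_k^2 B_k = (s_k\alpha_k)^2\theta_{k+2}^2 \ge \big(\tfrac{\alpha_2^2\alpha_3}{\alpha_3+\alpha_2^2}\big)^2 \tfrac{1}{L^2}\theta_{k+2}^2$, and a parallel bound for $\tfrac{s_{k+1}}{L_{k+1}}A_k$, so that $V_{k+1}-V_k \le -\tfrac12\big(\tfrac{\alpha_2^2\alpha_3}{\alpha_3+\alpha_2^2}\big)^2 \tfrac{1}{L^2}\,\theta_{k+3}(\theta_{k+3}-1)\,\norm{\nabla f(x_k)}^2$ — I would need to double check whether the relevant $\theta$-index after all substitutions is $\theta_{k+2}^2$ or $\theta_{k+3}(\theta_{k+3}-1)$, but \eqref{eq:theta_motivation} bridges the two up to a harmless constant.

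Next I would sum this telescoping inequality from $k=0$ (using the $V_{-1}$ comparison of \Cref{lemma:lyapunov_analysis_k0}) through $k-1$ and use $V_k\ge 0$, exactly as in the proof of Proposition~\ref{prop:proof_of_thm_1_2}, to get
\begin{equation*}
    \sum_{i=1}^{k} \theta_{i+3}(\theta_{i+3}-1)\,\norm{\nabla f(x_i)}^2 \;\le\; \Big(\tfrac{\alpha_3+\alpha_2^2}{\alpha_2^2\alpha_3}\Big)^2 L^2 \Big(V_{-1} - \tfrac{s_0\tilde\alpha_0\theta_2}{2L}\norm{\nabla f(x_0)}^2\Big)\cdot\tfrac{1}{\text{(const)}},
\end{equation*}
and absorbing the factor $2(V_{-1}-\cdots)=\upperboundconstant$ yields \eqref{ineq:z_converges}. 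A small care point is the index shift between the summation variable in the target ($\sum_{i=1}^k$, with $\norm{\nabla f(x_i)}^2$ at iterate $i$) and the decrease at step $k$ involving $\norm{\nabla f(x_k)}^2$; I'd sum $k$ from $1$ to $k$ rather than from $0$, discarding the nonnegative $k=0$ contribution, which only loses a constant already absorbed.

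Finally the ``as a result'' consequence is immediate: bound the minimum by the average, $\min_{i\le k}\norm{\nabla f(x_i)}^2 \le \big(\sum_{i=1}^k\theta_{i+3}(\theta_{i+3}-1)\big)^{-1}\sum_{i=1}^k\theta_{i+3}(\theta_{i+3}-1)\norm{\nabla f(x_i)}^2$, then invoke $\theta_{i+3}\ge\frac{i+5}{2}$ from \Cref{lem:theta_properties} to get $\sum_{i=1}^k\theta_{i+3}(\theta_{i+3}-1)=\Omega(k^3)$, giving the $\cO(L^2/k^3)$ rate. The main obstacle I anticipate is \emph{not} the telescoping but the bookkeeping in Step 1: carefully verifying that the minimum $\min\{s_k^2 B_k, \tfrac{s_{k+1}}{L_{k+1}}A_k\}$ is, in the $L$-smooth regime, always at least a fixed constant times $\tfrac{1}{L^2}\theta_{k+3}(\theta_{k+3}-1)$ — this requires combining the step-size lower bound (Corollary~\ref{lemma:stepsize_lowerbound_L_smooth}), the step-size upper bound forcing $s_{k+1}L_{k+1}\le$ something like $\tfrac{\alpha_k^2}{\alpha_{k+1}+\alpha_k^2}\le 1$, and the relation $\rho_k=1$ from \eqref{eq:AdaNAG_AB_1}, and keeping the constant $\big(\tfrac{\alpha_3+\alpha_2^2}{\alpha_2^2\alpha_3}\big)^2$ clean. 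Once that constant is pinned down the rest is routine summation.
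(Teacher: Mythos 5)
Your proposal is correct and follows essentially the same route as the paper: plug $A_k,B_k$ into the Lyapunov decrease from Theorem~\ref{lemma:lyapunov_analysis}, lower-bound both branches of the $\min$ via Corollary~\ref{lemma:stepsize_lowerbound_L_smooth} together with $L_{k+1}\le L$, telescope, and finish by bounding the minimum by a weighted average. One small simplification over your anticipated bookkeeping: you do not actually need the step-size upper bound or the relation $\rho_k=1$ to pin down the constant --- each branch directly yields $s_k^2B_k \ge \big(\tfrac{\alpha_2^2\alpha_3}{\alpha_3+\alpha_2^2}\big)^2\tfrac{1}{L^2}\theta_{k+3}(\theta_{k+3}-1)$ and $\tfrac{s_{k+1}}{L_{k+1}}A_k \ge \tfrac{\alpha_2^2\alpha_3}{\alpha_3+\alpha_2^2}\tfrac{1}{L^2}\theta_{k+3}(\theta_{k+3}-1)$, and since $\tfrac{\alpha_2^2\alpha_3}{\alpha_3+\alpha_2^2}<1$ (which you should make explicit), the squared constant is the smaller one and gives the clean bound.
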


\begin{proof}
    Plugging in \eqref{eq:A_B_AdaNAG} and applying \eqref{eq:theta_motivation} to the result of \Cref{lemma:lyapunov_analysis}, we have
    \begin{equation*}
        \qquad \qquad \qquad V_{k+1} - V_k 
        \le \begin{cases}
            - \frac{1}{2} s_k^2 \alpha_{k}^2 \theta_{k+3} \pr{ \theta_{k+3} - 1 }   \norm{ \nabla f(x_k) }^2 \qquad & \text{ if } s_kL_{k+1} \le 1 \\
            - \frac{1}{2} \frac{\ssz_{k+1}\alpha_{k+1}}{L_{k+1}}  \theta_{k+3} \pr{ \theta_{k+3} - 1 }  \norm{ \nabla f(x_k) }^2 & \text{ if } s_kL_{k+1} \ge 1
        \end{cases}
    \end{equation*}
    for $k\ge0$. Here, we have used $\tilde{\alpha}_0 \approx 0.2706 < 0.4707 \approx \alpha_0$. 
    Now, since $f$ is an $L$-smooth convex function, applying Corollary~\ref{lemma:stepsize_lowerbound_L_smooth}, we have:
    \begin{equation*}
        s_k^2 \alpha_k^2 \ge \pr{ \frac{ \alpha_{2}^2 \alpha_{3}}{\alpha_{3} + \alpha_{2}^2}  }^2 \frac{1}{\upperboundL^2},
        \qquad 
        \frac{\ssz_{k+1}\alpha_{k+1}}{L_{k+1}} \ge  \frac{ \alpha_{2}^2 \alpha_{3}}{\alpha_{3} + \alpha_{2}^2}   \frac{1}{\upperboundL^2}.
    \end{equation*}
    From \Cref{tab:numerical_values}, we know that $\frac{ \alpha_{2}^2 \alpha_{3}}{\alpha_{3} + \alpha_{2}^2} < 1$. 
    Collecting these observations, we have
    \begin{equation*}
        V_{k+1} - V_k
        \le - \frac{1}{2} \pr{ \frac{ \alpha_{2}^2 \alpha_{3}}{\alpha_{3} + \alpha_{2}^2}  }^2 \frac{1}{\upperboundL^2} \theta_{k+3} \pr{ \theta_{k+3} - 1 }  \norm{ \nabla f(x_k) }^2.
    \end{equation*}
    Summing up from \( 0 \) to \( k \), 
    and applying \Cref{lemma:lyapunov_analysis_k0}, 
    we obtain \eqref{ineq:z_converges}. 
    Note that since $\theta_k = \Omega\pr{k}$, we have $\sum_{i=1}^{k}  \theta_{i+3} \pr{ \theta_{i+3} - 1 }=\Omega\pr{k^3}$. 
\end{proof}

\begin{proof} [Proof of \Cref{thm:main_tight}] 
    The proofs immediately follow from Proposition~\ref{prop:proof_of_thm_1_2} and Proposition~\ref{prop:adanag_gradient_norm}. 
    Note, we have $\theta_{k+3}(\theta_{k+3}-1)=\theta_{k+2}^2$ by \eqref{eq:theta} and $\theta_{k+2}^2 \ge \frac{1}{4}(k+4)^2$ by \Cref{lem:theta_properties}.
    From \Cref{lemma:local_smoothness} and $L_0\le L$, 
    we know $1/S_{k+1}\le \upperboundL$. 
    Finally, by plugging in the numerical values from \Cref{tab:numerical_values} with appropriate rounding, we obtain the convergence results in \Cref{thm:main_tight}. 
\end{proof}

\subsection{Discussion: Core points of the proof} \label{sec:proof_core_points}

Designing a new algorithm with theoretical guarantees means finding a new proof.  
Organizing the core elements of a proof often enables us to consider a more general strategy for finding a new proof.  
As mentioned in \cite[\S~2.1]{MalitskyMishchenko2024_adaptive}, bounding the term $\norm{ \nabla f(x_{k+1}) }^2$ that appears in $V_{k+1} - V_k$ is an important point and arguably a tricky part of the proof for adaptive algorithms.  
In \cite{MalitskyMishchenko2024_adaptive}, the authors were able to propose AdGD2 with a refined step size by treating this point in a novel way in Lemma~1.
We consider the following to be the core elements of our proof related to this issue. 
Recalling \eqref{ineq:core_is_quadratic}, we note that the term $Q_k$ defined in \eqref{eq:adanag_lyapunov_diff_quadratic} is the core quantity we need to bound.

\begin{itemize}[leftmargin=*]
    \item [\textit{(i)}] \textit{Definition of $L_{k+1}$.} \\
        Observing \eqref{eq:lyapunov_difference_without_grad-2nd-line}, we see that a negative term  
        $- \frac{\ssz_{k+1} A_k}{2L_{k+1}} \norm{ \nabla f(x_{k}) - \nabla f(x_{k+1}) }^2$ appears, 
        which later subsumed into $Q_k$.  
        Such a term can appear because 
        $f(x_{k+1}) - f(x_k) + \inner{ \nabla f(x_{k+1}) }{ x_k - x_{k+1} }$  
        cancels out the term  
        $\frac{1}{2L_{k+1}} \norm{ \nabla f(x_{k}) - \nabla f(x_{k+1}) }^2$. 
        This is a consequence of the definition of $L_{k+1}$ in \eqref{define-Lk+1}, and also the motivation for the definition as discussed in \Cref{sec:motivation_of_lyapunov_L}.  
        Note that the well-definedness of $L_{k+1}$ is based on Corollary~\ref{cor:L_k_bound}, which crucially exploits both the locally smooth and convex assumptions of $f$. 
        
    \item [\textit{(ii)}] \textit{The positive part of the coefficient of $\norm{ \nabla f(x_{k+1}) }^2$ contains $\ssz_{k+1}^2$.} \\ 
        Taking a closer look at the coefficient of the $\norm{ \nabla f(x_{k+1}) }^2$ term in \eqref{eq:adanag_lyapunov_diff_quadratic}, and recalling the definition of $\rho_k$ in \eqref{eq:adanag_r_definition}, we find that its positive part is $\frac{1}{2}\pr{ B_{k+1} + \alpha_{k+1}^2\theta_{k+3}^2 }\ssz_{k+1}^2$. 
        The important point is that it contains $\ssz_{k+1}^2$, which we can control to be small since it is defined after $\ssz_{k}$ and $L_{k+1}$ are determined.  
        Tracing its origin, $B_{k+1}$ originates directly from the definition of $V_{k+1}$ in \eqref{eq:Lyapunov}, and $\alpha_{k+1}^2\theta_{k+3}^2$ originates from the difference in the $\norm{z_{k+1}-x_\star}^2$ term as observed in \eqref{eq:z_square_diff}. 
        A crucial point is that the index of $z$ is $k+1$ rather than $k$. 
        Note that this shifted index of $z$ in $V_k$ was inherited from the Lyapunov function of OGM \eqref{eq:OGM_lyapunov}.    
\end{itemize}
In the next section, we introduce a novel adaptive gradient descent method obtained by applying the above observations. 
\section{A novel gradient-descent-type adaptive method } \label{sec:adagd}

In this section, we propose a gradient-descent-type (GD-type) adaptive algorithm whose update rule is of the following form: 
\[
x_{k+1} = x_k - \ssz_k \nabla f(x_k),
\]
for some step size schedule $s_k$. That is, there is no momentum term.

\subsection{The AdaGD Algorithm}

Our \gdtype\ adaptive method \adagd\ is described in Algorithm \ref{alg:AdaGD}. 

\begin{algorithm}[ht]
 \caption{\textbf{Ada}ptive \textbf{G}radient \textbf{D}escent (AdaGD)}
 \label{alg:AdaGD} 
 \begin{algorithmic}[1]
 \STATE \textbf{Input:} $x_0 \in \mathbb{R}^d$, 
 $\ssz_{0} >0$, $\set{A_k}_{k\ge0}$,  $\set{B_k}_{k\ge0}$, $A_{-1} = 0$
   \FOR{$k = 0,1,\dots$}
  \STATE 
   \begin{equation*}     
        \begin{aligned}
        x_{k+1} &= x_k -  \ssz_{k} \nabla f(x_k) \\
        L_{k+1} &= -\frac{ \frac{1}{2} \norm{ \nabla f(x_{k+1}) - \nabla f(x_{k}) }^2 }            { f(x_{k+1}) - f(x_{k}) + \inner{\nabla f(x_{k+1})}{x_{k}-x_{k+1}} }
        \end{aligned}
    \end{equation*}
    \begin{equation} \label{eq:AdaGD} 
        \qquad s_{k+1} = 
             \min\set{ \frac{A_{k-1}+1}{A_{k}} s_{k}, \pr{ \frac{A_{k}}{B_{k}} + \frac{B_{k+1} +1}{A_{k}} }^{-1} \frac{1}{L_{k+1}}  } 
    \end{equation}
   \ENDFOR
 \end{algorithmic}
\end{algorithm}

Our main result in this section is given in Theorem \ref{thm:gd}. 
We provide its proof in \Cref{section:gd_analysis}. 

\begin{theorem} \label{thm:gd}
    Let $f$ be a locally smooth convex function. Suppose $\set{x_k}_{k\ge0}$ is generated by \adagd\ (Algorithm \ref{alg:AdaGD}) with $s_0 = r A_0 \inverLzero$ 
    where $L_0$ defined as in \eqref{eq:initial_guess}, 
    and positive
    $\set{A_k}_{k\ge0}$, $\set{B_k}_{k\ge0}$  satisfying 
\begin{equation}   \label{ineq:AB_condition_for_step_size_lowerbound} 
    \begin{aligned}
        \frac{A_{k}+1}{A_{k+1}} &\ge 1, \qquad 
        \pr{ \frac{A_k}{B_k} + \frac{B_{k+1}+1}{A_k} }^{-1} \ge r, \qquad \forall k \ge 0,
    \end{aligned}
\end{equation}
with some $r\in(0,1]$ and
\begin{equation}    \label{ineq:gd_A_B_relation}
    B_{k+2} \le \frac{A_{k+1}^2}{A_{k}+1} - 1, \qquad \forall k \ge 0.
\end{equation}
Let $L$ be a smoothness parameter of $f$ on 
$\bar{B}_{3\bar{R}}(x_\star)\cup\bar{B}_{3\|\tilde{x}_0-x_\star\|}(x_\star)$, 
where $\tilde{x}_0$ is the vector used in \eqref{eq:initial_guess} and $\bar{R} = \norm{x_{0} - x_\star}^2 + ( B_0+1 ) s_0^2 \norm{ \nabla f(x_0) }^2$. 
Define
$\upperboundconstant = \bar{R} - \frac{s_0}{\gdupperboundL}\norm{ \nabla f(x_0) }^2$. 
 
\begin{itemize}
    \item 
    Then the following holds, which indicates a non-ergodic convergence rate when $\lim_{k \to \infty} A_k = \infty$.
    \begin{equation}    \label{ineq:adagd_g_nonergodic}
        f(x_k) - f_\star 
        \le \frac{\gdupperboundL}{2 r A_k} \upperboundconstant 
        =  \cO\pr{ \frac{\gdupperboundL}{A_k} }.
    \end{equation}
    
    \item
    Also, $x_k$ achieves minimum selection convergence rate
    \begin{equation}    \label{ineq:adagd_ergodic}
        \begin{aligned}
            \min_{i \in \set{1,\dots, k}}(f(x_{i}) - f_\star) &\le \frac{1}{2\sum_{i=1}^{k} s_{i}} \upperboundconstant 
            = \cO\pr{ \frac{\gdupperboundL}{k} }
        \end{aligned}
    \end{equation}
    and 
    \begin{equation*}
        \min_{i \in \set{1,\dots, k}} \norm{ \nabla f(x_{i}) }^2 
        \le \frac{\gdupperboundL^2}{r\sum_{i=1}^{k}  (1 + r \min\set{A_i,B_i})} \upperboundconstant 
        = \cO\pr{ \frac{\gdupperboundL^2}{ k + \sum_{i=1}^{k}  A_i} }.
    \end{equation*}
\end{itemize}
\end{theorem}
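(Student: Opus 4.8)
The plan is to replay the Lyapunov analysis of \adanag\ (\Cref{lemma:lyapunov_analysis}) specialized to the momentum-free setting: in \eqref{eq:AdaNAG} the factor $\alpha_{k+1}\theta_{k+3}$ multiplying $\nabla f(x_k)$ collapses to $1$, and the auxiliary sequence $z_{k+1}$ is replaced by the iterate $x_{k+1}$ itself. Concretely, I would take
\[
    V_k = \ssz_{k+1} A_k \pr{ f(x_k) - f_\star } + \tfrac12 \ssz_k^2 B_k \norm{ \nabla f(x_k) }^2 + \tfrac12 \norm{ x_{k+1} - x_\star }^2, \qquad k \ge 0,
\]
and, as in \Cref{lemma:lyapunov_analysis_k0}, $V_{-1} = \tfrac12 \norm{x_0 - x_\star}^2 + \tfrac12 (B_0 + 1)\ssz_0^2 \norm{\nabla f(x_0)}^2 = \tfrac12\bar{R}$. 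The heart of the argument is the one-step estimate
\[
    V_{k+1} - V_k \le -\tfrac12 \min\set{ \ssz_k^2 B_k,\ \tfrac{\ssz_{k+1}}{L_{k+1}} A_k } \norm{ \nabla f(x_k) }^2 \le 0, \qquad k \ge 0,
\]
the exact analogue of \Cref{lemma:lyapunov_analysis}, now with $\rho_k := \tfrac{B_{k+1}+1}{A_k}$ playing the role that the constant $1$ played there.

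I would establish this by the same four-part decomposition. Part~1: the three step-size inequalities $\ssz_{k+2} \le \tfrac{A_k+1}{A_{k+1}}\ssz_{k+1}$, $\rho_k \ssz_{k+1} \le \ssz_k$, and $\ssz_{k+1} \le \pr{\tfrac{A_k}{B_k} + \rho_k}^{-1}\tfrac{1}{L_{k+1}}$; the first and third are immediate from the two branches of \eqref{eq:AdaGD}, while the middle one --- the analogue of \eqref{ineq:AdaNAG_generalized_stepsize-b}, which in \adanag\ reduced to plain monotonicity of $\ssz_k$ since $\rho_k \equiv 1$ there --- is precisely where \eqref{ineq:gd_A_B_relation} enters: combined with $\ssz_{k+1} \le \tfrac{A_{k-1}+1}{A_k}\ssz_k$ it yields $\rho_k \ssz_{k+1} \le \tfrac{(A_{k-1}+1)(B_{k+1}+1)}{A_k^2}\ssz_k \le \ssz_k$. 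Part~2: expand $V_{k+1}-V_k$ into the algebraic identity, using only $x_{k+1}-x_k = -\ssz_k\nabla f(x_k)$ and $x_{k+2}-x_{k+1} = -\ssz_{k+1}\nabla f(x_{k+1})$, to isolate a quadratic form $Q_k$ in $(\nabla f(x_k), \nabla f(x_{k+1}))$ of the same shape as \eqref{eq:adanag_lyapunov_diff_quadratic}. Part~3: the discriminant argument of \Cref{lem:determinant} carries over verbatim --- splitting on $\ssz_k L_{k+1} \le 1$ versus $\ge 1$ and using $\rho_k \ssz_{k+1} \le \ssz_k$ together with $\ssz_{k+1} \le (\tfrac{A_k}{B_k}+\rho_k)^{-1}\tfrac{1}{L_{k+1}}$ --- giving $Q_k \le -\tfrac12\min\set{\ssz_k^2 B_k, \tfrac{\ssz_{k+1}}{L_{k+1}}A_k}\norm{\nabla f(x_k)}^2$. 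Part~4: the remaining terms of the identity are nonpositive, since $\ssz_{k+1}(A_k+1) \ge \ssz_{k+2}A_{k+1}$ by the first branch of \eqref{eq:AdaGD}, $f(x_{k+1}) - f_\star - \inner{\nabla f(x_{k+1})}{x_{k+1}-x_\star} \le 0$ by convexity, and $f(x_{k+1}) - f(x_k) + \inner{\nabla f(x_{k+1})}{x_k - x_{k+1}} + \tfrac{1}{2L_{k+1}}\norm{\nabla f(x_k) - \nabla f(x_{k+1})}^2 = 0$ by \eqref{define-Lk+1}. The degenerate case $\nabla f(x_k) = \nabla f(x_{k+1})$, so $L_{k+1}=0$, is handled as in \Cref{lemma:lyapunov_analysis}: drop the $\tfrac{1}{2L_{k+1}}\norm{\nabla f(x_{k+1}) - \nabla f(x_k)}^2$ term and close using $\rho_k \ssz_{k+1} \le \ssz_k$ once more.

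Given the one-step estimate, the rest is assembly. The base case $V_0 \le V_{-1} - \tfrac{\ssz_0}{2\upperboundL}\norm{\nabla f(x_0)}^2 = \tfrac12\upperboundconstant$ follows as in \Cref{lemma:lyapunov_analysis_k0} by expanding $\tfrac12\norm{x_1 - x_\star}^2 - \tfrac12\norm{x_0 - x_\star}^2$ and applying \Cref{lemma:local_smoothness} to $x_0, x_\star$ --- hence the requirement that $L$ be a smoothness parameter on $\bar{B}_{3\bar{R}}(x_\star)$ (and on $\bar{B}_{3\|\tilde{x}_0 - x_\star\|}(x_\star)$, so that the initial guess \eqref{eq:initial_guess} satisfies $L_0 \le L$). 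A short induction --- the analogue of \Cref{lemma:stepsize_lowerbound}, using both parts of \eqref{ineq:AB_condition_for_step_size_lowerbound} together with $\ssz_0 = r A_0 \inverLzero$ --- gives $\ssz_k \ge r\min\set{1/L_0,\dots,1/L_k} \ge r/\upperboundL$ (using $L_j \le L$). Chaining $V_k \le V_{-1} - \tfrac{\ssz_0}{2\upperboundL}\norm{\nabla f(x_0)}^2$ and discarding nonnegative terms gives $\ssz_{k+1}A_k(f(x_k) - f_\star) \le V_k \le \tfrac12\upperboundconstant$, hence $f(x_k) - f_\star \le \tfrac{\upperboundconstant}{2\ssz_{k+1}A_k} \le \tfrac{\upperboundL\,\upperboundconstant}{2 r A_k}$, which is \eqref{ineq:adagd_g_nonergodic}. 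The two minimum-selection bounds come from telescoping: summing the one-step estimate controls $\sum_i \min\set{\ssz_i^2 B_i, \tfrac{\ssz_{i+1}}{L_{i+1}}A_i}\norm{\nabla f(x_i)}^2$, which combined with the gradient term retained inside $V_k \ge 0$ and the step-size lower bound yields the $\min_i\norm{\nabla f(x_i)}^2$ rate after collecting the $1 + r\min\set{A_i,B_i}$ coefficients; and telescoping the $f$-value part of the decrease gives \eqref{ineq:adagd_ergodic}.

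The main obstacle is Parts~1--3 in the abstract $(A_k, B_k)$ setting. In \adanag\ the explicit choices of $\theta_k$, $\alpha_k$ pinned down $\rho_k \equiv 1$, so \eqref{ineq:AdaNAG_generalized_stepsize-b} was simply ``$\ssz_k$ is nonincreasing''; here $\rho_k = (B_{k+1}+1)/A_k$ is free, and the proof closes only because \eqref{ineq:gd_A_B_relation} is exactly the condition ensuring $\rho_k \ssz_{k+1} \le \ssz_k$. So the real work is recognizing that \eqref{ineq:AB_condition_for_step_size_lowerbound}--\eqref{ineq:gd_A_B_relation} together form a sufficient set of structural hypotheses, checking that the $k=0$ boundary (with $A_{-1}=0$) is covered, and re-running the discriminant computation of \Cref{lem:determinant} with the general $\rho_k$. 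A secondary difficulty is the bookkeeping for the precise coefficient $1 + r\min\set{A_i, B_i}$ appearing in the squared-gradient-norm rate.
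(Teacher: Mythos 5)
Your overall blueprint is right and matches the paper's: the Lyapunov function $V_k = s_{k+1}A_k(f(x_k)-f_\star) + \tfrac12 s_k^2 B_k\|\nabla f(x_k)\|^2 + \tfrac12\|x_{k+1}-x_\star\|^2$ with $V_{-1}=\tfrac12\bar R$, the four-part decomposition from \Cref{lemma:lyapunov_analysis} with $\rho_k = (B_{k+1}+1)/A_k$ replacing the constant $1$, and you correctly identify the key structural fact that \eqref{ineq:gd_A_B_relation} combined with the first branch of \eqref{eq:AdaGD} is exactly what restores $\rho_k s_{k+1}\le s_k$ (the analogue of \eqref{ineq:AdaNAG_generalized_stepsize-b}). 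The step-size lower bound $s_k \ge r/L$ via the analogue of \Cref{lemma:stepsize_lowerbound} and the derivation of \eqref{ineq:adagd_g_nonergodic} from $s_{k+1}A_k(f(x_k)-f_\star)\le V_k\le \tfrac12 R$ are also as in the paper.

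However, your stated one-step estimate is too lossy, and this gap propagates to the two minimum-selection rates. You write $V_{k+1}-V_k\le -\tfrac12\min\{s_k^2 B_k, \tfrac{s_{k+1}}{L_{k+1}}A_k\}\|\nabla f(x_k)\|^2$, having discarded everything else in Part~4. But the paper's Proposition~\ref{lemma:gd_lyapunov_analysis} deliberately \emph{retains} two additional pieces: (i) the nonpositive function-value terms $(s_{k+1}A_k + s_{k+1} - s_{k+2}A_{k+1})(f_\star - f(x_{k+1}))$, which when summed telescope to a lower bound $\big(s_1 A_0 + \sum_{i=1}^k s_i\big)\min_i(f(x_i)-f_\star)$ — this is what gives \eqref{ineq:adagd_ergodic}; and (ii) an extra $-\tfrac{s_{k+1}}{2L}\|\nabla f(x_{k+1})\|^2$ coming from \emph{strengthening} the convexity inequality you use in Part~4 to the local-smoothness inequality $f(x_{k+1}) - f_\star - \langle\nabla f(x_{k+1}), x_{k+1}-x_\star\rangle \le -\tfrac{1}{2L}\|\nabla f(x_{k+1})\|^2$ from \Cref{lemma:local_smoothness}. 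Without (ii), the summed gradient-norm bound only controls $\sum_i r\min\{A_i,B_i\}\|\nabla f(x_i)\|^2$ and cannot produce the $1$ in the stated coefficient $1 + r\min\{A_i,B_i\}$; without (i) there is nothing to telescope for \eqref{ineq:adagd_ergodic}, so your closing sentence ``telescoping the $f$-value part of the decrease gives \eqref{ineq:adagd_ergodic}'' is inconsistent with your own one-step estimate. (Note \eqref{ineq:adagd_ergodic} yields $\cO(1/k)$ even when $A_k=\Theta(1)$, where \eqref{ineq:adagd_g_nonergodic} gives only $o(1)$, so it is not subsumed.)

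Finally, upgrading convexity to the local-smoothness inequality (ii) requires knowing that $x_{k+1}\in \bar B_{\bar R}(x_\star)$, which is not free. The paper resolves this circularity by a bootstrap: it first proves the weaker estimate \eqref{ineq:gd_ineq_goal} using only convexity, deduces $\tfrac12\|x_k-x_\star\|^2\le V_{k-1}\le V_{-1}=\tfrac12\bar R$, and only then invokes \Cref{lemma:local_smoothness} to tighten the bound. Your proposal never establishes boundedness of the iterates, which is essential both for the local-smoothness strengthening and for the lower bound $L_k\le L$ underlying $s_k\ge r/L$ in the locally-smooth setting.

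Thus the one-step decrease and nonergodic rate are essentially correct, but the two minimum-selection bounds as stated are not obtainable from your estimate: you need to keep the retained function-value and $\|\nabla f(x_{k+1})\|^2$ terms, replace convexity by local smoothness in Part~4, and add the boundedness bootstrap.
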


\vspace{4mm}
Note that from the first condition of \eqref{ineq:AB_condition_for_step_size_lowerbound}, we see that:
\begin{equation*}
    \frac{A_{k}+1}{A_{k+1}} \ge 1
    \quad \iff \quad 
    A_{k+1} - A_k \le 1
    \quad \Longrightarrow \quad
    A_{k} \le A_0 + k.
\end{equation*}
Therefore, the fastest rate we can obtain from \eqref{ineq:adagd_g_nonergodic} is $\cO\pr{1/k}$, which matches the order of non-adaptive gradient descent method. In this context, the choice of $A_k = \Theta\pr{k}$ is worth emphasizing.

\begin{corollaryT} 
\label{cor:adagd_1/k_rate} 
    Suppose $\set{x_k}_{k\ge0}$ is generated by \adagd\  with $A_k = \gamma (k + 1) + 2$ and $B_k = \gamma(k+1)$, where $\gamma \in (0,1]$.  
    Then
    \begin{equation}    \label{ineq:adagd_nonergodic_1/k}
        f(x_k) - f_\star
        \le \frac{\gdupperboundL}{2r} \frac{1}{\gamma (k + 1) + 2} \upperboundconstant 
        =  \cO\pr{ \frac{\gdupperboundL}{k} },
    \end{equation}
    where {$r = \pr{ \frac{A_{0}}{B_{0}} + \frac{B_{1} +1}{A_{0}} }^{-1} = \frac{\gamma(\gamma+2)}{3\gamma^2 + 5\gamma + 4}\le1$}, and $\gdupperboundL$, $\upperboundconstant$ are defined as in \Cref{thm:gd}. 
    Also,
    \begin{equation*}
        \min_{i\in\set{1,\dots, k}} \norm{\nabla f(x_i)}^2 
        \le  \frac{2\gdupperboundL^2}{r} \frac{1}{k (r\gamma(k + 3) + 2)}  \upperboundconstant 
        = \mathcal{O}\pr{ \frac{\gdupperboundL^2}{k^2} }.
    \end{equation*}
\end{corollaryT}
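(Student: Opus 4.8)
The plan is to obtain Corollary~\ref{cor:adagd_1/k_rate} as a direct specialization of Theorem~\ref{thm:gd}: the whole task is to verify that the choice $A_k = \gamma(k+1)+2$, $B_k = \gamma(k+1)$ (with $\gamma\in(0,1]$, the step size $s_0 = rA_0/L_0$, and $r$ as stated) meets the hypotheses \eqref{ineq:AB_condition_for_step_size_lowerbound} and \eqref{ineq:gd_A_B_relation}, and then to substitute $A_k,B_k$ into the three conclusions of Theorem~\ref{thm:gd} and simplify. Positivity of $\set{A_k}$, $\set{B_k}$ is clear since $\gamma>0$.

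\emph{Step 1: the conditions in \eqref{ineq:AB_condition_for_step_size_lowerbound}.} The first one, $\frac{A_k+1}{A_{k+1}}\ge1$, is immediate because $A_{k+1}-A_k=\gamma\le1$. For the second, I would write, with $n:=k+1\ge1$,
\[
    \frac{A_k}{B_k}+\frac{B_{k+1}+1}{A_k} = 2 + \frac{2}{\gamma n} + \frac{\gamma-1}{\gamma n+2},
\]
and show the right-hand side is nonincreasing in $n$: the derivative of $g(n):=\frac{2}{\gamma n}+\frac{\gamma-1}{\gamma n+2}$ is $\le0$ iff $2(\gamma n+2)^2\ge(1-\gamma)\gamma^2 n^2$, which holds since $(\gamma n+2)^2\ge\gamma^2 n^2$ and $2\ge 1-\gamma$. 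Hence the expression is maximal over $k\ge0$ at $k=0$, where it equals $\frac{\gamma+2}{\gamma}+\frac{2\gamma+1}{\gamma+2}=\frac{3\gamma^2+5\gamma+4}{\gamma(\gamma+2)}=1/r$; taking reciprocals gives $\pr{\frac{A_k}{B_k}+\frac{B_{k+1}+1}{A_k}}^{-1}\ge r$ for every $k$, with $r = \pr{\frac{A_0}{B_0}+\frac{B_1+1}{A_0}}^{-1}$ as claimed, and $r\le1$ follows from $\gamma(\gamma+2)\le 3\gamma^2+5\gamma+4$.

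\emph{Step 2: the condition \eqref{ineq:gd_A_B_relation}.} I would clear denominators: $B_{k+2}\le\frac{A_{k+1}^2}{A_k+1}-1$ is equivalent to $(\gamma(k+3)+1)(\gamma(k+1)+3)\le(\gamma(k+2)+2)^2$. Expanding both sides (equivalently, with $m=k+2$, comparing $(\gamma m+\gamma+1)(\gamma m-\gamma+3)$ with $(\gamma m+2)^2$), the difference of right minus left collapses to $(\gamma-1)^2\ge0$, so the inequality holds for all $k\ge0$ (with equality exactly when $\gamma=1$). This verifies all hypotheses of Theorem~\ref{thm:gd}.

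\emph{Step 3: substitution, and the main point of care.} The non-ergodic bound \eqref{ineq:adagd_g_nonergodic} becomes $f(x_k)-f_\star\le\frac{\gdupperboundL}{2rA_k}\upperboundconstant=\frac{\gdupperboundL}{2r}\frac{1}{\gamma(k+1)+2}\upperboundconstant$, which is \eqref{ineq:adagd_nonergodic_1/k}. For the gradient-norm bound, note $A_i=B_i+2>B_i$, so $\min\set{A_i,B_i}=B_i=\gamma(i+1)$ and
\[
    r\sum_{i=1}^{k}\pr{1+r\min\set{A_i,B_i}} = rk + r^2\gamma\sum_{i=1}^{k}(i+1) = \frac{rk}{2}\pr{2+r\gamma(k+3)};
\]
plugging this into the third conclusion of Theorem~\ref{thm:gd} gives $\min_{i\in\set{1,\dots,k}}\norm{\nabla f(x_i)}^2\le\frac{2\gdupperboundL^2}{r}\frac{1}{k(r\gamma(k+3)+2)}\upperboundconstant=\cO\pr{\gdupperboundL^2/k^2}$. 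I do not anticipate a genuine obstacle: the only steps needing a little care are the monotonicity argument in Step~1, which is what legitimizes taking $r$ to be the $k=0$ value rather than an infimum over $k$, and recognizing the clean factorization $(\gamma-1)^2$ in Step~2; the remainder is arithmetic simplification of bounds already established in Theorem~\ref{thm:gd}.
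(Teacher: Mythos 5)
Your proposal is correct and follows essentially the same route as the paper: verify the two hypotheses \eqref{ineq:AB_condition_for_step_size_lowerbound} and \eqref{ineq:gd_A_B_relation} for the given $A_k,B_k$, then substitute into the conclusions of Theorem~\ref{thm:gd}. The only cosmetic differences are that the paper shows $\frac{A_k}{B_k}+\frac{B_{k+1}+1}{A_k}$ is decreasing by rewriting it as $2+\frac{\gamma+1+4/B_k}{B_k+2}$ (manifestly decreasing in $B_k$) rather than via a derivative bound, and it verifies \eqref{ineq:gd_A_B_relation} by computing $B_{k+2}-\bigl(\frac{A_{k+1}^2}{A_k+1}-1\bigr)=-\frac{(1-\gamma)^2}{A_k+1}$ directly instead of cross-multiplying; both are the same algebra.
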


\begin{proof}
    It is sufficient to check the two conditions \eqref{ineq:AB_condition_for_step_size_lowerbound} 
 and \eqref{ineq:gd_A_B_relation}. 
    Since $A_{k+1} - A_k = \gamma \le 1$, the first inequality of \eqref{ineq:AB_condition_for_step_size_lowerbound} holds. 
    Observe that
    \begin{equation*}
        \frac{A_k}{B_k} + \frac{B_{k+1}+1}{A_k} 
        = \frac{B_k+2}{B_k} + \frac{B_{k} + \gamma + 1}{B_k+2} 
        = 2 + \frac{ (\gamma+1)B_k+4 }{B_k(B_k+2)}
        = 2 + \frac{ \gamma+1 + \frac{4}{B_k} }{B_k+2}
    \end{equation*}
    is decreasing for $k\ge0$, since $B_k$ is increasing. Hence, the second inequality of \eqref{ineq:AB_condition_for_step_size_lowerbound} holds with $r = \pr{ \frac{A_{0}}{B_{0}} + \frac{B_{1} +1}{A_{0}} }^{-1}$.
    Next, \eqref{ineq:gd_A_B_relation} follows from   
    \[
    B_{k+2} - \pr{ \frac{A_{k+1}^2}{A_k+1} -1}
    = A_k+2(\gamma-1) -  \frac{(A_{k}+\gamma)^2}{A_k+1} +1
    = -\frac{(1-\gamma)^2}{A_k+1}
    \le0.\] 
    Finally, 
    \[
        \sum_{i=1}^k (1 + r\min\set{A_i,B_i}) = \sum_{i=1}^{k} \pr{ 1 + r \gamma (i+1) } = \frac{1}{2}k (r\gamma(k + 3) + 2).
    \]
    So the desired conclusion follows by applying \Cref{thm:gd}. 
\end{proof}

\vspace{-4mm}
\paragraph{Non-ergodic convergence.} 
Note that both \eqref{ineq:adagd_g_nonergodic} and \eqref{ineq:adagd_nonergodic_1/k} are non-ergodic convergence results. To the best of our knowledge, these are the first non-ergodic results for GD-type (i.e., no momentum) 
adaptive algorithms. In contrast, only ergodic convergence results are available for existing GD-type adaptive algorithms \cite{MalitskyMishchenko2020_adaptive, LatafatThemelisStellaPatrinos2024_adaptive, MalitskyMishchenko2024_adaptive, ZhouMaYang2024_adabb}.

\subsection{Convergence analysis of AdaGD (Proof of Theorem \ref{thm:gd})} 
\label{section:gd_analysis}

We use the Lyapunov function motivated by the discussion in \Cref{sec:proof_core_points}. 
The overall proof structure resembles that of \adanag. 
We first prove $V_{k+1}\le V_k$ for $k\ge-1$.

\begin{proposition} \label{lemma:gd_lyapunov_analysis}
Let $f$ be a locally smooth convex function. 
Suppose $\set{x_k}_{k\ge0}$ is generated by \adagd\ 
with $\set{A_k}_{k\ge0}$ and $\set{B_k}_{k\ge0}$ satisfying the assumptions in \Cref{thm:gd}. 
Define $B_{-1} = B_0 + 1$, $x_{-1} = x_0$, $s_{-1}=s_0$, and the Lyapunov function
\begin{equation*}
    V_k = \ssz_{k+1} A_k (f(x_k) - f_\star)
            + \frac{1}{2} \ssz_{k}^2 B_k \norm{ \nabla f(x_k) }^2 
            + \frac{1}{2} \norm{x_{k+1} - x_\star}^2,
            \qquad \forall k\ge-1.
\end{equation*}
Denote $\tilde{\ssz}_{k} = \min\big\{s_k, s_{k+1}, \frac{1}{L_{k+1}}\big\}$ for $k\ge-1$.  
Then the following inequality holds: 
\begin{equation}\label{prop11}
    \begin{aligned}
        V_{k+1} - V_k 
        &\le (\ssz_{k+1} A_k + s_{k+1} - \ssz_{k+2} A_{k+1}) (f_\star - f(x_{k+1}))  \\ & \phantom{=}
        - \frac{s_{k+1}}{2\gdupperboundL}  \norm{ \nabla f(x_{k+1}) }^2  
        - \frac{\tilde{\ssz}_k^2}{2} \min\set{A_k, B_k} \norm{ \nabla f(x_k) }^2, \qquad \forall k\ge-1.
    \end{aligned}
\end{equation} 
Also, $x_k \in \bar{B}_{\bar{R}}(x_\star)$ holds. Here, both $\gdupperboundL$ and $\bar{R}$ are defined as in \Cref{thm:gd}. 
\end{proposition}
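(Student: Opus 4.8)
The plan is to mimic the structure of the proof of Theorem~\ref{lemma:lyapunov_analysis}, which was organized into the four parts ``step-size inequalities,'' ``exact expansion of $V_{k+1}-V_k$,'' ``bounding the quadratic remainder,'' and ``nonpositivity of the residual terms.'' First I would establish the GD-analogues of \eqref{ineq:AdaNAG_generalized_stepsize}: from the update rule \eqref{eq:AdaGD} together with the first condition in \eqref{ineq:AB_condition_for_step_size_lowerbound} one gets $s_{k+2}A_{k+1}\le s_{k+1}(A_k+1)$, and from the second condition (with the arithmetic convention $\frac{a}{0}=+\infty$) one gets both $s_{k+1}\le s_k$ and $s_{k+1}\le\big(\frac{A_k}{B_k}+\frac{B_{k+1}+1}{A_k}\big)^{-1}\frac1{L_{k+1}}$; these are the only facts about the step sizes I will need. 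Note that the choice $\rho_k=1$ that held identically for \adanag\ is here replaced by the constant ``$1$'' multiplying $B_{k+1}$, which is exactly why the $+1$ appears everywhere in \eqref{eq:AdaGD} and \eqref{ineq:gd_A_B_relation}.

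Next I would compute $V_{k+1}-V_k$ exactly. Using $x_{k+1}-x_k=-s_k\nabla f(x_k)$ and the telescoping identity
\[
\tfrac12\|x_{k+2}-x_\star\|^2-\tfrac12\|x_{k+1}-x_\star\|^2
=-s_{k+1}\inner{\nabla f(x_{k+1})}{x_{k+1}-x_\star}+\tfrac12 s_{k+1}^2\|\nabla f(x_{k+1})\|^2,
\]
and then replacing $f(x_{k+1})-f(x_k)+\inner{\nabla f(x_{k+1})}{x_k-x_{k+1}}$ by $-\frac1{2L_{k+1}}\|\nabla f(x_{k+1})-\nabla f(x_k)\|^2$ via \eqref{define-Lk+1}, I expect an identity of the shape
\[
V_{k+1}-V_k=(s_{k+1}A_k+s_{k+1}-s_{k+2}A_{k+1})(f_\star-f(x_{k+1}))
+s_{k+1}(f(x_{k+1})-f_\star-\inner{\nabla f(x_{k+1})}{x_{k+1}-x_\star})+Q_k,
\]
where $Q_k$ is a quadratic form in $\nabla f(x_{k+1})$ and $\nabla f(x_k)$ whose positive $\|\nabla f(x_{k+1})\|^2$-coefficient is $\tfrac12(B_{k+1}+1)s_{k+1}^2$ — crucially carrying the controllable factor $s_{k+1}^2$, exactly as highlighted in point \textit{(ii)} of \Cref{sec:proof_core_points}. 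The first residual line is handled by step-size inequality (a) giving its sign, and the second line is $\le -\frac{s_{k+1}}{2L}\|\nabla f(x_{k+1})\|^2\le 0$ by \Cref{lemma:local_smoothness} (this is where the ball $\bar B_{3\bar R}(x_\star)$ enters, and why I must first verify the iterates stay in $\bar B_{\bar R}(x_\star)$).

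The bulk of the work, and the main obstacle, is showing $Q_k\le -\frac12\tilde s_k^2\min\{A_k,B_k\}\|\nabla f(x_k)\|^2$ — the GD-analogue of \Cref{lem:determinant}. I would factor out $\frac{2L_{k+1}}{s_{k+1}A_k}$, use step-size inequality (c) to show the $\|\nabla f(x_{k+1})\|^2$-coefficient is nonnegative, and then reduce to a discriminant (Cauchy–Schwarz-type) condition, splitting into the cases $s_kL_{k+1}\le1$ and $s_kL_{k+1}\ge1$ just as before; the case split on which of $A_k,B_k$ is smaller, plus the condition \eqref{ineq:gd_A_B_relation} $B_{k+2}\le\frac{A_{k+1}^2}{A_k+1}-1$, should be exactly what is needed to close the algebra (I expect \eqref{ineq:gd_A_B_relation} to be used to control the cross term / the leftover $B$-coefficient when passing between consecutive indices). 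Finally, the claim $x_k\in\bar B_{\bar R}(x_\star)$ I would get by induction: the base case $k=0$ is the definition of $\bar R$ together with $V_{-1}=\frac12\bar R$ (using $x_{-1}=x_0$, $s_{-1}=s_0$, $B_{-1}=B_0+1$), and the inductive step follows since $\frac12\|x_{k+1}-x_\star\|^2\le V_k\le V_{-1}=\frac12\bar R$ once \eqref{prop11} has been established with its right-hand side shown nonpositive (the $(f_\star-f(x_{k+1}))$ term being nonpositive after the step-size inequality, the other two terms manifestly $\le0$). A mild subtlety is that the $\bar R$-bound is used inside \Cref{lemma:local_smoothness} to justify the $\frac1{2L}$ step, so the induction must be set up to prove ``$x_k\in\bar B_{\bar R}(x_\star)$ and $V_k\le V_{k-1}$'' simultaneously rather than sequentially.
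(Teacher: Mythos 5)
Your outline is correct and essentially matches the paper's proof: you have the exact $V_{k+1}-V_k$ decomposition via \eqref{define-Lk+1} and the telescope identity for $\|x_{k+1}-x_\star\|^2$, the quadratic remainder $Q_k$ with the controllable $\tfrac12(B_{k+1}+1)s_{k+1}^2$ coefficient, the case split on $s_kL_{k+1}\lessgtr1$ with \eqref{ineq:gd_A_B_relation} providing $\frac{B_{k+1}+1}{A_k}\le\frac{A_k}{A_{k-1}+1}$, and the step-size inequalities from \eqref{eq:AdaGD} and \eqref{ineq:AB_condition_for_step_size_lowerbound}. Two small remarks. First, the $\min\{A_k,B_k\}$ in the conclusion is not a separate case split; it comes from the single elementary inequality $\tilde s_k^2\min\{A_k,B_k\}\le\min\big\{s_k^2B_k,\tfrac{s_{k+1}}{L_{k+1}}A_k\big\}$, so the $s_kL_{k+1}$ split alone suffices. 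Second, the circularity you flag between the $\bar B_{\bar R}(x_\star)$-boundedness and the use of \Cref{lemma:local_smoothness} is resolved in the paper not by a simultaneous induction but sequentially: the intermediate inequality \eqref{ineq:gd_ineq_goal} keeps the second line as the raw convexity gap $s_{k+1}\big(f(x_{k+1})-f_\star-\langle\nabla f(x_{k+1}),x_{k+1}-x_\star\rangle\big)$, which is $\le0$ by plain convexity with no reference to any ball. That already gives $V_{k+1}\le V_k$ for all $k$, hence $\tfrac12\|x_{k}-x_\star\|^2\le V_{-1}=\tfrac12\bar R$ and thus the boundedness of the iterates, and only then is \Cref{lemma:local_smoothness} invoked once to upgrade the convexity gap to $-\tfrac{s_{k+1}}{2L}\|\nabla f(x_{k+1})\|^2$, yielding \eqref{prop11}. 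Your simultaneous-induction scheme would also work, but the sequential version avoids the bootstrap entirely and is what the paper does.
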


\begin{proof}
    Suppose we have shown 
    \begin{equation}    \label{ineq:gd_ineq_goal}
        \begin{aligned}
        V_{k+1} - V_k 
        &\le (\ssz_{k+1} A_k + s_{k+1} - \ssz_{k+2} A_{k+1}) (f_\star - f(x_{k+1})) \\ &\quad
            + s_{k+1} ( f(x_{k+1}) - f_\star - \inner{ \nabla f(x_{k+1}) }{  x_{k+1} - x_\star  } )  
            - \frac{\tilde{\ssz}_k^2}{2} \min\set{A_k, B_k} \norm{ \nabla f(x_k) }^2
        \end{aligned}
    \end{equation}
    for $k\ge-1$.  
    With \eqref{ineq:gd_ineq_goal}, we have 
    \begin{equation*}
        \frac{1}{2} \norm{ x_k - x_\star }^2 \le V_{k-1} \le \dots \le V_{-1} = \frac{1}{2} \bar{R}, \qquad \forall k\ge0.  
    \end{equation*}
    This implies $x_k \in \bar{B}_{\bar{R}}(x_\star)$. 
    Then we have $f(x_{k+1}) - f_\star - \inner{ \nabla f(x_{k+1}) }{  x_{k+1} - x_\star  } \le - \frac{1}{2\gdupperboundL} \norm{ \nabla f(x_{k+1}) }^2$ from \Cref{lemma:local_smoothness}, 
    therefore, \eqref{ineq:gd_ineq_goal} implies \eqref{prop11}. 
    The proof of \eqref{ineq:gd_ineq_goal} resembles the proof of \Cref{lemma:lyapunov_analysis}, and we place it in \Cref{appendix:remaining_proof_gd}. 
\end{proof}

Leveraging the boundedness of the iterate $x_k \in \bar{B}_{\bar{R}}(x_\star)$, we can achieve a lower bound for $s_k$.
We provide it in \Cref{lemma:gd_stepsize_lowerbound},
and the proof in \Cref{appendix:proof_of_gd_stepsize_lowerbound}. 

\begin{lemma} \label{lemma:gd_stepsize_lowerbound}
    Let $f$ be a locally smooth convex function.  
    Suppose $\set{s_k}_{k\ge0}$ is generated by \adagd\ with $\set{A_k}_{k\ge0}$ and $\set{B_k}_{k\ge0}$ satisfying the assumptions in \Cref{thm:gd}. 
    For $s_0$ and $\gdupperboundL$ defined in \Cref{thm:gd}, we have:
    \begin{equation}\label{lem12-2}
        \ssz_k \geq \frac{r}{\gdupperboundL}, \qquad \forall k\ge 1.
    \end{equation} 
\end{lemma}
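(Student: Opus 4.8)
The plan is to follow the two-step template of \Cref{lemma:stepsize_lowerbound} and \Cref{lemma:stepsize_lowerbound_L_smooth}: an algebraic induction giving a lower bound on $s_k$ in terms of the running minimum of the $1/L_j$'s, followed by an appeal to the boundedness of the iterates to replace that running minimum by $1/L$. Set $S_k = \min\set{ 1/L_0, 1/L_1, \dots, 1/L_k }$, which is nonincreasing in $k$, and I would prove the sharper statement $s_k \ge r S_k$ for all $k \ge 1$ by induction, using only the conditions \eqref{ineq:AB_condition_for_step_size_lowerbound} (and not \eqref{ineq:gd_A_B_relation}).

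For the base case $k = 1$: since $A_{-1} = 0$ and $s_0 = r A_0 / L_0$, the update \eqref{eq:AdaGD} with $k = 0$ reads $s_1 = \min\set{ \frac{1}{A_0}\, r A_0 / L_0, \ \pr{ \frac{A_0}{B_0} + \frac{B_1+1}{A_0} }^{-1} \frac{1}{L_1} }$; the first branch equals $r/L_0$, and the second condition in \eqref{ineq:AB_condition_for_step_size_lowerbound} bounds the second branch below by $r/L_1$, so $s_1 \ge r\min\set{1/L_0, 1/L_1} = r S_1$. For the inductive step $k \to k+1$ with $k \ge 1$: the first condition in \eqref{ineq:AB_condition_for_step_size_lowerbound} (with index $k-1$) gives $\frac{A_{k-1}+1}{A_k} \ge 1$, so the first branch of \eqref{eq:AdaGD} is at least $s_k \ge r S_k \ge r S_{k+1}$ by the induction hypothesis and monotonicity of $S_k$, while the second branch is at least $r/L_{k+1} \ge r S_{k+1}$ by the second condition in \eqref{ineq:AB_condition_for_step_size_lowerbound}; taking the minimum yields $s_{k+1} \ge r S_{k+1}$. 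The arithmetic convention $a/0 = +\infty$ covers the degenerate case $L_{k+1} = 0$.

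It then remains to show $S_k \ge 1/L$, i.e. $L_j \le L$ for $0 \le j \le k$. By \Cref{lemma:gd_lyapunov_analysis} every iterate satisfies $x_j \in \bar B_{\bar R}(x_\star)$; since $L$ is assumed to be a smoothness parameter of $f$ on $\bar B_{3\bar R}(x_\star)$, applying \Cref{lemma:local_smoothness} with $K = \set{x_{j-1}, x_j}$ gives the $L$-smoothness-type inequality between $x_{j-1}$ and $x_j$, which together with \Cref{cor:L_k_bound} and the arithmetic convention forces $L_j \le L$ for $j \ge 1$. For $j = 0$, the points $x_0$ and $\tilde{x}_0$ appearing in \eqref{eq:initial_guess} lie in $\bar B_{\bar R}(x_\star)$ and $\bar B_{\|\tilde{x}_0 - x_\star\|}(x_\star)$ respectively, hence in the set on which $L$ is a smoothness parameter, so $L_0 \le L$ is immediate from \eqref{eq:initial_guess}. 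Combining, $s_k \ge r S_k \ge r/L$ for all $k \ge 1$.

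The one point that needs care is ruling out circularity with \Cref{lemma:gd_lyapunov_analysis}: the boundedness $x_k \in \bar B_{\bar R}(x_\star)$ is derived there from the Lyapunov descent \eqref{prop11}, whose proof controls the $\norm{\nabla f(x_k)}^2$ terms through the purely algebraic conditions \eqref{ineq:AB_condition_for_step_size_lowerbound}--\eqref{ineq:gd_A_B_relation} rather than through any lower bound on $s_k$, so the dependence is one-directional; I would therefore simply invoke \Cref{lemma:gd_lyapunov_analysis} as already established. Everything else is routine substitution.
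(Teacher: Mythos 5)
Your proof is correct and follows essentially the same two-step structure as the paper's: first establish $s_k \ge r S_k$ by induction on $k$ using only \eqref{ineq:AB_condition_for_step_size_lowerbound}, then invoke the iterate boundedness from Proposition~\ref{lemma:gd_lyapunov_analysis} together with \Cref{lemma:local_smoothness} to bound $L_j \le \gdupperboundL$ and hence $S_k \ge 1/\gdupperboundL$. Your phrasing in the base case (an inequality $s_1 \ge r S_1$ rather than the equality the paper writes) is actually a bit more careful, since the second branch in \eqref{eq:AdaGD} uses $r_0^L$ which need only satisfy $r_0^L \ge r$; and the closing remark on non-circularity with Proposition~\ref{lemma:gd_lyapunov_analysis} is a useful observation that the paper leaves implicit.
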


\vspace{2mm} 
We are now ready to prove \Cref{thm:gd}. 
\vspace{-1mm} 
\begin{proof} [Proof of \Cref{thm:gd}]
    For notation simplicity, denote $\CC_k = \frac{1}{2} \tilde{\ssz}_k^2 \min\set{A_k, B_k}$. 
    First, summing up \eqref{prop11} from $0$ to $k-1$ we have 
    \[
        V_k + \sum_{i=0}^{k-1} \pr{ ( \ssz_{i+1} A_i - \ssz_{i+2} A_{i+1} + s_{i+1}) (f(x_{i+1}) - f_\star) + \frac{s_{i+1}}{2\gdupperboundL} \norm{ \nabla f(x_{i+1}) }^2
        + \CC_{i} \norm{ \nabla f(x_i) }^2    
        }
        \le V_0.
    \] {
    Plugging in $k=-1$ to \eqref{prop11}, we have
    \begin{equation*}    
        \begin{aligned}
        V_{0} - V_{-1}
        &\le (\ssz_{0} A_{-1} + s_{0} - \ssz_{1} A_{0}) (f_\star - f(x_{0})) \\ &\quad
            - \frac{s_{0}}{2\gdupperboundL}  \norm{ \nabla f(x_{0}) }^2 
            - \frac{\tilde{\ssz}_{-1}^2}{2} \min\set{A_{-1}, B_{-1}} \norm{ \nabla f(x_0) }^2 
        \le - \frac{s_{0}}{2\gdupperboundL}  \norm{ \nabla f(x_{0}) }^2.
        \end{aligned}
    \end{equation*}
    The second inequality follows from the fact $\ssz_{1} \le \frac{1}{A_0} \ssz_{0}$ which follows from \eqref{eq:AdaGD},  and $\min\set{A_{-1}, B_{-1}}=0$  since $A_{-1}=0$. 
    Therefore, we obtain $V_0 \le V_{-1} - \frac{s_0}{2\gdupperboundL} \norm{ \nabla f(x_0) }^2 = \frac{1}{2} \upperboundconstant$.} 
    Combining with $ s_{k+1}A_k(f(x_k) - f_\star) \leq V_k$, 
    we have 
    \begin{equation} \label{ineq:gd_core_ineq}
        \begin{aligned}
        s_{k+1}A_k(f(x_k) - f_\star) &+
        \sum_{i=1}^{k} ( \ssz_{i} A_{i-1} - \ssz_{i+1} A_{i} + s_{i}) (f(x_{i}) - f_\star)   \\
        &+ \sum_{i=1}^{k} \frac{s_{i}}{2\gdupperboundL} \norm{ \nabla f(x_{i}) }^2   
        + {\sum_{i=0}^{k-1}} \CC_{i} \norm{ \nabla f(x_i) }^2  
        \le 
        \frac{1}{2} \upperboundconstant.
    \end{aligned}
    \end{equation} 
    
    Since the summations in \eqref{ineq:gd_core_ineq} are nonnegative, from \eqref{ineq:gd_core_ineq} we have $s_{k+1}A_k(f(x_k) - f_\star) \le \frac{1}{2} \upperboundconstant$, 
    dividing both sides by $s_{k+1}A_k$ and applying \Cref{lemma:gd_stepsize_lowerbound}, we obtain 
    \begin{equation*}
        \begin{aligned}
            f(x_k) - f_\star  
            &\le \frac{\gdupperboundL}{2r A_k} \upperboundconstant  
            = \cO\pr{ \frac{\gdupperboundL}{A_k} }.
        \end{aligned}
    \end{equation*} 
        
    Next, considering only the function value terms on the left-hand side of \eqref{ineq:gd_core_ineq}, we obtain 
    \begin{equation}    \label{ineq:core_minimum}
        s_{k+1}A_k(f(x_k) - f_\star) +
           \sum_{i=1}^{k} ( \ssz_{i} A_{i-1} - \ssz_{i+1} A_{i} + s_{i}) (f(x_{i}) - f_\star)
            \le \frac{1}{2} \upperboundconstant.  
    \end{equation}
    Observing the sum of the coefficients of the $f(x_i) - f_\star$ terms, we see that
    \begin{equation*}
        s_{k+1}A_k + \sum_{i=1}^{k} ( \ssz_{i} A_{i-1} - \ssz_{i+1} A_{i} + s_{i}) 
        = s_1 A_0 + \sum_{i=1}^{k} s_{i} 
        \ge \sum_{i=1}^{k} s_{i} 
        \ge \frac{r}{\gdupperboundL} k,
    \end{equation*}
    where the second inequality is from \Cref{lemma:gd_stepsize_lowerbound}. 
    From \eqref{eq:AdaGD}, we know $\ssz_{i+1} A_i - \ssz_{i+2} A_{i+1} + s_{i+1}\ge0$.  
    Gathering these observations, from \eqref{ineq:core_minimum} we conclude 
    \begin{equation*}
        \begin{aligned}
            \min_{i \in \set{1,\dots, k}} (f(x_{i}) - f_\star) &\le \frac{\gdupperboundL}{2rk} \upperboundconstant 
            = \cO\pr{ \frac{\gdupperboundL}{k} }.
        \end{aligned}
    \end{equation*}

    Lastly, from \Cref{lemma:gd_stepsize_lowerbound} we know $s_k, \tilde{s}_k \ge {r}/{\gdupperboundL}$. 
   Focusing on the squared gradient norm terms in \eqref{ineq:gd_core_ineq}, 
   and recalling the definition $\CC_k = \frac{1}{2} \tilde{\ssz}_k^2 \min\set{A_k, B_k}$, 
   we obtain
    \begin{equation}    \label{ineq:gd_gradient_summable}
         \frac{r}{{2\gdupperboundL}^2}  \sum_{i=1}^{k} (1 + r \min\set{A_i,B_i}) \norm{ \nabla f(x_{i}) }^2
         \le \sum_{i=1}^{k+1} \frac{s_{i}}{2\gdupperboundL} \norm{ \nabla f(x_{i}) }^2 + \sum_{i=0}^{k} \CC_i \norm{ \nabla f(x_{i}) }^2
        \le  \frac{1}{2} \upperboundconstant . 
    \end{equation} 
    Therefore, we conclude  
    \[
         \min_{i \in \set{1,\dots, k}} \norm{ \nabla f(x_{i}) }^2 
         \le \frac{\gdupperboundL^2}{r\sum_{i=1}^{k} \pr{1 + r \min\set{A_i,B_i}}}   
         \upperboundconstant. 
    \] 
    Lastly, from the second inequality of \eqref{ineq:AB_condition_for_step_size_lowerbound}, it follows that $\limsup_{k\to\infty} \frac{A_k}{B_k} < \infty$. 
    Therefore, we obtain $A_k = \cO\pr{\min\set{A_k,B_k}}$, and thus $\frac{1}{\sum_{i=1}^{k} \pr{1 + r \min\set{A_i,B_i}}} \le \frac{1}{k+r \sum_{i=1}^{k} \min\set{A_i,B_i}} = \cO\pr{ \frac{1}{k+\sum_{i=1}^{k} A_i} }$. 
\end{proof}

\vspace{4mm}
Previous Lyapunov analysis also implies the pointwise convergence. 
\vspace{-1mm}

\begin{theorem} 
    Let $f$ be a locally smooth convex function. 
    Suppose $\set{s_k}_{k\ge0}$ is generated by \adagd\ with $\set{A_k}_{k\ge0}$ and $\set{B_k}_{k\ge0}$ satisfying the assumptions in \Cref{thm:gd}. Then, for some optimal point $\bar{x}_\star$, 
    \begin{equation*}
        \lim_{k\to\infty} x_k = \bar{x}_\star.
    \end{equation*}
\end{theorem}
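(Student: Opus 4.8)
The plan is to run the classical Opial / Fej\'er-monotonicity argument, using the Lyapunov function $V_k$ of Proposition~\ref{lemma:gd_lyapunov_analysis} as the potential and exploiting that its two nonnegative ``energy'' summands do not involve the reference minimizer. First I would record that, for \emph{every} minimizer $x_\star$, the sequence $\{V_k\}$ formed with that $x_\star$ is nonincreasing: the bound \eqref{ineq:gd_ineq_goal} is an algebraic identity together with a single use of convexity, and it holds for any such $x_\star$; for a minimizer the coefficient of $f_\star - f(x_{k+1})$ on its right-hand side is nonnegative by \eqref{eq:AdaGD}, the term $f(x_{k+1}) - f_\star - \inner{\nabla f(x_{k+1})}{x_{k+1}-x_\star}$ is nonpositive by convexity, and the last term is nonpositive, so $V_{k+1}\le V_k$. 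Since $s_k,A_k,B_k>0$ and $f(x_k)\ge f_\star$, we have $V_k\ge\frac12\norm{x_{k+1}-x_\star}^2\ge0$, hence $\{V_k\}$ converges; in particular $\{x_k\}$ is bounded (this is the conclusion $x_k\in\bar{B}_{\bar{R}}(x_\star)$ already in Proposition~\ref{lemma:gd_lyapunov_analysis}). Moreover, from \eqref{ineq:gd_gradient_summable} one has $\sum_{k=1}^{\infty}\norm{\nabla f(x_k)}^2<\infty$, so $\nabla f(x_k)\to 0$; as $f$ is locally smooth, $\nabla f$ is continuous, so every cluster point $\bar x$ of the bounded sequence $\{x_k\}$ satisfies $\nabla f(\bar x)=0$, and by convexity $\bar x$ is a global minimizer.

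Next I would show $\{x_k\}$ has a unique cluster point. Let $p,q$ be cluster points; by the previous step both are minimizers, so $\{V_k^{(p)}\}$ and $\{V_k^{(q)}\}$ (the Lyapunov functions built with $x_\star=p$ and $x_\star=q$) both converge. The summands $s_{k+1}A_k(f(x_k)-f_\star)$ and $\frac12 s_k^2 B_k\norm{\nabla f(x_k)}^2$ do not depend on the minimizer, so they cancel in the difference, leaving
\[
V_k^{(p)}-V_k^{(q)}=\frac12\norm{x_{k+1}-p}^2-\frac12\norm{x_{k+1}-q}^2=\inner{x_{k+1}}{q-p}+\frac12\pr{\norm{p}^2-\norm{q}^2}.
\]
Hence $\inner{x_k}{q-p}$ converges to some $\ell$; evaluating this limit along a subsequence $x_{k_j}\to p$ gives $\ell=\inner{p}{q-p}$, and along one with $x_{l_m}\to q$ gives $\ell=\inner{q}{q-p}$, so $\norm{p-q}^2=0$, i.e.\ $p=q$. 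A bounded sequence in $\ourspace$ with a single cluster point converges to it, and by the previous step this limit is some minimizer $\bar x_\star$, which is the claim.

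I do not expect a serious obstacle; this is a textbook Opial argument. The one point worth stating carefully is the observation used above — that the nonnegative energy terms of $V_k$ are independent of $x_\star$, so that convergence of $V_k$ for two different minimizers directly forces $\inner{x_k}{q-p}$ to converge. This is what lets us bypass the customary separate verification that $\norm{x_{k+1}-x_k}\to 0$, which would otherwise require an upper bound on the step sizes $s_k$ that is not obviously available in the locally smooth setting. One should also double-check that monotonicity of $V_k$ uses only convexity of $f$ (so that it is genuinely available for every minimizer, not merely for the one entering the definition of $\bar{R}$), and that $\{V_k\}$ is bounded below, which is immediate from $f(x_k)\ge f_\star$ and positivity of $s_k,A_k,B_k$.
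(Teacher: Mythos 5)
Your argument is correct and follows essentially the same path as the paper: both establish boundedness of $\{x_k\}$, summability of $\norm{\nabla f(x_k)}^2$ so that every cluster point is a minimizer, and the quasi-Fej\'er monotonicity of $\tfrac12\norm{x_{k+1}-\bar x}^2 + a_{k+1}$ over the solution set (noting, as you do, that the monotonicity of $V_k$ uses only convexity and so holds for every minimizer). The only difference is that the paper then invokes \cite[Lemma~2]{MalitskyMishchenko2020_adaptive} to conclude, whereas you re-derive that lemma inline via the two-cluster-point Opial argument — a stylistic rather than a substantive difference.
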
 

\begin{proof} 
    Recall that $x_k \in \bar{B}_{\bar{R}}(x_\star)$ holds by Proposition~\ref{lemma:gd_lyapunov_analysis}, 
    we know $\set{x_k}_{k\ge0}$ is a bounded sequence. 
    From \eqref{ineq:gd_gradient_summable}, we have $\sum_{k=1}^{\infty} \norm{\nabla f(x_k)}^2 \le \frac{\gdupperboundL^2}{r} \upperboundconstant < \infty$, which implies that $\lim_{k \to \infty} \norm{\nabla f(x_k)}^2 = 0$. Therefore, all limit points converge to an optimal point. 
    Now, we leverage \cite[Lemma~2]{MalitskyMishchenko2020_adaptive} by setting  
    $a_k = 2\ssz_{k} A_{k-1} (f(x_{k-1}) - f_\star) + \ssz_{k-1}^2 B_{k-1} \norm{ \nabla f(x_{k-1}) }^2$, and conclude the desired result. 
    For convenience, we restate the lemma below.
\end{proof}

\begin{lemmaX}  \emph{(\cite[Lemma~2]{MalitskyMishchenko2020_adaptive})}  
Let $\set{x_k}_{k\ge0}$ and $\set{a_k}_{k\ge0}$ be two sequences in $\bbR^d$ and $\bbR_{+}$ respectively. Suppose that $\set{x_k}_{k\ge0}$ is bounded, its cluster points belong to $\cX \subset \bbR^d$ and it also holds that
\[
    \| x^{k+1} - \bar{x} \|^2 + a_{k+1} \le \| x^{k} - \bar{x} \|^2 + a_{k}, \qquad \forall \bar{x} \in \cX.
\]
Then $\set{x_k}_{k\ge0}$ converges to some element in $\cX$.     
\end{lemmaX}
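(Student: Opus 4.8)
The approach I would take is the classical Opial / Fej\'er-monotonicity argument, which is exactly the context in which such a lemma is used. The plan is to first extract a convergent scalar sequence from the monotonicity hypothesis, then use boundedness of $\set{x_k}$ to produce cluster points in $\cX$, and finally show there can be only one such cluster point by playing off two candidate cluster points against each other through a cancellation identity; a bounded sequence with a unique cluster point then converges.

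Concretely, the steps in order are as follows. First, fix any $\bar x \in \cX$ and set $b_k(\bar x) := \|x_k - \bar x\|^2 + a_k$. Since $a_k \ge 0$ this sequence is bounded below by $0$, and by the standing hypothesis it is nonincreasing in $k$; hence it converges, say to some $\ell(\bar x)\in[0,\infty)$. Second, boundedness of $\set{x_k}$ guarantees at least one cluster point, and by assumption every cluster point lies in $\cX$. Suppose toward a contradiction that there are two distinct cluster points $x^\star,y^\star\in\cX$, with subsequences $x_{n_j}\to x^\star$ and $x_{m_i}\to y^\star$. The crucial observation is that the $a_k$ terms cancel in the difference, so for every $k$
\[
  \|x_k - x^\star\|^2 - \|x_k - y^\star\|^2 \;=\; b_k(x^\star) - b_k(y^\star) \;\xrightarrow[k\to\infty]{}\; \ell(x^\star) - \ell(y^\star) =: c .
\]
On the other hand, expanding the left-hand side gives $2\langle x_k,\, y^\star - x^\star\rangle + \|x^\star\|^2 - \|y^\star\|^2$, which along $x_{n_j}\to x^\star$ tends to $-\|x^\star - y^\star\|^2$ and along $x_{m_i}\to y^\star$ tends to $+\|x^\star - y^\star\|^2$. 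Equating both limits with $c$ forces $\|x^\star - y^\star\|^2 = 0$, i.e. $x^\star = y^\star$, a contradiction. Hence $\set{x_k}$ is bounded with a unique cluster point $x^\star\in\cX$; and a bounded sequence with a unique cluster point converges to it (otherwise some subsequence stays outside a fixed ball around $x^\star$, and by boundedness that subsequence has a further convergent subsequence, giving a second cluster point).

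I do not expect a genuine obstacle: this is a standard lemma. The only two points that need a moment of care are (i) invoking $a_k \ge 0$ to guarantee $b_k(\bar x)$ is bounded below (so that monotonicity yields convergence rather than possible divergence to $-\infty$), and (ii) noticing that the $a_k$'s disappear from the displayed difference identity, so that no separate control on $\set{a_k}$ is required — the whole argument only uses that $\set{a_k}$ is the same nonnegative increment for all choices of $\bar x$.
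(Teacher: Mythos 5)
Your proof is correct, and in fact the paper does not prove this lemma at all—it is imported verbatim from \cite[Lemma~2]{MalitskyMishchenko2020_adaptive}. Your argument is exactly the standard quasi-Fej\'er/Opial one used in that reference: monotonicity plus nonnegativity of $a_k$ gives convergence of $\|x_k-\bar x\|^2 + a_k$ for each $\bar x\in\cX$, the $a_k$ terms cancel in the difference $\|x_k-x^\star\|^2-\|x_k-y^\star\|^2$, and evaluating the limit along two subsequences forces any two cluster points in $\cX$ to coincide, so the bounded sequence converges.
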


\subsection{Discussion: Trade-off between the step size and the theoretical guarantee} 
\label{sec:trade_off_theory_practical}

\Cref{thm:gd} covers a family of algorithms. 
We provide some representative examples of \adagd\  and compare their behaviors.  
 For notational simplicity, we rewrite \eqref{eq:AdaGD} as  
\[
    s_{k+1} = \min\set{ r_k^s s_{k}, \,\, r_k^L \frac{1}{L_{k+1}}  }, \qquad \mbox{ where } \quad
    r_k^s = \frac{A_{k-1}+1}{A_{k}}, \quad r_k^L = \pr{ \frac{A_{k}}{B_{k}} + \frac{B_{k+1} +1}{A_{k}} }^{-1}.
\]
Note that when $s_{k} \ll \frac{1}{L_{k+1}}$, we have $s_{k+1} = r_k^s s_{k}$.  
Thus, $r_k^s$ represents the rate at which the adaptive step size catches up to the local smoothness parameter $\frac{1}{L_{k+1}}$.  
On the other hand, when $\frac{1}{L_{k+1}} \ll s_{k}$, we obtain $s_{k+1} = r_k^L \frac{1}{L_{k+1}}$.  
In this case, we can take a larger step size when $r_k^L$ is large. 
The following table gives examples of \adagd\ in which $A_k$ is $\Theta\pr{ k }$, $\Theta( \sqrt{k} )$ and $\Theta\pr{ 1 }$, respectively.

\renewcommand{\arraystretch}{1.3}
\begin{table}[H]
    \centering
    \begin{tabular}{c|cc|cc|c}
    \toprule
    &  \dseq{\boldsymbol{A_k}} &  \dseq{\boldsymbol{B_k}}  &  \dseq{\boldsymbol{\ratio_k^s}} &  \dseq{\boldsymbol{\ratio_k^L}} &  \dseq{\boldsymbol{f(x_k)-f_\star}} \\ 
    \midrule\midrule
    \textbf{\adagdgOne} & \dseq{\frac{1}{2}(k+5)} & \dseq{\frac{1}{2}(k+1)} & \dseq{\frac{k+6}{k+5}} & \dseq{\frac{1}{2} + { \textstyle\Theta\pr{{1}/{k}} } }  
    & \dseq{\cO\pr{ {1}/{k}}} \dseq{\vphantom{\frac{1}{\frac{1}{k}}}}  \\
    \hline
    \textbf{\adagdgSqrt} & \dseq{2\sqrt{k+4}} & \dseq{ 2\sqrt{k+2} - 2} & \dseq{\frac{2\sqrt{k+3}+1}{2\sqrt{k+4}}} 
    & \dseq{ \frac{1}{2} + {\textstyle \Theta\big({1}/\sqrt{k} } }\big)
    & \dseq{\cO \big( {1}/{\sqrt{k}}\big)}  \dseq{\vphantom{\frac{\frac{1}{\frac{1}{k}}}{\frac{1}{k}}}} \\
    \hline
    \textbf{\adagdgZero} & \dseq{3} & \dseq{\frac{5}{4}} & \dseq{\frac{4}{3}} & \dseq{\frac{20}{63}} & \dseq{o(1)} \dseq{\vphantom{\frac{\frac{1}{k}}{1}}} \\
    \bottomrule
    \end{tabular} 
\end{table}
\renewcommand{\arraystretch}{1}

\vspace{-4mm}

\begin{itemize}[leftmargin=*]
    \item [(i)]  \adagdgOne: $A_k, B_k = \Theta\pr{k}$. \\
        For this case, we have $\lim_{k\to\infty} r_k^s = 1$. Thus, the growth rate of the step size slows down as the iteration proceeds.  
        This does not mean there is an upper bound on the growth induced by $r_k^s$, since $\prod_{k=N}^{\infty} \frac{k+1}{k} = \infty$, but its growth becomes very slow compared to other cases.  
        On the other hand, $\lim_{k\to\infty} r_k^{L} = \frac{1}{2}$, which is larger than the value in \adagdgZero, where $r_k^{L} = \frac{20}{63}$.
     
    \item [(ii)]   \adagdgSqrt: $A_k, B_k = \Theta\pr{k^q}$ with $0<q<1$. \\ 
        It still holds that $\lim_{k\to\infty} r_k^s=1$ and $\lim_{k\to\infty} r_k^{L} = \frac{1}{2}$.         
        However, comparing $r_k^s$ for \adagdgSqrt\ and \adagdgOne, we see that 
        $1 + \Theta( {1}/{\sqrt{k}})    = \frac{2\sqrt{k+3}+1}{2\sqrt{k+4}} > \frac{k+6}{k+5} = 1 + \Theta \pr{ {1}/{k} }$. 
        In this context, \adagdgSqrt\ has a larger $r_k^s$ compared to \adagdgOne, while the guaranteed convergence rate is slower. 
     
    \item  [(iii)]  \adagdgZero: $A_k, B_k = \Theta\pr{1}$. \\
        For this case, $\lim_{k\to\infty} r_k^s > 1$ since it remains constant. Thus, its growth is larger compared to other cases.  
        However, we do not have a non-ergodic convergence rate for this case.  
        Note that, though \adagdgZero\ still has an ergodic $\cO\pr{1/k}$ rate by \eqref{ineq:adagd_ergodic}, like other GD-type adaptive algorithms in the prior works \cite{MalitskyMishchenko2020_adaptive, LatafatThemelisStellaPatrinos2024_adaptive, MalitskyMishchenko2024_adaptive, ZhouMaYang2024_adabb}.
\end{itemize}
In many practical scenarios, $\frac{1}{L_{k+1}}$ becomes large near the optimum, so a larger $r_k^s$ often allows a larger step size.  
However, from the above context, \emph{a larger step size is not free.} It requires a trade-off between the theoretical guarantee on the convergence rate.\footnote{To clarify, at least for our algorithm family \adagd\  and under our proof argument.}  
Note that the choice $\gamma = 1$ in Corollary~\ref{cor:adagd_1/k_rate} may guarantee the fastest convergence rate, but it is practically not useful since $r_k^s = \frac{(k+2)+1}{k+3} = 1$, implying that the step size $s_k$ is nonincreasing.

Recall from \eqref{eq:step_size_rule}, for \adanag\ we see $r_k^s = \frac{\alpha_{k}}{\alpha_{k+1}} \le 1$ by \Cref{lem:theta_properties}. 
Above discussion motivates us to consider different choice of $\theta_k$ of \adanag\ that makes $r_k^s>1$. 
Therefore, we consider such generalization of \adanag\ in the next section.

\section{Generalized AdaNAG} \label{sec:adanag-g}

The parameter selection of \adanag\ was motivated by the goal of tightening the rate of the theoretical guarantee.  
However, the discussion in \Cref{sec:trade_off_theory_practical}  
suggests that we should consider a broader range of parameter choices to enlarge the step size.  
Previous studies \cite{SuBoydCandes2016_differential, ChambolleDossal2015_convergence, AttouchChbaniPeypouquetRedont2018_fast} considered generalized versions of NAG with $\theta_k = \frac{k+p}{p}$ for $p \geq 2$, which achieve a slower rate compared to $p = 2$ but maintain an $\mathcal{O} \pr{{1}/{k^2}}$ convergence rate.  
Note that \adanag\ can be thought of as the $p = 2$ case, since the simplified variant that uses \eqref{eq:rational_theta} exhibits similar behavior, as shown in \Cref{sec:adanag-s}. 
As we can benefit from a larger step size when considering adaptive methods, we do not need to restrict ourselves to the choices of $\theta_k$ that achieve the tightest convergence guarantee.  

Note, the core properties of the parameters that make the proof of \Cref{lemma:lyapunov_analysis} work were \eqref{ineq:AdaNAG_generalized_stepsize} and the definition $A_k = \alpha_{k+1}\theta_{k+3}(\theta_{k+3}-1)$. 
Motivated by this observation, 
we now introduce the generalized version of AdaNAG, \adanagG, in Algorithm~\ref{alg:AdaNAG-G}.  
The convergence results 
are summarized in Theorem~\ref{thm:adanag_g}. 
The proof resembles the proof of \Cref{thm:main_tight}, and is provided in \Cref{appendix:proof_of_adanag_g}.

\begin{algorithm}[ht] 
     \caption{\textbf{G}eneralized AdaNAG (AdaNAG-G)}
     \label{alg:AdaNAG-G} 
     \begin{algorithmic}[1]
     \STATE \textbf{Input:} $x^0=z^0 \in \mathbb{R}^d$, 
     $\set{\tau_k}_{k\ge0}$, $\set{\alpha_k}_{k\ge0}$, 
     $\ssz_{0} >0$, $B_0>0$, 
     $A_{-1}=0$. 
     \STATE \textbf{Define} 
     \[
        \begin{aligned}
            A_k = \alpha_{k+1} \tau_{k+1}(\tau_{k+1}-1), \qquad  
            B_k = \begin{cases}
            B_0 & \text{ if } k=0 \\
           \alpha_{k}^2\tau_{k}^2 \Big( \frac{(\tau_{k}-1)^2}{\alpha_{k-1}\tau_{k-1}^2} - 1 \Big) & \text{ if } k\ge1
       \end{cases}
        \end{aligned}
     \]
       \FOR{$k = 0,1,\dots$} 
       \STATE   
        \begin{equation*}
            \begin{aligned}
                y_{k+1} &= x_k -  \ssz_{k} \nabla f(x_k)  \\
            z_{k+1} &= z_k - \ssz_{k} \talpha_{k} \tau_{k} \nabla f(x_k)  \\
            x_{k+1} &= \pr{ 1 - \frac{1}{\tau_{k+1}} } y_{k+1} + \frac{1}{\tau_{k+1}} z_{k+1}   \\
            L_{k+1} &= -\frac{ \frac{1}{2} \norm{ \nabla f(x_{k+1}) - \nabla f(x_{k}) }^2 }
                { f(x_{k+1}) - f(x_{k}) + \inner{\nabla f(x_{k+1})}{x_{k}-x_{k+1}} }  \qquad\qquad\qquad\qquad
            \end{aligned}
        \end{equation*}  
        \begin{equation}    \label{eq:AdaNAG-G_stepsize} 
                \ssz_{k+1} = 
                \min\set{ \frac{A_{k-1}+\alpha_{k}\tau_{k}}{A_{k}} s_{k}, \,
         \pr{ \frac{A_{k}}{B_{k}} + \frac{B_{k+1} + \alpha_{k+1}^2\tau_{k+1}^2}{A_{k}} }^{-1} \frac{1}{L_{k+1}} }
            \end{equation}
       \ENDFOR
     \end{algorithmic}
\end{algorithm}

\begin{theorem} 
\label{thm:adanag_g} 
Suppose  $\set{\tau_k}_{k\ge0}$, $\set{\alpha_k}_{k\ge0}$ satisfy 
\begin{equation}   \label{ineq:adanag_AB_condition_for_step_size_lowerbound}
    \begin{aligned}
        0<\alpha_k \le1, 
        \quad 
        \frac{\tau_{k}^2}{\tau_{k+1}(\tau_{k+1}-1)} &\ge 1,  
        \quad 
        \pr{ \frac{A_{k}}{B_{k}} + \frac{B_{k+1} + \alpha_{k+1}^2\tau_{k+1}^2}{A_{k}} }^{-1}
        \ge \frac{ \rbound }{\alpha_{k+1}}, \qquad \forall k\ge0 
    \end{aligned}
\end{equation}
with some $\rbound>0$. 
Set $s_0 = \frac{A_{0}}{\alpha_{0}\tau_{0}} \frac{\rbound}{\alpha_1}  \frac{1}{L_0}$ with $L_0$ defined as in \eqref{eq:initial_guess}. 
Suppose $f$ is an $L$-smooth convex function. 
Denote  
$\upperboundconstant = {\norm{x_{0} - x_\star}^2 +  \pr{ \ssz_{0}^2 \pr{ B_0 + \alpha_0^2\theta_0^2 } -\frac{\ssz_{0} \talpha_0 \theta_{2}}{2L} } \norm{ \nabla f(x_{0}) }^2 }$. 
Then the sequence $\{x_k\}_{k\ge0}$ generated by \adanagG\ satisfies
$s_{k+1}\alpha_{k+1} \ge \frac{\rbound}{\upperboundL}$ for $k\ge0$ and 
\begin{equation*}
    f(x_k) - f_\star 
    \le  \frac{ 1 }{2s_{k+1}A_k } 
    \upperboundconstant
    = \cO\pr{ \frac{ \upperboundL }{\alpha_k\tau_k^2} },
    \qquad 
    \min_{i\in\set{1,\dots, k}} \norm{\nabla f(x_i)}^2
    \le  
    \cO\pr{ \frac{ \upperboundL^2 }{ \sum_{i=1}^{k} \alpha_i\tau_i^2} }.
\end{equation*}
\end{theorem}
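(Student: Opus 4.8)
\textbf{Overall strategy.} The plan is to mirror the proof of \Cref{thm:main_tight} exactly, since \adanagG\ was designed so that the core properties \eqref{ineq:AdaNAG_generalized_stepsize} and the identity $A_k = \alpha_{k+1}\tau_{k+1}(\tau_{k+1}-1)$ carry over. First I would reuse the Lyapunov function of \eqref{eq:Lyapunov} with the new $A_k$, $B_k$ supplied in Algorithm~\ref{alg:AdaNAG-G}, and re-establish the three inequalities in \eqref{ineq:AdaNAG_generalized_stepsize} in this generalized setting. Inequality \eqref{ineq:AdaNAG_generalized_stepsize-a} becomes $s_{k+2}\le \frac{A_k+\alpha_{k+1}\tau_{k+1}}{A_{k+1}}s_{k+1}$, which follows from the first clause of \eqref{eq:AdaNAG-G_stepsize} together with $\frac{\tau_{k+1}^2}{\tau_{k+2}(\tau_{k+2}-1)}\ge 1$ from \eqref{ineq:adanag_AB_condition_for_step_size_lowerbound}. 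Inequality \eqref{ineq:AdaNAG_generalized_stepsize-c} is exactly the second clause of \eqref{eq:AdaNAG-G_stepsize}, with $\rho_k=\frac{B_{k+1}+\alpha_{k+1}^2\tau_{k+1}^2}{A_k}$; here the key is that the new definition of $B_{k}$ makes $B_{k+1}+\alpha_{k+1}^2\tau_{k+1}^2 = \alpha_{k+1}^2\tau_{k+1}^2 \cdot \frac{(\tau_{k+1}-1)^2}{\alpha_k\tau_k^2}$, and from the third clause of \eqref{ineq:adanag_AB_condition_for_step_size_lowerbound} one gets $\rho_k s_{k+1}L_{k+1}\le 1$. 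For \eqref{ineq:AdaNAG_generalized_stepsize-b} I would verify $s_{k+1}\le\rho_k^{-1}s_k$ directly from the step-size rule and the relation between $A_k$, $B_k$.

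\textbf{Lyapunov decrease.} With \eqref{ineq:AdaNAG_generalized_stepsize} in hand, the analogues of Lemma~\ref{lem:lyapunov_diff} and Lemma~\ref{lem:determinant} go through verbatim after the substitutions $\theta_{k+3}\leadsto\tau_{k+1}$ and the new $A_k,B_k$: the decomposition of $V_{k+1}-V_k$ into a function-value-gap term, a convexity term, the $L_{k+1}$-defining term (which vanishes by \eqref{define-Lk+1}), and the quadratic remainder $Q_k$; then the discriminant computation bounding $Q_k$ by $-\tfrac12\min\{s_k^2B_k,\,\frac{s_{k+1}}{L_{k+1}}A_k\}\norm{\nabla f(x_k)}^2$. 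One also needs the $k=0$ boundary lemma (analogue of Lemma~\ref{lemma:lyapunov_analysis_k0}) defining $V_{-1}$, which is where the $\pr{B_0+\alpha_0^2\theta_0^2}$ and the $-\frac{s_0\tilde\alpha_0\theta_2}{2L}$ terms in $\upperboundconstant$ originate; the choice $s_0=\frac{A_0}{\alpha_0\tau_0}\frac{r}{\alpha_1}\frac{1}{L_0}$ is calibrated so that $s_1\alpha_1\ge r/L_0$, matching the induction base.

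\textbf{Step-size lower bound and conclusion.} Next I would prove the generalized step-size lower bound $s_{k+1}\alpha_{k+1}\ge r/L$, the analogue of \Cref{lemma:stepsize_lowerbound} and Corollary~\ref{lemma:stepsize_lowerbound_L_smooth}: by induction, using $s_0\alpha_0\tau_0 = A_0 r/(\alpha_1 L_0)$ for the base step, the step-size recursion for the first clause, and the third part of \eqref{ineq:adanag_AB_condition_for_step_size_lowerbound} together with $L_{k+1}\le L$ (from \Cref{lemma:local_smoothness}) and $L_0\le L$ for the second clause. Telescoping $V_{k}\le\cdots\le V_{-1}-\frac{s_0\tilde\alpha_0\theta_2}{2L}\norm{\nabla f(x_0)}^2=\tfrac12\upperboundconstant$ gives $s_{k+1}A_k(f(x_k)-f_\star)\le\tfrac12\upperboundconstant$, hence $f(x_k)-f_\star\le\frac{1}{2s_{k+1}A_k}\upperboundconstant\le\frac{L}{2r\alpha_{k+1}^{-1}A_k}\upperboundconstant$; since $A_k=\alpha_{k+1}\tau_{k+1}(\tau_{k+1}-1)=\Theta(\alpha_k\tau_k^2)$ this is $\cO(L/(\alpha_k\tau_k^2))$. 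For the gradient-norm bound I would sum the Lyapunov decreases, as in Proposition~\ref{prop:adanag_gradient_norm}, using $\min\{s_k^2B_k,\frac{s_{k+1}}{L_{k+1}}A_k\}\ge \Omega(\alpha_k\tau_k^2/L^2)$ to get $\sum_{i=1}^k \alpha_i\tau_i^2\,\norm{\nabla f(x_i)}^2=\cO(L^2\upperboundconstant)$, then take the minimum.

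\textbf{Main obstacle.} The routine part is the algebra of propagating the substitution $\theta_{k+3}\leadsto\tau_{k+1}$ through Lemmas~\ref{lem:lyapunov_diff}--\ref{lem:determinant}. The genuinely delicate point is verifying that the \emph{new, more complicated} definition of $B_k$ — namely $B_k=\alpha_k^2\tau_k^2\big(\frac{(\tau_k-1)^2}{\alpha_{k-1}\tau_{k-1}^2}-1\big)$ — is (a) positive (so that the Lyapunov function is a genuine potential; this presumably needs $\frac{(\tau_k-1)^2}{\alpha_{k-1}\tau_{k-1}^2}>1$, which one must extract from the hypotheses), and (b) makes the discriminant condition in the analogue of Lemma~\ref{lem:determinant} exactly close, i.e. that $B_{k+1}+\alpha_{k+1}^2\tau_{k+1}^2 = \frac{\alpha_{k+1}^2\tau_{k+1}^2(\tau_{k+1}-1)^2}{\alpha_k\tau_k^2}$ collapses $\rho_k s_{k+1}L_{k+1}\le1$ and $\frac{B_k}{A_k}\frac{s_k^2}{s_{k+1}}L_{k+1}$ into the same bound as before. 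Checking that these three conditions in \eqref{ineq:adanag_AB_condition_for_step_size_lowerbound} are simultaneously sufficient — in particular that the $k=0$ boundary terms and the $s_0$ calibration remain consistent with a general $B_0>0$ rather than the specific $\tilde\alpha_0$-based value — is the step that requires the most care.
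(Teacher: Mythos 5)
Your proposal is correct and follows the paper's proof strategy essentially verbatim: rerun the Lyapunov argument of Theorem~\ref{lemma:lyapunov_analysis} with $\theta_{k+2}\leadsto\tau_k$ after verifying the three step-size inequalities, establish the $k=0$ boundary bound $V_0\le V_{-1}$, prove the step-size lower bound $s_{k+1}\alpha_{k+1}\ge r/L$ by induction, and telescope. Two small bookkeeping slips worth noting: \eqref{ineq:AdaNAG_generalized_stepsize-a} is immediate from the first clause of \eqref{eq:AdaNAG-G_stepsize} alone (the hypothesis $\tau_{k}^2/(\tau_{k+1}(\tau_{k+1}-1))\ge1$ is instead what drives the induction in the step-size lower bound, via $\frac{A_{k-1}+\alpha_k\tau_k}{A_k}\ge\frac{\alpha_k}{\alpha_{k+1}}$), and $\rho_k s_{k+1}L_{k+1}\le 1$ comes from the second clause of the step-size rule (i.e.\ from \eqref{ineq:AdaNAG_generalized_stepsize-c}), not from the third condition of \eqref{ineq:adanag_AB_condition_for_step_size_lowerbound}; these do not affect correctness since all the cited hypotheses are available. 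Your concern about $B_k>0$ is legitimate — the paper itself only enforces it implicitly through the input $B_0>0$ and the requirement that $(\frac{A_k}{B_k}+\cdots)^{-1}\ge r/\alpha_{k+1}>0$ be well-posed.
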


The important point of \adanagG\ compared to \adanag\ is,  
it covers the choice of $\tau_k$ that makes $s_k$ possible to increase. 
The worst possible scenario for \adanag\ is that the initial step size $s_0$ is chosen too small, and thus the first option of \eqref{eq:step_size_rule} is activated at every iteration. Then the information about the local smoothness parameter $L_{k+1}$ is never used. 
However, this cannot occur for \adanagG\ with a proper choice of $\tau_k$. 
Observing the first option of \eqref{eq:AdaNAG-G_stepsize}, we see
\begin{equation*}
    \prod_{k=1}^{N} \frac{A_{k-1}+\alpha_{k}\tau_{k}}{A_{k}}
    = \prod_{k=1}^N \frac{\alpha_{k}}{\alpha_{k+1}} \frac{\tau_k^2}{\tau_{k+1}(\tau_{k+1}-1)}  
    \ge \alpha_1 \prod_{k=1}^N \frac{\tau_k^2}{\tau_{k+1}(\tau_{k+1}-1)},
\end{equation*}
where the inequality comes from the fact $\alpha_N\le1$. 
When $\tau_k = \frac{k+p}{p}$ and $p>2$, we can verify
\begin{equation}    \label{eq:p>2_growth_infinity}
    \prod_{k=1}^\infty \frac{\tau_k^2}{\tau_{k+1}(\tau_{k+1}-1)}
    = \prod_{k=1}^\infty \frac{(k+p+1)^2}{(k+2)(k+p+2)}  
    \ge \prod_{k=1}^\infty \pr{ 1 + \frac{p-2}{k+2} }
    \ge 1 + (p-2) \sum_{k=3}^{\infty} \frac{1}{k}
    = \infty.
\end{equation}
Therefore, if only the first option of \eqref{eq:AdaNAG-G_stepsize} is always activated, the step size would diverge to $\infty$. Thus the second option would be activated before that happens. 
On the other hand, the lower bound $\rbound$ appearing in \Cref{thm:adanag_g}, 
does not capture this growth and misrepresents the asymptotic behavior of the step size $\ssz_k$. 
This motivates us to consider the following lemma. 
The proof can be done by formalizing the discussion above, and we provide it in \Cref{appendix:proof_of_asymptotic_lower_bound_step_size}.

\begin{lemma}   \label{lemma:asymptotic_lower_bound_step_size} 
    Suppose the limit $\bar{r} = \lim_{k\to\infty} \Big( \frac{A_{k}}{B_{k}} + \frac{B_{k+1} + \alpha_{k+1}^2\tau_{k+1}^2}{A_{k}} \Big)^{-1}$ exists.   
    Assume that there exists $N>0$ such that $\frac{A_{k-1}+\alpha_{k}\tau_{k}}{A_{k}} \ge 1$ holds for all $k\ge N$. 
    Also, assume that $\prod_{k=1}^{\infty} \frac{\tau_{k}^2}{\tau_{k+1}(\tau_{k+1}-1)} = \infty$.  
    Suppose $\set{s_k}_{k \ge 0}$ is generated by \adanagG\ under the same assumptions as in \Cref{thm:adanag_g}. Then,
    \begin{equation*}
        \liminf_{k\to\infty} s_{k+1} 
        \ge \frac{\bar{r}}{L}.
    \end{equation*}
\end{lemma}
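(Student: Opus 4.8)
The plan is to abbreviate the step-size rule \eqref{eq:AdaNAG-G_stepsize} as $s_{k+1} = \min\{\, r_k^s s_k,\ r_k^L/L_{k+1}\,\}$, where $r_k^s := \tfrac{A_{k-1}+\alpha_k\tau_k}{A_k}$ and $r_k^L := \big(\tfrac{A_k}{B_k} + \tfrac{B_{k+1}+\alpha_{k+1}^2\tau_{k+1}^2}{A_k}\big)^{-1}$, so that $r_k^L \to \bar r$ by hypothesis. Two facts are recorded up front. First, since $f$ is $L$-smooth, \Cref{lemma:local_smoothness} (applied exactly as in the definition of $L_{k+1}$) gives $L_{k+1} \le L$ for every $k$ — with the convention that $L_{k+1}=0$ and $r_k^L/L_{k+1}=+\infty$ when the denominator defining $L_{k+1}$ vanishes — hence $r_k^L/L_{k+1} \ge r_k^L/L$ whenever that option is finite. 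Second, from the telescoping identity $r_k^s = \tfrac{\alpha_k}{\alpha_{k+1}}\cdot\tfrac{\tau_k^2}{\tau_{k+1}(\tau_{k+1}-1)}$ used already in \eqref{eq:p>2_growth_infinity}, together with $\alpha_{k+1}\le1$, one gets $\prod_{j=M}^{k} r_j^s \ge \alpha_M \prod_{j=M}^{k}\tfrac{\tau_j^2}{\tau_{j+1}(\tau_{j+1}-1)}$ for every $M$, and the right-hand side tends to $+\infty$ as $k\to\infty$ because $\prod_{j\ge1}\tfrac{\tau_j^2}{\tau_{j+1}(\tau_{j+1}-1)}=\infty$ while the prefix $\prod_{j=1}^{M-1}$ is a fixed positive number.

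Next I would split into two cases according to whether the curvature option of \eqref{eq:AdaNAG-G_stepsize} is the active one — call $k$ \emph{active} when $s_{k+1}=r_k^L/L_{k+1}\le r_k^s s_k$ — for infinitely many $k$. If there are only finitely many active indices, pick $M$ beyond the last one; then $s_{k+1}=r_k^s s_k$ for all $k\ge M$, so $s_{k+1}=s_M\prod_{j=M}^k r_j^s\to\infty$ by the estimate above, and the desired bound holds trivially since $\liminf_k s_{k+1}=+\infty$.

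If instead there are infinitely many active indices, enumerate them $k_1<k_2<\cdots$ with $k_1\ge N$. On each stretch $k_j<m<k_{j+1}$ only the first option of \eqref{eq:AdaNAG-G_stepsize} is used, so $s_{m+1}=r_m^s s_m\ge s_m$ (here $r_m^s\ge1$ since $m\ge N$); consequently $\{s_\ell\}$ is nondecreasing on $\{k_j+1,\dots,k_{j+1}\}$, whence $s_\ell\ge s_{k_j+1}=r_{k_j}^L/L_{k_j+1}\ge r_{k_j}^L/L$ for every $\ell$ in that block. Given $\varepsilon>0$, choose $J$ with $r_{k_j}^L\ge\bar r-\varepsilon$ for all $j\ge J$; every $\ell>k_J$ lies in some block with $j\ge J$, so $s_\ell\ge(\bar r-\varepsilon)/L$, giving $\liminf_{\ell}s_\ell\ge(\bar r-\varepsilon)/L$. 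Letting $\varepsilon\downarrow0$ and noting $\liminf_k s_{k+1}=\liminf_\ell s_\ell$ completes the argument.

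I expect the only real obstacle to be organizational rather than substantive: cleanly handling ties in the $\min$ and the degenerate case $L_{k+1}=0$ (both collapse harmlessly into "the first option is taken" and are absorbed into the finitely-many-active case), and making sure the nondecreasing behaviour of $\{s_\ell\}$ within a block between consecutive active indices really does account for every sufficiently large index. The product-divergence step is a verbatim re-run of the computation behind \eqref{eq:p>2_growth_infinity}, and $r_k^L\to\bar r$ is assumed outright, so no estimate there is delicate.
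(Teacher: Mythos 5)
Your proof is correct, and it reaches the result by a slightly different organizational route than the paper. The paper's proof has two halves: a \emph{threshold} step (if some $s_M\ge(\bar r-\epsilon)/L$ with $M\ge N$, then $s_m\ge(\bar r-\epsilon)/L$ for all $m\ge M$, by using \emph{both} branches of the $\min$ in the induction step) and a \emph{contradiction} step to establish that such an $M$ must exist (if $s_M<(\bar r-\epsilon)/L$ for every $M\ge N$, the curvature branch $r_M^L/L_{M+1}\ge (\bar r-\epsilon)/L$ can never be selected, so $s_M$ is forced to grow along the first branch, and telescoping via $\prod r_j^s$ shows $s_M\to\infty$, a contradiction). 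Your decomposition instead enumerates the ``active'' indices (where the curvature branch is chosen) and splits on whether there are finitely or infinitely many: case (a) is exactly the paper's contradiction arm recast as a direct statement ($s_k\to\infty$, so the liminf bound is trivial), while your block argument in case (b) is a more explicit version of the paper's threshold induction, since you pinpoint the index $k_j$ that supplies the lower bound $r_{k_j}^L/L$ on each block $(k_j,k_{j+1}]$ and then propagate by monotonicity. Both versions rest on identically the same two ingredients (namely $L_{k+1}\le L$, and divergence of $\prod_{j\ge M}r_j^s$ via the telescoping $\alpha_M/\alpha_{k+1}\ge\alpha_M$ and the product hypothesis on $\tau_j$), so the content is the same; the paper's phrasing is a bit more compact because it never needs to track or enumerate the active indices, whereas yours is more transparent about where the step size can drop and why it then recovers.
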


Intuitively speaking, \Cref{lemma:asymptotic_lower_bound_step_size} states that if the growth rate $\frac{\tau_k^2}{\tau_{k+1}(\tau_{k+1}-1)}$ is large enough,  
the step size eventually catches up to a certain ratio of the smoothness parameter. 
However, similar to \adagd, the growth rate has a trade-off between the convergence guarantee. 
We have empirically identified two parameter choices that balance both practical performance and theoretical guarantees, which constitute the final main results of this paper. 
The proofs can be completed by verifying that the assumptions of \Cref{thm:adanag_g} and \Cref{lemma:asymptotic_lower_bound_step_size} hold, and we defer them to \Cref{appendix:adanag_12_proof} and \Cref{appendix:adanag_half_proof}. 
In the proof, we consider the generalized parameter choice $\tau_k = \frac{(k+2)+p}{p}$ with $p > 2$ for Corollary~\ref{cor:adanag_12}, following the spirit of non-adaptive NAG variants~\cite{SuBoydCandes2016_differential, ChambolleDossal2015_convergence, AttouchChbaniPeypouquetRedont2018_fast}.

\begin{corollaryT}  
\label{cor:adanag_12} \hypertarget{adanag_12}{} 
    Let $\tau_k = \frac{(k+2)+12}{12}$, $\alpha_k = \frac{1}{2} \frac{(\tau_{k+1}-1)^2}{\tau_{k}^2}$, $B_0 = \alpha_{0}^2\tau_{0}^2 \pr{ \frac{(\tau_{0}-1)^2}{\alpha_{-1}\tau_{-1}^2} - 1 }$, set $s_0$, $\upperboundL$ as  \Cref{thm:adanag_g}.  
    Suppose $f$ is an $L$-smooth convex function. 
    Then for $\set{x_k}_{k\ge0}$ generated by \adanagG, we have:  
    \begin{equation*}
        f(x_k) - f_\star  
        \le \frac{1}{s_{k+1}} \frac{144 (k+15)}{(k+3) (k+4)^2}  
        \upperboundconstant 
        = \cO\pr{ \frac{ \upperboundL }{k^2} },
        \qquad 
        \min_{i\in\set{1,\dots, k}} \norm{\nabla f(x_i)}^2 = \cO\pr{ \frac{\upperboundL^2}{k^3} }.
    \end{equation*}
    Here, $s_k \ge \frac{1}{250} \frac{1}{\upperboundL}$ and
    $\liminf_{k\to\infty} s_k \ge \frac{1}{3} \frac{1}{\upperboundL}$.  
    We name the algorithm as {\bf \adanagpracticename}.  
\end{corollaryT}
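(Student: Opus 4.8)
The plan is to obtain the statement by instantiating \Cref{thm:adanag_g} and \Cref{lemma:asymptotic_lower_bound_step_size} with the prescribed parameters; the whole argument amounts to an explicit verification of the hypotheses of those two results. Throughout I write $\tau_k=\frac{k+14}{12}$, so that $\tau_k-1=\frac{k+3}{12}$ and every quantity below becomes a rational function of $k$. The simplification I would record first is that the relation $\alpha_{k-1}=\frac12\frac{(\tau_k-1)^2}{\tau_{k-1}^2}$ makes $\frac{(\tau_k-1)^2}{\alpha_{k-1}\tau_{k-1}^2}=2$ for every $k$ (this includes $k=0$, which is exactly why $B_0$ is defined via the $\alpha_{-1},\tau_{-1}$ convention). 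Consequently $B_k=\alpha_k^2\tau_k^2=(\alpha_k\tau_k)^2$ for all $k\ge0$ and $\alpha_k\tau_k^2=\tfrac12(\tau_{k+1}-1)^2$, and a short computation collapses the factors appearing in \eqref{eq:AdaNAG-G_stepsize} to
\[
    \frac{B_{k+1}+\alpha_{k+1}^2\tau_{k+1}^2}{A_k}=\frac{(\tau_{k+2}-1)^2}{\tau_{k+1}(\tau_{k+1}-1)}=\frac{(k+4)^2}{(k+3)(k+15)},\qquad
    \frac{A_k}{B_k}=\frac{2(\tau_{k+2}-1)^2\tau_k^2}{\tau_{k+1}(\tau_{k+1}-1)^3}=\frac{2(k+4)^2(k+14)^2}{(k+15)(k+3)^3},
\]
together with $A_k=\alpha_{k+1}\tau_{k+1}(\tau_{k+1}-1)=\frac{(k+4)^2(k+3)}{288(k+15)}$ and $\alpha_{k+1}=\frac{(k+4)^2}{2(k+15)^2}$.

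With these explicit forms I would check the three conditions of \eqref{ineq:adanag_AB_condition_for_step_size_lowerbound}. The bound $0<\alpha_k\le1$ is immediate since $\alpha_k=\frac12\frac{(k+3)^2}{(k+14)^2}<\frac12$. The condition $\frac{\tau_k^2}{\tau_{k+1}(\tau_{k+1}-1)}\ge1$ is equivalent to $(k+14)^2\ge(k+15)(k+3)$, i.e.\ $10k+151\ge0$, the same $p>2$ phenomenon already exploited in \eqref{eq:p>2_growth_infinity}. For the third condition, summing the two displayed fractions gives $\frac{A_k}{B_k}+\frac{B_{k+1}+\alpha_{k+1}^2\tau_{k+1}^2}{A_k}=\frac{(k+4)^2(3k^2+62k+401)}{(k+15)(k+3)^3}$, so the required inequality $\bigl(\tfrac{A_k}{B_k}+\tfrac{B_{k+1}+\alpha_{k+1}^2\tau_{k+1}^2}{A_k}\bigr)^{-1}\ge\frac{r}{\alpha_{k+1}}$ reduces, using $\alpha_{k+1}=\frac{(k+4)^2}{2(k+15)^2}$, to $r\le\frac{(k+3)^3}{2(k+15)(3k^2+62k+401)}$ for all $k\ge0$. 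I would then show this right-hand side is increasing in $k$ (with limit $\tfrac16$) and take $r$ equal to its value at $k=0$, namely $r=\frac{27}{12030}$. This step — a monotonicity claim for the rational function $k\mapsto\frac{(k+3)^3}{2(k+15)(3k^2+62k+401)}$ — is the main obstacle: one must pick the tight value of $r$ (attained at $k=0$, not in the limit) and track constants carefully so that $\frac1{250}$ and the asymptotic $\frac13$ come out exactly; everything else is routine manipulation of rational expressions in $k$.

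Once the hypotheses are verified, \Cref{thm:adanag_g} supplies $s_{k+1}\alpha_{k+1}\ge\frac{r}{\upperboundL}$ and $f(x_k)-f_\star\le\frac{1}{2s_{k+1}A_k}\upperboundconstant$; substituting $A_k=\frac{(k+4)^2(k+3)}{288(k+15)}$ yields $\frac1{2A_k}=\frac{144(k+15)}{(k+3)(k+4)^2}$, which is the displayed estimate, and since $\alpha_k\tau_k^2=\tfrac12(\tau_{k+1}-1)^2=\Theta(k^2)$ and $\sum_{i=1}^k\alpha_i\tau_i^2=\Theta(k^3)$, the general rates of \Cref{thm:adanag_g} become $\cO(\upperboundL/k^2)$ and $\cO(\upperboundL^2/k^3)$. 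For the uniform step-size bound, $\alpha_{k+1}<\frac12$ gives $s_{k+1}>\frac{2r}{\upperboundL}=\frac{9}{2005}\frac1{\upperboundL}>\frac1{250}\frac1{\upperboundL}$ for $k\ge0$, while for $k=0$ one computes directly $s_0=\frac{A_0}{\alpha_0\tau_0}\frac{r}{\alpha_1}\frac1{L_0}\ge\frac{2\tau_0\tau_1}{\tau_1-1}\frac{r}{\upperboundL}=\frac{35}{3}\frac{r}{\upperboundL}$, which far exceeds $\frac1{250\upperboundL}$; hence $s_k\ge\frac1{250\upperboundL}$ for all $k$.

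Finally, $\liminf_k s_k\ge\frac1{3\upperboundL}$ follows from \Cref{lemma:asymptotic_lower_bound_step_size}, whose three hypotheses hold here: the limit $\bar r=\lim_k\bigl(\tfrac{A_k}{B_k}+\tfrac{B_{k+1}+\alpha_{k+1}^2\tau_{k+1}^2}{A_k}\bigr)^{-1}$ exists and equals $\tfrac13$ since $\frac{(k+4)^2(3k^2+62k+401)}{(k+15)(k+3)^3}\to3$; the ratio $\frac{A_{k-1}+\alpha_k\tau_k}{A_k}=\frac{(\tau_{k+1}-1)\tau_{k+1}}{(\tau_{k+2}-1)^2}=\frac{(k+3)(k+15)}{(k+4)^2}\ge1$ for all $k\ge1$ (this is precisely the $r_k^s>1$ feature that motivated the generalization); and $\prod_{k\ge1}\frac{\tau_k^2}{\tau_{k+1}(\tau_{k+1}-1)}=\infty$ by \eqref{eq:p>2_growth_infinity} with $p=12$. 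Then \Cref{lemma:asymptotic_lower_bound_step_size} gives $\liminf_k s_k\ge\frac{\bar r}{\upperboundL}=\frac1{3\upperboundL}$, which completes the argument and identifies the algorithm as \adanagpracticename.
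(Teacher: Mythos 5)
Your proposal is correct and follows essentially the same route as the paper: verify the hypotheses of Theorem~\ref{thm:adanag_g} and of Lemma~\ref{lemma:asymptotic_lower_bound_step_size} for the chosen parameters and then read off the conclusions. All of your explicit formulas check out, including $B_k=(\alpha_k\tau_k)^2$ (a nice consequence of the coupling $\alpha_{k-1}=\tfrac12(\tau_k-1)^2/\tau_{k-1}^2$), the expression $\frac{A_k}{B_k}+\frac{B_{k+1}+\alpha_{k+1}^2\tau_{k+1}^2}{A_k}=\frac{(k+4)^2(3k^2+62k+401)}{(k+15)(k+3)^3}$, the value $r=\frac{27}{12030}$, and $\bar r=\tfrac13$. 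The only cosmetic difference is that the paper proves the general-$p$ statement (Proposition~\ref{prop:ananaga_g_p} with $\tau_k=\frac{k+2+p}{p}$) and then substitutes $p=12$, whereas you work with $p=12$ throughout; this is purely a matter of bookkeeping.

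The one place you flag as ``the main obstacle'' — monotonicity of $k\mapsto\frac{(k+3)^3}{2(k+15)(3k^2+62k+401)}$ — is handled in the paper (for general $p$) by writing
\[
    \alpha_{k+1}r_k^L=\frac{k+3}{2(k+p+3)}\left(2\left(\frac{k+p+2}{k+3}\right)^2+1\right)^{-1},
\]
and observing that the first factor is increasing (it is $\tfrac12\bigl(1-\tfrac{p}{k+p+3}\bigr)$) while the second is increasing because $\frac{k+p+2}{k+3}=1+\frac{p-1}{k+3}$ is decreasing and $t\mapsto(2t^2+1)^{-1}$ is decreasing for $t>0$. Since you arrived at the equivalent expanded rational form but not this factorization, you would still need to supply the monotonicity proof — the paper's product decomposition does so painlessly and would close the only gap in your argument.
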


\begin{corollaryT} 
\label{cor:adanag_half} \hypertarget{adanag_half}{}
    Let $\tau_k = 2 \sqrt{k+3}$, $\alpha_k = \frac{1}{2}$, $B_0 = \alpha_{0}^2\tau_{0}^2 \pr{ \frac{(\tau_{0}-1)^2}{\alpha_{-1}\tau_{-1}^2} - 1 }$ and set $s_0$, $\upperboundL$ as in \Cref{thm:adanag_g}. 
    Suppose $f$ is an $L$-smooth convex function. 
    Then for $\set{x_k}_{k\ge0}$ generated by \adanagG, we have:
    \begin{equation*}
        f(x_k) - f_\star 
        \le \frac{1}{s_{k+1}} \frac{1}{4\pr{k+4 - \frac{\sqrt{k+4}}{2}}} 
        \upperboundconstant 
        = \cO\pr{ \frac{ \upperboundL }{k} }, 
        \qquad 
        \min_{i\in\set{1,\dots, k}} \norm{\nabla f(x_i)}^2 = \cO\pr{ \frac{\upperboundL^2}{k^2} }.
    \end{equation*}
    Here,  $s_k \ge \frac{1}{5} \frac{1}{\upperboundL}$ and
    $\liminf_{k\to\infty} s_k \ge \frac{1}{3} \frac{1}{\upperboundL}$. 
    We name the algorithm as {\bf \adanagsqrtname}. 
\end{corollaryT}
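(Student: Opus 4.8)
The plan is to derive the corollary directly from \Cref{thm:adanag_g} and \Cref{lemma:asymptotic_lower_bound_step_size}, so the work consists entirely of checking their hypotheses for the choice $\tau_k = 2\sqrt{k+3}$, $\alpha_k \equiv \tfrac12$. First I make the coefficient sequences explicit. Since $\alpha_k \equiv \tfrac12$, a short computation gives
\begin{equation*}
    A_k = \alpha_{k+1}\tau_{k+1}(\tau_{k+1}-1) = 2(k+4) - \sqrt{k+4}, \qquad \alpha_{k+1}^2\tau_{k+1}^2 = k+4,
\end{equation*}
and, for $k\ge0$ (reading $\tau_{-1} = 2\sqrt2$, $\alpha_{-1}=\tfrac12$ when $k=0$),
\begin{equation*}
    B_k = \alpha_k^2\tau_k^2\Bigl( \tfrac{(\tau_k-1)^2}{\alpha_{k-1}\tau_{k-1}^2} - 1 \Bigr) = (k+3)\,\frac{2k+9-4\sqrt{k+3}}{2k+4}.
\end{equation*}
With $t = \sqrt{k+3}$ the numerator equals $2(t-1)^2+1>0$, and moreover $\frac{2k+9-4\sqrt{k+3}}{2k+4}\ge\tfrac12$ is equivalent to $(k-1)^2\ge0$, so $B_k \ge \tfrac12(k+3)>0$ for every $k\ge0$. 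I also record the identity baked into the definition of $B_{k+1}$, namely $B_{k+1}+\alpha_{k+1}^2\tau_{k+1}^2 = \alpha_{k+1}^2\tau_{k+1}^2\frac{(\tau_{k+1}-1)^2}{\alpha_k\tau_k^2}$, which collapses to
\begin{equation*}
    \frac{B_{k+1}+\alpha_{k+1}^2\tau_{k+1}^2}{A_k} = \frac{A_k}{\alpha_k\tau_k^2} = \frac{2(k+4)-\sqrt{k+4}}{2(k+3)} \le 1 \qquad (k\ge0),
\end{equation*}
the inequality because $\sqrt{k+4}\ge2$.

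Next I verify \eqref{ineq:adanag_AB_condition_for_step_size_lowerbound}. The bound $0<\alpha_k\le1$ is trivial, and $\frac{\tau_k^2}{\tau_{k+1}(\tau_{k+1}-1)}\ge1$ reduces, after $\tau_k^2=4(k+3)$, to $2\sqrt{k+4}\ge4$, i.e. $k\ge0$. For the third condition, combining $B_k\ge\tfrac12(k+3)$ with the display above yields
\begin{equation*}
    \frac{A_k}{B_k} + \frac{B_{k+1}+\alpha_{k+1}^2\tau_{k+1}^2}{A_k}
    \le \frac{2\bigl(2(k+4)-\sqrt{k+4}\bigr)}{k+3} + 1,
\end{equation*}
which is bounded uniformly in $k\ge0$, so its reciprocal is bounded below by a positive constant; taking $r := \inf_{k\ge0}\alpha_{k+1}\bigl(\frac{A_k}{B_k} + \frac{B_{k+1}+\alpha_{k+1}^2\tau_{k+1}^2}{A_k}\bigr)^{-1}>0$ validates the condition with this $r$. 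Then \Cref{thm:adanag_g} (with $s_0 = \frac{A_0}{\alpha_0\tau_0}\frac{r}{\alpha_1}\frac1{L_0}$) applies and gives $s_{k+1}\alpha_{k+1}\ge r/L$, hence $s_{k+1}\ge 2r/L$; checking that the supremum over $k\ge0$ of the sum above stays below $5$ then yields the stated $s_k\ge\tfrac{1}{5L}$. The convergence estimates follow by substitution into \Cref{thm:adanag_g}: writing $A_k = 2\bigl(k+4-\tfrac{\sqrt{k+4}}{2}\bigr)$ turns $f(x_k)-f_\star\le\frac{1}{2s_{k+1}A_k}\upperboundconstant$ into $\frac1{s_{k+1}}\cdot\frac{1}{4(k+4-\sqrt{k+4}/2)}\upperboundconstant = \cO(L/k)$, and $\sum_{i=1}^k\alpha_i\tau_i^2 = \sum_{i=1}^k 2(i+3) = k^2+7k = \Theta(k^2)$ gives $\min_{i\le k}\norm{\nabla f(x_i)}^2 = \cO(L^2/k^2)$.

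For the asymptotic lower bound I invoke \Cref{lemma:asymptotic_lower_bound_step_size} and check its three hypotheses. First, $\prod_{k=1}^\infty\frac{\tau_k^2}{\tau_{k+1}(\tau_{k+1}-1)} = \prod_{k=1}^\infty\frac{4(k+3)}{4(k+4)-2\sqrt{k+4}} = \prod_{k=1}^\infty\Bigl(1-\frac{2\sqrt{k+4}-4}{4(k+3)}\Bigr)^{-1}$; since the subtracted term is $\Theta(1/\sqrt k)$ and $\sum_k 1/\sqrt k = \infty$, the product diverges. Second, $\frac{A_{k-1}+\alpha_k\tau_k}{A_k}\ge1$ for large $k$, because $A_k - A_{k-1} = 2 - (\sqrt{k+4}-\sqrt{k+3}) < 2 \le \sqrt{k+3} = \alpha_k\tau_k$ for $k\ge2$. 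Third, the limit $\bar r = \lim_k\bigl(\frac{A_k}{B_k}+\frac{B_{k+1}+\alpha_{k+1}^2\tau_{k+1}^2}{A_k}\bigr)^{-1}$ exists: from $A_k\sim2k$, $B_k\sim k$ we get $\frac{A_k}{B_k}\to2$, while $\frac{B_{k+1}+\alpha_{k+1}^2\tau_{k+1}^2}{A_k} = \frac{A_k}{\alpha_k\tau_k^2}\to1$, so $\bar r = (2+1)^{-1} = \tfrac13$. \Cref{lemma:asymptotic_lower_bound_step_size} then gives $\liminf_k s_k\ge\bar r/L = \tfrac{1}{3L}$, which completes the proof.

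The main obstacle is the third condition in \eqref{ineq:adanag_AB_condition_for_step_size_lowerbound}: pinning down a uniform upper bound on $\frac{A_k}{B_k}+\frac{B_{k+1}+\alpha_{k+1}^2\tau_{k+1}^2}{A_k}$ sharp enough to recover the constant $\tfrac15$. The $\sqrt{\cdot}$ terms make the clean monotonicity-in-$k$ argument used in \Cref{cor:adagd_1/k_rate} somewhat fussier, so one either squares through a monotonicity estimate or combines the coarse closed-form bound above with a finite numerical check at small indices; everything else is routine algebra and limit computation.
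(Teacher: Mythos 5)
Your plan matches the paper's: both you and the authors prove the corollary by verifying the hypotheses of \Cref{thm:adanag_g} and \Cref{lemma:asymptotic_lower_bound_step_size} for the choice $\tau_k = 2\sqrt{k+3}$, $\alpha_k\equiv\tfrac12$, then read off the rates. The limit $\bar r = \tfrac13$, the divergence of $\prod_k \tau_k^2/(\tau_{k+1}(\tau_{k+1}-1))$, and the lower bound on $(A_{k-1}+\alpha_k\tau_k)/A_k$ are handled by essentially the same elementary computations in both. Where you diverge is in how you verify the third condition in \eqref{ineq:adanag_AB_condition_for_step_size_lowerbound}: the paper simply asserts $\alpha_{k+1}r_k^L\ge\alpha_1 r_0^L\approx 0.102>\tfrac{1}{10}$ numerically, implicitly invoking monotonicity in $k$ without proof, whereas you derive an explicit closed-form bound by splitting into $A_k/B_k\le 2A_k/(k+3)$ (via $B_k\ge\tfrac12(k+3)$, i.e. $(k-1)^2\ge0$) and $(B_{k+1}+\alpha_{k+1}^2\tau_{k+1}^2)/A_k = A_k/(\alpha_k\tau_k^2)\le 1$. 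Your route is arguably tighter and more self-contained.

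However, the step you flag as "the main obstacle" — showing your upper bound stays $\le 5$ so that $r=\tfrac{1}{10}$ and hence $s_k\ge\tfrac{1}{5L}$ — is not actually fussy at all: it is a one-line calculation you should finish rather than defer. Indeed,
\[
\frac{2\bigl(2(k+4)-\sqrt{k+4}\bigr)}{k+3} + 1 \le 5
\iff 4(k+4)-2\sqrt{k+4} \le 4(k+3)
\iff \sqrt{k+4} \ge 2
\iff k \ge 0,
\]
which holds identically. With this line added, your upper bound gives $\frac{A_k}{B_k}+\frac{B_{k+1}+\alpha_{k+1}^2\tau_{k+1}^2}{A_k}\le 5$ for all $k\ge0$, hence $\alpha_{k+1}r_k^L\ge\tfrac1{10}$, and the constant $\tfrac15$ follows exactly as you claim. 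You should also state (as the paper does via the identity $(A_{k-1}+\alpha_k\tau_k)/A_k = \frac{\alpha_k}{\alpha_{k+1}}\frac{\tau_k^2}{\tau_{k+1}(\tau_{k+1}-1)}$, which reduces to $\tau_k^2/(\tau_{k+1}(\tau_{k+1}-1))\ge 1$ when $\alpha$ is constant) that the ratio inequality actually holds for all $k\ge0$, not just $k\ge2$; your direct estimate $A_k-A_{k-1}<2\le\sqrt{k+3}$ already gives $k\ge1$, and for \Cref{lemma:asymptotic_lower_bound_step_size} only eventual validity is required, so this is not a gap, just a loose statement. Overall: correct approach, one trivial computation left unfilled that you mistakenly assess as difficult.
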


\adanagpractice\ has a larger constant in the guaranteed convergence upper bound compared to \adanag, but it still achieves an accelerated $\mathcal{O} \pr{ {1}/{k^2} }$ rate.  
Although \adanagsqrt\ does not achieve the accelerated convergence rate, it still attains a non-ergodic $\mathcal{O} \pr{ {1}/{k} }$ rate. 
The reason for introducing both methods is that there were specific experimental cases where each method performed well.  
From our empirical observations, these algorithms performed well compared to prior methods in various practical scenarios, which we present in the next section.

\section{Numerical Experiments}     \label{sec:experiments}

In this section, we apply our methods to two problems: \textit{logistic regression} and \textit{least squares problem}. The objective function \( f \) is \( L \)-smooth and convex for both problems. We primarily focus on two algorithms, \adanagpractice\ and \adanagsqrt,  
which were considered in Corollary~\ref{cor:adanag_12} and  Corollary~\ref{cor:adanag_half}. 
Among the parameter choices that satisfy \Cref{thm:adanag_g} and that we have tried, these two algorithms performed the best overall.

We compare our algorithms with recently developed adaptive algorithms, mostly focusing on the accelerated adaptive algorithm AC-FGM \cite{LiLan2024_simple}.  
Specifically, we compare with AdaBB \cite{ZhouMaYang2024_adabb}, AdGD \cite{MalitskyMishchenko2024_adaptive}, AdaPG$^{q,r}$ \cite{OikonomidisLaudeLatafatThemelisPatrinos2024_adaptive}, 
and the original Nesterov accelerated gradient method \cite{Nesterov1983_method}, which is a non-adaptive method.  
For AC-FGM, we use the parameters from~\cite[Corollary~2]{LiLan2024_simple}, with the $\alpha$ chosen in their experiments. 
We note that AC-FGM guarantees an $\cO(1/k^2)$ rate for $\alpha > 0$, but only an $\cO(1/k)$ ergodic rate when $\alpha = 0$. 
Overall, our methods performed the best in the scenarios we considered.

\renewcommand{\arraystretch}{1.3}
\begin{table}[H]
    \centering
    \begin{tabular}{c|c|cc|ccc}
    \toprule
    \multirow{2}{*}{\textbf{Algorithm}} & 
    \multirow{2}{*}{\adanag} & 
    \multicolumn{2}{c|}{\textbf{AdaNAG-G}} & \multicolumn{3}{c}{\textbf{AdaGD}} \\
    \cline{3-7}
    & & \small\adanagpractice & \small\adanagsqrt & \small\adagdgOne\ & \small\adagdgSqrt & \small\adagdgZero\ \\  
    \midrule
    $f(x_k) - f_\star$ & $\cO\pr{{1}/{k^2}}$ & $\cO\pr{{1}/{k^2}}$ & $\cO\pr{{1}/{k}}$ & $\cO\pr{{1}/{k}}$ & $\cO\big({{1}/{\sqrt{k}}}\big)$& o(1) \\
    \hline \small \textbf{Key Properties} &\small Theoretically tighter &  \multicolumn{2}{c|}{\small Practically useful} & \multicolumn{3}{c}{\small Without momentum} \\
    \bottomrule
    \end{tabular}
    \caption{Summary of our algorithms. Note that the convergence rates of $f(x_k) - f_\star$ are non-ergodic, and both \adagdgSqrt\ and \adagdgZero\ still achieve an $\cO(1/k)$ ergodic convergence rate.}
    \label{tab:our_algorithms}
\end{table}
\renewcommand{\arraystretch}{1}
\vspace{-2mm}

Our codes were written in Python 3.13.1 and leveraged the implementations provided by \cite{MalitskyMishchenko2020_adaptive} and \cite{LiLan2024_simple}. All numerical experiments were conducted on a personal computer with an Intel Core i5-8265U processor, Intel UHD Graphics 620, and 16GB of memory.
Details of the dataset and the parameters used for each algorithm in the experiments are provided in the respective subsections. 
For convenience, we summarize our algorithms in Table \ref{tab:our_algorithms}.

\subsection{Logistic regression}

In this subsection, we present several experimental cases of the logistic regression problem 
\begin{equation*}
    \min_{x \in \bbR^n} f(x)
    = -\frac{1}{m} \sum_{i=1}^{m} \pr{ y_i \log(\sigma(a_i^{\intercal}x))  + (1-y_i) \log(1-\sigma(a_i^\intercal x)) } + \frac{\gamma}{2} \norm{x}^2,
\end{equation*}
where our algorithm outperforms others. 
Here, \(m\) denotes the number of observations, \(a_i \in \mathbb{R}^n\), \(y_i \in \{0,1\}\), \(\sigma(z) = \frac{1}{1+e^{-z}}\) and  \(\gamma > 0\) is a regularization parameter. The gradient of \(f\) is given by \( \nabla f(x) = \frac{1}{m}\sum_{i=1}^{m} a_i\big(\sigma(a_i^{\intercal}x) - y_i\big) + \gamma x \). As discussed in \cite{MalitskyMishchenko2020_adaptive}, the function \(f\) is \(L\)-smooth with \( L = \frac{1}{4}\lambda_{\text{max}}(A^{\intercal}A) + \gamma \), where $ A = {\begin{bmatrix} a_1^{\intercal} & \dots & a_m^{\intercal} \end{bmatrix}}^{\intercal} \in \bbR^{m \times n}$, and \(\lambda_{\text{max}}\) denotes the largest eigenvalue.

We follow the experimental setup described in \cite[\S~6.1]{ZhouMaYang2024_adabb}. The value \( f_\star \) is defined as the lowest objective function value achieved among all tested algorithms.  
We used the \textit{mushrooms}, \textit{w8a}, and \textit{covtype} datasets from LIBSVM \cite{ChangLin2011_libsvm}.  
The parameter details for each dataset are given in Table \ref{tab:logistic_parameters}.
\begin{table}[H]
    \centering
    \begin{tabular}{c|c c c c c c} 
        \toprule
        \textbf{dataset} & {$m$} & {$n$} & {$L$} & {$\gamma$} & {max iteration}   \\ 
        \midrule
        \textbf{mushrooms} & 8,124 & 112 & 2.59 & ${L}/{m}$ & 600    \\
        \textbf{w8a} & 49,749 & 300 & 0.66 & ${L}/{m}$ & 1,000    \\
        \textbf{covtype} & 581,012 & 54 & 5.04 $\times$ 10$^6$ & ${L}/{(10m)}$ & 9,000    \\
        \bottomrule
    \end{tabular}
    \caption{Parameters used in the logistic regression problems}
    \label{tab:logistic_parameters}
\end{table}

\subsubsection{Comparison between our algorithms}

Before comparing our algorithms with prior methods, we first analyze the comparison among our own algorithms. This analysis justifies selecting \adanagpractice\ and \adanagsqrt\ as the representative algorithms. The algorithms under comparison include \adanag\ from \Cref{sec:adanag}, \adagdgOne, \adagdgSqrt, and \adagdgZero\ from \Cref{sec:adagd}, as well as \adanagpractice\ and \adanagsqrt\ from \Cref{sec:adanag-g}.  
We also include the algorithms defined by \(\tau_k = \frac{(k+2)+p}{p}\) with \(p=3, 20\) for \adanagG, to justify the choice \(p=12\) for \adanagpractice. We label them as AdaNAG-G\(_{3}\) and AdaNAG-G\(_{20}\), respectively.  
As shown in the proof of Corollary~\ref{cor:adanag_12}, provided in \Cref{appendix:adanag_12_proof}, AdaNAG-G\(_{3}\) and AdaNAG-G\(_{20}\) also achieve a convergence rate of \(\cO\left(1 / k^2\right)\) in \eqref{eq:convergence_rate_generalized_p}.  

We use $L_0$ as defined in \eqref{eq:initial_guess} to define the initial step size for all our algorithms, as stated in the theorems. 
Specifically, we set  
\begin{equation}    \label{eq:tilde_x_0}
    \tilde{x}_0 = x_0 + u,
\end{equation}
where the entries of $u$ are chosen randomly from the uniform distribution over $[0,1]$. 
Also, we set $x_0$ as the zero vector. 
The corresponding plots are presented in \Cref{fig:logistic_our_methods}.  

\begin{figure}[H]
    \centering
    \begin{subfigure}[b]{0.32\textwidth}
        \centering        
         \includegraphics[width=\textwidth]{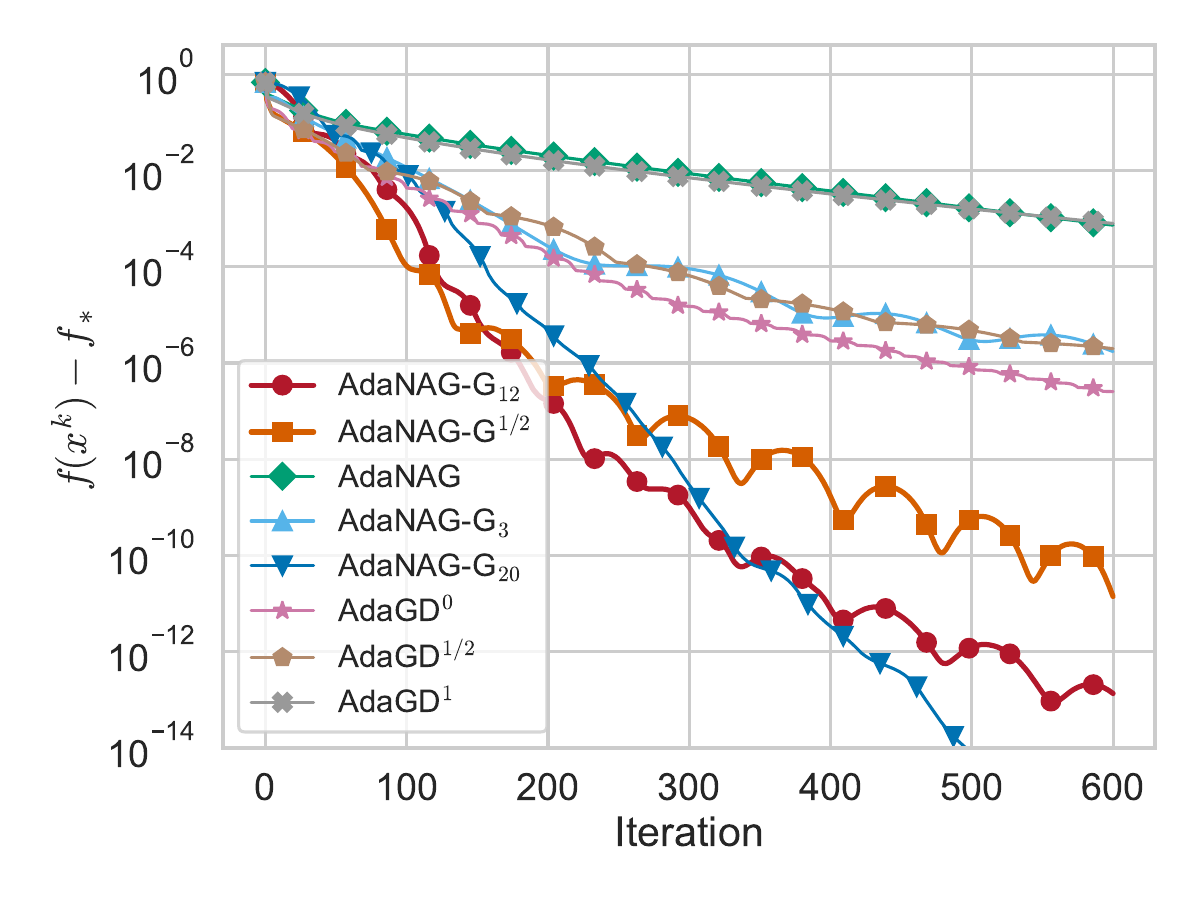}
        \caption{mushrooms, iteration $6 \times 10^2$}
    \end{subfigure}
    \begin{subfigure}[b]{0.32\textwidth}
        \centering
        \includegraphics[width=\textwidth]{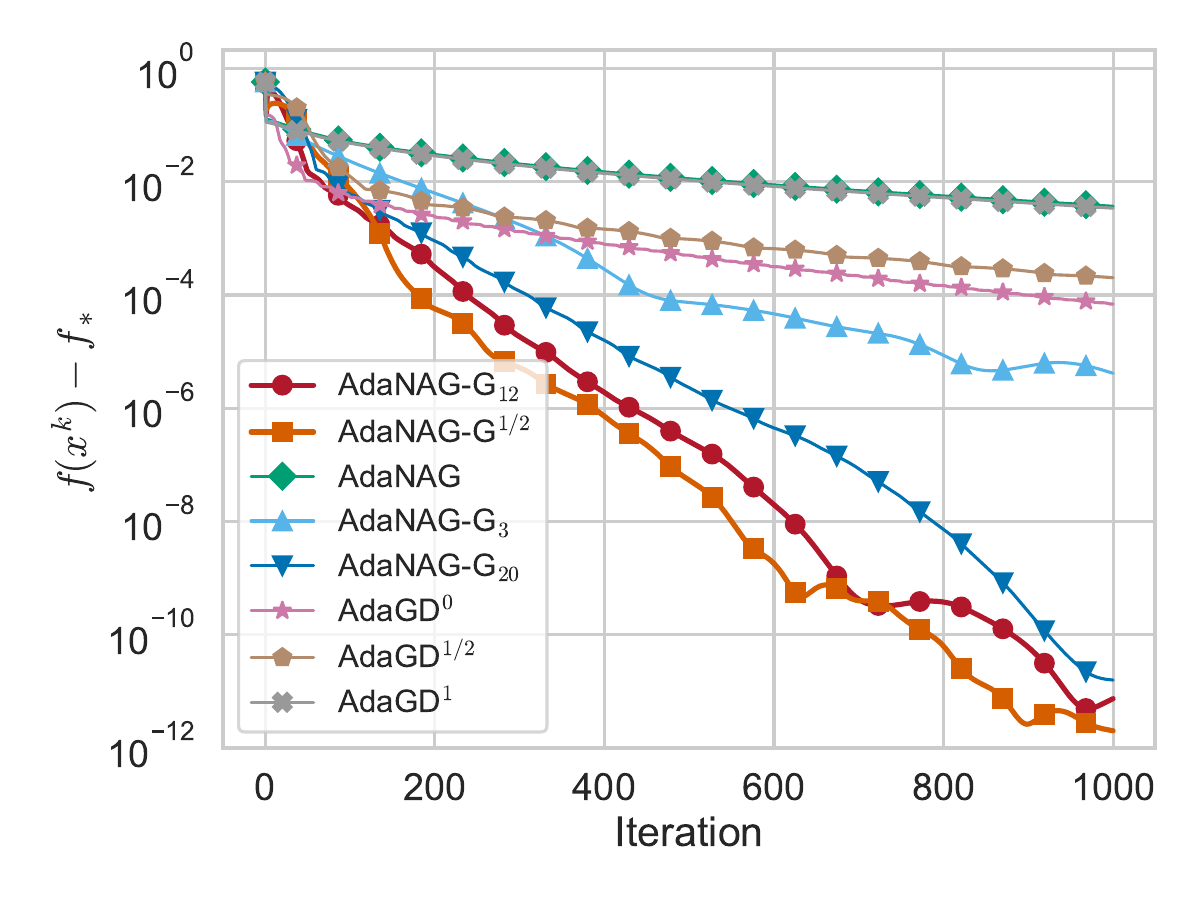}
        \caption{w8a, iteration $10^3$}
    \end{subfigure}   
    \begin{subfigure}[b]{0.32\textwidth}
        \centering        
        \includegraphics[width=\textwidth]{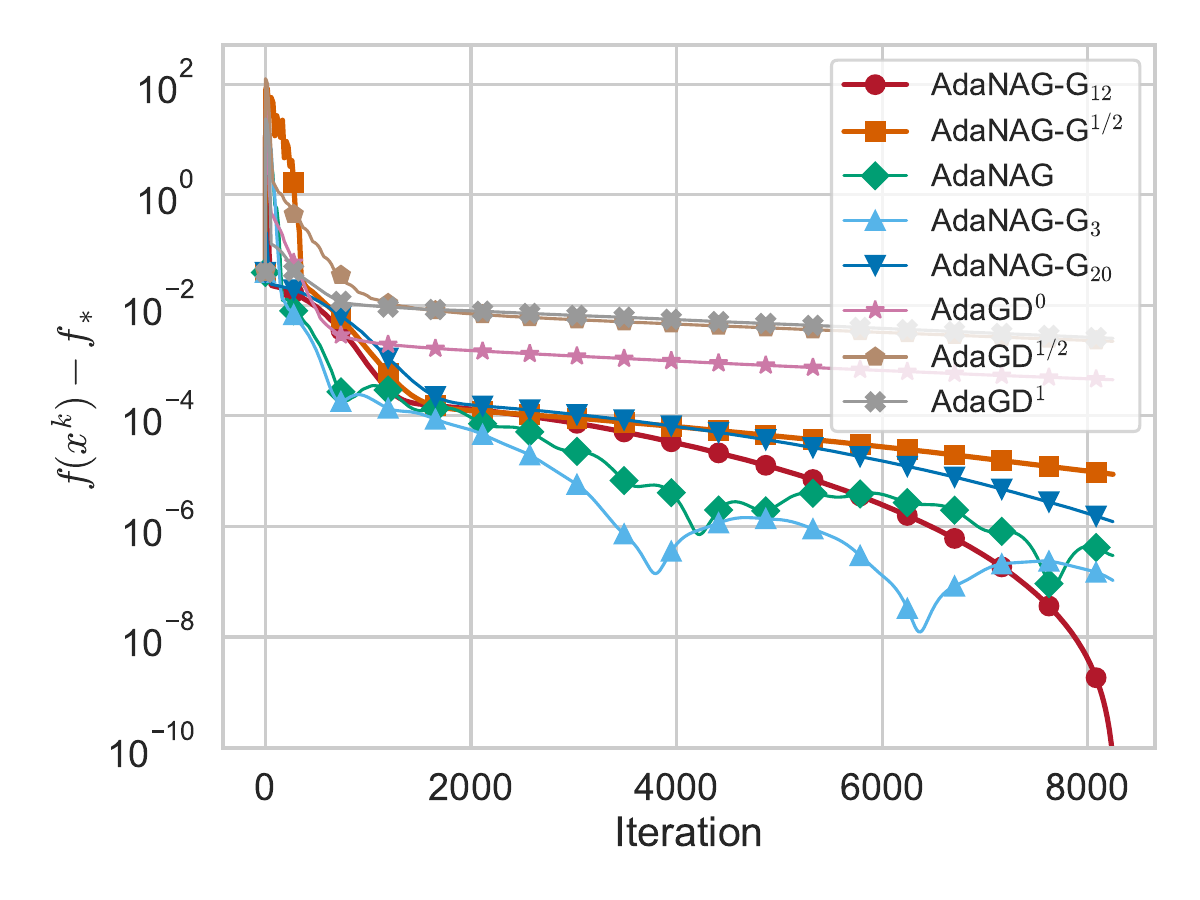}
        \caption{covtype, iteration $9\times10^3$}
    \end{subfigure}
    \caption{Logistic regression problem, comparison between our algorithms}
    \label{fig:logistic_our_methods}
\end{figure}
First, we focus on the ones with the best performance. 
Comparing the three accelerated algorithms with \(\cO\pr{{1}/{k^2}}\) convergence rate, AdaNAG-G\(_{3}\), \adanagpractice\ and AdaNAG-G\(_{20}\), the best-performing one varies depending on the case.  
However, \adanagpractice\ never placed the third. 
\adanagsqrt, which has an \(\cO\pr{{1}/{k}}\) convergence rate, outperformed other AdaNAG instances in the \textit{w8a} dataset experiment. 
\adanag\  and \adagdgOne, which are of the best theoretical guarantees for accelerated and GD-type algorithms, respectively, performed poorly in practice.  

Comparing the GD-type algorithms, \adagdgZero\ always performed the best, while \adagdgOne\ always performed the worst. Note that this is the opposite order of the speed of their theoretical guarantees.  
Overall, GD-type algorithms never performed the best compared to other algorithms for logistic regression problems.  

Based on these observations, we will only compare \adanagpractice\ and \adanagsqrt\ with the previous algorithms from now on.

\subsubsection{Scenarios that our algorithms perform the best} \label{sec:log_prior_algorithms}

The comparisons of \adanagpractice\ and \adanagsqrt\ with prior algorithms are given in Figures \ref{fig:logistic-reg-iteration} and \ref{fig:logistic-reg-cpu}. 
We observe that our algorithms outperform previous methods for the three tested datasets. 
In particular, \adanagpractice\ consistently outperformed all prior algorithms across all three cases. Meanwhile, \adanagsqrt\ achieved the best performance for the \emph{w8a} dataset, while in the other cases, it performed similarly to the AC-FGM algorithms.

\begin{figure}[H]
    \centering
    \begin{subfigure}[b]{0.32\textwidth}
        \centering        
         \includegraphics[width=\textwidth]{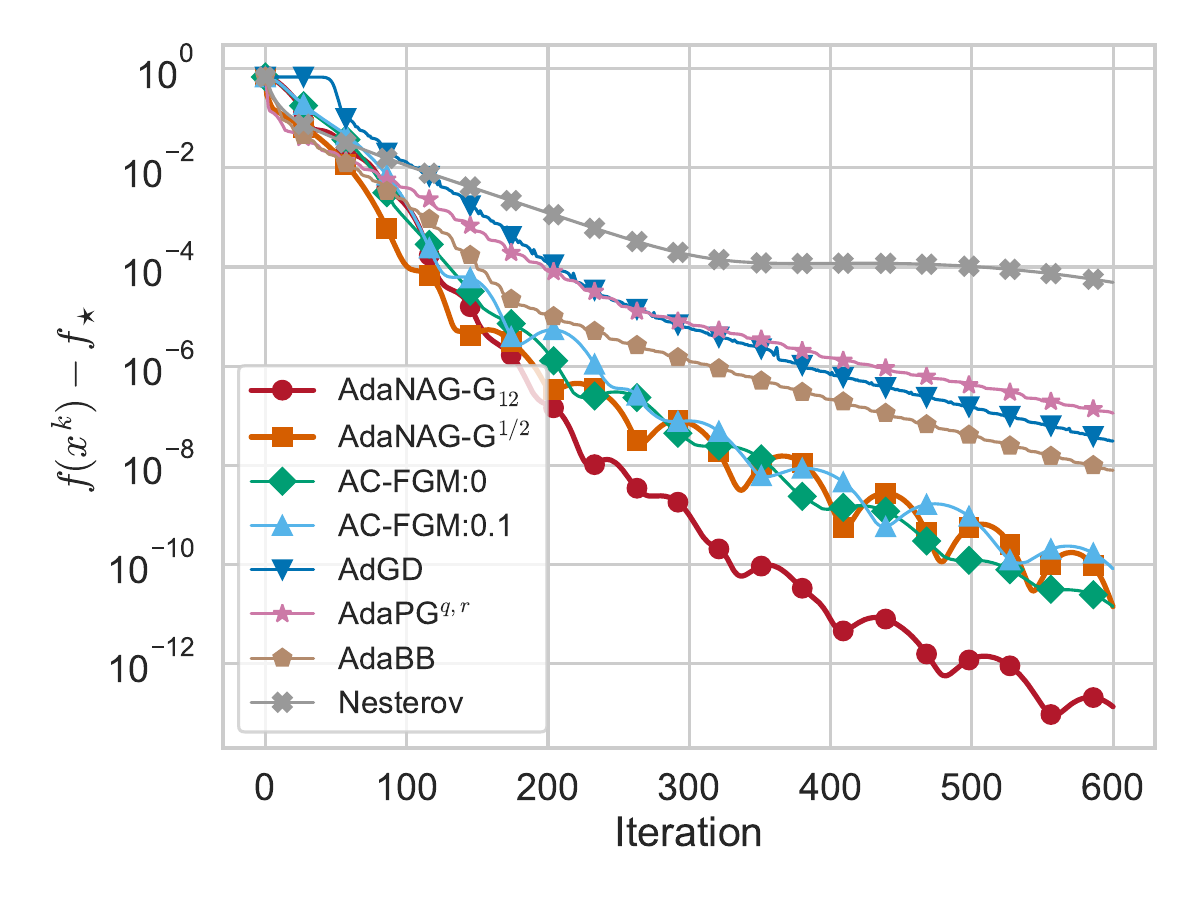}
        \caption{mushrooms, iteration $6 \times 10^2$}
    \end{subfigure}
    \begin{subfigure}[b]{0.32\textwidth}
        \centering
        \includegraphics[width=\textwidth]{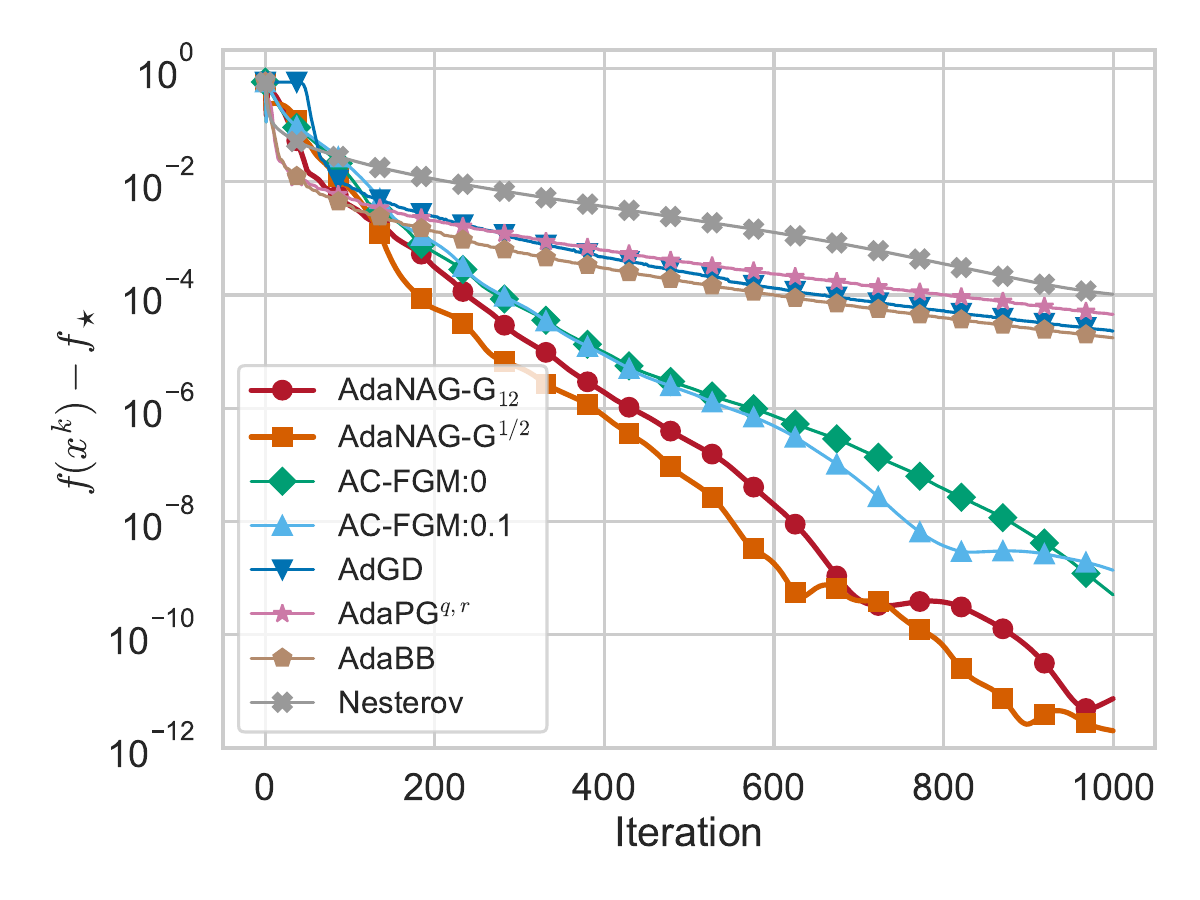}
        \caption{w8a, iteration $10^3$}
    \end{subfigure}   
    \begin{subfigure}[b]{0.32\textwidth}
        \centering        
        \includegraphics[width=\textwidth]{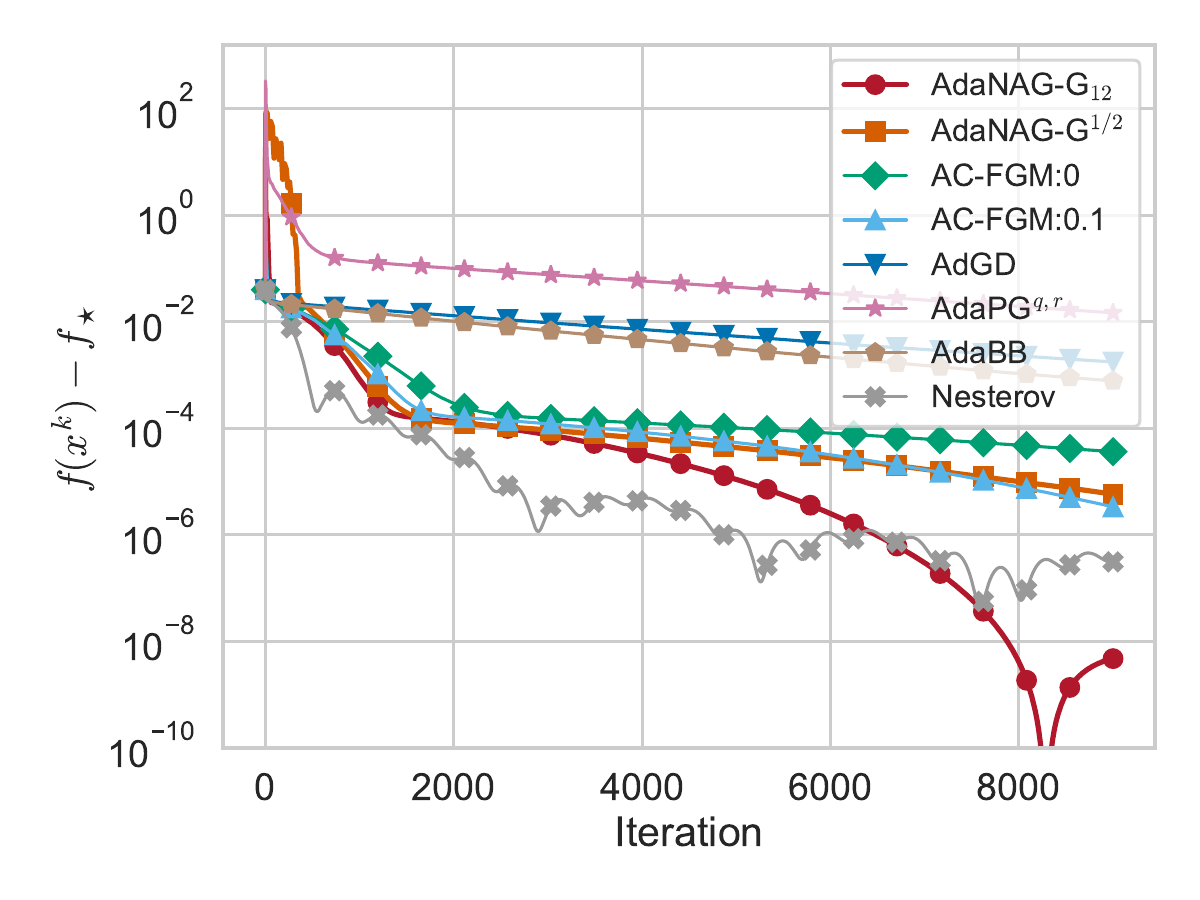}
        \caption{covtype, iteration $9\times10^3$}
    \end{subfigure}
    \caption{Logistic regression problem: iteration number}\label{fig:logistic-reg-iteration}
\end{figure}

\vspace{-3mm}

\begin{figure}[H]
    \centering
    \begin{subfigure}[b]{0.32\textwidth}
        \centering        
         \includegraphics[width=\textwidth]{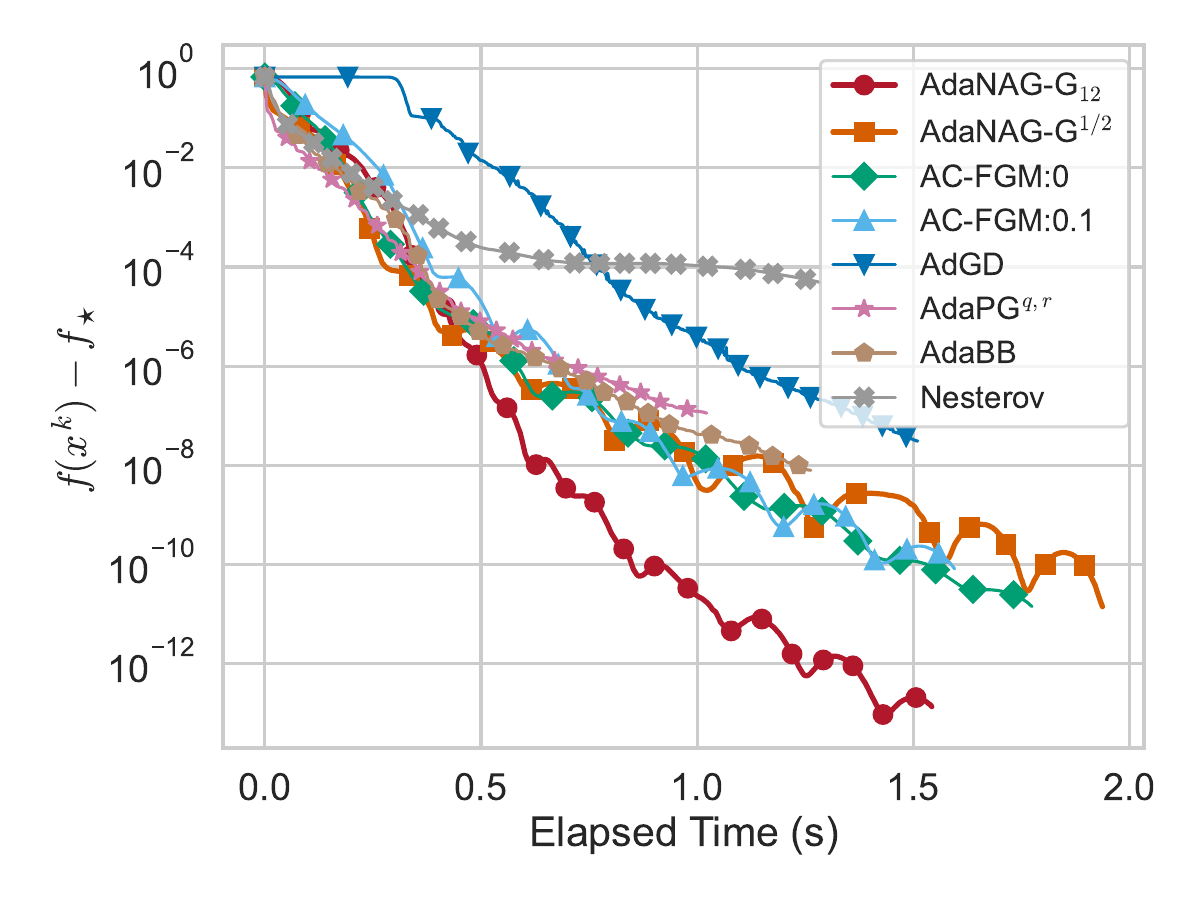}
        \caption{mushrooms}
    \end{subfigure}
    \begin{subfigure}[b]{0.32\textwidth}
        \centering
        \includegraphics[width=\textwidth]{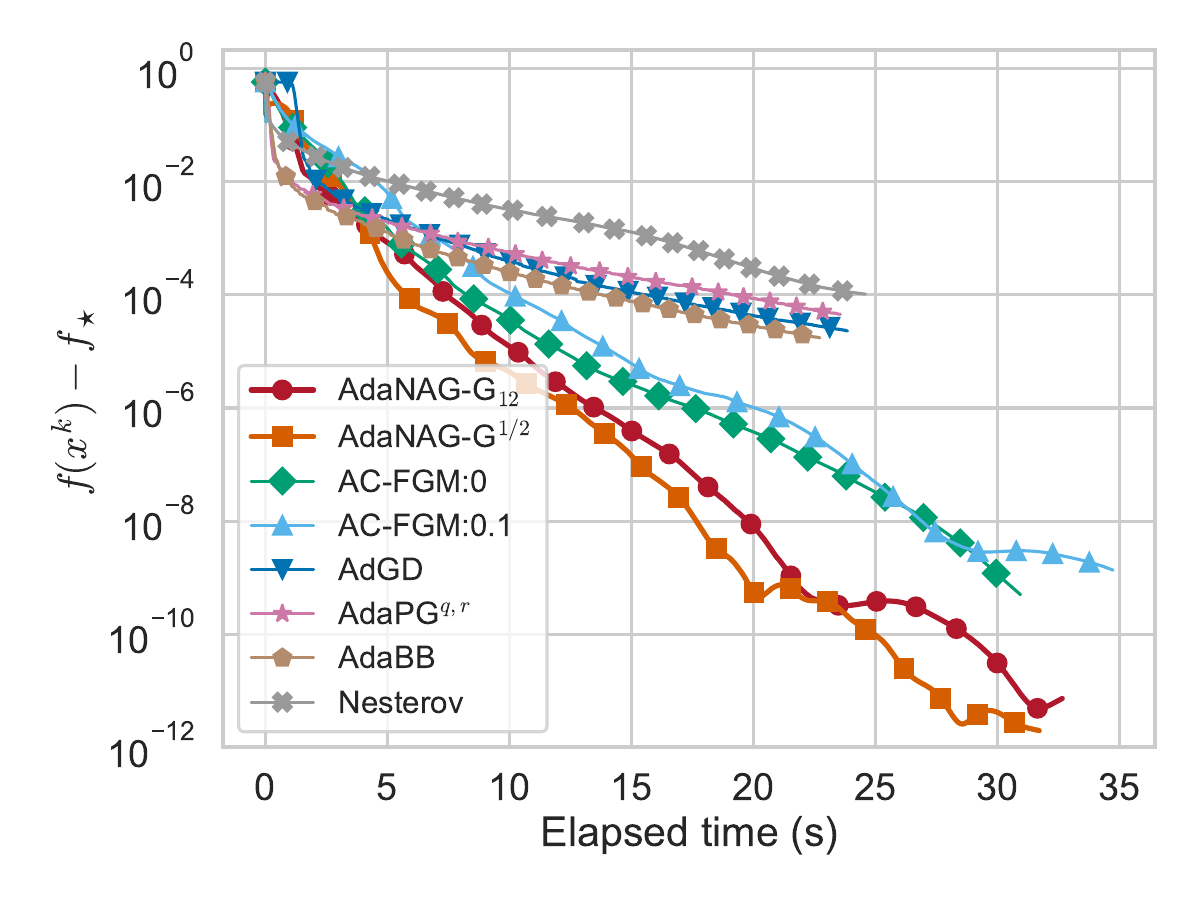}
        \caption{w8a}
    \end{subfigure}   
    \begin{subfigure}[b]{0.32\textwidth}
        \centering        
        \includegraphics[width=\textwidth]{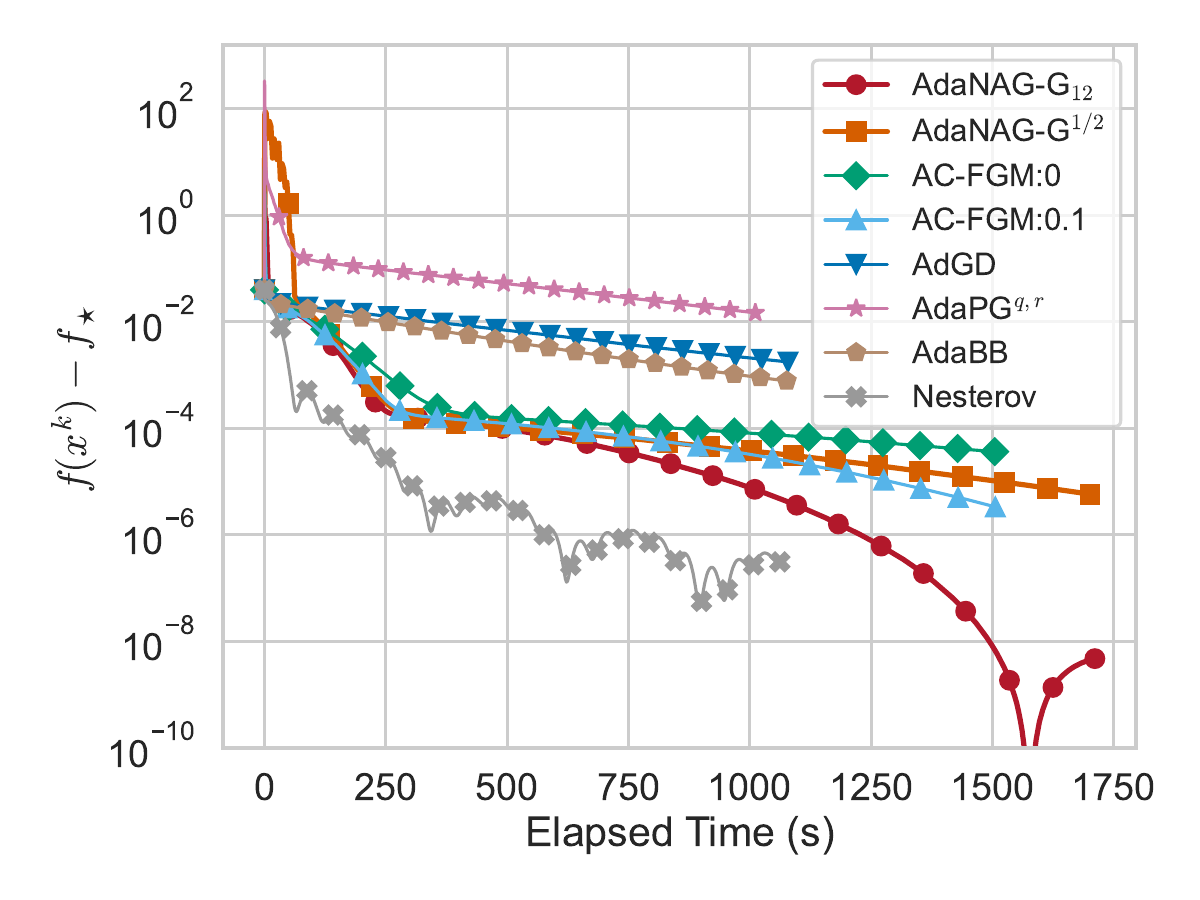}
        \caption{covtype}
    \end{subfigure}
    \caption{Logistic regression problem: CPU time}\label{fig:logistic-reg-cpu}
\end{figure}

The details of the algorithm parameters are as follows. For our algorithms \adanagpractice\ and \adanagsqrt, we use $L_0$ as defined in \eqref{eq:initial_guess}, with $\tilde{x}_0$ as considered in \eqref{eq:tilde_x_0}.  
Overall, for the parameter settings of algorithms from prior works, we selected the best-performing variant introduced in each respective paper.  
For AC-FGM, we set $\beta = 1 - \frac{\sqrt{6}}{3}$ and $\eta_1 = \frac{2}{5L_0}$, as in \cite[\S~4]{LiLan2024_simple}, and consider $\alpha = 0$ and $0.1$. 
In the figures, AC-FGM:0 and AC-FGM:0.1 mean AC-FGM with $\alpha = 0$ and $\alpha = 0.1$, respectively. 
For AdGD, we chose one of Algorithms 1 and 2 from \cite{MalitskyMishchenko2024_adaptive}, and we performed one line search to ensure $\alpha_0 L_1 \in [1/\sqrt{2}, 2]$ for Algorithm 2.  
For AdaPG$^{q,r}$, we set $q = 5/3$ and $r = 5/6$, based on comparisons of the suggested values in Table 1 of \cite{OikonomidisLaudeLatafatThemelisPatrinos2024_adaptive}. We use
$\gamma_0 = {\norm{\tilde{x}_0 - x_0}^2}/{\inner{\tilde{x}_0 - x_0}{\nabla f(\tilde{x}_0) - \nabla f(x_0)}} = \gamma_{-1}$, 
where $\tilde{x}_0$ is defined as in \eqref{eq:tilde_x_0}. 
Finally, for AdaBB, we adopt AdaBB3 as introduced in Table 2 of \cite[\S~6]{ZhouMaYang2024_adabb}, and set $\alpha_0 = 10^{-10}$, $\theta_0 = \max\set{\lambda_1^2 / (2\alpha_0^2) - 1, 0}$, and $\theta_1 = 1$.  

Overall, the accelerated adaptive algorithms and their variants (AdaNAG, AC-FGM) demonstrated superior performance compared to GD-type adaptive algorithms (AdaBB, AdGD, AdaPG$^{q,r}$).  
This performance gap is especially noticeable in the \textit{covtype} dataset,  
which has a significantly larger \( m \) compared to the others, requiring more iterations and a longer running time.  
Interestingly, the \textit{covtype} dataset is also the only case where the non-adaptive Nesterov algorithm outperformed most of the other algorithms.

\subsection{Least squares problem}

In this subsection, we present several experimental cases of the classical least squares problem 
\begin{equation*}
    \min_{x\in\mathbb{R}^n} f(x) = \frac{1}{m} \|Ax - b\|^2,
\end{equation*}
where our algorithm outperforms others or achieves the best performance. 
Here, $A \in \mathbb{R}^{m\times n}$ and $b\in\mathbb{R}^m$.  
We provide two cases with randomly generated synthetic data and two cases with real-world datasets, \textit{bodyfat} and \textit{cadata}, from LIBSVM \cite{ChangLin2011_libsvm}.  

To ensure a fair comparison, we follow most of the experimental details in \cite[\S~4.1]{LiLan2024_simple}, using the same settings as our benchmark algorithm \ref{eq:AC-FGM}.  
We consider both synthetic and real-world datasets, comparing two cases with relatively larger and smaller values of $m$, which corresponds to the number of data points.  
In the synthetic data setting, $A$ is generated with entries uniformly distributed in $[0,1]$. An optimal point $x_\star$ is randomly chosen from the unit ball $B_1(0)$, and we set $b = Ax_\star$, which implies $f_\star = 0$. 
The parameter details for the algorithms are the same as in \Cref{sec:log_prior_algorithms} and parameters for each dataset are summarized in \Cref{tab:qp_parameters}. 
\begin{table}[H]
    \centering
    \begin{tabular}{c|c c c c c c} 
        \toprule
        \textbf{dataset} & {$m$} & {$n$} & {$L$} &  {max iteration}   \\ 
        \midrule
        \textbf{random-small} & 1,000 & 4,000 & 2,000 & 6 $\times$ 10$^{3}$    \\
        \textbf{random-large} & 4,000 & 8,000 & 4,000 & 10$^{4}$    \\
        \textbf{bodyfat} & 252 & 14 & 156,269 & 2 $\times$ 10$^{4}$    \\
        \textbf{cadata} & 20,640 & 8 & 20  &   10$^{5}$   \\ 
        \bottomrule
    \end{tabular}
    \caption{Parameters used in the least squares problem}
    \label{tab:qp_parameters}
\end{table} 
As captured in \Cref{fig:least_square_synthetic} and \Cref{fig:least_square_real_world}, 
our algorithms \adanagpractice\ and \adanagsqrt\ outperform others or achieve the best performance. 
For synthetic datasets in \Cref{fig:least_square_synthetic}, we observe that \adanagsqrt\ outperforms other algorithms for smaller datasets, while \adanagpractice\ outperforms others for larger datasets. 
For the real-world datasets in \Cref{fig:least_square_real_world}, \adanagpractice\ achieves the best performance on both datasets, tying with AC-FGM:0.1 on \textit{bodyfat} but performing noticeably better on \textit{cadata}. 
Note that the performance gap between \adanagpractice\ and \adanagsqrt\ is larger on \textit{cadata}, which is the larger dataset.

\begin{figure}[ht]
    \centering
    \begin{subfigure}[b]{0.34\textwidth}
        \centering        
         \includegraphics[width=\textwidth]{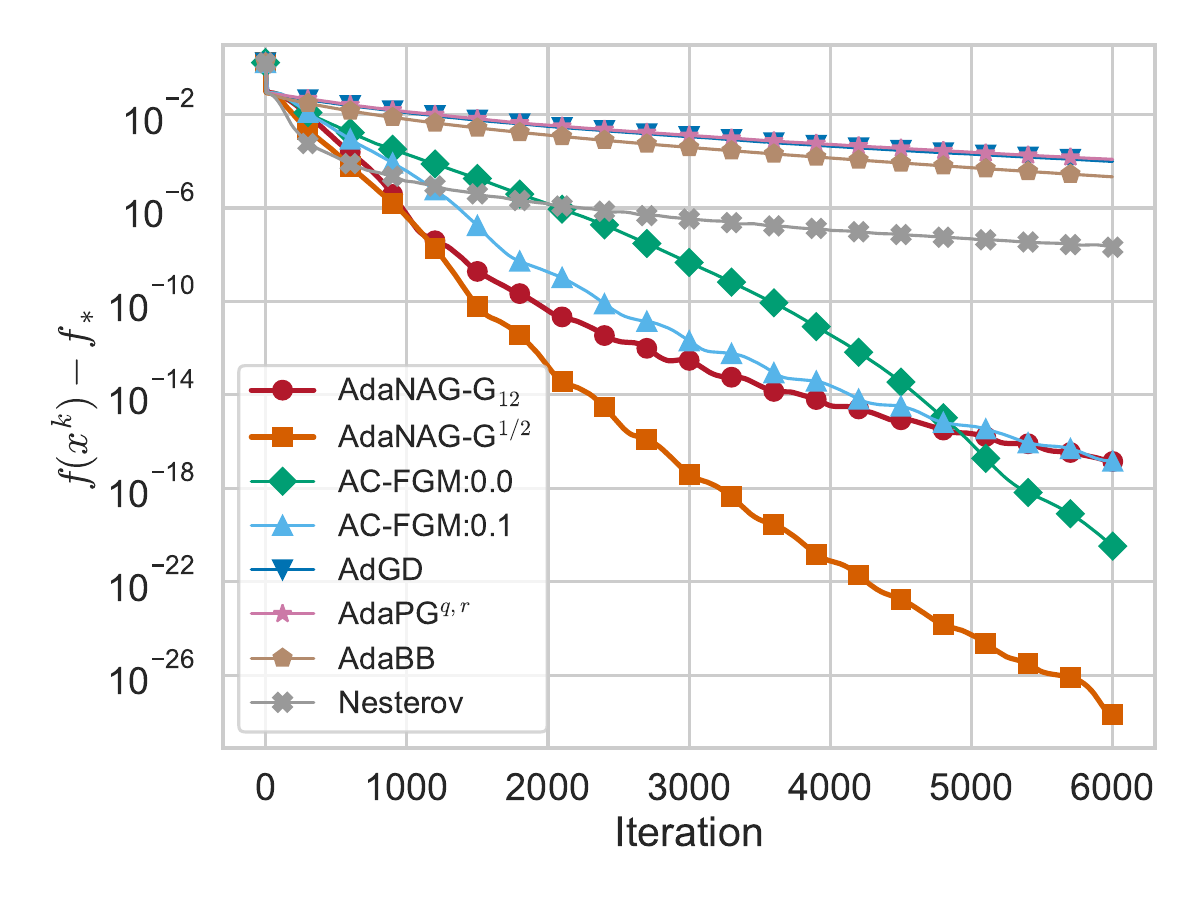}
        \caption{random-small, iteration $6\times10^3$}
    \end{subfigure}
    \begin{subfigure}[b]{0.34\textwidth}
        \centering        
         \includegraphics[width=\textwidth]{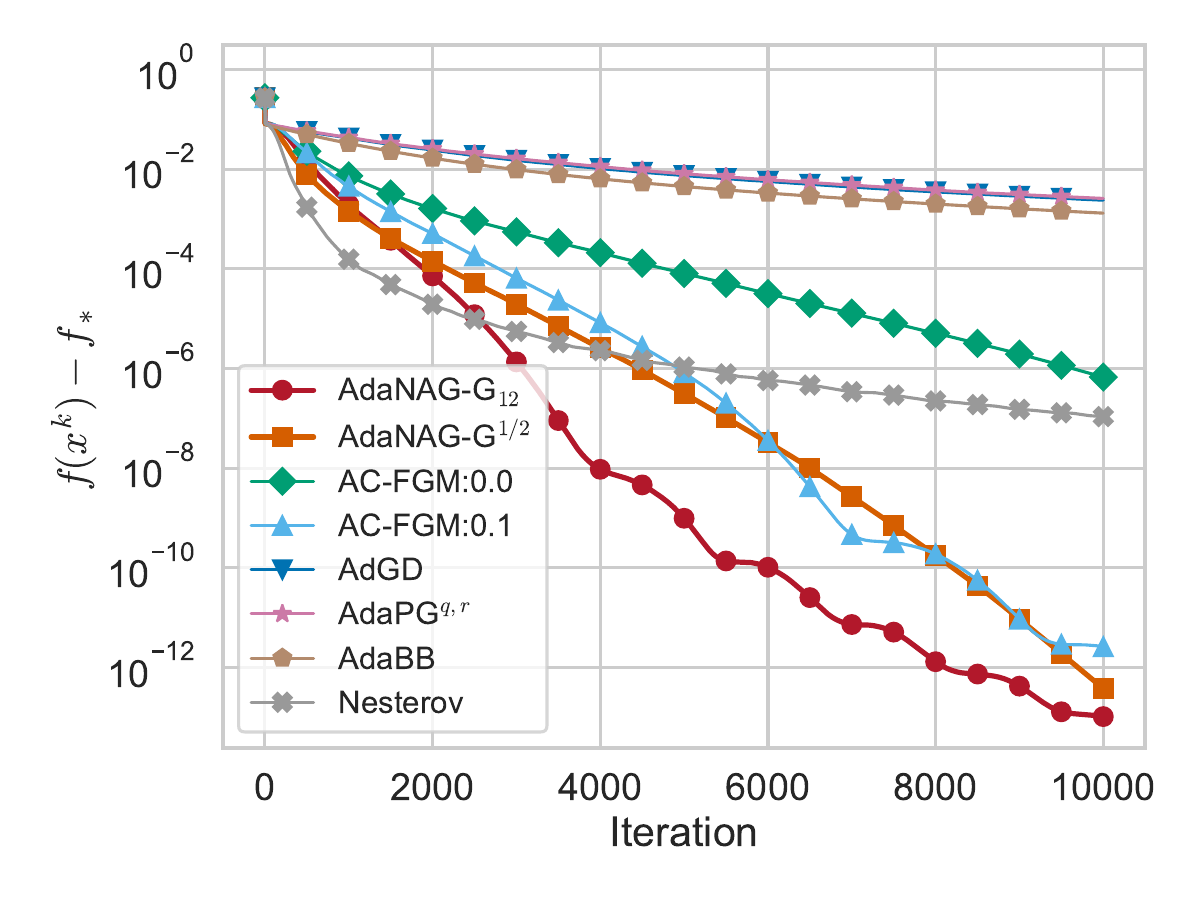}
        \caption{random-large, iteration $10^4$}
    \end{subfigure} 
    \caption{Least squares problem, synthetic datasets}
    \label{fig:least_square_synthetic}
\end{figure}

\begin{figure}[ht]
    \centering
    \begin{subfigure}[b]{0.34\textwidth}
        \centering        
        \includegraphics[width=\textwidth]{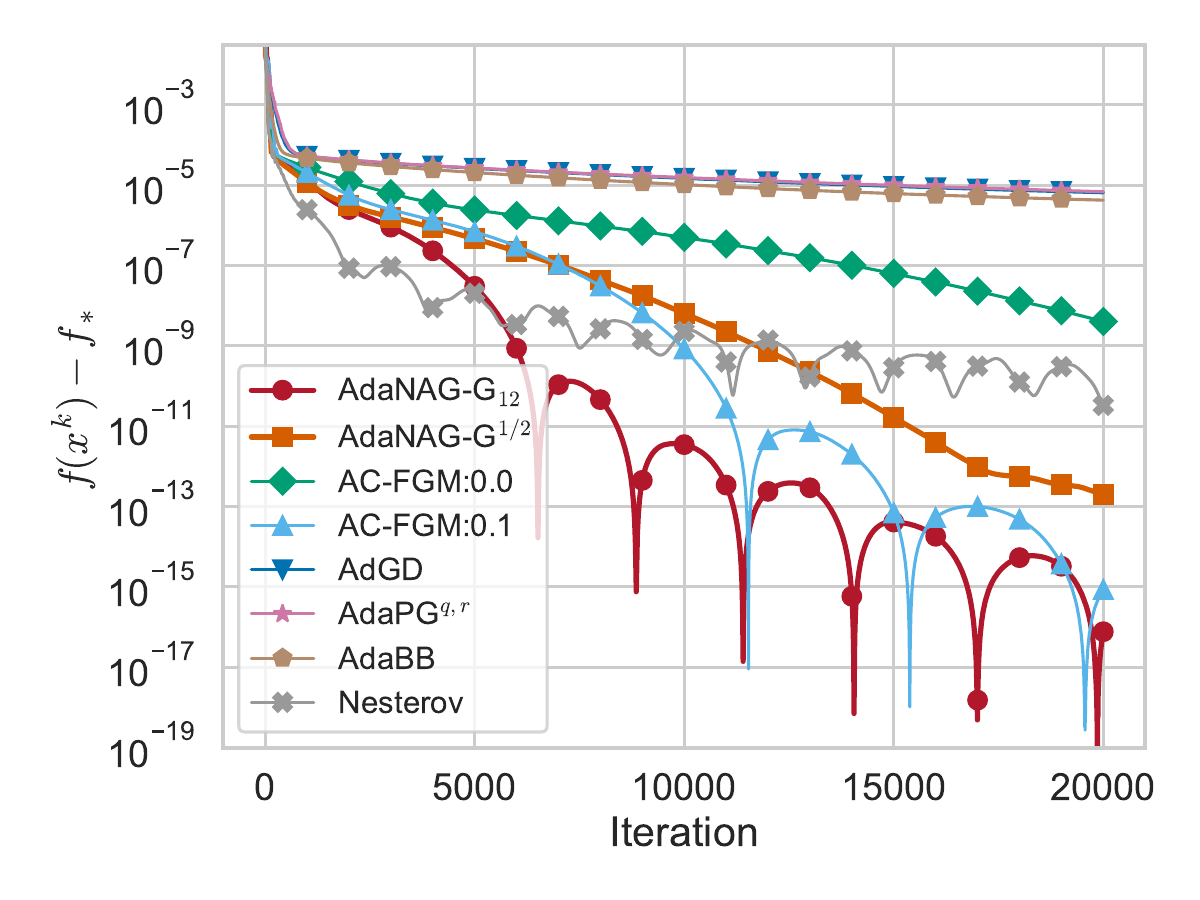}
        \caption{bodyfat, iteration $2 \times 10^4$}
    \end{subfigure}
    \begin{subfigure}[b]{0.34\textwidth}
        \centering
        \includegraphics[width=\textwidth]{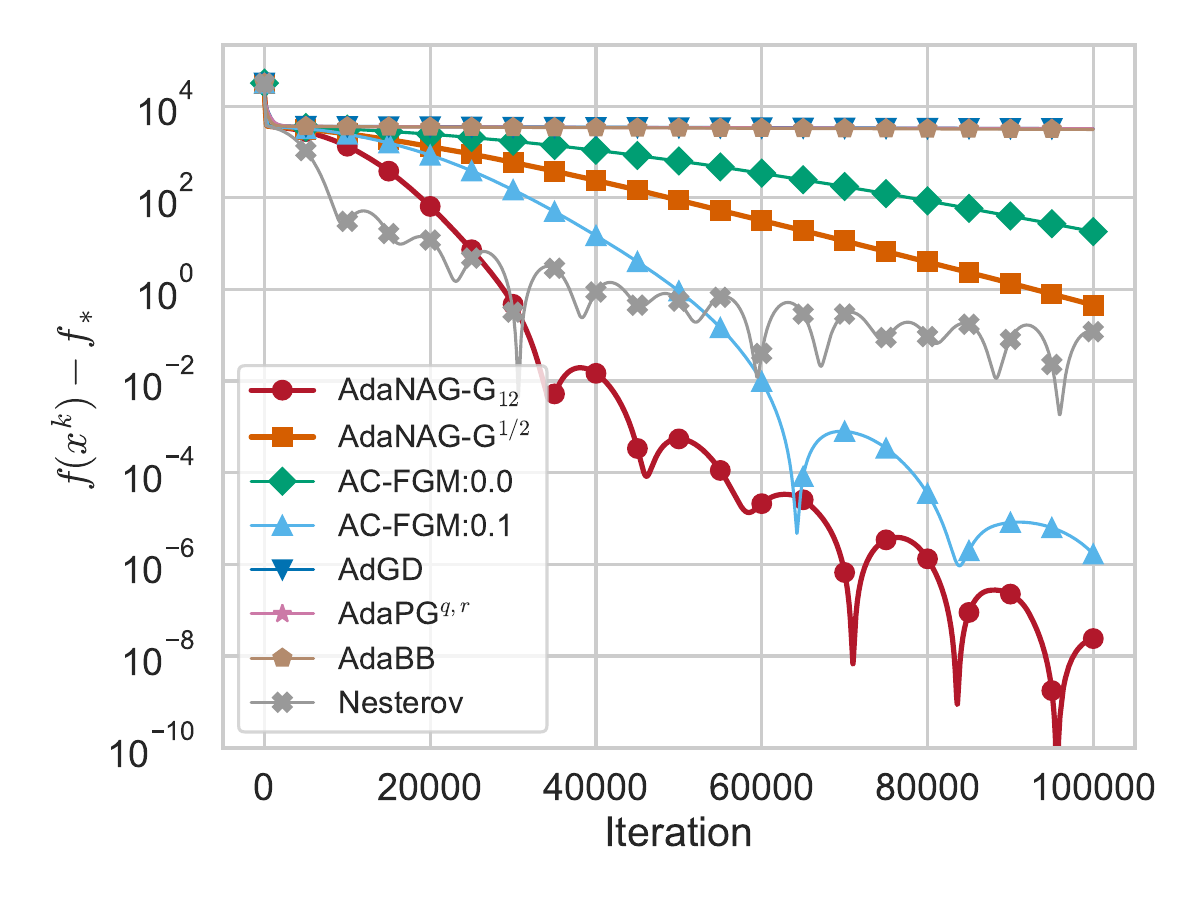}
        \caption{cadata, iteration $10^5$}
    \end{subfigure}  

    \caption{Least squares problem, real-world datasets}
    \label{fig:least_square_real_world}
\end{figure}

To summarize, \adanagpractice, which achieves the accelerated rate of $\cO\pr{{1}/{k^2}}$, tends to perform the best overall and performs even better on larger datasets that require more iterations. 
Another noteworthy observation is that, for the least squares problem, variants of accelerated adaptive algorithms (AdaNAG, AC-FGM) perform significantly better than GD-type adaptive algorithms (AdaGD, AdaPG$^{q,r}$, AdaBB). 
\section{Conclusion} \label{sec:conclusion}

This paper introduces a novel adaptive accelerated algorithm based on Nesterov's accelerated gradient method, named \adanag. The algorithm is line-search-free, parameter-free, and achieves the accelerated convergence rate 
$f(x_k) - f_\star = \mathcal{O}\pr{\hfrac{1}{k^2}}$. 
We establish this result via a Lyapunov analysis, and also achieve $\min_{i\in\set{1,\dots, k}} \norm{\nabla f(x_i)}^2 = \mathcal{O}\pr{\hfrac{1}{k^3}}$, as in the original Nesterov's algorithm, which is the first of its kind. 
Applying a similar proof idea, we further introduce a family of novel GD-type adaptive algorithms, denoted as \adagd, and provide an instance that achieves a non-ergodic rate $f(x_k) - f_\star = \mathcal{O}\pr{\hfrac{1}{k}}$, analogous to classical gradient descent. 
We observe that, within a family of adaptive algorithms, choosing a parameter associated with a slower convergence rate can allow for a larger step size. 
Inspired by the previous observation, we propose another family of adaptive algorithms that generalizes \adanag, which includes instances that both theoretically sound and practically useful, \adanagpractice\ and \adanagsqrt.

Our adaptive method \adanag\ resembles the original Nesterov's method in the form of the algorithm and the Lyapunov-based proof structure, and achieves the same order of convergence rates.  
A natural next step would be to investigate whether other properties and extensions developed for the original Nesterov's method can also be applied to \adanag.  
Studying whether \adanag\ or its variants can achieve iterate convergence~\cite{ChambolleDossal2015_convergence, AttouchChbaniPeypouquetRedont2018_fast}, 
extend to the prox-grad setup 
\cite{BeckTeboulle2009_fast, KimFessler2018_another, JangGuptaRyu2024_computerassisted}, or extend to  other advanced setups \cite{AhnSra2020_nesterovs, KimYang2022_accelerated, ChenShiJiangWang2024_penaltybased, CaoJiangHamedaniMokhtari2024_accelerated}, 
would be interesting directions for future work.

\bibliographystyle{plain}   
\bibliography{ref}

\newpage
\appendix
\section{Omitted proofs in \Cref{sec:preliminary}} 
\label{appendix:proofs_for_preliminary}

\subsection{Proof of \Cref{lemma:local_smoothness}} \label{appendix:proof_of_local_smoothness}

We use arguments similar to those for $L$-smooth functions that are provided in \cite[Theorem~2.1.5]{Nesterov2004_introductory}. 
Let $K\subset\ourspace$ be a compact set. 
Let $R>0$ and $c\in\ourspace$ satisfy $K \subset \bar{B}_R(c) = \set{x \in \bbR^d \mid \norm{ x - c } \le R}$. 
For notation simplicity, denote $\bar{K} = \bar{B}_{3R}(c)$. Since $f$ is locally smooth, there is $\bar{L}_{K}>0$ satisfying
\begin{equation*}
    \qquad\qquad
    \norm{ \nabla f(\bar{x}) - \nabla f(\bar{y}) } \le \bar{L}_{K} \norm{ \bar{x} - \bar{y} }, \qquad \forall \bar{x}, \bar{y} \in \bar{K}.
\end{equation*}
Now, take $x,y \in K$ such that $x\ne y$. 
Define $\tilde{f}_y\colon\ourspace\to\bbR$ as
\begin{equation}    \label{eq:translated_function}
    \tilde{f}_y(z) = f(z) - \inner{\nabla f(y)}{z}.
\end{equation}
Note
\begin{equation}    \label{eq:grad_translated_function}
    \nabla \tilde{f}_y(z) = \nabla f(z) - \nabla f(y). 
\end{equation}
Form \eqref{eq:grad_translated_function} we know that $\tilde{f}_y$ has the same smoothness parameter as $f$, that is
\begin{equation*}
    \qquad\qquad
    \norm{ \nabla \tilde{f}_y(\bar{x}) - \nabla \tilde{f}_y(\bar{y}) } = 
    \norm{ \nabla f(\bar{x}) - \nabla f(\bar{y}) } \le \bar{L}_{K} \norm{ \bar{x} - \bar{y} }, \qquad \forall \bar{x}, \bar{y} \in \bar{K}.
\end{equation*}
Also, we know $\tilde{f}_y(y) = \min_{u \in \bbR^d} \tilde{f}_y(u)$ since $\nabla \tilde{f}_y(y) = 0$ and  $\tilde{f}_y$ is a convex function. 
Now, define
\begin{equation}    \label{eq:xplus_smoothness_proof}
    x^{+} = x - \frac{1}{\bar{L}_{K}} \nabla \tilde{f}_y(x).
\end{equation}
Then from \eqref{eq:grad_translated_function} and the fact $x,y \in \bar{K}$, we have
\begin{equation*}
    \norm{ x^{+} -x }
    = \frac{1}{\bar{L}_{K}} \norm{ \nabla \tilde{f}_y(x) } = \frac{1}{\bar{L}_{K}} \norm{ \nabla f(x) - \nabla f(y) }
    \le \norm{ x-y }.
\end{equation*}
By triangular inequality and the fact $x,y\in \bar{B}_R(c)$, we have 
\begin{equation*}
    \norm{ x^{+} - c } 
    \le \norm{ x - c } + \norm{ x^{+} -x }
    \le \norm{ x - c } + \norm{ x-y } 
    \le 3R,
\end{equation*}
and we can conclude $x^{+} \in \bar{B}_{3R}(c) = \bar{K}$. 
Thus for $\segmentparameter\in[0,1]$ we have
\begin{equation}    \label{ineq:tilde_grad_difference}
    \begin{aligned}
        &\norm{ \nabla \tilde{f}_y(x + \segmentparameter(x^+-x)) - \nabla \tilde{f}_y(x) } 
        = \norm{ \nabla f(x + \segmentparameter(x^+-x)) - \nabla f(x) }  
        \le \segmentparameter \bar{L}_{K} \norm{x^{+}-x}, 
    \end{aligned}
\end{equation}
where the equality comes from \eqref{eq:grad_translated_function} and the inequality comes from the fact $x, x^{+} \in \bar{K}$. 

Now we can proceed with the standard argument of \cite[Lemma~1.2.3]{Nesterov2004_introductory}. 
From the Cauchy–Schwarz inequality, \eqref{ineq:tilde_grad_difference}, and \eqref{eq:xplus_smoothness_proof}, we obtain: 
\begin{equation*}
    \begin{aligned}
         \tilde{f}_y(x^{+}) 
         &= \tilde{f}_y(x) + \inner{\nabla \tilde{f}_y(x)}{x^{+}-x} + \int_{0}^{1} \inner{\nabla \tilde{f}_y( x + \segmentparameter(x^{+}-x) ) - \nabla \tilde{f}_y(x)}{x^{+}-x} d\segmentparameter \\
         &\le \tilde{f}_y(x) + \inner{\nabla \tilde{f}_y(x)}{x^{+}-x} + \int_{0}^{1} \norm{\nabla \tilde{f}_y( x + \segmentparameter(x^{+}-x) ) - \nabla \tilde{f}_y(x)}\norm{x^{+}-x} d\segmentparameter \\
         &\le \tilde{f}_y(x) + \inner{\nabla \tilde{f}_y(x)}{x^{+}-x} + \int_{0}^{1} \segmentparameter \bar{L}_{K} \norm{x^{+}-x}^2 d\segmentparameter \\ 
         &= \tilde{f}_y(x) - \frac{1}{2\bar{L}_{K}} \norm{\nabla \tilde{f}_y(x)}^2.
    \end{aligned}
\end{equation*} 

Now recalling the fact $\tilde{f}_y(y) = \min_{u \in \bbR^d} \tilde{f}_y(u)$ and applying the above inequality, we have
\begin{equation*}
    \begin{aligned} 
    \tilde{f}_y(y) 
    \le \tilde{f}_y(x^{+}) 
    &\le \tilde{f}_y(x) - \frac{1}{2\bar{L}_{K}} \norm{\nabla \tilde{f}_y(x)}^2. 
    \end{aligned}
\end{equation*}
Plugging the definition of $\tilde{f}_y$ in \eqref{eq:translated_function} and \eqref{eq:grad_translated_function} we have
\begin{equation*}
    \begin{aligned}
    f(y) - \inner{\nabla f(y)}{y}
    \le f(x) - \inner{\nabla f (y)}{x} - \frac{1}{2\bar{L}_{K}} \norm{\nabla f(x) - \nabla f(y)}^2,
    \end{aligned}
\end{equation*}
which implies 
\begin{equation} \label{eq:local_smooth_ineq}
    f(y) - f(x) + \inner{\nabla f(y)}{x-y} + \frac{1}{2\bar{L}_{K}} \norm{\nabla f(x) - \nabla f(y)}^2 \le 0.
\end{equation}
Since $x,y\in K$ were arbitrary, this completes the proof.  
\qed

\subsection{Proof of Corollary \ref{cor:L_k_bound}} \label{appendix:L_k_bound} 
\begin{itemize}
    \item [(i)] Suppose $f(y) - f(x) + \inner{\nabla f(y)}{x-y}=0$. For any compact set $K$ that contains $x$ and $y$, the desired conclusion follows immediately from Lemma \ref{lemma:local_smoothness}. 

    \item [(ii)] Suppose $f(y) - f(x) + \inner{\nabla f(y)}{x - y} \ne 0$. Then we have $f(y) - f(x) + \inner{\nabla f(y)}{x - y} < 0$ by the convexity of $f$. 
    Multiplying both sides of \eqref{eq:local_smooth_ineq} by $-\frac{1}{f(y) - f(x) + \inner{\nabla f(y)}{x-y}} \bar{L}_{K}$ yields the desired inequalities. 
    \qed
\end{itemize}

\section{Further Discussions on AdaNAG}

\subsection{Proof of Lemma~\ref{lem:theta_properties}}  \label{appendix:theta_properties_proof} 
From \eqref{eq:theta}, we have $\theta_{k+1} = \frac{1}{2} \Big( 1 + \sqrt{1 + 4\theta_{k}^2}\Big)   \ge \frac{1}{2} + \theta_k$. Thus $\theta_k$ is increasing, and the inequality $\theta_{k}\ge\frac{k+2}{2}$ for $k\ge0$ follows by induction and the fact $\theta_0=1$. 
Next, from $\theta_{k+1} = \frac{1}{2}  \Big( 1 + \sqrt{1 + 4\theta_{k}^2}\Big)$ we have $(2\theta_{k+1}-1)^2 = 1 + 4\theta_{k}^2$ and so $\theta_{k+1}^2 - \theta_{k+1} = \theta_k^2$. Therefore $\theta_k$ satisfies \eqref{eq:theta_motivation} as an equality. 

Moreover, from \eqref{eq:theta} we have $\alpha_{k+1} = \frac{1}{2} \Big( 1 - \frac{1}{\theta_{k+3}} \Big) \ge \frac{1}{2} \Big( 1 - \frac{1}{\theta_{k+2}} \Big) = \alpha_k$ for $k\ge1$ since $\theta_k$ is increasing, and $\lim_{k\to\infty} \alpha_k = \lim_{k\to\infty} \frac{1}{2} \Big( 1 - \frac{1}{\theta_{k+2}} \Big) = \frac{1}{2}$ since $\lim_{k\to\infty} \theta_k = \infty$. 
Finally, we have $s_{k+1} \le \frac{\alpha_{k}}{\alpha_{k+1}} s_k \le s_k$ since $\alpha_{k+1} \ge \alpha_k$ for $k\ge1$.   
When $k=0$, leveraging the numerical values in the first column of \Cref{tab:numerical_values}, we can obtain \( s_1 \le \frac{\alpha_0}{\alpha_1} \frac{\theta_2}{\theta_3(\theta_3-1)} \ssz_{0} < s_0 \).  
\qed

\subsection{Simple AdaNAG} \label{sec:adanag-s}
For NAG, \( \theta_k \) defined in \eqref{eq:theta} is of a rather complex form. 
A simpler alternative to it is $\theta_k=\frac{k+2}{2}$ as defined in \eqref{eq:rational_theta}. 
It is easy to verify that $\theta_k = \frac{k+2}{2}$ also satisfies \eqref{eq:theta_motivation}. 
Other properties mentioned in \Cref{lem:theta_properties} can also be verified using the same argument as in \Cref{appendix:theta_properties_proof}. 
In this section, we provide a simplified version of \adanag\ with this \( \theta_k \).  
In this case, \( \alpha_k \) defined in \eqref{eq:theta} becomes:
\begin{equation*} 
    \begin{aligned}
        \alpha_k = \frac{1}{2} \pr{ 1 - \frac{2}{k+4} }, 
        \mbox{ for } k \ge 1, \quad \mbox{ and } \quad \alpha_0 = 
        \frac{60}{127}. 
    \end{aligned}
\end{equation*} 
Plugging in the above, we can check that the coefficients in \eqref{eq:step_size_rule} become $\frac{\alpha_{k}}{\alpha_{k+1}} = \frac{k(k+3)}{(k+1)(k+2)}$ and  
$\frac{\alpha_{k}^2}{\alpha_{k+1} + \alpha_{k}^2} = \frac{k^2 (k+3)}{3 k^3+13 k^2+16 k+8}$. 
Calculating other coefficients in \adanag\ with similar manner, we get a simplified version of \adanag\ described in Algorithm \ref{alg:AdaNAG-S}. 
\begin{algorithm}[H]
     \caption{\textbf{Simple AdaNAG}}
     \label{alg:AdaNAG-S} 
     \begin{algorithmic}[1]
     \STATE \textbf{Input:} $x^0=z^0 \in \mathbb{R}^d$, 
     $\ssz_{0} >0$, 
     \STATE    
     \begin{equation*}
        \begin{aligned}
        y_{1} &= x_0 -  \ssz_{0} \nabla f(x_k) \qquad \\
        z_{1} &= z_0 - \ssz_{0}  \frac{60}{127} \nabla f(x_0) \qquad \\
        x_{1} &= \frac{3}{5} y_{1} + \frac{2}{5} z_{1} \\
        L_{1} &= -\frac{ \frac{1}{2} \norm{ \nabla f(x_{1}) - \nabla f(x_{0}) }^2 }            { f(x_{1}) - f(x_{0}) + \inner{\nabla f(x_{1})}{x_{0}-x_{1}} } \\ \qquad 
        s_1 &= \min\set{ \frac{320}{381} \ssz_{0},  \,\, \frac{50}{177} \frac{1}{L_{1}}  }
        \end{aligned}
    \end{equation*}
    \FOR{$k = 1,2,\dots$}     \STATE
        \begin{equation*}     
                \begin{aligned}
            y_{k+1} &= x_k -  \ssz_{k} \nabla f(x_k) \\
            z_{k+1} &= z_k - \frac{k+2}{4} \ssz_{k} \nabla f(x_k) \\
            x_{k+1} &= \frac{k+3}{k+5} y_{k+1} + \frac{2}{k+5} z_{k+1}  \\
            L_{k+1} &= -\frac{ \frac{1}{2} \norm{ \nabla f(x_{k+1}) - \nabla f(x_{k}) }^2 }            { f(x_{k+1}) - f(x_{k}) + \inner{\nabla f(x_{k+1})}{x_{k}-x_{k+1}} } \\
            \ssz_{k+1}  &= 
                  \min\set{ \frac{k(k+3)}{(k+1)(k+2)} \ssz_{k},  \,\, \frac{k^2 (k+3)}{3 k^3+13 k^2+16 k+8} \frac{1}{L_{k+1}}  } 
            \end{aligned}
        \end{equation*}
       \ENDFOR
     \end{algorithmic}
\end{algorithm} 

Using the same proof argument, 
we can also obtain the convergence result similar to \Cref{thm:main_tight} for \adanagS. 
\begin{proposition} \label{thm:main_simple}
    Let $f$ be an $L$-smooth convex function. 
    In \adanagS, set $s_0 = \frac{635}{1888} \frac{1}{L_0}$ with $L_0$ defined as in \eqref{eq:initial_guess}. 
    Define $\upperboundconstant = \norm{x_{0} - x_\star}^2 + 0.15  \frac{1}{L_0} \pr{ \frac{1}{L_0} - \frac{2}{L} } \norm{\nabla f(x_0)}^2 $. 
    Let $\set{x_k}_{k\ge0}$ be a sequence generated by \adanagS. 
    Then the following convergence rate results hold.
    \begin{equation*}
        \begin{aligned}
        f(x_k) - f_\star
         &\le \frac{ 24 \upperboundL}{ (k+3)(k+5) }  \upperboundconstant
         = \cO\pr{ \frac{\upperboundL}{k^2} }, \\
        \min_{i\in\set{1,\dots, k}} \norm{\nabla f(x_i)}^2 
        &\le \frac{1671\upperboundL^2}{k \left(k^2+10 k+33\right)} \upperboundconstant
        = \cO\pr{ \frac{\upperboundL^2}{k^3} }.
        \end{aligned}
    \end{equation*}
\end{proposition}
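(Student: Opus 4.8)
The plan is to mirror the proof of \Cref{thm:main_tight} step by step, since \adanagS\ is obtained from \adanag\ by only replacing the definition of $\theta_k$ in \eqref{eq:theta} with $\theta_k = \frac{k+2}{2}$ from \eqref{eq:rational_theta}, and then recomputing the induced constants $\alpha_k$, $A_k$, $B_k$, $s_k$. As observed in \Cref{sec:adanag-s}, this new $\theta_k$ still satisfies the crucial identity \eqref{eq:theta_motivation}; here one checks directly that $\theta_{k+1}(\theta_{k+1}-1) - \theta_k^2 = \frac{k+3}{2}\cdot\frac{k+1}{2} - \frac{(k+2)^2}{4} = -\frac14 < 0$, so \eqref{eq:theta_motivation} holds (as a strict inequality rather than equality, which is harmless — everything in \Cref{lem:ineq:AdaNAG_generalized_stepsize}, \Cref{lem:lyapunov_diff}, \Cref{lem:determinant} only used the inequality direction). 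The monotonicity and growth properties of $\theta_k$, $\alpha_k$, $s_k$ analogous to \Cref{lem:theta_properties} follow by the identical argument sketched in \Cref{appendix:theta_properties_proof}, as the paper already notes; in particular $\theta_k = \frac{k+2}{2}$ is increasing with $\theta_k \ge \frac{k+2}{2}$ trivially (with equality).

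First I would record the explicit small-index values. With $\theta_k = \frac{k+2}{2}$ we get $\theta_2 = 2$, $\theta_3 = \frac52$, so $\alpha_1 = \frac12(1 - \frac1{\theta_3}) = \frac12\cdot\frac35 = \frac3{10}$, $\alpha_2 = \frac12(1-\frac1{\theta_4}) = \frac12\cdot\frac23 = \frac13$, $\alpha_3 = \frac12(1-\frac1{\theta_5}) = \frac12\cdot\frac57 = \frac5{14}$, and then $\alpha_0 = \frac{2\theta_2}{\theta_2-1}(\frac1{\alpha_3}+\frac1{\alpha_2^2}-\frac1{\alpha_1})^{-1}$, which one computes to be $\frac{60}{127}$ as already stated. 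The reciprocal $\rho_k$ in \eqref{eq:adanag_r_definition} still equals $1$ by the same computation as in \eqref{eq:AdaNAG_AB_1} (that computation only used \eqref{eq:theta_motivation} and the definition of $\alpha_k$, both of which persist). Next I would verify the one genuinely numerical hypothesis used in \Cref{prop:adanag_gradient_norm}, namely $\tilde\alpha_0 < \alpha_0$ and $\frac{\alpha_2^2\alpha_3}{\alpha_3+\alpha_2^2} < 1$: here $\tilde\alpha_0 = \frac12(1-\frac1{\theta_2}) = \frac14 < \frac{60}{127} = \alpha_0$, and $\frac{\alpha_2^2\alpha_3}{\alpha_3+\alpha_2^2} = \frac{(1/9)(5/14)}{5/14+1/9} = \frac{5/126}{45/126+14/126} = \frac{5}{59} < 1$. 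With these in hand, \Cref{lemma:lyapunov_analysis}, \Cref{lemma:lyapunov_analysis_k0}, \Cref{lemma:stepsize_lowerbound}, \Cref{lemma:stepsize_lowerbound_L_smooth}, \Cref{prop:proof_of_thm_1_2}, and \Cref{prop:adanag_gradient_norm} all go through verbatim, yielding
\begin{align*}
    f(x_k) - f_\star &\le \frac{\alpha_3+\alpha_2^2}{2\alpha_2^2\alpha_3}\,\frac{1/S_{k+1}}{\theta_{k+3}(\theta_{k+3}-1)}\,R, \\
    \min_{i\in\set{1,\dots,k}}\norm{\nabla f(x_i)}^2 &\le \frac{L^2}{\sum_{i=1}^k \theta_{i+3}(\theta_{i+3}-1)}\Big(\frac{\alpha_3+\alpha_2^2}{\alpha_2^2\alpha_3}\Big)^2 R,
\end{align*}
with $R$ the quantity from \Cref{prop:proof_of_thm_1_2}.

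The remaining work is purely arithmetic substitution. With $\theta_k = \frac{k+2}{2}$ one has $\theta_{k+3}(\theta_{k+3}-1) = \frac{k+5}{2}\cdot\frac{k+3}{2} = \frac{(k+3)(k+5)}{4}$, and $\sum_{i=1}^k \theta_{i+3}(\theta_{i+3}-1) = \frac14\sum_{i=1}^k(i+3)(i+5) = \frac14\sum_{i=1}^k(i^2+8i+15) = \frac{1}{4}\big(\frac{k(k+1)(2k+1)}{6} + 4k(k+1) + 15k\big) = \frac{k(k^2+10k+33)}{12}$. The coefficient $\frac{\alpha_3+\alpha_2^2}{2\alpha_2^2\alpha_3} = \frac{59/126}{2\cdot 5/126} = \frac{59}{10}$; using $1/S_{k+1} \le L$ (from \Cref{lemma:local_smoothness} and $L_0 \le L$) and rounding up gives the stated $\frac{24L}{(k+3)(k+5)}R$ bound after also checking that the value $R$ here, with $r_0 = \frac{\theta_3(\theta_3-1)}{\theta_2}\frac{1}{\alpha_0}\frac{\alpha_2^2\alpha_3}{\alpha_3+\alpha_2^2}$, reduces to $\norm{x_0-x_\star}^2 + 0.15\frac{1}{L_0}(\frac1{L_0}-\frac2L)\norm{\nabla f(x_0)}^2$; indeed $r_0 = \frac{(5/2)(3/2)}{2}\cdot\frac{127}{60}\cdot\frac{5}{59} = \frac{15}{8}\cdot\frac{127}{60}\cdot\frac{5}{59} = \frac{635}{1888}$, matching the prescribed $s_0$. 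For the gradient-norm bound, $\big(\frac{\alpha_3+\alpha_2^2}{\alpha_2^2\alpha_3}\big)^2 = \big(\frac{59}{5}\big)^2 = \frac{3481}{25}$, and $\frac{L^2\cdot 12}{k(k^2+10k+33)}\cdot\frac{3481}{25}\cdot R \le \frac{1671 L^2}{k(k^2+10k+33)}R$ after rounding. The main obstacle is simply bookkeeping: making sure every lemma invoked from the \adanag\ analysis really only used \eqref{eq:theta_motivation} (and never the specific recursion $\theta_{k+1}^2-\theta_{k+1}=\theta_k^2$ in a way that needs equality) together with the few explicit numerical inequalities, and then carrying the fractions through without error; there is no new conceptual difficulty.
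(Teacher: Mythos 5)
Your approach is exactly the paper's own: the paper's proof of Proposition~\ref{thm:main_simple} is a one-liner saying to repeat the argument of Theorem~\ref{thm:main_tight} with $\theta_{k+3}(\theta_{k+3}-1)=\tfrac14(k+3)(k+5)$ in place of $\theta_{k+2}^2$, and you carry this out carefully, including the checks that \eqref{eq:theta_motivation} still holds (now as a strict inequality, which you correctly note is all the lemmas use), that $\rho_k=1$ persists, and that the numerical inequalities $\tilde\alpha_0<\alpha_0$ and $\tfrac{\alpha_2^2\alpha_3}{\alpha_3+\alpha_2^2}<1$ hold. Your small-index values ($\alpha_1=\tfrac3{10},\ \alpha_2=\tfrac13,\ \alpha_3=\tfrac5{14},\ \alpha_0=\tfrac{60}{127},\ \tilde\alpha_0=\tfrac14,\ r_0=\tfrac{635}{1888},\ \tfrac{\alpha_3+\alpha_2^2}{2\alpha_2^2\alpha_3}=\tfrac{59}{10}$) all agree with the paper's Table~\ref{tab:numerical_values}.

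There is one arithmetic slip worth flagging. You claim
\[
\sum_{i=1}^{k}\theta_{i+3}(\theta_{i+3}-1)
=\frac14\sum_{i=1}^{k}(i^2+8i+15)
=\frac{k(k^2+10k+33)}{12},
\]
but the correct closed form is $\frac{k(2k^2+27k+115)}{24}$ (check $k=1$: the true sum is $\theta_4(\theta_4-1)=3\cdot2=6$, whereas $\frac{1\cdot44}{12}\approx3.67$). Fortunately this error is harmless for the purpose at hand: since $2k^2+27k+115\ge 2k^2+20k+66=2(k^2+10k+33)$ for all $k\ge0$, the quantity $\frac{k(k^2+10k+33)}{12}$ is a valid \emph{lower} bound on the sum, so dividing by it still gives a legitimate upper bound on $\min_i\|\nabla f(x_i)\|^2$, and your final constant $1671$ comes out the same as the paper's. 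You should just state this step as an inequality $\sum_{i=1}^k\theta_{i+3}(\theta_{i+3}-1)\ge\frac{k(k^2+10k+33)}{12}$ rather than an equality. Similarly, the claim that $R$ ``reduces to'' $\norm{x_0-x_\star}^2+0.15\tfrac1{L_0}(\tfrac1{L_0}-\tfrac2L)\norm{\nabla f(x_0)}^2$ is not an exact identity (the true coefficients are $\approx0.1544$ and $\approx2.0578$ as in Table~\ref{tab:numerical_values}); the paper itself invokes ``appropriate rounding'' here, so this is a cosmetic issue inherited from the reference theorem rather than a flaw in your argument.

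Beyond these two bookkeeping points, your proof is correct and is essentially the paper's own.
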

\begin{proof} 
The proof follows the same argument as that of \Cref{thm:main_tight}. 
The only difference is that since $\theta_{k+3}(\theta_{k+3}-1)=\theta_{k+2}^2$ does not hold, we instead use $\theta_{k+3}(\theta_{k+3}-1)=\frac{1}{4}(k+3)(k+5)$. 
\end{proof}

\subsection{Extension of \adanag\ to locally smooth $f$} \label{sec:locally_smooth_extension}

Various previous adaptive methods \cite{MalitskyMishchenko2020_adaptive, LatafatThemelisStellaPatrinos2024_adaptive, MalitskyMishchenko2024_adaptive, ZhouMaYang2024_adabb, LiLan2024_simple} achieve convergence for locally smooth convex functions $f$, which are highly dependent on the boundedness of the iterates. 
In this section, we show that we can achieve boundedness with a locally smooth function $f$ and the same convergence rate, if we consider a small perturbation to the step size selection \eqref{eq:step_size_rule} of \adanag.

\begin{theorem} 
    \label{thm:adanag_local_smooth}
    Let $f$ be a locally smooth convex function. 
    Set $s_0$, $\upperboundconstant$ as in \Cref{thm:main_tight}. 
    Suppose $\set{x_k}_{k\ge0}$ is generated by \adanag\ but with \eqref{eq:step_size_rule} replaced by the following rule when $k\ge 3$: 
    \begin{equation}    \label{eq:adanag_r_step_size_rule} 
        \ssz_{k+1} = 
            \min\set{ \frac{\alpha_{k}}{\alpha_{k+1}}  \ssz_{k},  \,\, \frac{ \alpha_{k}^2 }{ { \alpha_{k+1} + \alpha_{k}^2 (1+10^{-6}) } } \frac{1}{L_{k+1}}  },\qquad \text{if }  k\ge 3.
    \end{equation} 
    Then the sequence $\set{x_k}_{k\ge0}$ is bounded.  
    Let \( \bar{R} > 0 \) such that \( x_k \in \bar{B}_{\bar{R}}(x_\star) \). 
    Define $L$ as a smoothness parameter of $f$ on 
    $\bar{B}_{3\bar{R}}(x_\star)\cup\bar{B}_{3\|\tilde{x}_0-x_\star\|}(x_\star)$, 
    where $\tilde{x}_0$ is the vector in \eqref{eq:initial_guess}. 
    Then the convergence rates stated in \Cref{thm:main_tight} continue to hold with the (local) smoothness parameter \( \localupperboundL \), \ie, 
    \begin{equation}    \label{eq:adanag_convergence_rate}
        \begin{aligned}
        f(x_k) - f_\star
         &\le \frac{5.5 \localupperboundL}{  \theta_{k+2}^2}  \upperboundconstant 
         = \cO\pr{ \frac{\localupperboundL}{k^2} } 
         , \qquad 
         \min_{i\in\set{1,\dots, k}} \norm{\nabla f(x_i)}^2
        \le \frac{120 \localupperboundL^2}{\sum_{i=1}^{k}  \theta_{i+2}^2}  \upperboundconstant 
        = \cO\pr{ \frac{\localupperboundL^2}{k^3} } .
        \end{aligned}
    \end{equation}
\end{theorem}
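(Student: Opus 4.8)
The plan is to reduce the statement to the machinery already developed in \Cref{sec:AdaNAG_anaylsis}, with the only genuinely new ingredient being a boundedness argument that takes over the role played by global $L$-smoothness. First I would note that the perturbation is harmless for the Lyapunov analysis: the rule \eqref{eq:adanag_r_step_size_rule} replaces $\alpha_{k+1}+\alpha_k^2$ by the strictly larger $\alpha_{k+1}+\alpha_k^2(1+10^{-6})$, hence only \emph{decreases} $s_{k+1}$, and every relation in \eqref{ineq:AdaNAG_generalized_stepsize} is an upper bound on $s_{k+1}$; therefore \Cref{lemma:lyapunov_analysis} and \Cref{lemma:lyapunov_analysis_k0} hold verbatim (the latter using a smoothness parameter of $f$ on $\bar B_{3\|x_0-x_\star\|}(x_\star)$, which exists since $f$ is locally smooth). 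This yields $V_{k+1}\le V_k$ for all $k\ge 0$ and $V_k\le V_0<\infty$, with $V_0$ a fixed finite quantity.

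Next — the crux — I would prove that $\set{x_k}_{k\ge0}$ is bounded. From $V_k\le V_0$ we read off $\norm{z_{k+1}-x_\star}^2\le 2V_0$ and $s_k^2 B_k\norm{\nabla f(x_k)}^2\le 2V_0$; since $B_k\ge \underline{b}\,\theta_{k+2}^2$ with the fixed positive constant $\underline{b}=\min\set{\tilde\alpha_0\alpha_0,\alpha_1^2}$ (using $\alpha_k\ge\alpha_1>0$ for $k\ge1$), this forces $s_k\norm{\nabla f(x_k)}\le C/\theta_{k+2}$ for a fixed $C$. Writing the $x$-update as the convex combination $x_{k+1}-x_\star=(1-\tfrac{1}{\theta_{k+3}})(y_{k+1}-x_\star)+\tfrac{1}{\theta_{k+3}}(z_{k+1}-x_\star)$ with $y_{k+1}=x_k-s_k\nabla f(x_k)$, and using $\theta_{k+3}\le 2\theta_{k+2}$, I obtain a recursion $\norm{x_{k+1}-x_\star}\le(1-\lambda_k)\norm{x_k-x_\star}+\lambda_k M$ with $\lambda_k=1/\theta_{k+3}\in(0,1)$ and $M$ a fixed constant assembled from $\sqrt{V_0}$ and $\underline{b}$. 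A convex-combination induction then gives $x_k\in\bar B_{\bar R}(x_\star)$ for all $k$, where $\bar R=\max\set{\norm{x_0-x_\star},M}$.

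With boundedness established, take $L$ as in the statement, a smoothness parameter of $f$ on $\bar B_{3\bar R}(x_\star)\cup\bar B_{3\|\tilde x_0-x_\star\|}(x_\star)$. Applying \Cref{lemma:local_smoothness} to $\set{x_k,x_{k+1}}\subset\bar B_{\bar R}(x_\star)$ gives $L_{k+1}\le L$ for all $k\ge0$, and to $\set{x_0,\tilde x_0}$ gives $L_0\le L$; hence $S_k\ge 1/L$ for all $k\ge0$. From here I would re-run the induction of \Cref{lemma:stepsize_lowerbound} with the perturbed rule \eqref{eq:adanag_r_step_size_rule}; this is the more technical step, since the $10^{-6}$ shift perturbs the monotonicity bookkeeping of \eqref{ineq:core_of_stepsize_lowerbound}, but because $10^{-6}\ll\alpha_k^2,\alpha_{k+1}$ one still gets $s_k\alpha_k\ge c/L$ for all $k\ge1$, with $c$ equal to $\tfrac{\alpha_2^2\alpha_3}{\alpha_3+\alpha_2^2}$ up to a correction below the precision of the displayed constants. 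Then the proof of \Cref{prop:proof_of_thm_1_2} (which is already stated for locally smooth $f$ and whose only quantitative input about $L$ is $1/S_{k+1}\le L$) and the proof of \Cref{prop:adanag_gradient_norm} (whose only uses of global smoothness are the two estimates $L_{k+1}\le L$ and $S_k\ge 1/L$, both now available) carry over unchanged, producing exactly \eqref{eq:adanag_convergence_rate} with $\localupperboundL$ in place of the global constant.

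The main obstacle is the boundedness step: the Lyapunov function controls only $\norm{z_{k+1}-x_\star}$ and the function-value gap, not $\norm{x_k-x_\star}$, so one must exploit both the convex-combination form of the $x$-update and the quantitative decay $s_k\norm{\nabla f(x_k)}=O(1/\theta_{k+2})$ — itself forced by the $\Theta(k^2)$ growth of $B_k$ in $V_k$ — to turn the update into a contraction-plus-bounded-perturbation recursion. A secondary, bookkeeping-heavy obstacle is checking that the $10^{-6}$-perturbed step-size lower bound still closes the induction of \Cref{lemma:stepsize_lowerbound} and leaves the constants $5.5$, $22$, $120$, $1440$ of \Cref{thm:main_tight} intact.
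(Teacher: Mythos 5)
Your proof takes the same overall reduction as the paper — establish boundedness of the iterates, then invoke local smoothness on a large enough ball to recover a uniform $L_k\le L$ and re-run the globally smooth analysis — but the boundedness step is a genuinely different and, in a sense, more elementary argument. The paper's route (Lemma~\ref{lemma:lyapunov_analysis_locally} and Proposition~\ref{prop:adanag_boundedness_locally}) uses the $\epsilon$-perturbation to strengthen the Lyapunov decrease to $V_{k+1}-V_k\le -\tfrac{\delta}{2}s_k^2 B_k\|\nabla f(x_k)\|^2$, hence $\sum s_k^2 B_k\|\nabla f(x_k)\|^2<\infty$; since $B_k=\Theta(\theta_{k+2}^2)$ this gives $\sum s_k^2 k^2\|\nabla f(x_k)\|^2<\infty$, and then a Cauchy--Schwarz argument in $\ell^2$ gives $\sum s_k\|\nabla f(x_k)\|<\infty$, after which an inductive radius bound $x_k\in\bar B_{R_k}(x_\star)$ with $R_k=\sqrt{2V_{-1}}+\sum_{i<k}s_i\|\nabla f(x_i)\|$ closes. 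You instead read off from the mere monotone bound $V_k\le V_0$ that $\|z_{k+1}-x_\star\|^2\le 2V_0$ and $s_k\|\nabla f(x_k)\|\le C/\theta_{k+2}$ (forced precisely by the $\Theta(\theta_{k+2}^2)$ growth of $B_k$), and then use the convex-combination form $x_{k+1}=(1-1/\theta_{k+3})y_{k+1}+(1/\theta_{k+3})z_{k+1}$ to obtain a recursion $\|x_{k+1}-x_\star\|\le(1-\lambda_k)\|x_k-x_\star\|+\lambda_k M$ with $\lambda_k=1/\theta_{k+3}$. This closes directly by induction with $\bar R=\max\{\|x_0-x_\star\|,M\}$, avoiding summability and Cauchy--Schwarz entirely. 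Notably, your argument uses only $V_k\le V_0$, which already holds by Theorem~\ref{lemma:lyapunov_analysis} for locally smooth $f$ without any perturbation of the step size; the paper's route, by contrast, needs the $\epsilon$-perturbation to get summability. That's a worthwhile simplification to flag if it survives careful checking.

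Where you are imprecise is the step-size lower bound, which you dismiss with ``one still gets $s_k\alpha_k\ge c/L$ with $c$ equal to $\tfrac{\alpha_2^2\alpha_3}{\alpha_3+\alpha_2^2}$ up to a correction below the precision of the displayed constants.'' In fact the paper's Lemma~\ref{lemma:stepsize_lowerbound_locally_smooth} shows the lower-bound coefficient is preserved \emph{exactly}, with no correction at all: the perturbed rule is only applied for $k\ge N$, the map $k\mapsto\alpha_k^2\alpha_{k+1}/(\alpha_{k+1}+\alpha_k^2(1+\epsilon))$ is increasing in $k$, and provided $\epsilon<\bar\epsilon_N$ from \eqref{eq:epsilon_bar} one has $\alpha_N^2\alpha_{N+1}/(\alpha_{N+1}+\alpha_N^2(1+\epsilon))\ge\alpha_2^2\alpha_3/(\alpha_3+\alpha_2^2)$. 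The choice $N=3$ together with $\bar\epsilon_3\approx0.399>10^{-6}$ is exactly what makes the displayed constants $5.5$, $22$, $120$, $1440$ of Theorem~\ref{thm:main_tight} carry over unchanged. Your hand-wave about a negligible correction therefore both overstates the difficulty (there is a clean mechanism) and understates the requirement (you need the exact coefficient to justify those numbers, not an approximate one).
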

We note that the value $10^{-6}$ in~\eqref{eq:adanag_r_step_size_rule} is negligibly small compared to $\alpha_k^2$ and $\alpha_{k+1}$.  
To check this point, recall that $\alpha_k$ is increasing for $k \ge 1$ and $\lim_{k \to \infty} \alpha_k = 0.5$, as stated in \Cref{lem:theta_properties}.  
Also, from numerical calculation, we find that $\alpha_1 \approx 0.3182$.  
Therefore, we can view this method as a perturbed version of \adanag, with a negligibly small relaxation in the coefficient of $\frac{1}{L_{k+1}}$. 

The selection of the value $10^{-6}$ is simply an arbitrary example of a small number, chosen to help clear understanding. This choice can be generalized, as stated in Proposition~\ref{prop:adanag_local_smooth}.

\begin{proposition} 
    \label{prop:adanag_local_smooth} 
    Choose $N\ge3$. Define $\alpha_k$ as in \eqref{eq:theta}, and define 
    \begin{equation}    \label{eq:epsilon_bar}
        \bar{\epsilon}_N = \pr{\frac{1}{\alpha _2^2}+\frac{1}{\alpha _3}-\frac{1}{\alpha _N^2} } \alpha _{N+1} - 1.
    \end{equation}
    Take an arbitrary $\epsilon \in (0,\bar{\epsilon}_N)$. 
    Suppose $\set{x_k}_{k\ge0}$ is generated by \adanag\ but with \eqref{eq:step_size_rule} replaced by 
    \begin{equation}    \label{eq:step_size_rule_relaxed}
        \ssz_{k+1} = 
            \min\set{ \frac{\alpha_{k}}{\alpha_{k+1}}  \ssz_{k},  \,\, \frac{ \alpha_{k}^2 }{ { \alpha_{k+1} + \alpha_{k}^2 (1+\epsilon) } } \frac{1}{L_{k+1}}  },\qquad \text{if }  k\ge N.
    \end{equation}
    Then the sequence $\set{x_k}_{k\ge0}$ is bounded.  
    Define $L$ as in \Cref{thm:adanag_local_smooth}. Then the inequalities \eqref{eq:adanag_convergence_rate} hold. 
\end{proposition}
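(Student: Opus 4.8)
The plan is to run the $L$-smooth argument of \Cref{sec:AdaNAG_anaylsis} ``from the inside out'': first establish the Lyapunov decrease and the boundedness of the iterates using \emph{no} global constant, and only afterwards fix a local smoothness parameter $L$ on the enlarged ball and feed it into the estimates that previously required global smoothness. Since $\NN\ge3$, the points $x_0,x_1,x_2$ and the step sizes $s_0,s_1,s_2$ are produced by the original rule \eqref{eq:step_size_rule}, so nothing changes for those indices. For the decrease, I would re-examine \Cref{lem:ineq:AdaNAG_generalized_stepsize}: the only inequality affected by replacing \eqref{eq:step_size_rule} by \eqref{eq:step_size_rule_relaxed} for $k\ge\NN$ is \eqref{ineq:AdaNAG_generalized_stepsize-c}, and since $\frac{\alpha_k^2}{\alpha_{k+1}+\alpha_k^2(1+\epsilon)}\le\frac{\alpha_k^2}{\alpha_{k+1}+\alpha_k^2}=\pr{\frac{A_k}{B_k}+\rho_k}^{-1}$, that bound is only strengthened, while \eqref{ineq:AdaNAG_generalized_stepsize-a}--\eqref{ineq:AdaNAG_generalized_stepsize-b} use only the unchanged first branch and \Cref{lem:theta_properties}. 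Hence the proofs of \Cref{lem:lyapunov_diff} and \Cref{lem:determinant} go through verbatim, and \Cref{lemma:lyapunov_analysis} gives $V_{k+1}\le V_k$ for every locally smooth $f$ (here $L_{k+1}$ is well defined by \Cref{cor:L_k_bound}). Combining with $V_0\le V_{-1}$, which by the proof of \Cref{lemma:lyapunov_analysis_k0} needs only convexity of $f$ once the $\frac{1}{2L}$ term is discarded, I obtain
\[
\tfrac12\norm{z_{k+1}-x_\star}^2\le V_k\le V_{-1}=\tfrac12\bar R_0,\qquad \bar R_0:=\norm{x_0-x_\star}^2+s_0^2(\oldalpha_0+\talpha_0)\talpha_0\theta_2^2\norm{\nabla f(x_0)}^2 ,
\]
so $\{z_k\}$ is bounded, and from $s_k^2 B_k\norm{\nabla f(x_k)}^2\le\bar R_0$ with $B_k=\alpha_k^2\theta_{k+2}^2$ and $\alpha_k\ge\alpha_1$ ($k\ge1$) the gradient step is small: $s_k\norm{\nabla f(x_k)}\le\sqrt{\bar R_0}/(\alpha_1\theta_{k+2})$.

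Next I would prove boundedness of $\{x_k\}$. The $x$-update mixes $y_{k+1}$ with $z_{k+1}$ with weight $\frac1{\theta_{k+3}}\to0$ and so provides no usable contraction; instead I rewrite the update at index $k$ as the affine identity $z_k=\theta_{k+2}x_k-(\theta_{k+2}-1)y_k$, i.e.\ $x_k=(1-\frac1{\theta_{k+2}})y_k+\frac1{\theta_{k+2}}z_k$. Since $y_{k+1}=x_k-s_k\nabla f(x_k)$, the triangle inequality together with $\norm{z_k-x_\star}\le\sqrt{\bar R_0}$ and the gradient-step bound above gives, for $k\ge1$,
\[
\norm{y_{k+1}-x_\star}\le\pr{1-\frac{1}{\theta_{k+2}}}\norm{y_k-x_\star}+\frac{C}{\theta_{k+2}},\qquad C:=\sqrt{\bar R_0}\,\pr{1+\frac{1}{\alpha_1}} ,
\]
and a one-line induction yields $\norm{y_k-x_\star}\le\max\{\norm{y_1-x_\star},C\}$ for all $k\ge1$. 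As $x_k$ is a convex combination of $y_k$ and $z_k$ (and $x_0=z_0$), $\{x_k\}$ is bounded, and I may fix an explicit $\bar R>0$, depending only on $x_0$ and $\tilde x_0$, with $x_k\in\bar B_{\bar R}(x_\star)$.

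With $\bar R$ in hand, I would set $L$ as in \Cref{thm:adanag_local_smooth}: a smoothness parameter of $f$ on $\bar B_{3\bar R}(x_\star)\cup\bar B_{3\norm{\tilde x_0-x_\star}}(x_\star)$, which is a single ball centred at $x_\star$, hence convex, and contains $x_0$, $\tilde x_0$ and every consecutive pair $(x_k,x_{k+1})$. Then \Cref{lemma:local_smoothness}/\Cref{cor:L_k_bound} give $L_{k+1}\le L$ for $k\ge0$, and $L_0\le L$ from \eqref{eq:initial_guess}, so $S_k=\min\{1/L_0,\dots,1/L_k\}\ge1/L$. The one remaining ingredient is the step-size lower bound, and this is exactly where $\epsilon<\bar\epsilon_{\NN}$ is used: re-running the induction of \Cref{lemma:stepsize_lowerbound} for the rule \eqref{eq:step_size_rule_relaxed} requires $\frac{\alpha_k^2\alpha_{k+1}}{\alpha_{k+1}+\alpha_k^2(1+\epsilon)}\ge\frac{\alpha_2^2\alpha_3}{\alpha_3+\alpha_2^2}$ for all $k\ge\NN$; since the constant shift $\epsilon$ does not affect the monotonicity proof there, $k\mapsto\frac{\alpha_{k+1}}{\alpha_{k+1}/\alpha_k^2+1+\epsilon}$ is still increasing, so it suffices to verify the inequality at $k=\NN$, which rearranges precisely to $\epsilon\le\bar\epsilon_{\NN}$ with $\bar\epsilon_{\NN}$ as in \eqref{eq:epsilon_bar}. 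This yields $s_k\ge\frac{\alpha_2^2\alpha_3}{\alpha_3+\alpha_2^2}\frac{1}{\alpha_k}\frac1L$ for $k\ge1$. Plugging $1/S_{k+1}\le L$ into \Cref{prop:proof_of_thm_1_2} and this step-size bound into \Cref{prop:adanag_gradient_norm} (whose Lyapunov-summation argument relies only on the decrease already established), and using $\theta_{k+3}(\theta_{k+3}-1)=\theta_{k+2}^2$ and $\theta_{k+2}\ge\frac{k+4}{2}$, then gives the two bounds in \eqref{eq:adanag_convergence_rate} with the same constants as in \Cref{thm:main_tight}.

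The genuinely non-routine point is the boundedness step: one must resist arguing through the $x$-update itself, whose damping $1-\frac1{\theta_{k+3}}$ is too weak (a Grönwall estimate through it only yields polynomial growth), and instead route the recursion through the $z$--$y$ affine identity, whose damping $1-\frac1{\theta_{k+2}}$ is just strong enough to absorb the $O(1/\theta_{k+2})$ forcing. The remaining work is bookkeeping: the Lyapunov decrease and the boundedness must be obtained with no global constant so that $L$ can be \emph{defined afterwards} on a ball fixed by the already-bounded iterates, and one must check that a single choice $\epsilon\in(0,\bar\epsilon_{\NN})$ is simultaneously harmless for the decrease and sufficient for the step-size lower-bound induction.
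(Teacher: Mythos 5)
Your proof is correct and, in the boundedness step, genuinely different from the paper's. The paper first proves a \emph{strengthened} Lyapunov decrease (its Lemma~10, requiring $\epsilon>0$ so that $V_{k+1}-V_k\le -\tfrac{\delta}{2}s_k^2B_k\|\nabla f(x_k)\|^2$), sums it to get $\sum_k s_k^2 k^2\|\nabla f(x_k)\|^2<\infty$, applies Cauchy--Schwarz to obtain $\sum_k s_k\|\nabla f(x_k)\|<\infty$, and then bounds the iterates via the telescoping radius $R_k=\sqrt{2V_{-1}}+\sum_{i<k}s_i\|\nabla f(x_i)\|$. You instead use only the \emph{basic} monotonicity $V_k\le V_{-1}$, which holds for the relaxed rule just as it does for the original one, to extract the per-iterate bounds $\|z_k-x_\star\|\le\sqrt{2V_{-1}}$ and $s_k\|\nabla f(x_k)\|\le \sqrt{2V_{-1}}/(\alpha_1\theta_{k+2})$, and then run a contraction on $\|y_k-x_\star\|$ via the affine identity $x_k=(1-\tfrac{1}{\theta_{k+2}})y_k+\tfrac{1}{\theta_{k+2}}z_k$. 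The damping $1-\tfrac{1}{\theta_{k+2}}$ and forcing $C/\theta_{k+2}$ are matched, so a one-step induction gives $\sup_k\|y_k-x_\star\|<\infty$, with no summability needed. This is shorter, avoids Cauchy--Schwarz, and observes in passing that boundedness of the iterates does not need the $\epsilon$-perturbation at all (only the step-size lower bound does); that is a nontrivial side observation the paper does not make. Your handling of the rest of the argument --- the step-size inequalities \eqref{ineq:AdaNAG_generalized_stepsize} are only strengthened by the relaxed rule, $V_0\le V_{-1}$ needs only convexity once the $1/(2L)$ term is dropped, $\bar\epsilon_N$ is exactly the threshold making the induction of \Cref{lemma:stepsize_lowerbound} go through after index $N$, and the final convergence rates follow from \Cref{prop:proof_of_thm_1_2} and \Cref{prop:adanag_gradient_norm} with $S_{k+1}\ge 1/L$ --- matches the paper.

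One small inaccuracy in the closing remark: the claim that routing the recursion through the $x$-update ``only yields polynomial growth'' is not actually right. Since $\theta_{k+3}^2-\theta_{k+3}=\theta_{k+2}^2$ gives $\theta_{k+3}\le 1+\theta_{k+2}$, the ratio $\theta_{k+3}/\theta_{k+2}$ is bounded (by $\theta_3/\theta_2$), and the same absorbing-constant induction works on $\|x_k-x_\star\|$ directly, with the damping $1-\tfrac{1}{\theta_{k+3}}$ and a slightly larger constant $M\ge D\,\theta_3/\theta_2$. Routing through $y_k$ is cleaner because the damping and forcing indices match exactly, but it is a convenience, not a necessity. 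This does not affect the validity of your proof.
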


We postpone the proof of Proposition~\ref{prop:adanag_local_smooth} to \Cref{appendix:proof_of_local_extension}.  
Here, we first assume Proposition~\ref{prop:adanag_local_smooth} and prove \Cref{thm:adanag_local_smooth}.
\begin{proof} [Proof of \Cref{thm:adanag_local_smooth}]
    Let $N = 3$.  
    Using numerical calculation, we check that $\bar{\epsilon}_3 \approx 0.399>10^{-6}$.  
    Therefore, we obtain the desired conclusion by Proposition~\ref{prop:adanag_local_smooth}.
\end{proof}

Note that when $\epsilon = 0$ or $N = \infty$, \eqref{eq:step_size_rule_relaxed} reduces to the original step size update rule \eqref{eq:step_size_rule} of \adanag.  
Therefore, the variant in Proposition~\ref{prop:adanag_local_smooth} resembles \adanag\ more closely as $\epsilon$ becomes smaller and $N$ becomes larger.  
Nevertheless, regardless of how small $\epsilon$ is or how large $N$ becomes, Proposition~\ref{prop:adanag_local_smooth} tells us that we still achieve the same convergence rates for locally smooth convex $f$, possibly with a larger local smoothness parameter $\localupperboundL$.

\subsubsection{Proof of Proposition~\ref{prop:adanag_local_smooth}} \label{appendix:proof_of_local_extension}

As mentioned at the beginning of \Cref{sec:AdaNAG_anaylsis}, the statements in the convergence proof of \adanag\ that require global smoothness are only Corollary~\ref{lemma:stepsize_lowerbound_L_smooth} and Proposition~\ref{prop:adanag_gradient_norm}.  
Since the reason that Proposition~\ref{prop:adanag_gradient_norm} requires global smoothness is to leverage Corollary~\ref{lemma:stepsize_lowerbound_L_smooth}, our goal is to extend Corollary~\ref{lemma:stepsize_lowerbound_L_smooth} to the locally smooth case. 

Recall that Corollary~\ref{lemma:stepsize_lowerbound_L_smooth} is about the upper bound of $L_k$.  
Unlike the globally smooth case, when \( f \) is only locally smooth, the boundedness of the iterates becomes crucial for obtaining an upper bound on \( L_k \). 
To establish the boundedness of the iterates, we first prove a statement similar to \Cref{lemma:lyapunov_analysis}, but with a sharper bound involving the coefficient \( s_k^2 B_k \) on the right-hand side.

\begin{lemma}   \label{lemma:lyapunov_analysis_locally}
    Suppose $\set{x_k}_{k\ge0}$ is generated by \adanag\ but with \eqref{eq:step_size_rule} replaced by  \eqref{eq:step_size_rule_relaxed} for some $N\ge3$.   
    Let $f$ be a locally smooth convex function. 
    Define \( V_k \), \( A_k \), and \( B_k \) as in Theorem \ref{lemma:lyapunov_analysis}. 
    Then there exists $\delta>0$ such that 
    \begin{equation*}
        V_{k+1} - V_k 
        \le - \frac{\delta}{2} s_k^2  B_k \norm{ \nabla f(x_k) }^2, \qquad \forall k\ge N.
    \end{equation*}
\end{lemma}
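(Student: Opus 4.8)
\textbf{Proof plan for Lemma~\ref{lemma:lyapunov_analysis_locally}.}

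The plan is to re-run the argument of Theorem~\ref{lemma:lyapunov_analysis}, tracking how the relaxation in \eqref{eq:step_size_rule_relaxed} propagates through the three key step-size inequalities \eqref{ineq:AdaNAG_generalized_stepsize}, and then re-examining \textbf{Part 3} (the quadratic form bound, Lemma~\ref{lem:determinant}) to extract a strictly negative margin. First I would note that for $k \ge N$, the step-size rule \eqref{eq:step_size_rule_relaxed} replaces the constant $\alpha_k^2/(\alpha_{k+1}+\alpha_k^2)$ by the smaller constant $\alpha_k^2/(\alpha_{k+1}+\alpha_k^2(1+\epsilon))$. Since $(A_k/B_k + \rho_k)^{-1} = (\alpha_{k+1}/\alpha_k^2 + 1)^{-1} \cdot(\text{something}\ge1) \ge \alpha_k^2/(\alpha_{k+1}+\alpha_k^2)$, the new rule still satisfies a version of \eqref{ineq:AdaNAG_generalized_stepsize-c}, but now with slack: concretely, there is a constant $c_\epsilon \in (0,1)$ (depending on $\epsilon$ and the limiting value $\alpha_k \to 1/2$) such that $\ssz_{k+1} \le c_\epsilon \, ( A_k/B_k + \rho_k )^{-1} \frac{1}{L_{k+1}}$ for all $k\ge N$. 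The inequality \eqref{ineq:AdaNAG_generalized_stepsize-b}, namely $s_{k+1}\le\rho_k^{-1}s_k = s_k$, is unaffected since the first branch of the $\min$ is unchanged, and \eqref{ineq:AdaNAG_generalized_stepsize-a} likewise only uses the first branch.

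Next I would revisit Lemma~\ref{lem:lyapunov_diff} and Lemma~\ref{lem:determinant}. The equality \eqref{eq:lyapunov_diff} and the reduction $V_{k+1}-V_k \le Q_k$ (equation \eqref{ineq:core_is_quadratic}) go through verbatim, since they only used \eqref{ineq:AdaNAG_generalized_stepsize-a}, convexity, and the defining identity for $L_{k+1}$, none of which change. The place where the relaxation helps is the discriminant computation inside Lemma~\ref{lem:determinant}. In the case $s_k L_{k+1}\le 1$, the chain \eqref{ineq:when_sL_smaller_1} gives $(1-\delta_1)^2-(1-\delta_2)\le \delta_2-\delta_1$ with $\delta_1 = s_k L_{k+1}$ and $\delta_2 = \rho_k s_{k+1}L_{k+1}$; the strengthened \eqref{ineq:AdaNAG_generalized_stepsize-c} now forces $\delta_2 \le c_\epsilon\,(\text{stuff})$ strictly below $\delta_1$, yielding a gap $\delta_1-\delta_2 \ge (1-c_\epsilon)\rho_k s_{k+1}L_{k+1}$ or similar, which translates into the coefficient of $\norm{\nabla f(x_{k+1})}^2$ being bounded below by a positive multiple of $1-c_\epsilon$. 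Carrying this margin back through $Q_k = -\frac{s_{k+1}A_k}{2L_{k+1}}[\,\cdot\,]$ and using $\rho_k=1$, $A_k = \alpha_{k+1}\theta_{k+3}(\theta_{k+3}-1)$, $B_{k+1}+\alpha_{k+1}^2\theta_{k+3}^2 = A_k$, one finds $Q_k \le -\tfrac{1}{2}(1+\delta')s_k^2 B_k\norm{\nabla f(x_k)}^2$ in the $s_k L_{k+1}\le1$ regime, and an analogous strict improvement over $-\tfrac12 \frac{s_{k+1}}{L_{k+1}}A_k\norm{\nabla f(x_k)}^2$ in the $s_kL_{k+1}\ge1$ regime. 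Since $\frac{s_{k+1}}{L_{k+1}}A_k \ge \frac{s_{k+1}}{L_{k+1}}B_k$ requires care, I would instead observe that in the $s_k L_{k+1} \ge 1$ case one has $s_{k+1}L_{k+1} \le s_k L_{k+1}$ combined with the relaxed bound to conclude $\tfrac{s_{k+1}}{L_{k+1}}A_k \ge c_\epsilon' s_k^2 B_k$ for a suitable $c_\epsilon'>0$, again using $\theta_{k+3}(\theta_{k+3}-1)\le\theta_{k+2}^2$ and the bound $A_k/B_k\le\alpha_{k+1}/\alpha_k^2$. Taking $\delta = \min\{\delta', c_\epsilon'\} > 0$, uniform in $k\ge N$ because all the $\alpha_k$-ratios are bounded away from their limits, gives the claim.

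The main obstacle I anticipate is making the constant $\delta$ genuinely uniform in $k$: the ratios $A_k/B_k = \frac{\alpha_{k+1}}{\alpha_k^2}\frac{\theta_{k+3}(\theta_{k+3}-1)}{\theta_{k+2}^2}$ and $\alpha_k^2/(\alpha_{k+1}+\alpha_k^2(1+\epsilon))$ both converge as $k\to\infty$ (to $\tfrac12$ and $\tfrac{1/4}{1/2+(1/4)(1+\epsilon)}$ respectively), so one must verify that the induced slack does not degenerate to $0$ in the limit — this is where the condition $\epsilon < \bar\epsilon_N$ from \eqref{eq:epsilon_bar} enters, guaranteeing the relevant lower bound is strictly less than what \eqref{ineq:AdaNAG_generalized_stepsize-c} would give with equality, uniformly. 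I would handle this by writing the slack factor explicitly as $c_\epsilon = \sup_{k\ge N}\frac{\alpha_k^2/(\alpha_{k+1}+\alpha_k^2(1+\epsilon))}{(\alpha_{k+1}/\alpha_k^2+1)^{-1}}$ and checking $c_\epsilon<1$ via monotonicity of $\alpha_k$ and its limit. The degenerate case $\nabla f(x_k)=\nabla f(x_{k+1})$ (so $L_{k+1}=0$) is handled exactly as in the last paragraph of the proof of Theorem~\ref{lemma:lyapunov_analysis}, where the bound $-\tfrac12 s_k^2 B_k\norm{\nabla f(x_k)}^2$ already holds (and trivially $-\tfrac12 s_k^2 B_k \le -\tfrac{\delta}{2}s_k^2 B_k$ for $\delta\le1$), so no extra work is needed there.
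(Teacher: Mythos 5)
Your overall strategy is right: the $\epsilon$ in \eqref{eq:step_size_rule_relaxed} converts \eqref{ineq:AdaNAG_generalized_stepsize-c} into a strict inequality $s_{k+1}\le (A_k/B_k+\rho_k+\epsilon)^{-1}L_{k+1}^{-1}$, and the task is to propagate that slack through the discriminant computation in Lemma~\ref{lem:determinant}. The degenerate case $L_{k+1}=0$ is handled correctly, and your instinct about uniformity (the $\alpha_k$-ratios have finite, controlled limits) is sound.

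However, the central step where you handle the regime $s_kL_{k+1}\ge 1$ is wrong, and this is where the lemma actually has content. You propose to show $\frac{s_{k+1}}{L_{k+1}}A_k \ge c_\epsilon' s_k^2 B_k$ "using $s_{k+1}L_{k+1}\le s_kL_{k+1}$" together with ratio bounds on $A_k/B_k$. This inequality is false. Write
\[
\frac{\frac{s_{k+1}}{L_{k+1}}A_k}{s_k^2 B_k}
= \frac{A_k}{B_k}\cdot\frac{s_{k+1}L_{k+1}}{(s_kL_{k+1})^2}.
\]
The first factor is bounded, and the numerator $s_{k+1}L_{k+1}$ is bounded above (it is at most the constant multiplying $1/L_{k+1}$ in the step-size rule). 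But the denominator $(s_kL_{k+1})^2$ is unbounded: $s_k$ was fixed using $L_k$, so $s_kL_{k+1}\approx L_{k+1}/L_k$ can be arbitrarily large if the local Lipschitz estimate jumps up. The inequalities $s_kL_{k+1}\ge 1$ and $s_{k+1}\le s_k$ go the wrong way; they give you $\frac{s_{k+1}}{L_{k+1}}\le s_{k+1}s_k\le s_k^2$, an upper bound where you need a lower bound. So there is no uniform $c_\epsilon'>0$ making your claimed comparison hold, and the chain of inequalities does not close. (For similar reasons, your claimed strict improvement $Q_k\le -\tfrac12(1+\delta')s_k^2B_k\|\nabla f(x_k)\|^2$ in the $s_kL_{k+1}\le 1$ regime is also unjustified — with equality $\alpha_k/\alpha_{k+1}\to 1$ there is no slack in that branch of the $\min$ — but this is harmless since the unrelaxed bound $-\tfrac12 s_k^2B_k$ already suffices there for any $\delta\le 1$.)

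The paper's proof avoids the faulty comparison entirely. Instead of trying to relate $\frac{s_{k+1}}{L_{k+1}}A_k$ to $s_k^2B_k$ after the fact, it injects the slack directly into the discriminant: it defines $\delta_k=\epsilon s_{k+1}L_{k+1}\pr{1-\rho_k s_{k+1}L_{k+1}}^{-1}$ and shows that in the regime $s_kL_{k+1}\ge 1$ the discriminant remains nonpositive even after multiplying the $\|\nabla f(x_k)\|^2$ coefficient by $(1-\delta_k)$, i.e.\ $Q_k\le -\pr{\tfrac12\tfrac{s_{k+1}}{L_{k+1}}A_k+\tfrac{\delta_k}{2}s_k^2B_k}\|\nabla f(x_k)\|^2$. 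The extra $s_k^2B_k$ term thus appears \emph{natively}, not by comparison. Then $\delta_k\ge \epsilon\, s_{k+1}L_{k+1}=\epsilon\min\set{\tfrac{\alpha_k}{\alpha_{k+1}}s_kL_{k+1},\,\tfrac{\alpha_k^2}{\alpha_{k+1}+\alpha_k^2(1+\epsilon)}}$, and \emph{because} $s_kL_{k+1}\ge 1$ both arguments of this $\min$ are bounded away from zero uniformly in $k\ge N$, which is the correct place to use the hypothesis of that regime. That is the step your proposal is missing.
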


\begin{proof}   
    Using the same argument as in \Cref{lem:ineq:AdaNAG_generalized_stepsize}, from \eqref{eq:AdaNAG_AB_1}, we have:
    \begin{equation*} 
        \pr{ \frac{A_k}{B_k} + \rsBAreciprocal +\epsilon }^{-1}
        \AlphaDefinitionUsed{=} \pr{ \frac{A_k}{B_k} + 1+\epsilon }^{-1}
        \ThetaProprtyUsed{\ge} \pr{ \frac{\alpha_{k+1}}{\alpha_{k}^2} + 1 +\epsilon}^{-1}
        = \frac{ \alpha_{k}^2 }{ \alpha_{k+1} + \alpha_{k}^2 (1+\epsilon) }.
    \end{equation*}
    Therefore, for \( k \ge N \), using \eqref{eq:step_size_rule_relaxed}, we obtain a tighter inequality compared to \eqref{ineq:AdaNAG_generalized_stepsize-c}:
    \begin{equation}    \label{ineq:AdaNAG_generalized_stepsize_tight} 
    \tag{\ref{ineq:AdaNAG_generalized_stepsize-c}'}
         \ssz_{k+1} \le \pr{ \frac{A_k}{B_k} + \rsBAreciprocal + \epsilon }^{-1} \frac{1}{L_{k+1}}.
    \end{equation}          
    Next, for notational simplicity, we define:
    \begin{equation}    \label{eq:delta_k_definition}
        \delta_k = \epsilon  s_{k+1} L_{k+1} \pr{1 - \rsBAreciprocal \ssz_{k+1}  L_{k+1} }^{-1}.
    \end{equation}
    With \( Q_k \) defined as in \Cref{lem:determinant}, we now aim to prove a tighter inequality than \eqref{ineq:determinant}, stated below:  
    \begin{equation}    \label{ineq:determinant_tight} 
    \tag{\ref{ineq:determinant}'}
        Q_k \le \begin{cases}
            - \frac{1}{2} s_k^2 B_k \norm{ \nabla f(x_k) }^2 & \text{ if } s_kL_{k+1} \le 1 \\ 
            - \pr{ \frac{1}{2} \frac{\ssz_{k+1}}{L_{k+1}} A_k  +  \frac{\delta_k}{2} s_k^2 B_k  } \norm{ \nabla f(x_k) }^2 & \text{ if } s_kL_{k+1} \ge 1.
        \end{cases}
    \end{equation}
    Note that the difference from \eqref{ineq:determinant} is the additional term \( \frac{\delta_k}{2} s_k^2 B_k \| \nabla f(x_k) \|^2 \) when \( s_k L_{k+1} \ge 1 \).
    
    We first show that the above inequality implies our desired conclusion. Suppose \eqref{ineq:determinant_tight} is true. Recall that $\lim_{k\to\infty} \alpha_k = \frac{1}{2}$ from \Cref{lem:theta_properties}. 
    Thus \( \frac{\alpha_{k}}{\alpha_{k+1}} \) and \( \frac{ \alpha_{k}^2 }{ \alpha_{k+1} + \alpha_{k}^2 (1+\epsilon) } \) have positive limits, namely \( 1 \) and \( \frac{1}{3+\epsilon} \) respectively, thus there exists \( \delta \in (0,1) \) such that  
    \[
        \epsilon \min\left\{ \frac{\alpha_{k}}{\alpha_{k+1}}, \, \frac{ \alpha_{k}^2 }{ \alpha_{k+1} + \alpha_{k}^2 (1+\epsilon) } \right\} > \delta ,\quad \forall k\ge N.
    \]
    For such $\delta$, when $s_kL_{k+1}\ge1$, recalling the definition of $\delta_k$ in \eqref{eq:delta_k_definition}, we have:
    \begin{equation*}
        \begin{aligned}
        \delta_k
        &\ge \epsilon s_{k+1}L_{k+1} 
        = \epsilon \min\set{ \frac{\alpha_{k}}{\alpha_{k+1}}  \ssz_{k}L_{k+1},  \,\, \frac{ \alpha_{k}^2 }{ \alpha_{k+1} + \alpha_{k}^2 (1+\epsilon) } }  
        > \delta, \qquad \forall k\ge N.
        \end{aligned}
    \end{equation*}
    From the same argument as in \Cref{lemma:lyapunov_analysis}, we have \( V_{k+1} - V_k \le Q_k \). Therefore, we obtain:
    \begin{equation*}
        V_{k+1} - V_k 
        \le Q_k 
        \le -\frac{1}{2} \min\set{ 1, \delta_k } s_k^2 B_k \norm{ \nabla f(x_k) }^2 
        \le -\frac{\delta}{2} s_k^2 B_k \norm{ \nabla f(x_k) }^2, \qquad \forall k\ge N.
    \end{equation*}
    It now remains to show \eqref{ineq:determinant_tight}. 
    The overall argument follows the same structure as the proof of \Cref{lem:determinant}, except that we use a slightly tighter argument in the case \( s_k L_{k+1} \ge 1 \).  
    With the same argument of \Cref{lem:determinant}, showing \eqref{ineq:determinant_tight} reduces to showing that the discriminant of the quadratic form is nonpositive: 
    \begin{equation*} 
        \begin{aligned}
            \pr{ 1 - \ssz_{k} L_{k+1} }^2 -  \pr{1 - \rsBAreciprocal  \ssz_{k+1}  L_{k+1} } \frac{B_k}{A_k} \frac{ \ssz_{k}^2 }{ \ssz_{k+1}} L_{k+1} \pr{ 1 - \delta_k } &\le 0, \qquad \text{ if } s_kL_{k+1} \ge 1.
        \end{aligned}
    \end{equation*}
    Note that from \eqref{ineq:AdaNAG_generalized_stepsize_tight}, we have $\frac{1}{\ssz_{k+1}L_{k+1}} \ge \frac{A_k}{B_k} + \rsBAreciprocal + \epsilon$.  
    Together with the definition of $\delta_k$ in \eqref{eq:delta_k_definition}, we can prove the above inequality as follows:
    \begin{equation*}    
        \begin{aligned}
        &\pr{1 - \rsBAreciprocal  \ssz_{k+1}  L_{k+1} } \frac{B_k}{A_k} \frac{ \ssz_{k}^2 }{ \ssz_{k+1}} L_{k+1} \pr{ 1 - \delta_k } 
        = \pr{ \frac{1}{\ssz_{k+1}  L_{k+1}} -  \rsBAreciprocal }  \frac{ B_k }{ A_k } \ssz_{k} ^2 L_{k+1}^2  - \epsilon \frac{ B_k }{ A_k }  \ssz_{k}^2 L_{k+1}^2  \\ 
        &\ge  \frac{ B_k }{ A_k } \pr{ \frac{1}{\ssz_{k+1}  L_{k+1}}  -  \rsBAreciprocal  } \ssz_{k}^2 L_{k+1}^2 - \epsilon \frac{ B_k }{ A_k }  \ssz_{k}^2 L_{k+1}^2 + 2 \pr{ 1-\ssz_{k}L_{k+1} } \\
        &\ge  \frac{ B_k }{ A_k } \pr{ \frac{A_k}{B_k} + \epsilon  } \ssz_{k}^2 L_{k+1}^2 - \epsilon \frac{ B_k }{ A_k }  \ssz_{k}^2 L_{k+1}^2 + 2 \pr{ 1-\ssz_{k}L_{k+1} } 
            = 2 +  \ssz_{k}^2 L_{k+1}^2 - 2\ssz_{k}L_{k+1}
            \ge (1 - \ssz_{k}L_{k+1})^2.
        \end{aligned}
    \end{equation*}
    This concludes the proof. 
\end{proof}

We now establish the boundedness of the iterates by leveraging \Cref{lemma:lyapunov_analysis_locally}.

\begin{proposition} 
    \label{prop:adanag_boundedness_locally}
    Let $f$ be a locally smooth convex function. 
    Suppose $\set{x_k}_{k\ge0}$ is generated by \adanag\ but with \eqref{eq:step_size_rule} replaced by  \eqref{eq:step_size_rule_relaxed} for some $N\ge3$.     
    Define $R_k = \sqrt{2V_{-1}} + \sum_{i=0}^{k-1} s_k \norm{ \nabla f(x_i) }$, where  \( V_{-1} \) defined as in Proposition~\ref{lemma:lyapunov_analysis_k0}. 
    Then, $\Rlimit = \lim_{k\to\infty} R_k$ exists and    
    \begin{equation}    \label{eq:boundedness_of_iterate_local_smooth}
        x_k \in \bar{B}_{\Rlimit}(x_\star), \qquad \forall k\ge0.
    \end{equation}
\end{proposition}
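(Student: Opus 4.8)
The plan is to combine the refined Lyapunov decrease from \Cref{lemma:lyapunov_analysis_locally} with the telescoping structure that was already used in \Cref{prop:proof_of_thm_1_2}, but now tracked through the $z$-updates so as to control $\norm{z_{k+1}-x_\star}$ directly. First I would recall from \Cref{lem:ineq:AdaNAG_generalized_stepsize} (specifically $\rho_k=1$ and $s_{k+1}\le s_k$) and from the case analysis in the proof of \Cref{lemma:lyapunov_analysis} that the Lyapunov function $V_k$ in \eqref{eq:Lyapunov} is nonincreasing for all $k\ge0$, and that $V_0\le V_{-1}-\tfrac{s_0\tilde\alpha_0\theta_2}{2L}\norm{\nabla f(x_0)}^2\le V_{-1}$ by \Cref{lemma:lyapunov_analysis_k0}. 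Hence $\tfrac12\norm{z_{k+1}-x_\star}^2\le V_k\le V_{-1}$, which already gives boundedness of the auxiliary sequence $\{z_{k+1}\}$ inside $\bar B_{\sqrt{2V_{-1}}}(x_\star)$. The point of the proposition, however, is to get boundedness of $\{x_k\}$ itself (which is what feeds into the local-smoothness bound on $L_k$), and the cleanest route is the triangle-inequality/telescoping bound encoded in $R_k=\sqrt{2V_{-1}}+\sum_{i=0}^{k-1}s_i\norm{\nabla f(x_i)}$.

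The key steps, in order: (i) From the algorithm's updates in \eqref{eq:AdaNAG}, express $x_{k+1}$ as a convex combination of $y_{k+1}=x_k-s_k\nabla f(x_k)$ and $z_{k+1}$, and unfold $z_{k+1}=z_0-\sum_{i=0}^k s_i\tilde\alpha_i\theta_{i+2}\nabla f(x_i)$; since $\tilde\alpha_i\theta_{i+2}$ is bounded (indeed $\alpha_i\theta_{i+2}=\tfrac12\theta_{i+2}(1-1/\theta_{i+3})$ grows, so one must be a little careful — see the obstacle below), derive $\norm{x_{k}-x_\star}\le \norm{z_{k}-x_\star}+(\text{small correction})$ or more directly bound $\norm{x_k-x_\star}$ by $\sqrt{2V_{k-1}}$ plus a telescoping tail. (ii) Show $R_k$ is nondecreasing and bounded above, hence convergent to some $\bar R$: nondecreasing is immediate since the summands are nonnegative; boundedness follows because $\sum_{i\ge N} s_i^2 B_i\norm{\nabla f(x_i)}^2<\infty$ by summing the decrease inequality of \Cref{lemma:lyapunov_analysis_locally} (telescoping against $V_N<\infty$ and $V_k\ge0$), and since $s_i$ is nonincreasing while $B_i=\alpha_i^2\theta_{i+2}^2=\Omega(i^2)$, we get $\sum_i s_i\norm{\nabla f(x_i)}<\infty$ from $\sum_i s_i^2 B_i\norm{\nabla f(x_i)}^2<\infty$ via $s_i\norm{\nabla f(x_i)}=\big(s_i^2 B_i\norm{\nabla f(x_i)}^2\big)^{1/2}/\sqrt{B_i}$ together with $\sum_i 1/B_i<\infty$ and Cauchy–Schwarz. (iii) Finally verify $x_k\in\bar B_{R_k}(x_\star)\subseteq\bar B_{\bar R}(x_\star)$ by the triangle inequality, using $x_0=z_0$ and the update expansion from step (i), with $\norm{z_{1}-x_\star}\le\sqrt{2V_{-1}}$ serving as the base of the telescope.

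The main obstacle is step (ii)'s convergence of $R_k$: one needs the series $\sum_i s_i\norm{\nabla f(x_i)}$ to converge, and this is delicate because the natural quantity that telescopes out of the Lyapunov inequality is $\sum_i s_i^2 B_i\norm{\nabla f(x_i)}^2$ (with $B_i=\Theta(i^2)$), whereas $R_k$ accumulates the first power $s_i\norm{\nabla f(x_i)}$. The resolution is exactly the Cauchy–Schwarz trick above — writing $s_i\norm{\nabla f(x_i)}=\sqrt{s_i^2 B_i}\,\norm{\nabla f(x_i)}\cdot B_i^{-1/2}$ and using that $\sum_i B_i^{-1}=\sum_i \Theta(i^{-2})<\infty$ — but this requires the $\ge N$-tail bound from \Cref{lemma:lyapunov_analysis_locally} (which is why the proposition is stated for the perturbed step-size rule \eqref{eq:step_size_rule_relaxed}) rather than merely the non-strict decrease of $V_k$; for the finitely many indices $i<N$ there is nothing to prove. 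A secondary, more bookkeeping-level obstacle is keeping track of the correction term between $\norm{x_k-x_\star}$ and $\norm{z_k-x_\star}$ coming from the $y$-component in the convex combination; but since $y_{k+1}-z_{k+1}$ is a bounded multiple of past gradient steps, it folds into the same telescoping tail and does not change the form of $\bar R$.
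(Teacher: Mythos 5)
Your proposal captures essentially the paper's argument. The crucial step — summing the strict Lyapunov decrease from \Cref{lemma:lyapunov_analysis_locally} to get $\sum_k s_k^2 B_k\|\nabla f(x_k)\|^2<\infty$ and then converting this into $\sum_k s_k\|\nabla f(x_k)\|<\infty$ via Cauchy--Schwarz against $\sum_k B_k^{-1}=\Theta(\sum_k k^{-2})<\infty$ — is exactly what the paper does, and you correctly identify it as the place where the perturbed step-size rule \eqref{eq:step_size_rule_relaxed} is genuinely needed. The paper also establishes $\|z_{k+1}-x_\star\|\le\sqrt{2V_{-1}}$ from the Lyapunov monotonicity, just as you note.

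One place where your sketch drifts slightly from the paper's cleanest formulation is step (iii). You propose to control the gap between $x_k$ and $z_k$ by bounding $y_{k+1}-z_{k+1}$, calling it a ``bounded multiple of past gradient steps''; that quantity is actually not readily controlled and the detour of unfolding $z_{k+1}=z_0-\sum_{i\le k}s_i\alpha_i\theta_{i+2}\nabla f(x_i)$ with growing coefficients (which you yourself flag) does not recover. The paper instead runs a direct induction on $\|x_k-x_\star\|\le R_k$: since $y_{k+1}=x_k-s_k\nabla f(x_k)$, the triangle inequality gives $\|y_{k+1}-x_\star\|\le R_k+s_k\|\nabla f(x_k)\|=R_{k+1}$, while $\|z_{k+1}-x_\star\|\le\sqrt{2V_{-1}}\le R_{k+1}$ for all $k$ (not just $k=0$, so it is not a ``base'' of a telescope); and $x_{k+1}$ is a convex combination of $y_{k+1}$ and $z_{k+1}$ because $\theta_{k+3}\ge1$, hence $\|x_{k+1}-x_\star\|\le R_{k+1}$. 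This avoids any need to relate $y_{k+1}$ to $z_{k+1}$ and is the ``more direct'' route you also mention. So: same decomposition, same Cauchy--Schwarz engine, and the paper's induction is the clean realization of your step (iii) without the $y-z$ comparison.
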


\begin{proof}
    By summing the inequality obtained in \Cref{lemma:lyapunov_analysis_locally}, we have $\frac{\delta}{2} \sum_{k=N}^{\infty} s_k^2 B_k  \norm{\nabla f(x_k)}^2 \le V_N$.  
    Recall that from \eqref{eq:A_B_AdaNAG} we know $B_k = \alpha_k^2 \theta_{k+2}^2$ for $k\ge N$. Since $\theta_{k+2} \ge \frac{1}{2} (k+4) \ge \frac{1}{2}k$ and $\alpha_k \ge \alpha_N$ for $k\ge N$ by \Cref{lem:theta_properties}, we have 
    \[
        \sum_{k=N}^{\infty} s_k^2 k^2 \norm{\nabla f(x_k)}^2 
        \le \frac{4}{\alpha_N^2} \sum_{k=N}^{\infty} s_k^2 B_k \norm{\nabla f(x_k)}^2
        \le \frac{4}{\alpha_N^2} \frac{2}{\delta } V_N <\infty.
    \] 
    Therefore, \( \sum_{k=1}^{\infty} s_k^2 k^2 \| \nabla f(x_k) \|^2 < \infty \). 
    Now, from the Cauchy–Schwarz inequality in the $\ell^2$ space, 
    \begin{equation*}
        \sum_{k=1}^{\infty} s_k \norm{ \nabla f(x_k) }
        = \sum_{k=1}^{\infty} \pr{ \frac{1}{k} \times s_k k \norm{ \nabla f(x_k) } }
        \le \pr{ \sum_{k=1}^{\infty} \frac{1}{k^2} }^{\frac{1}{2}} \pr{ \sum_{k=1}^{\infty} s_k^2 k^2\norm{ \nabla f(x_k) }^2 }^{\frac{1}{2}} < \infty.
    \end{equation*}
    This implies that $R_k = \norm{x_0 - x_\star} + \sum_{i=0}^{k-1} s_i \norm{\nabla f(x_i)} < \infty$. Since $R_k$ is a nondecreasing sequence, we conclude that $R_k$ converges. 

    To prove \eqref{eq:boundedness_of_iterate_local_smooth}, since $\bar{B}_{R_k}(x_\star) \subset \bar{B}_{\Rlimit}(x_\star)$, it suffices to show $x_k \in \bar{B}_{R_k}(x_\star)$ for all $k\ge0$. 
    We prove this by induction. 
    By definition, $x_0 \in \bar{B}_{R_0}(x_\star)$ is clear. Now, assume $x_k \in \bar{B}_{R_k}(x_\star)$ is true, and we want to prove $x_{k+1} \in \bar{B}_{R_{k+1}}(x_\star)$. 
    From \eqref{eq:AdaNAG}, we have $y_{k+1} = x_k - s_k \nabla f(x_k)$.  
    Hence,
    \begin{equation*}
        \norm{ y_{k+1} - x_\star } 
        \le \norm{ x_{k} - x_\star } + s_k \norm{ \nabla f(x_k) }
        \le R_k + s_k \norm{ \nabla f(x_k) } = R_{k+1}.
    \end{equation*}
    From \Cref{lemma:lyapunov_analysis}, we have $\norm{z_{k+1} - x_\star} \le \sqrt{2V_{k}} \le \sqrt{2V_{-1}} \le R_{k+1}$. 
    Using \eqref{eq:AdaNAG} again, 
    we have:
    \begin{equation*}
        \norm{ x_{k+1} - x_\star } 
        \le \pr{ 1 - \frac{1}{\theta_{k+3}} } \norm{ y_{k+1} - x_\star } + \frac{1}{\theta_{k+3}} \norm{ z_{k+1} - x_\star }
        \le R_{k+1}.
    \end{equation*}
    Therefore $x_{k+1} \in \bar{B}_{R_{k+1}}(x_\star)$, and this completes the proof.
\end{proof}

The following lemma is an extension of \Cref{lemma:stepsize_lowerbound} to the relaxed step size \eqref{eq:step_size_rule_relaxed}. 
The definition of $\bar{\epsilon}_N$ in \eqref{eq:epsilon_bar} plays a crucial role in keeping the coefficient in front of $S_k$ the same.

\begin{lemma} \label{lemma:stepsize_lowerbound_locally_smooth}
    Suppose $\set{x_k}_{k\ge0}$ is generated by \adanag\ but with \eqref{eq:step_size_rule} replaced by  \eqref{eq:step_size_rule_relaxed} for some $N\ge3$.   
    Let $f$ be a locally smooth convex function.  
    Set $s_0$ as in \Cref{lemma:stepsize_lowerbound} and define $S_k$ as \eqref{eq:large_S}. 
    Then 
    \begin{equation*}
        \ssz_k \ge \frac{ \alpha_{2}^2 \alpha_{3}}{\alpha_{3} + \alpha_{2}^2} \frac{1}{\alpha_{k}}  S_{k}, \qquad \forall k\ge 1.
    \end{equation*} 
\end{lemma}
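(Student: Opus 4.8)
The plan is to run the same induction as in \Cref{lemma:stepsize_lowerbound}, altering only the steps that see the perturbed denominator $\alpha_{k+1}+\alpha_k^2(1+\epsilon)$ appearing in \eqref{eq:step_size_rule_relaxed}. Write $C := \frac{\alpha_2^2\alpha_3}{\alpha_3+\alpha_2^2}$, so that the target reads $s_k \ge \frac{C}{\alpha_k}S_k$ for all $k\ge1$, and record the identity $\frac1C = \frac1{\alpha_2^2}+\frac1{\alpha_3}$. Since \eqref{eq:step_size_rule_relaxed} differs from \eqref{eq:step_size_rule} only for the update producing $s_{k+1}$ with $k\ge N$, the iterates $s_1,\dots,s_N$ coincide with those of \adanag; hence the argument of \Cref{lemma:stepsize_lowerbound} applies verbatim and yields $s_k \ge \frac{C}{\alpha_k}S_k$ for $1\le k\le N$. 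It then remains to propagate this bound through $k\ge N$ under the relaxed rule.

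For the inductive step at $k\ge N$, assume $s_k\ge\frac{C}{\alpha_k}S_k$ and bound the two entries of $s_{k+1}=\min\big\{\frac{\alpha_k}{\alpha_{k+1}}s_k,\ \frac{\alpha_k^2}{\alpha_{k+1}+\alpha_k^2(1+\epsilon)}\frac1{L_{k+1}}\big\}$ separately. The first entry is at least $\frac{C}{\alpha_{k+1}}S_k\ge\frac{C}{\alpha_{k+1}}S_{k+1}$, using the hypothesis and the fact that $S_k$ is nonincreasing by \eqref{eq:large_S}. For the second entry, since $S_{k+1}\le\frac1{L_{k+1}}$, it suffices to establish the purely algebraic inequality
\[
g(k):=\frac{\alpha_k^2\,\alpha_{k+1}}{\alpha_{k+1}+\alpha_k^2(1+\epsilon)}\ \ge\ C,\qquad\forall\,k\ge N,
\]
because then the second entry equals $\frac{g(k)}{\alpha_{k+1}}\frac1{L_{k+1}}\ge\frac{C}{\alpha_{k+1}}S_{k+1}$; combining the two bounds gives $s_{k+1}\ge\frac{C}{\alpha_{k+1}}S_{k+1}$, closing the induction. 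This is the relaxed analogue of \eqref{ineq:core_of_stepsize_lowerbound}.

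So the whole matter reduces to proving $g(k)\ge C$ on $k\ge N$, which is the only genuinely new point and the place where the choice of $\bar{\epsilon}_N$ from \eqref{eq:epsilon_bar} is used. Writing $\frac1{g(k)}=\frac1{\alpha_k^2}+\frac{1+\epsilon}{\alpha_{k+1}}$ and recalling from \Cref{lem:theta_properties} that $\alpha_k$ is increasing for $k\ge1$ (hence certainly for $k\ge N\ge3$), both summands decrease in $k$, so $g$ is increasing and $g(k)\ge g(N)$ for $k\ge N$. Finally $g(N)\ge C$ is equivalent to $\frac1{\alpha_N^2}+\frac{1+\epsilon}{\alpha_{N+1}}\le\frac1{\alpha_2^2}+\frac1{\alpha_3}$, that is, to $\epsilon\le\big(\frac1{\alpha_2^2}+\frac1{\alpha_3}-\frac1{\alpha_N^2}\big)\alpha_{N+1}-1=\bar{\epsilon}_N$, which holds since $\epsilon\in(0,\bar{\epsilon}_N)$. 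The remaining bookkeeping is routine: the induction hypothesis at $k=N$ is exactly what the $k\le N$ part provides, and the monotonicity of $g$ is applied only from $k=N$ onward.
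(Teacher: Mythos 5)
Your proof is correct and follows the same overall strategy as the paper's: establish the bound for the unperturbed prefix via \Cref{lemma:stepsize_lowerbound}, then induct past $k\ge N$ using monotonicity of the coefficient and the identity that $\bar{\epsilon}_N$ pins $g(N)$ exactly at $C$. The one place you genuinely streamline is the monotonicity of $g$: writing $1/g(k)=1/\alpha_k^2+(1+\epsilon)/\alpha_{k+1}$ and invoking that $\alpha_k$ is nondecreasing (Lemma~\ref{lem:theta_properties}) is cleaner than the paper's route, which reuses the multi-step argument from \Cref{lemma:stepsize_lowerbound} that $\alpha_{k+1}/\alpha_k$ is decreasing; otherwise the arguments coincide (the paper merely anchors its induction one step earlier, at $k=N-1$, which is harmless).
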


\begin{proof}    
    Note that $\epsilonbound_N$ in \eqref{eq:epsilon_bar} satisfies the equality $\frac{ \alpha_{N}^2 \alpha_{N+1} }{ \alpha_{N+1} + \alpha_{N}^2 (1+\epsilonbound_N) } = \frac{\alpha_{2}^2 \alpha_{3}}{\alpha_{3} + \alpha_{2}^2}$.  
    Since \( \epsilon \mapsto \frac{ \alpha_{N}^2 \alpha_{N+1} }{ \alpha_{N+1} + \alpha_{N}^2 (1+\epsilon) } \) is decreasing in \( \epsilon \) when $\epsilon>0$, we have:
    \begin{equation*} \qquad\qquad\qquad 
        \frac{ \alpha_{N}^2 \alpha_{N+1} }{ \alpha_{N+1} + \alpha_{N}^2 (1+\epsilon) } > \frac{\alpha_{2}^2 \alpha_{3}}{\alpha_{3} + \alpha_{2}^2}, \qquad \forall \epsilon \in (0,\epsilonbound_N).
    \end{equation*}
    Note that when $1\le k\le N-1$, the inequality $\ssz_k \ge \frac{ \alpha_{2}^2 \alpha_{3}}{\alpha_{3} + \alpha_{2}^2} \frac{1}{\alpha_{k}} S_{k}$ is immediate from \Cref{lemma:stepsize_lowerbound}. 
    In particular, the inequality is true for $k=N-1$. 
    Now, we proceed by induction for the case \( k \ge N-1 \). Repeating the same argument as in \Cref{lemma:stepsize_lowerbound}, we can show that for every fixed \( \epsilon > 0 \), the function \( k \mapsto \frac{ \alpha_{k}^2 \alpha_{k+1} }{ \alpha_{k+1} + \alpha_{k}^2(1+\epsilon) } = \frac{ \alpha_{k+1} }{ \frac{\alpha_{k+1}}{\alpha_{k}^2} + 1+\epsilon } \) is increasing. 
    Therefore, we have \vspace{-1mm}
    \begin{equation*}
         \frac{ \alpha_{k}^2 \alpha_{k+1} }{ { \alpha_{k+1} + \alpha_{k}^2(1+\epsilon) } }
         \ge  \frac{ \alpha_{k}^2 \alpha_{k+1} }{ { \alpha_{k+1} + \alpha_{k}^2(1+\epsilonbound_N) } }
         \ge  \frac{ \alpha_{N}^2 \alpha_{N+1} }{ { \alpha_{N+1} + \alpha_{N}^2(1+\epsilonbound_N) } }
         \ge  \frac{\alpha_{2}^2 \alpha_{3}}{\alpha_{3} + \alpha_{2}^2}, \qquad \forall k\ge N.
    \end{equation*}
     Applying the above inequality and the induction hypothesis $\ssz_k \ge \frac{ \alpha_{2}^2 \alpha_{3}}{\alpha_{3} + \alpha_{2}^2} \frac{1}{\alpha_{k}} S_{k}$, we have 
    \begin{equation*}
        \begin{aligned}
            \ssz_{k+1} 
        &= \min\set{ \frac{\alpha_{k}}{\alpha_{k+1}} \ssz_{k},  \,\, \frac{ \alpha_{k}^2 }{ { \alpha_{k+1} + \alpha_{k}^2(1+\epsilon) } } \frac{1}{L_{k+1}}  } \\ 
        &\ge \min\set{    \frac{\alpha_{2}^2\alpha_{3}}{\alpha_{3} + \alpha_{2}^2} \frac{1}{\alpha_{k+1}} S_{k},  \,\,  \frac{\alpha_{2}^2\alpha_{3}}{\alpha_{3} + \alpha_{2}^2} \frac{1}{\alpha_{k+1}} \frac{1}{L_{k+1}}  }  
        =  \frac{\alpha_{2}^2\alpha_{3}}{\alpha_{3} + \alpha_{2}^2} \frac{1}{\alpha_{k+1}} S_{k+1}.
        \end{aligned}
    \end{equation*}
    We conclude the desired result by induction.      
\end{proof}

The following corollary is our promised goal, an extension of Corollary~\ref{lemma:stepsize_lowerbound_L_smooth} to a locally smooth function $f$. 
The purpose of the relaxation \eqref{eq:step_size_rule_relaxed} and Proposition~\ref{prop:adanag_boundedness_locally} was to prove this corollary.

\begin{corollaryL}  \label{lemma:stepsize_constant_lowerbound_locally_smooth}
    Suppose $f$ is a locally smooth convex function and define
    $\set{s_k}_{k\ge0}$ as in \Cref{lemma:stepsize_lowerbound_locally_smooth}. Let $\bar{R}$ be as in Proposition~\ref{prop:adanag_boundedness_locally}, and define $\localupperboundL$ as in \Cref{thm:adanag_local_smooth}. 
    Then 
    \begin{equation*}
        \ssz_k \ge \frac{ \alpha_{2}^2 \alpha_{3}}{\alpha_{3} + \alpha_{2}^2} \frac{1}{\alpha_{k}} \frac{1}{\localupperboundL}, \qquad \forall k\ge 1.
    \end{equation*}
\end{corollaryL}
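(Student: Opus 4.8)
The plan is to combine the general lower bound on the step size in terms of the quantity $S_k$ (Lemma~\ref{lemma:stepsize_lowerbound_locally_smooth}) with an upper bound on each $L_k$ coming from the boundedness of the iterates established in Proposition~\ref{prop:adanag_boundedness_locally}. Concretely, Lemma~\ref{lemma:stepsize_lowerbound_locally_smooth} already gives $\ssz_k \ge \frac{\alpha_2^2\alpha_3}{\alpha_3+\alpha_2^2}\frac{1}{\alpha_k} S_k$ for all $k\ge1$ with $S_k = \min\{\frac{1}{L_0},\frac{1}{L_1},\dots,\frac{1}{L_k}\}$, so it suffices to show $S_k \ge \frac{1}{\localupperboundL}$ for all $k\ge0$, i.e. that $L_j \le \localupperboundL$ for every $j$ (where we read $L_0$ via its definition \eqref{eq:initial_guess}).

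First I would invoke Proposition~\ref{prop:adanag_boundedness_locally}, which tells us $x_k \in \bar{B}_{\Rlimit}(x_\star)$ for all $k\ge0$, where $\Rlimit = \lim_k R_k$. Recall $\localupperboundL$ is defined in Theorem~\ref{thm:adanag_local_smooth} as a smoothness parameter of $f$ on $\bar{B}_{3\Rlimit}(x_\star)\cup\bar{B}_{3\|\tilde{x}_0-x_\star\|}(x_\star)$. For $k\ge1$, the points $x_k$ and $x_{k+1}$ both lie in the compact set $\bar{B}_{\Rlimit}(x_\star)$, so by the definition \eqref{define-Lk+1} of $L_{k+1}$ together with Corollary~\ref{cor:L_k_bound}(ii) applied with $K=\bar{B}_{\Rlimit}(x_\star)$ — which by Lemma~\ref{lemma:local_smoothness} admits $\bar{L}_K = \localupperboundL$ since $\bar{B}_{3\Rlimit}(x_\star)$ is contained in the region where $\localupperboundL$ is a smoothness parameter — we get $L_{k+1}\le \localupperboundL$, hence $\frac{1}{L_{k+1}}\ge\frac{1}{\localupperboundL}$, for all $k\ge0$. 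For the $L_0$ term: by \eqref{eq:initial_guess}, $L_0 = \|\nabla f(x_0)-\nabla f(\tilde{x}_0)\|/\|x_0-\tilde{x}_0\|$; since $x_0\in\bar{B}_{\Rlimit}(x_\star)$ (in fact $R_0=\|x_0-x_\star\|\le\Rlimit$) and $\tilde{x}_0\in\bar{B}_{\|\tilde{x}_0-x_\star\|}(x_\star)$, both points lie in $\bar{B}_{\Rlimit}(x_\star)\cup\bar{B}_{\|\tilde{x}_0-x_\star\|}(x_\star)$, and Lemma~\ref{lemma:local_smoothness} applied on this union (whose tripled version is exactly the region defining $\localupperboundL$) gives the Lipschitz bound $\|\nabla f(x_0)-\nabla f(\tilde{x}_0)\|\le\localupperboundL\|x_0-\tilde{x}_0\|$, so $L_0\le\localupperboundL$ and $\frac{1}{L_0}\ge\frac{1}{\localupperboundL}$ as well.

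Collecting these estimates gives $S_k = \min\{\frac{1}{L_0},\dots,\frac{1}{L_k}\}\ge\frac{1}{\localupperboundL}$ for every $k\ge0$, and plugging this into the conclusion of Lemma~\ref{lemma:stepsize_lowerbound_locally_smooth} yields
\[
    \ssz_k \ge \frac{\alpha_2^2\alpha_3}{\alpha_3+\alpha_2^2}\frac{1}{\alpha_k}\frac{1}{\localupperboundL},\qquad\forall k\ge1,
\]
as desired. The only subtle point — and the one requiring the most care — is making sure the radii match up: Proposition~\ref{prop:adanag_boundedness_locally} controls the iterates inside $\bar{B}_{\Rlimit}(x_\star)$, and Lemma~\ref{lemma:local_smoothness}/Corollary~\ref{cor:L_k_bound} needs the tripled ball $\bar{B}_{3\Rlimit}(x_\star)$ (together with the analogous ball around $\tilde{x}_0$) to be inside the region where $\localupperboundL$ is a valid smoothness parameter, which is precisely why $\localupperboundL$ was defined on that enlarged union in Theorem~\ref{thm:adanag_local_smooth}. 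Everything else is a direct substitution.
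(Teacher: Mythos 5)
Your proposal is correct and takes essentially the same route as the paper: invoke Proposition~\ref{prop:adanag_boundedness_locally} for the boundedness $x_k\in\bar{B}_{\bar{R}}(x_\star)$, combine with Lemma~\ref{lemma:local_smoothness} to get $L_j\le\localupperboundL$ (and hence $S_k\ge 1/\localupperboundL$), and then feed this into the lower bound $s_k\ge\frac{\alpha_2^2\alpha_3}{\alpha_3+\alpha_2^2}\frac{1}{\alpha_k}S_k$ from Lemma~\ref{lemma:stepsize_lowerbound_locally_smooth}; the paper merely compresses this chain into two sentences. One small slip worth flagging: $R_0=\sqrt{2V_{-1}}$ (not $\|x_0-x_\star\|$) by the definition in Proposition~\ref{prop:adanag_boundedness_locally}, but since $V_{-1}\ge\tfrac12\|x_0-x_\star\|^2$ the inclusion $x_0\in\bar{B}_{R_0}(x_\star)\subset\bar{B}_{\bar{R}}(x_\star)$ that you actually use still holds, so the argument is unaffected.
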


\begin{proof}
     Note that we have $x_k \in \bar{B}_{\bar{R}}(x_\star)$ for all $k \ge 0$ from Proposition~\ref{prop:adanag_boundedness_locally}.  
     Since $\tilde{x}_0 \in \bar{B}_{\|\tilde{x}_0 - x_\star\|}(x_\star)$, we obtain the conclusion from \Cref{lemma:local_smoothness}.
\end{proof}

Since we have established a lower bound for \( s_k \), the proof of Proposition~\Cref{prop:adanag_local_smooth} follows immediately by repeating the previous arguments. 
\begin{proof} [Proof of Proposition~\ref{prop:adanag_local_smooth}]
    We obtain the boundedness of \( \{x_k\}_{k\ge0} \) from Proposition~\ref{prop:adanag_boundedness_locally}. 
    By applying \Cref{lemma:stepsize_lowerbound_locally_smooth} and Corollary~\ref{lemma:stepsize_constant_lowerbound_locally_smooth} in place of \Cref{lemma:stepsize_lowerbound} and Corollary~\ref{lemma:stepsize_lowerbound_L_smooth}, and repeating the same arguments used in the proof of \Cref{thm:main_tight}, we obtain the convergence rate \eqref{eq:adanag_convergence_rate}. 
\end{proof}

\subsubsection{Boundedness of \adanag\ for globally smooth $f$}

As an independent and interesting observation, we show that \adanag\ also achieves the boundedness of the iterates for globally smooth $f$, using an argument similar to that of Proposition~\ref{prop:adanag_boundedness_locally}.  
This implies that \( L_k \) is bounded by some local smoothness parameter smaller than the global smoothness constant $L$, and thus we actually have a tighter guarantee than \Cref{thm:main_tight}. 

\begin{lemma} 
\label{prop:adanag_boundedness}
    Let $f$ be an $L$-smooth convex function. 
    Suppose $\set{x_k}_{k\ge0}$ is generated by \adanag. 
    Define $R_k$ as in Proposition~\ref{prop:adanag_boundedness_locally}. 
    Then, $\Rlimit = \lim_{k\to\infty} R_k$ exists and $x_k \in \bar{B}_{\Rlimit}(x_\star)$ for all $k\ge0$. 
\end{lemma}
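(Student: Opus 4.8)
The plan is to derive boundedness directly from the summability of the squared gradient norms already obtained for $L$-smooth $f$, and then run the same inductive argument as in \Cref{prop:adanag_boundedness_locally}. Since $f$ is $L$-smooth convex and $\set{x_k}_{k\ge0}$ is generated by the (unperturbed) \adanag, \Cref{prop:adanag_gradient_norm} applies verbatim; in particular \eqref{ineq:z_converges} gives $\sum_{i=1}^{\infty} \theta_{i+3}(\theta_{i+3}-1)\norm{\nabla f(x_i)}^2 < \infty$. By \Cref{lem:theta_properties} we have $\theta_k \ge \frac{k+2}{2}$, hence $\sum_{i=1}^{\infty} i^2 \norm{\nabla f(x_i)}^2 < \infty$, and then, exactly as in the proof of \Cref{prop:adanag_boundedness_locally}, the Cauchy--Schwarz inequality in $\ell^2$ yields
\[
    \sum_{i=1}^{\infty} \norm{\nabla f(x_i)}
    = \sum_{i=1}^{\infty} \pr{ \tfrac{1}{i}\cdot i\norm{\nabla f(x_i)} }
    \le \pr{ \sum_{i=1}^{\infty} \tfrac{1}{i^2} }^{1/2} \pr{ \sum_{i=1}^{\infty} i^2\norm{\nabla f(x_i)}^2 }^{1/2} < \infty .
\]
Since the step sizes are nonincreasing by \Cref{lem:theta_properties}, so $s_i \le s_0$, it follows that $\sum_{i=0}^{\infty} s_i\norm{\nabla f(x_i)} < \infty$. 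Therefore $R_k = \sqrt{2V_{-1}} + \sum_{i=0}^{k-1} s_i\norm{\nabla f(x_i)}$ is nondecreasing and bounded, so $\Rlimit = \lim_{k\to\infty} R_k$ exists.

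For the inclusion $x_k \in \bar{B}_{\Rlimit}(x_\star)$ it suffices, since $\bar{B}_{R_k}(x_\star) \subset \bar{B}_{\Rlimit}(x_\star)$, to show $x_k \in \bar{B}_{R_k}(x_\star)$ for all $k\ge0$ by induction. The base case holds because $R_0 = \sqrt{2V_{-1}} \ge \norm{x_0-x_\star}$, which follows from the definition of $V_{-1}$ in \Cref{lemma:lyapunov_analysis_k0}. For the inductive step, assuming $x_k \in \bar{B}_{R_k}(x_\star)$: from \eqref{eq:AdaNAG}, $y_{k+1} = x_k - s_k\nabla f(x_k)$, so $\norm{y_{k+1}-x_\star} \le R_k + s_k\norm{\nabla f(x_k)} = R_{k+1}$; since the first two summands of $V_k$ are nonnegative and $V_k \le V_0 \le V_{-1}$ by \Cref{lemma:lyapunov_analysis} and \Cref{lemma:lyapunov_analysis_k0}, we get $\norm{z_{k+1}-x_\star} \le \sqrt{2V_k} \le \sqrt{2V_{-1}} = R_0 \le R_{k+1}$; and since $x_{k+1} = \pr{1-\tfrac{1}{\theta_{k+3}}}y_{k+1} + \tfrac{1}{\theta_{k+3}}z_{k+1}$ is a convex combination, the triangle inequality gives $\norm{x_{k+1}-x_\star} \le R_{k+1}$, completing the induction.

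There is no substantial obstacle here: the only point worth checking is that \Cref{prop:adanag_gradient_norm} (hence \eqref{ineq:z_converges}) applies directly, which it does under the hypotheses of this lemma. In effect this is a cleaner version of the proof of \Cref{prop:adanag_boundedness_locally}, with the summability input supplied by the gradient-norm bound of \Cref{thm:main_tight} rather than by a perturbed Lyapunov inequality; alternatively one could re-derive $\sum_k s_k^2 B_k\norm{\nabla f(x_k)}^2 < \infty$ from \Cref{lemma:lyapunov_analysis} using $s_k^2 B_k \ge (s_k\alpha_k)^2\theta_{k+2}^2$ and \Cref{lemma:stepsize_lowerbound_L_smooth} on the branch $s_kL_{k+1}\le1$, together with $\tfrac{s_{k+1}}{L_{k+1}}A_k \ge \Omega(k^2)$ on the branch $s_kL_{k+1}\ge1$, but this merely reproduces the content of \Cref{prop:adanag_gradient_norm}.
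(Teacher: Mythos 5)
Your proof is correct and takes essentially the same route as the paper's: both use \eqref{ineq:z_converges} with $\theta_k = \Omega(k)$ to get $\ell^2$-summability of $k\,\norm{\nabla f(x_k)}$, apply Cauchy--Schwarz, and re-run the induction from Proposition~\ref{prop:adanag_boundedness_locally}. The only difference is cosmetic: the paper inserts $s_i \le s_1$ before Cauchy--Schwarz to reduce directly to the precise condition $\sum_i s_i^2 i^2 \norm{\nabla f(x_i)}^2 < \infty$ used in the earlier proposition, whereas you apply the bound $s_i \le s_0$ after Cauchy--Schwarz and spell out the induction again.
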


\begin{proof}
    Recalling the proof of Proposition~\ref{prop:adanag_boundedness_locally}, it suffices to show that $\sum_{i=1}^{\infty} s_i^2 i^2 \norm{ \nabla f(x_i) }^2 < \infty$. 
    Considering  $k \to \infty$ in \eqref{ineq:z_converges} and using the fact 
    $\theta_{i+3} \geq \frac{1}{2}(i+5)$ for $i\ge1$ from \Cref{lem:theta_properties}, we obtain $\frac{1}{4} \sum_{i=1}^{\infty} i^2 \norm{ \nabla f(x_i) }^2 < \infty$. 
    From \Cref{lem:theta_properties}, we know that $s_{i+1} \le s_{i}$ holds for $i\ge1$. 
    Therefore, 
    \[
        \sum_{i=1}^{\infty} s_i^2 i^2 \norm{ \nabla f(x_i) }^2
        \le 
        s_1^2 \sum_{i=1}^{\infty} i^2 \norm{ \nabla f(x_i) }^2  < \infty.
    \]
    The rest of the proof can be obtained with the same argument of Proposition~\ref{prop:adanag_boundedness_locally}. 
\end{proof}

\subsection{Numerical values used in the proof of Theorem~\ref{thm:main_tight} and Proposition~\ref{thm:main_simple}}

The numerical values used in the proof of \Cref{thm:main_tight} and Proposition~\ref{thm:main_simple} are provided in Table \ref{tab:numerical_values}. Recall that we denote $r_0=\frac{\theta_3(\theta_3-1)}{\theta_2} \frac{1}{\alpha_{0}} \frac{ \alpha_{2}^2 \alpha_{3}}{\alpha_{3} + \alpha_{2}^2}$.

\begin{table}[H]
    \centering
    \begin{tabular}{c|c c c c c c} 
        \toprule
        \textbf{$\theta_k$} & {$\frac{\alpha_0}{\alpha_{1}}\frac{\theta_2}{\theta_3(\theta_3-1)} $} & {$r_0$} & {$\frac{2(\alpha_{3} + \alpha_{2}^2)}{ \alpha_{2}^2 \alpha_{3}}$} & {$r_{0}^2 \talpha_0 \pr{   \oldalpha_0 + \talpha_0 } \theta_{2}^2$} & {$\frac{1}{r_0 \pr{   \oldalpha_0 + \talpha_0 } \theta_{2}}$}   \\ 
        \midrule
        \textbf{AdaNAG} & 0.6745 & 0.4255 & 21.9032 & 0.3045 & 1.4424    \\
        \textbf{\adanagS} & 0.8399 & 0.3363 & 23.6 & 0.1544 & 2.0578    \\
        \bottomrule
    \end{tabular}
    \caption{Numerical values (rounded to the fourth decimal place) used in the proof of Theorems \ref{thm:main_tight} and \ref{thm:main_simple}.}
    \label{tab:numerical_values}
\end{table}

\section{Omitted details in \Cref{sec:adagd}}

\subsection{Remaining Proof of Proposition \ref{lemma:gd_lyapunov_analysis}} \label{appendix:remaining_proof_gd}

By similar calculation as \eqref{eq:z_square_diff}, for $k\ge-1$ we have
\begin{equation*}
    \begin{aligned}
        \frac{1}{2} \norm{x_{k+2} - x_\star}^2 - \frac{1}{2} \norm{x_{k+1} - x_\star}^2 
        &= -s_{k+1} \inner{ \nabla f(x_{k+1}) }{  x_{k+1} - x_\star  }
            + \frac{s_{k+1}^2}{2} \norm{ \nabla f(x_{k+1}) }^2.
    \end{aligned}
\end{equation*}
When $k=-1$, we can verify \eqref{ineq:gd_ineq_goal} as follows:
\begin{equation*}
    \begin{aligned}
        &V_0 - V_{-1} \\
        &= s_1 A_0 (f(x_0) - f_\star) + \frac{1}{2} \norm{x_{1} - x_\star}^2 
            + \frac{1}{2} s_{0}^2 B_0 \norm{ \nabla f(x_0) }^2
        - \pr{ \frac{1}{2} \norm{x_{0} - x_\star}^2 
            + \frac{1}{2} s_0^2 \pr{B_0+1} \norm{ \nabla f(x_0) }^2  }  \\
        &= \pr{ s_0 - s_1 A_0 } \pr{ f_\star - f(x_0) }
            + s_0 \pr{ f(x_0) - f_\star - \inner{ \nabla f(x_{0}) }{  x_{0} - x_\star  } } .
    \end{aligned}
\end{equation*} 
Now suppose $k\ge0$. 
We first consider the case $L_{k+1} \ne 0$. It is easy to check 
\begin{equation*}
    \begin{aligned}
        &\ssz_{k+2} A_{k+1}(f(x_{k+1}) - f_\star) - \ssz_{k+1} A_k(f(x_k) - f_\star) \\
        &= (\ssz_{k+1} A_k + s_{k+1} - \ssz_{k+2} A_{k+1}) (f_\star - f(x_{k+1})) + s_{k+1} ( f(x_{k+1}) - f_\star) \\ &\quad
        + \ssz_{k+1} A_k \underbrace{  \pr{ f(x_{k+1}) - f(x_k) - \inner{\nabla f(x_{k+1})}{ x_{k+1} - x_k} +\frac{1}{2L_{k+1}}  \norm{ \nabla f(x_{k+1}) - \nabla f(x_{k}) }^2 }}_{=0}  \\ &\quad
        + \ssz_{k+1} A_k \inner{\nabla f(x_{k+1})}{ x_{k+1} - x_k} - \frac{\ssz_{k+1} A_k }{2L_{k+1}}  \norm{ \nabla f(x_{k+1}) - \nabla f(x_{k}) }^2.
    \end{aligned}
\end{equation*}
Applying $x_{k+1} - x_k = -\ssz_{k} \nabla f(x_{k+1})$ to the last line, 
we obtain
\begin{equation}    \label{eq:gd_lyapunov_core_eq}
    \begin{aligned}
        V_{k+1} - V_k 
        &= (\ssz_{k+1} A_k + s_{k+1} - \ssz_{k+2} A_{k+1}) (f_\star - f(x_{k+1})) \\ &\quad
            + s_{k+1} ( f(x_{k+1}) - f_\star - \inner{ \nabla f(x_{k+1}) }{  x_{k+1} - x_\star  } ) +  Q_k,
    \end{aligned}
\end{equation} 
where 
\begin{equation*}
    \begin{aligned}
         Q_k = -\frac{\ssz_{k+1} A_k}{2L_{k+1}} &\bigg[ \pr{ 1 - \frac{B_{k+1}+1}{A_k} s_{k+1} L_{k+1} } \norm{ \nabla f(x_{k+1}) }^2  \\ &             
            - 2\pr{ 1 - \ssz_{k} L_{k+1} } \inner{\nabla f(x_{k+1})}{ \nabla f(x_{k}) }  +  \pr{ 1 + \frac{B_{k}}{A_k}  \frac{\ssz_{k}^2}{s_{k+1}} L_{k+1} } \norm{ \nabla f(x_{k}) }^2 \bigg].
    \end{aligned}
\end{equation*} 
For simplicity, denote $\rsBAgdreciprocal=\frac{B_{k+1}+1}{A_k}$. 
We now repeat the argument in the proof of \Cref{lem:determinant}. 
Since 
\[
    \tilde{\ssz}_k^2 \min\set{A_k, B_k} \le \min \set{\frac{\ssz_{k+1}}{L_{k+1}} A_k, s_k^2 B_k},
\] 
where $\tilde{\ssz}_{k} = \min\big\{s_k, s_{k+1}, \frac{1}{L_{k+1}}\big\}$, 
it suffices to show:
\begin{equation*}    
    \begin{aligned}
        Q_k + \frac{B_k}{2}s_k^2 \norm{ \nabla f(x_k) }^2 &\le 0 
          \qquad \text{ if } s_kL_{k+1} \le 1 \\
        Q_k + \frac{A_k}{2} \frac{\ssz_{k+1}}{L_{k+1}} \norm{ \nabla f(x_k) }^2 &\le 0  \qquad \text{ if } s_kL_{k+1} \ge 1.
    \end{aligned}
\end{equation*} 
First, we focus on the coefficient of the \( \norm{ \nabla f(x_{k+1}) }^2 \) term. 
From \eqref{eq:AdaGD}, we have 
\begin{equation*} 
    1 - \rsBAgdreciprocal s_{k+1} L_{k+1}
    \ge 1 - \pr{ \frac{A_{k}}{B_{k}} + \rsBAgdreciprocal } s_{k+1} L_{k+1}
    \ge 0.
\end{equation*} 
Considering the discriminant of the quadratic form, our goal reduces to showing:

\begin{equation*} 
    \begin{aligned}
        \pr{ 1 - \ssz_{k} L_{k+1} }^2 -  \pr{1 - \rsBAgdreciprocal  \ssz_{k+1}  L_{k+1} } &\le 0 \qquad \text{ if } s_kL_{k+1} \le 1, \\
        \pr{ 1 - \ssz_{k} L_{k+1} }^2 -  \pr{1 - \rsBAgdreciprocal  \ssz_{k+1}  L_{k+1} } \frac{B_k}{A_k} \frac{ \ssz_{k}^2 }{ \ssz_{k+1}} L_{k+1}&\le 0 \qquad \text{ if } s_kL_{k+1} \le 1.
    \end{aligned}
\end{equation*}
Now, we divide into two cases, as we have done in the proof of \Cref{lem:determinant}. 

\begin{itemize}
    \item $\ssz_{k}L_{k+1} \le 1$.  
        From \eqref{ineq:gd_A_B_relation} and \eqref{eq:AdaGD} we have 
        $\rsBAgdreciprocal s_{k+1} L_{k+1} \le \frac{A_{k}}{A_{k-1}+1} s_{k+1} L_{k+1} \le \ssz_{k+1} L_{k+1}$. 
        Again leveraging \eqref{ineq:when_sL_smaller_1}, we conclude the desired inequality.
    \item $\ssz_{k}L_{k+1} \ge 1$. 
    From \eqref{eq:AdaGD} we know $\frac{1}{\ssz_{k+1}L_{k+1}} \ge \frac{A_k}{B_k} + \rsBAgdreciprocal$ holds for $k\ge0$. 
    Therefore, recalling that $1 - 2\ssz_k L_{k+1} \leq 0$, we obtain:
    \begin{equation*}    
        \begin{aligned}
            &\pr{1 - \rsBAgdreciprocal \ssz_{k+1}  L_{k+1} } \frac{ B_k }{ A_k } \frac{ \ssz_{k}^2 }{ \ssz_{k+1} } L_{k+1} 
        =  \frac{ B_k }{ A_k } \pr{ \frac{1}{\ssz_{k+1}L_{k+1}}  - \rsBAgdreciprocal  } \ssz_{k}^2 L_{k+1}^2
        \ge \ssz_{k}^2 L_{k+1}^2
        \ge  \pr{ 1 - \ssz_{k} L_{k+1} }^2.
        \end{aligned}
    \end{equation*} 
\end{itemize}   

Now we move on to the case $L_{k+1} = 0$. Note that this implies $\ssz_{k}L_{k+1} = 0 \le 1$. 
From Corollary~\ref{cor:L_k_bound} we have $\nabla f(x_k)=\nabla f(x_{k+1})$. 
Then from the same calculation done to \eqref{eq:gd_lyapunov_core_eq} but without $\frac{1}{L_{k+1}}\norm{ \nabla f(x_k) - \nabla f(x_{k+1}) }^2$, we have 
\begin{equation*}
    \begin{aligned}
        V_{k+1} - V_k  
        &= (\ssz_{k+1} A_k + s_{k+1} - \ssz_{k+2} A_{k+1}) (f_\star - f(x_{k+1})) \\ &\phantom{=}
            + s_{k+1} ( f(x_{k+1}) - f_\star - \inner{ \nabla f(x_{k+1}) }{  x_{k+1} - x_\star  } )  \\ &\phantom{=}
             + \ssz_{k+1} A_k \pr{ f(x_{k+1}) - f(x_k) - \inner{\nabla f(x_{k+1})}{ x_{k+1} - x_k}  } \\ &\phantom{=}
            - \ssz_{k} \ssz_{k+1} A_k \inner{\nabla f(x_{k+1})}{ \nabla f(x_{k}) } 
            + \frac{1}{2} s_{k+1}^2 \pr{B_{k+1}+1} \norm{ \nabla f(x_{k+1}) }^2  - \frac{1}{2} \ssz_{k}^2 B_{k} \norm{ \nabla f(x_{k}) }^2 \\ 
            &\le- \ssz_{k} \ssz_{k+1} A_k \inner{\nabla f(x_{k+1})}{ \nabla f(x_{k}) }  
        + \frac{1}{2} s_{k+1}^2 \pr{B_{k+1}+1} \norm{ \nabla f(x_{k+1}) }^2  - \frac{1}{2} \ssz_{k}^2 B_{k}  \norm{ \nabla f(x_{k}) }^2 \\ 
        &= \frac{1}{2} s_{k+1} \pr{ s_{k+1} \pr{B_{k+1}+1} - s_{k} A_k } \norm{ \nabla f(x_{k}) }^2 
        -  \pr{ \frac{1}{2} \ssz_{k}\ssz_{k+1} A_k + \frac{1}{2}\ssz_{k}^2 B_k } \norm{ \nabla f(x_{k}) }^2 \\ 
        &\le - \frac{1}{2}\ssz_{k}^2 B_k \norm{ \nabla f(x_{k}) }^2 .
    \end{aligned}
\end{equation*} 
The second inequality holds since the following inequalities follow from \eqref{ineq:gd_A_B_relation} and \eqref{eq:AdaGD}:
\begin{equation*}
    s_{k+1} \pr{B_{k+1}+1} - s_{k} A_k
    \le s_{k+1} \frac{A_{k}^2}{1+A_{k-1}} - s_{k} A_k 
    \le 0.
\end{equation*} 
This concludes the proof. \qed

\subsection{Proof of Lemma~\ref{lemma:gd_stepsize_lowerbound}} \label{appendix:proof_of_gd_stepsize_lowerbound}

Define $S_k$ as in \eqref{eq:large_S}, \ie, $S_{k} = \min\big\{ \frac{1}{L_0}, \frac{1}{L_{1}}, \dots, \frac{1}{L_{k}} \big\}$. 
We first prove the following by induction:
\begin{equation}    \label{lem12-1}
    \ssz_k \ge r S_{k}, \qquad \forall k\ge 1.
\end{equation}
From \eqref{eq:AdaGD}, we have $\ssz_{1} = \min \set{ \frac{1}{A_0} s_{0}, \, r \frac{1}{L_{1}} } = \min \set{ r \frac{1}{L_0}, \, r \frac{1}{L_{1}} } = r S_{1}$.  
Thus \eqref{lem12-1} holds for $k=1$. Assume \eqref{lem12-1} holds for $k$, now we show that it also holds for $k+1$. 
Applying the induction hypothesis $ \ssz_k \ge r S_k $ and \eqref{ineq:AB_condition_for_step_size_lowerbound}, we obtain 
the following inequality, which completes the proof of \eqref{lem12-1}:
\begin{equation*}
    \begin{aligned}
        \ssz_{k+1}  
        &\ge \min \set{ \frac{A_{k-1}+1}{A_{k}} r S_{k}, \pr{ \frac{A_{k}}{B_{k}} + \frac{B_{k+1} +1}{A_{k}} }^{-1} \frac{1}{L_{k+1}} } 
        \ge \min\set{ r S_{k} , \, r \frac{1}{L_{k+1}} } 
        = r S_{k+1}.
    \end{aligned}
\end{equation*}

To prove \eqref{lem12-2}, recall that we have $x_k \in \bar{B}_{R}(x_\star)$ from Proposition~\ref{lemma:gd_lyapunov_analysis} and $\tilde{x}_0 \in \bar{B}_{\| \tilde{x}_0 - x_\star \|}(x_\star)$.  
Therefore, from \Cref{lemma:local_smoothness}, we know that $L_k \leq \gdupperboundL$ for $k \geq 0$, where $\gdupperboundL$ is a  smoothness parameter of $f$ on $\bar{B}_{3\bar{R}}(x_\star)\cup\bar{B}_{3\|\tilde{x}_0-x_\star\|}(x_\star)$. 
Therefore, we have $S_k \geq \frac{1}{\gdupperboundL}$. From \eqref{lem12-1}, we conclude \eqref{lem12-2}. 
\qed
\section{Omitted proofs in Section~\ref{sec:adanag-g}} 

\subsection{Proof of \Cref{thm:adanag_g}} \label{appendix:proof_of_adanag_g}

Similar to the proof of the convergence of \adanag, we state and prove lemmas similar to \Cref{lemma:lyapunov_analysis}, \Cref{lemma:lyapunov_analysis_k0} and \Cref{lemma:stepsize_lowerbound}.

\begin{lemma} \label{lemma:adanag_g_lyapunov_analysis} 
Suppose $\set{x_k}_{k\ge0}$ is generated by \adanagG\ with $\set{\tau_k}_{k\ge0}$, $\set{\alpha_k}_{k\ge0}$ satisfying the assumptions of \Cref{thm:adanag_g}. 
Define Lyapunov function as
\begin{equation*}  
    V_{k} 
    = \ssz_{k+1} A_k  \pr{ f(x_k) - f_\star } 
    +  \frac{1}{2} \ssz_{k}^2  B_k \norm{ \nabla f(x_k) }^2
    + \frac{1}{2} \norm{ z_{k+1} - x_\star }^2, 
    \qquad \forall k\ge -1
\end{equation*} 
where $x_{-1} = x_0$, $s_{-1}=s_0$ and $B_{-1}=B_0 + \alpha_0^2\tau_0^2$.  
Then
\begin{equation*}
    V_{k+1} - V_k \le 
         - \frac{1}{2} \min\set{s_k^2 B_k, \frac{\ssz_{k+1}}{L_{k+1}} A_k } \norm{ \nabla f(x_k) }^2, \qquad \forall k\ge0
\end{equation*}
holds, and  
\begin{equation*}
    V_0\le V_{-1} + \ssz_{0} \talpha_0 \tau_{0} \pr{ f(x_{0}) - f_\star - \inner{ \nabla f(x_{0}) }{ x_{0} - x_\star } }
\end{equation*}
holds, when $f$ is a locally smooth convex function. 
\end{lemma}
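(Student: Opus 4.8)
The plan is to transcribe, almost line for line, the four-part argument behind \Cref{lemma:lyapunov_analysis}, under the substitution $(\theta_{k+2},\theta_{k+3})\mapsto(\tau_k,\tau_{k+1})$ and with the generic sequences $A_k,B_k,\alpha_k$ in place of the AdaNAG-specific ones. The first step is to record the algebraic identities forced by the definitions $A_k=\alpha_{k+1}\tau_{k+1}(\tau_{k+1}-1)$ and $B_k=\alpha_k^2\tau_k^2\big(\tfrac{(\tau_k-1)^2}{\alpha_{k-1}\tau_{k-1}^2}-1\big)$ for $k\ge1$: namely $B_{k+1}+\alpha_{k+1}^2\tau_{k+1}^2=\alpha_{k+1}^2\tau_{k+1}^2\tfrac{(\tau_{k+1}-1)^2}{\alpha_k\tau_k^2}$, so that, writing $\rho_k:=\tfrac{B_{k+1}+\alpha_{k+1}^2\tau_{k+1}^2}{A_k}$, one gets $\rho_k=\tfrac{A_k}{\alpha_k\tau_k^2}$ and hence $\rho_k^{-1}=\tfrac{\alpha_k\tau_k^2}{A_k}=\tfrac{A_{k-1}+\alpha_k\tau_k}{A_k}$ for $k\ge1$ (using $A_{k-1}+\alpha_k\tau_k=\alpha_k\tau_k(\tau_k-1)+\alpha_k\tau_k=\alpha_k\tau_k^2$), while for $k=0$ one has $\tfrac{A_{-1}+\alpha_0\tau_0}{A_0}=\tfrac{\alpha_0\tau_0}{A_0}\le\tfrac{\alpha_0\tau_0^2}{A_0}=\rho_0^{-1}$ since $\tau_0>1$. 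Reading off the step-size rule \eqref{eq:AdaNAG-G_stepsize} then gives, for every $k\ge0$, the three inequalities $s_{k+2}A_{k+1}\le(A_k+\alpha_{k+1}\tau_{k+1})s_{k+1}$, $s_{k+1}\le\rho_k^{-1}s_k$, and $s_{k+1}\le\big(\tfrac{A_k}{B_k}+\rho_k\big)^{-1}\tfrac1{L_{k+1}}$, which are the exact analogues of \eqref{ineq:AdaNAG_generalized_stepsize-a}--\eqref{ineq:AdaNAG_generalized_stepsize-c} used in \Cref{lem:ineq:AdaNAG_generalized_stepsize}; notably, only $A_k,B_k>0$ and $\tau_{k+1}>1$ are needed here, not the growth hypotheses $\alpha_k\le1$ or $\tfrac{\tau_k^2}{\tau_{k+1}(\tau_{k+1}-1)}\ge1$.

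Next I would repeat the difference computation of \Cref{lem:lyapunov_diff}: expand $\tfrac12\norm{z_{k+2}-x_\star}^2-\tfrac12\norm{z_{k+1}-x_\star}^2$ via $z_{k+2}-z_{k+1}=-s_{k+1}\alpha_{k+1}\tau_{k+1}\nabla f(x_{k+1})$ as in \eqref{eq:z_square_diff}, substitute the identity $x_{k+1}-z_{k+1}=-(\tau_{k+1}-1)\big(x_{k+1}-x_k+s_k\nabla f(x_k)\big)$ coming from the $x_{k+1}$-update of \adanagG, and insert the defining identity $f(x_{k+1})-f(x_k)+\inner{\nabla f(x_{k+1})}{x_k-x_{k+1}}+\tfrac1{2L_{k+1}}\norm{\nabla f(x_k)-\nabla f(x_{k+1})}^2=0$ (well-defined and valid by \Cref{cor:L_k_bound}). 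This yields $V_{k+1}-V_k=\big(s_{k+1}(A_k+\alpha_{k+1}\tau_{k+1})-s_{k+2}A_{k+1}\big)(f_\star-f(x_{k+1}))+s_{k+1}\alpha_{k+1}\tau_{k+1}\big(f(x_{k+1})-f_\star-\inner{\nabla f(x_{k+1})}{x_{k+1}-x_\star}\big)+Q_k$, with $Q_k$ the same quadratic form in $\nabla f(x_k),\nabla f(x_{k+1})$ as in \eqref{eq:adanag_lyapunov_diff_quadratic} but with $\theta_{k+3}$ replaced by $\tau_{k+1}$. I then rerun the discriminant estimate of \Cref{lem:determinant}: the coefficient of $\norm{\nabla f(x_{k+1})}^2$ is nonnegative because $\rho_k s_{k+1}L_{k+1}\le\rho_k\big(\tfrac{A_k}{B_k}+\rho_k\big)^{-1}\le1$, and the discriminant is nonpositive in the case $s_kL_{k+1}\le1$ (from $\rho_k s_{k+1}L_{k+1}\le s_kL_{k+1}$, which is just $s_{k+1}\le\rho_k^{-1}s_k$, together with \eqref{ineq:when_sL_smaller_1}) and in the case $s_kL_{k+1}\ge1$ (from $\tfrac1{s_{k+1}L_{k+1}}\ge\tfrac{A_k}{B_k}+\rho_k$ and $1-2s_kL_{k+1}\le0$), giving $Q_k\le-\tfrac12\min\{s_k^2B_k,\tfrac{s_{k+1}}{L_{k+1}}A_k\}\norm{\nabla f(x_k)}^2$. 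Since the first surviving line is nonpositive by $s_{k+2}A_{k+1}\le(A_k+\alpha_{k+1}\tau_{k+1})s_{k+1}$ and $f(x_{k+1})\ge f_\star$, and the second is nonpositive by convexity, I obtain $V_{k+1}-V_k\le Q_k$ and hence the claimed bound. The degenerate case $\nabla f(x_k)=\nabla f(x_{k+1})$ (so $L_{k+1}=0$) is handled exactly as in the proof of \Cref{lemma:lyapunov_analysis}: redo the computation omitting the $\tfrac{s_{k+1}A_k}{2L_{k+1}}\norm{\nabla f(x_{k+1})-\nabla f(x_k)}^2$ term and close with $s_{k+1}\le\rho_k^{-1}s_k$, which again gives $V_{k+1}-V_k\le-\tfrac12 s_k^2B_k\norm{\nabla f(x_k)}^2$.

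For the $k=0$ boundary estimate (the analogue of \Cref{lemma:lyapunov_analysis_k0}) I specialize the same difference identity at index $-1$: since $A_{-1}=0$, $x_{-1}=z_0=x_0$, $s_{-1}=s_0$, and $B_{-1}=B_0+\alpha_0^2\tau_0^2$, expanding $\tfrac12\norm{z_1-x_\star}^2-\tfrac12\norm{x_0-x_\star}^2$ via $z_1-z_0=-s_0\alpha_0\tau_0\nabla f(x_0)$ makes the $\norm{\nabla f(x_0)}^2$ contributions cancel, leaving $V_0-V_{-1}=s_1A_0(f(x_0)-f_\star)-s_0\talpha_0\tau_0\inner{\nabla f(x_0)}{x_0-x_\star}$; the claim then reduces to $s_1A_0\le s_0\talpha_0\tau_0$ (using $f(x_0)\ge f_\star$), which is immediate from the first branch of \eqref{eq:AdaNAG-G_stepsize} at $k=0$ since $A_{-1}+\talpha_0\tau_0=\talpha_0\tau_0$.

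The main obstacle is the same as in the AdaNAG proof: the bookkeeping of the quadratic form $Q_k$ and the two-case discriminant estimate with generic $A_k,B_k$. The only genuinely new point is checking that the identities $B_{k+1}+\alpha_{k+1}^2\tau_{k+1}^2=\alpha_{k+1}^2\tau_{k+1}^2\tfrac{(\tau_{k+1}-1)^2}{\alpha_k\tau_k^2}$ and $\rho_k^{-1}=\tfrac{A_{k-1}+\alpha_k\tau_k}{A_k}$ hold for all $k\ge0$ — including the boundary $k=0$, where $B_0$ is an independent input and the identity degrades to an inequality via $\tau_0>1$ — which is precisely what makes the step-size rule \eqref{eq:AdaNAG-G_stepsize} self-consistent with the structural roles in \Cref{lem:ineq:AdaNAG_generalized_stepsize}. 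Once these are verified, the rest transfers verbatim under $(\theta_{k+2},\theta_{k+3})\mapsto(\tau_k,\tau_{k+1})$, and the conditions $0<\alpha_k\le1$, $\tfrac{\tau_k^2}{\tau_{k+1}(\tau_{k+1}-1)}\ge1$ of \eqref{ineq:adanag_AB_condition_for_step_size_lowerbound} are not needed for this lemma (they enter only in the subsequent step-size lower bound).
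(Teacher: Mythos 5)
Your proposal is correct and follows the same route as the paper's proof: treat the argument for \Cref{lemma:lyapunov_analysis} as a template under the substitution $(\theta_{k+2},\theta_{k+3})\mapsto(\tau_k,\tau_{k+1})$, verify that the analogues of \eqref{ineq:AdaNAG_generalized_stepsize-a}--\eqref{ineq:AdaNAG_generalized_stepsize-c} follow from the step-size rule and the defining recursion for $B_k$, and then handle the $k=-1$ boundary by the direct expansion of $\tfrac12\norm{z_1-x_\star}^2-\tfrac12\norm{x_0-x_\star}^2$. One small thing worth flagging: you are actually more precise than the paper at $k=0$ for \eqref{ineq:AdaNAG_generalized_stepsize-b}; the paper records the identity $\tfrac{A_{k-1}+\alpha_k\tau_k}{A_k}=\rho_k^{-1}$ uniformly, but with $A_{-1}=0$ this only holds as an inequality $\tfrac{\alpha_0\tau_0}{A_0}\le\rho_0^{-1}=\tfrac{\alpha_0\tau_0^2}{A_0}$, which you correctly note requires $\tau_0\ge1$ (an implicit assumption satisfied in all the paper's concrete instances but not spelled out in \eqref{ineq:adanag_AB_condition_for_step_size_lowerbound}).
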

 
\begin{proof}
    For $k \geq 0$, the proof follows by repeating the proof argument of \Cref{lemma:lyapunov_analysis}, with $\theta_{k+2}$ replaced by $\tau_k$.     
    The only thing we need to verify is that \eqref{ineq:AdaNAG_generalized_stepsize} holds when $\theta_{k+2}$ is replaced with $\tau_k$. 
    We now verify this point. 
    Starting with \eqref{ineq:AdaNAG_generalized_stepsize-a} and \eqref{ineq:AdaNAG_generalized_stepsize-c}, we can see they follow immediately from the step size rule of \eqref{eq:AdaNAG-G_stepsize}.  
    Next, observe that  
    \begin{equation*}   
        B_{k} 
        = \alpha_{k}^2\tau_{k}^2 \pr{ \frac{(\tau_{k}-1)^2}{\alpha_{k-1}\tau_{k-1}^2} - 1 } 
        = \frac{A_{k-1}^2}{A_{k-2} + \alpha_{k-1}\tau_{k-1}} - \alpha_{k}^2\tau_{k}^2
        \,\, \iff \,\,
        \frac{A_{k-2}+\alpha_{k-1}\tau_{k-1}}{A_{k-1}}  = \frac{A_{k-1}}{B_{k} + \alpha_{k}^2\tau_{k}^2},
    \end{equation*}    
    so the definition of $B_k$ implies $\frac{A_{k-2}+\alpha_{k-1}\tau_{k-1}}{A_{k-1}}  = \frac{A_{k-1}}{B_{k} + \alpha_{k}^2\tau_{k}^2}$. Combining with \eqref{ineq:AdaNAG_generalized_stepsize-a}, we obtain
    \eqref{ineq:AdaNAG_generalized_stepsize-b}.

    Now we move on to $k=-1$. Recalling $x_0=z_0$ and \eqref{eq:z_square_diff}, we have
    \begin{equation*}
        \begin{aligned}
            \norm{ z_{1} - x_\star }^2  - \norm{ x_{0} - x_\star }^2  
            &= - 2\ssz_{0} \talpha_0 \tau_{0} \inner{ \nabla f(x_{0}) }{ x_0- x_\star } + \ssz_{0}^2 \talpha_0^2 \theta_{0}^2 \norm{ \nabla f(x_{0}) }^2.
        \end{aligned}
    \end{equation*}
    Therefore
\begin{equation*}
    \begin{aligned}
        V_{0} - V_{-1} 
        &= \ssz_{1} A_0 \pr{ f(x_{0}) - f_\star } 
        + \frac{1}{2} \norm{ z_{1} - x_\star }^2 
        + \frac{1}{2} \ssz_{0}^2 B_0 \norm{ \nabla f(x_{0}) }^2  
        - \pr{ \frac{1}{2} \norm{x_{0} - x_\star}^2 +  \frac{1}{2} \ssz_{0}^2  B_{-1} \norm{ \nabla f(x_{0}) }^2 } \\
        &= \pr{ \ssz_{0} \talpha_0 \tau_{0} - \ssz_{1} A_0 } \pr{ f_\star - f(x_{0}) } 
        +  \ssz_{0} \talpha_0 \tau_{0} \pr{ f(x_{0}) - f_\star - \inner{ \nabla f(x_{0}) }{ x_{0} - x_\star } }  \\
        &\le \ssz_{0} \talpha_0 \tau_{0} \pr{ f(x_{0}) - f_\star - \inner{ \nabla f(x_{0}) }{ x_{0} - x_\star } }.
    \end{aligned}
\end{equation*}
The inequality follows from $\ssz_{0} \talpha_0 \tau_{0} - \ssz_{1} A_0 = A_0 \pr{ \frac{\alpha_0\tau_0}{A_0} \ssz_{0} - \ssz_{1} } \ge 0$, which holds true by \eqref{eq:AdaNAG-G_stepsize}. This concludes the proof.  
\end{proof}

\begin{lemma} \label{lemma:adanag_g_stepsize_lowerbound}
    Suppose $\set{s_k}_{k\ge0}$ is generated by \adanagG\ with $\set{\tau_k}_{k\ge0}$, $\set{\alpha_k}_{k\ge0}$ satisfying the assumptions in  \Cref{thm:adanag_g}. Set $s_0$ as in \Cref{thm:adanag_g} and 
    let $f$ be an $L$-smooth convex function. 
    Then 
    \begin{equation*}
        \ssz_k \ge \frac{r}{\alpha_k} \frac{1}{\upperboundL}, \qquad \forall k\ge 1.
    \end{equation*} 
\end{lemma}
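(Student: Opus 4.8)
\textbf{Proof plan for \Cref{lemma:adanag_g_stepsize_lowerbound}.}
The plan is to mimic the induction used for \adanag\ in \Cref{lemma:stepsize_lowerbound}, but now tracking the quantity $s_k \alpha_k$ rather than $s_k \alpha_k / S_k$, exploiting that the third condition in \eqref{ineq:adanag_AB_condition_for_step_size_lowerbound} has been engineered precisely so that the factor $\rbound/\alpha_{k+1}$ is carried through each step. I would first introduce $S_k = \min\{\tfrac{1}{L_0}, \tfrac{1}{L_1}, \dots, \tfrac{1}{L_k}\}$ as in \eqref{eq:large_S} and prove the slightly stronger claim $s_k \ge \tfrac{\rbound}{\alpha_k} S_k$ for all $k \ge 1$ by induction; the statement of the lemma then follows because $f$ being $L$-smooth gives $L_k \le L$ for $k \ge 1$ (\Cref{lemma:local_smoothness}) and $L_0 \le L$ by \eqref{eq:initial_guess}, hence $S_k \ge 1/\upperboundL$.

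For the base case $k = 1$, I would plug $s_0 = \tfrac{A_0}{\alpha_0\tau_0}\tfrac{\rbound}{\alpha_1}\tfrac{1}{L_0}$ into the first branch of \eqref{eq:AdaNAG-G_stepsize}: since $A_{-1} = 0$, the first option becomes $\tfrac{\alpha_0\tau_0}{A_0} s_0 = \tfrac{\rbound}{\alpha_1}\tfrac{1}{L_0}$, while the third condition of \eqref{ineq:adanag_AB_condition_for_step_size_lowerbound} with $k=0$ bounds the second option below by $\tfrac{\rbound}{\alpha_1}\tfrac{1}{L_1}$, so $s_1 = \tfrac{\rbound}{\alpha_1}\min\{\tfrac{1}{L_0},\tfrac{1}{L_1}\} = \tfrac{\rbound}{\alpha_1}S_1$. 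For the induction step, assume $s_k \ge \tfrac{\rbound}{\alpha_k}S_k$. The first branch of \eqref{eq:AdaNAG-G_stepsize} gives $\tfrac{A_{k-1}+\alpha_k\tau_k}{A_k}s_k$; here I would use the identity $\tfrac{A_{k-1}+\alpha_k\tau_k}{A_k} = \tfrac{\alpha_k}{\alpha_{k+1}}\cdot\tfrac{\tau_k^2}{\tau_{k+1}(\tau_{k+1}-1)}$ (from $A_k = \alpha_{k+1}\tau_{k+1}(\tau_{k+1}-1)$) together with the second condition $\tfrac{\tau_k^2}{\tau_{k+1}(\tau_{k+1}-1)} \ge 1$ of \eqref{ineq:adanag_AB_condition_for_step_size_lowerbound} to conclude this branch is at least $\tfrac{\alpha_k}{\alpha_{k+1}}s_k \ge \tfrac{\rbound}{\alpha_{k+1}}S_k$; the third condition of \eqref{ineq:adanag_AB_condition_for_step_size_lowerbound} with index $k$ bounds the second branch below by $\tfrac{\rbound}{\alpha_{k+1}}\tfrac{1}{L_{k+1}}$. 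Taking the minimum yields $s_{k+1} \ge \tfrac{\rbound}{\alpha_{k+1}}\min\{S_k, \tfrac{1}{L_{k+1}}\} = \tfrac{\rbound}{\alpha_{k+1}}S_{k+1}$, closing the induction.

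I do not anticipate a serious obstacle here; the only point requiring care is the algebraic identity relating $\tfrac{A_{k-1}+\alpha_k\tau_k}{A_k}$ to $\tfrac{\alpha_k}{\alpha_{k+1}}\tfrac{\tau_k^2}{\tau_{k+1}(\tau_{k+1}-1)}$, which follows directly from substituting $A_{k-1} = \alpha_k\tau_k(\tau_k-1)$ and $A_k = \alpha_{k+1}\tau_{k+1}(\tau_{k+1}-1)$ so that $A_{k-1}+\alpha_k\tau_k = \alpha_k\tau_k^2$. One should also double-check the base case bookkeeping: $A_{-1} = 0$ is given in the \adanagG\ input, so the first option at $k=0$ genuinely simplifies as claimed, and the choice of $s_0$ is exactly what makes the two options in $s_1$ share the common factor $\tfrac{\rbound}{\alpha_1}$. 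Everything else is a routine transcription of the \adanag\ argument with the substitution $\theta_{k+2} \leftrightarrow \tau_k$.
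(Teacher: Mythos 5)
Your proposal is correct and follows essentially the same approach as the paper: define $S_k = \min\{1/L_0,\dots,1/L_k\}$, prove $s_k \ge \tfrac{r}{\alpha_k}S_k$ by induction using the identity $A_{k-1}+\alpha_k\tau_k = \alpha_k\tau_k^2$ (hence $\tfrac{A_{k-1}+\alpha_k\tau_k}{A_k} = \tfrac{\alpha_k}{\alpha_{k+1}}\tfrac{\tau_k^2}{\tau_{k+1}(\tau_{k+1}-1)}$) together with conditions two and three of \eqref{ineq:adanag_AB_condition_for_step_size_lowerbound}, then conclude $S_k \ge 1/L$ from \Cref{lemma:local_smoothness} and \eqref{eq:initial_guess}. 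The only nitpick is that in the base case the equality $s_1 = \tfrac{r}{\alpha_1}S_1$ should be an inequality $s_1 \ge \tfrac{r}{\alpha_1}S_1$, since the second branch is merely bounded below (not equal to) $\tfrac{r}{\alpha_1}\tfrac{1}{L_1}$; this is cosmetic and does not affect the argument.
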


\begin{proof}
    Define $S_k= \min\big\{ \frac{1}{L_0}, \frac{1}{L_{1}}, \dots, \frac{1}{L_{k}} \big\}$, as in \eqref{eq:large_S}. 
    We first prove the following by induction. 
    \begin{equation}    \label{eq:adanag_g_stepsize_lowerbound_tight}
        \ssz_k \ge \frac{\rzeroAdaNAG }{\alpha_k}  S_{k}, \qquad \forall k\ge 1.
    \end{equation}
    \begin{itemize}
        \item 
        $k=1$.             
        From \eqref{eq:AdaNAG-G_stepsize}, the assumption $s_0 = \frac{A_{0}}{\alpha_{0}\tau_{0}}  \frac{\rzeroAdaNAG}{\alpha_1} \inverLzero$, 
        we have $\ssz_1 \ge \min\set{ \frac{\alpha_{0}\tau_{0}}{A_{0}} s_{0}, \, \frac{\rzeroAdaNAG}{\alpha_1} \frac{1}{L_{1}} } = \frac{\rzeroAdaNAG}{\alpha_1} S_{1}$.
        
        \item 
        $k\ge 2$. 
        From the second inequality of \eqref{ineq:adanag_AB_condition_for_step_size_lowerbound}, we have $\frac{A_{k-1}+\alpha_{k}\tau_{k}}{A_{k}} = \frac{\alpha_k}{\alpha_{k+1}} \frac{\tau_k}{\tau_{k+1}(\tau_{k+1}-1)}\ge \frac{\alpha_k}{\alpha_{k+1}}$. \ 
        Applying the third inequality of \eqref{ineq:adanag_AB_condition_for_step_size_lowerbound}, the induction hypothesis $\ssz_k \ge \frac{\rzeroAdaNAG }{\alpha_k}  S_{k}$, and above inequality, we have  
        \begin{equation*}
            \begin{aligned}
                \ssz_{k+1} 
                &\ge \min \set{ \frac{\alpha_k}{\alpha_{k+1}} \frac{\rzeroAdaNAG }{\alpha_k} S_{k}, \frac{\rzeroAdaNAG}{\alpha_{k+1}} \frac{1}{L_{k+1}} } 
                = \frac{\rzeroAdaNAG}{ \alpha_{k+1}} \min\set{ S_{k} , \, \frac{1}{L_{k+1}} } 
                = \frac{\rzeroAdaNAG}{ \alpha_{k+1}} S_{k+1}.
            \end{aligned}
        \end{equation*}
        We conclude the result of \eqref{eq:adanag_g_stepsize_lowerbound_tight} by induction. 
    \end{itemize}  
     Next, from \Cref{lemma:local_smoothness}, we have $\frac{1}{L_k} \ge \frac{1}{L}$ for $k \ge 1$ and $\frac{1}{L_0}\ge \frac{1}{L}$ by its definition \eqref{eq:initial_guess}. 
    Recalling the definition of $S_k$, we obtain $S_k \ge \frac{1}{\upperboundL}$ for $k \ge 0$.
    We obtain the desired conclusion by \eqref{eq:adanag_g_stepsize_lowerbound_tight}. 
\end{proof}

\begin{proof} [Proof of \Cref{thm:adanag_g}]
    The proof follows from \Cref{lemma:adanag_g_lyapunov_analysis} and \Cref{lemma:adanag_g_stepsize_lowerbound}.  
    From \Cref{lemma:adanag_g_lyapunov_analysis} and \eqref{eq:L_smooth_ineq} we have
    \begin{equation*}
        s_{k+1}A_k (f(x_k) - f_\star )
        \le V_k
        \le \dots
        \le V_0 
        \le V_{-1} -\frac{\ssz_{0} \talpha_0 \tau_{0}}{2L} \norm{ \nabla f(x_0)}^2 = \frac{1}{2} \upperboundconstant,
    \end{equation*}
    and so $f(x_k) - f_\star  \le  \frac{ 1 }{ s_{k+1} A_k } \upperboundconstant$. 
    From \Cref{lemma:adanag_g_stepsize_lowerbound}, we have $s_{k+1}\alpha_{k+1} \ge \frac{\rbound}{\upperboundL}$ for $k\ge0$. 

    The last statement can be proved with a similar argument as in Proposition~\ref{prop:adanag_gradient_norm}. 
    Since $\alpha_k \in (0,1]$, by \Cref{lemma:adanag_g_stepsize_lowerbound} we have $s_k \ge \frac{r}{\upperboundL}$ for $k \ge 1$ . 
    Thus, we have $\min\big\{s_k^2, \frac{s_{k+1}}{L_{k+1}}\big\} \ge \frac{r^2}{\upperboundL^2}$.
    Therefore, from \Cref{lemma:adanag_g_lyapunov_analysis} we obtain
    \begin{equation*}
        V_{k+1} - V_k \le - \frac{r^2}{2\upperboundL^2} \min\set{A_k, B_k} \norm{ \nabla f(x_k) }^2, \qquad \forall k \ge 0.
    \end{equation*}
    Summing up from $0$ to $k-1$, applying $V_0\le \frac{1}{2} \upperboundconstant$, we obtain
    \begin{equation*}
        \pr{ \frac{r^2}{2\upperboundL^2} \sum_{i=1}^{k} \min\set{A_i, B_i} } \min_{i\in\set{1,\dots, k}} \norm{\nabla f(x_i)}^2
        \le \frac{r^2}{2\upperboundL^2} \sum_{i=1}^{k} \min\set{A_i, B_i} \norm{ \nabla f(x_i) }^2 \le \frac{1}{2}\upperboundconstant,
    \end{equation*}
    which implies our desired result. 

    Lastly, from the last inequality of \eqref{ineq:adanag_AB_condition_for_step_size_lowerbound}, we have $\limsup_{k\to\infty} \frac{A_k}{B_k} < \infty$. Therefore, we have $A_k = \cO\pr{\min\set{A_k,B_k}}$, we conclude $\frac{1}{\sum_{i=1}^{k} \min\set{A_i,B_i}} = \cO\pr{ \frac{1}{\sum_{i=1}^{k} A_i} } = \cO\pr{ \frac{1}{\sum_{i=1}^{k} \alpha_i\tau_i^2} }$.
\end{proof}

\subsection{Proof of \Cref{lemma:asymptotic_lower_bound_step_size}} \label{appendix:proof_of_asymptotic_lower_bound_step_size}
 Take $\epsilon>0$. 
For notation simplicity, name 
\[
    r_k^s = \frac{A_{k-1}+\alpha_{k}\tau_{k}}{A_{k}},
    \qquad 
    r_k^L = \pr{ \frac{A_{k}}{B_{k}} + \frac{B_{k+1} + \alpha_{k+1}^2\tau_{k+1}^2}{A_{k}} }^{-1}.
\] 
Since $\lim_{k \to \infty} r_k^L = \bar{r}$, and by the assumption on $\frac{A_{k-1} + \alpha_k \tau_k}{A_k}$, there exists $\NN > 0$ such that 
\begin{equation}    \label{eq:asymptotic_lemma_N}
    k \ge \NN 
    \qquad \Longrightarrow \qquad 
    r_k^L \ge \bar{r} - \epsilon, \quad r_k^s \ge 1.
\end{equation}

Suppose there exists $\MM \ge \NN$ such that $s_\MM \ge (\bar{r} - \epsilon) \frac{1}{L}$. 
Since $r_\MM^s \ge 1$ holds by \eqref{eq:asymptotic_lemma_N} and $\frac{1}{L_{\MM+1}} \ge \frac{1}{L}$ holds by \Cref{lemma:local_smoothness}, this implies
\begin{equation*}
    s_{\MM+1} 
    = \min\set{ r_\MM^s s_\MM, \, r_\MM^{L} \frac{1}{L_{\MM+1}} }
    \ge \min\set{ s_\MM, \, \pr{ \bar{r} - \epsilon } \frac{1}{L_{\MM+1}} }
    \ge \pr{ \bar{r} - \epsilon } \frac{1}{L}.
\end{equation*}
By induction, we can conclude $s_\mm \ge \pr{ \bar{r} - \epsilon } \frac{1}{L}$ for all $\mm \ge \MM$. 
Thus, it suffices to show such $\MM$ exists.  

Proof by contradiction. Assume that such a $\MM$ does not exist, \ie, assume that
\begin{equation}    \label{eq:contradiction assumption}
    \MM \ge \NN 
    \qquad \Longrightarrow \qquad 
    s_{\MM} < (\bar{r} - \epsilon) \frac{1}{L}.
\end{equation}
Suppose $\mm \ge N$. 
Then from \eqref{eq:contradiction assumption}, \eqref{eq:asymptotic_lemma_N} and the fact $\frac{1}{L} \le \frac{1}{L_{\mm+1}}$, we obtain $ s_{\mm+1} < (\bar{r} - \epsilon) \frac{1}{L} \le  r_\mm^L \frac{1}{L_{\mm+1}}$. 
This implies that $s_{\mm+1} \ne r_\mm^L \frac{1}{L_{\mm+1}}$, which means that the first option of \eqref{eq:AdaNAG-G_stepsize} has been activated, and thus $s_{\mm+1} = r_\mm^s s_\mm$ holds. 
Therefore, we conclude that $\mm \ge N$ implies $s_{\mm+1} = r_\mm^s s_\mm$. By applying this fact recursively, we obtain:
\begin{equation*}
    s_\MM 
    = \prod_{\mm=\NN}^{\MM} r_\mm^s s_\mm
    = \frac{\alpha_{\NN}}{\alpha_\MM} \prod_{\mm=\NN}^{\MM} \frac{\tau_{\mm}^2}{\tau_{\mm+1}(\tau_{\mm+1}-1)}, \qquad \forall \MM>\NN.
\end{equation*}
Applying the assumption $\prod_{k=1}^{\infty} \frac{\tau_k^2}{\tau_{k+1}(\tau_{k+1} - 1)} = \infty$, we get $\lim_{\MM \to \infty} s_\MM = \infty$.  
This implies that there exists $\MM$ such that $s_\MM \ge (\bar{r} - \epsilon)\frac{1}{L}$, which contradicts \eqref{eq:contradiction assumption}.  
This completes the proof.
\qed

\subsection{Proof of Corollary~\ref{cor:adanag_12}}    \label{appendix:adanag_12_proof}

Here, we state and prove the result for the generalized parameter choice $\tau_k = \frac{(k+2)+p}{p}$, where $p > 2$. 
\begin{proposition} \label{prop:ananaga_g_p}
    Let $\tau_k = \frac{(k+2)+p}{p}$, where $p>2$. 
    Set other parameters and assumptions as Corollary~\ref{cor:adanag_12}. 
    Then for $\set{x_k}_{k\ge0}$ generated by \adanagG, we have:  
    \begin{equation}    \label{eq:convergence_rate_generalized_p}
        f(x_k) - f_\star 
        \le \frac{1}{s_{k+1}} \frac{p^2 (k+p+3)}{(k+3) (k+4)^2} 
        \upperboundconstant
        = \cO\pr{ \frac{ \upperboundL }{k^2} }, 
        \qquad
        \min_{i\in\set{1,\dots, k}} \norm{\nabla f(x_i)}^2
        \le  
        \cO\pr{ \frac{ \upperboundL^2 }{k^3} }.
    \end{equation}
    Here, 
    $s_k \ge \frac{27}{(p+3) \left(2 p^2+8 p+17\right)} \frac{1}{\upperboundL}$ and \vspace{-1mm} 
    \[
        \liminf_{k\to\infty} s_k \ge \frac{1}{3} \frac{1}{\upperboundL}.  
    \]
\end{proposition}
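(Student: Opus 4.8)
The plan is to instantiate \Cref{thm:adanag_g} and \Cref{lemma:asymptotic_lower_bound_step_size} at the choice $\tau_k=\tfrac{(k+2)+p}{p}$, $\alpha_k=\tfrac12\tfrac{(\tau_{k+1}-1)^2}{\tau_k^2}$, and then read off the claimed bounds. The first task is to get $A_k$, $B_k$ in closed form. Since $\alpha_{k-1}\tau_{k-1}^2=\tfrac12(\tau_k-1)^2$, we have $\tfrac{(\tau_k-1)^2}{\alpha_{k-1}\tau_{k-1}^2}=2$ for every $k\ge0$ — the prescribed $B_0,\alpha_{-1},\tau_{-1}$ are exactly what extends this to $k=0$ — so $B_k=\alpha_k^2\tau_k^2=\tfrac14\tfrac{(k+3)^4}{p^2(k+2+p)^2}$ for all $k\ge0$, while $A_k=\alpha_{k+1}\tau_{k+1}(\tau_{k+1}-1)=\tfrac12\tfrac{(k+4)^2(k+3)}{p^2(k+3+p)}$. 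Two identities carry most of the bookkeeping: $A_{k-1}+\alpha_k\tau_k=\alpha_k\tau_k^2$ and $B_{k+1}+\alpha_{k+1}^2\tau_{k+1}^2=2\alpha_{k+1}^2\tau_{k+1}^2$, which turn the two step-size ratios in \eqref{eq:AdaNAG-G_stepsize} into $\tfrac{A_{k-1}+\alpha_k\tau_k}{A_k}=\tfrac{(k+3)(k+3+p)}{(k+4)^2}$ and $\tfrac{B_{k+1}+\alpha_{k+1}^2\tau_{k+1}^2}{A_k}=\tfrac{(k+4)^2}{(k+3)(k+3+p)}$, and give $\tfrac{A_k}{B_k}=2\tfrac{(k+4)^2(k+2+p)^2}{(k+3+p)(k+3)^3}$.

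Next I would verify the three hypotheses \eqref{ineq:adanag_AB_condition_for_step_size_lowerbound}. The bound $0<\alpha_k\le1$ is immediate from $\alpha_k=\tfrac12\tfrac{(k+3)^2}{(k+2+p)^2}<\tfrac12$ (using $p>1$). The condition $\tfrac{\tau_k^2}{\tau_{k+1}(\tau_{k+1}-1)}\ge1$ reduces, after clearing denominators, to $(k+2+p)^2-(k+3)(k+3+p)=k(p-2)+(p^2+p-5)\ge0$, valid for all $k\ge0$ when $p>2$. The third condition is the real work: combining the formulas above,
\[
\alpha_{k+1}\pr{\frac{A_k}{B_k}+\frac{B_{k+1}+\alpha_{k+1}^2\tau_{k+1}^2}{A_k}}^{-1}=\frac12\,\frac{(k+3)^3}{(k+3+p)\pr{2(k+2+p)^2+(k+3)^2}},
\]
so it suffices to show this rational function of $k$ is minimized at $k=0$, equivalently that $k\mapsto\tfrac{(k+3+p)\pr{2(k+2+p)^2+(k+3)^2}}{(k+3)^3}$ is nonincreasing; I expect to do this by cross-multiplying the inequality between consecutive indices and checking that the resulting polynomial in $k$ (with $p>2$ a parameter) has nonnegative coefficients. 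This monotonicity verification is the main obstacle. Granting it, $r=\tfrac{27}{2(p+3)(2p^2+8p+17)}$ (the value at $k=0$), consistent with the prescribed $s_0$.

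With the hypotheses checked, \Cref{thm:adanag_g} gives $s_{k+1}\alpha_{k+1}\ge r/\upperboundL$, hence $s_{k+1}\ge\tfrac{r}{\alpha_{k+1}\upperboundL}\ge\tfrac{2r}{\upperboundL}=\tfrac{27}{(p+3)(2p^2+8p+17)}\tfrac1{\upperboundL}$ since $\alpha_{k+1}\le\tfrac12$ (a short direct computation handles $k=0$ as well), and $f(x_k)-f_\star\le\tfrac{1}{2s_{k+1}A_k}\upperboundconstant=\tfrac{1}{s_{k+1}}\tfrac{p^2(k+p+3)}{(k+3)(k+4)^2}\upperboundconstant$, which with the step-size bound is $\cO(\upperboundL/k^2)$; and because $\alpha_i\tau_i^2=\tfrac12(\tau_{i+1}-1)^2=\tfrac{(i+3)^2}{2p^2}=\Theta(i^2)$, the gradient-norm statement of \Cref{thm:adanag_g} yields $\min_{i}\norm{\nabla f(x_i)}^2=\cO(\upperboundL^2/\sum_i i^2)=\cO(\upperboundL^2/k^3)$. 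Finally, for the asymptotic bound I would apply \Cref{lemma:asymptotic_lower_bound_step_size}: the limit hypothesis holds since $\tfrac{A_k}{B_k}+\tfrac{B_{k+1}+\alpha_{k+1}^2\tau_{k+1}^2}{A_k}=\tfrac{(k+4)^2}{(k+3)(k+3+p)}\pr{\tfrac{2(k+2+p)^2}{(k+3)^2}+1}\to3$, so $\bar r=\tfrac13$; the ratio hypothesis holds because $\tfrac{A_{k-1}+\alpha_k\tau_k}{A_k}=\tfrac{(k+3)(k+3+p)}{(k+4)^2}\ge1$ once $k(p-2)+3p-7\ge0$; and $\prod_k\tfrac{\tau_k^2}{\tau_{k+1}(\tau_{k+1}-1)}=\infty$ by the same telescoping/harmonic-series estimate used in \eqref{eq:p>2_growth_infinity}. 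Hence $\liminf_{k\to\infty}s_k\ge\bar r/\upperboundL=\tfrac1{3\upperboundL}$, which finishes the proof; the special case $p=12$ is \Cref{cor:adanag_12}.
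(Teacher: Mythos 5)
Your proposal follows the same route as the paper's proof: instantiate \Cref{thm:adanag_g} and \Cref{lemma:asymptotic_lower_bound_step_size} at $\tau_k=\tfrac{k+2+p}{p}$, compute $A_k,B_k$, check the three hypotheses \eqref{ineq:adanag_AB_condition_for_step_size_lowerbound}, and read off the rates and step-size bounds. Your closed-form expressions for $A_k$, $B_k$, $\tfrac{A_{k-1}+\alpha_k\tau_k}{A_k}$, $\alpha_k$, and $\alpha_{k+1}r_k^L$ are all correct, as are the reductions of the first two conditions of \eqref{ineq:adanag_AB_condition_for_step_size_lowerbound} to sign conditions on linear polynomials in $k$, the asymptotic limit $\bar r=\tfrac13$, and the bookkeeping for both convergence rates.

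However, there is a genuine unfilled gap at exactly the step you yourself flag as ``the main obstacle'': showing that $\alpha_{k+1}r_k^L$ is minimized at $k=0$. You propose cross-multiplying the inequality between consecutive indices and checking that the resulting polynomial in $k$ has nonnegative coefficients for every $p>2$, but you do not carry this out, and it is not obvious that the coefficients will be sign-definite without a careful expansion. The paper closes this with a short structural argument that you could have read directly off your own expression: write
\[
\alpha_{k+1}r_k^L
= \frac{k+3}{2(k+p+3)}\cdot\pr{2\Bigl(\tfrac{k+p+2}{k+3}\Bigr)^2+1}^{-1},
\]
and observe that $\tfrac{k+3}{k+p+3}=1-\tfrac{p}{k+p+3}$ is increasing in $k$ (since $p>0$) while $\tfrac{k+p+2}{k+3}=1+\tfrac{p-1}{k+3}$ is decreasing in $k$ (since $p>1$), hence $\bigl(2(\tfrac{k+p+2}{k+3})^2+1\bigr)^{-1}$ is increasing; the product of two positive increasing sequences is increasing, so the minimum is attained at $k=0$, which gives $r=\alpha_1 r_0^L=\tfrac{27}{2(p+3)(2p^2+8p+17)}$. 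This two-line factoring replaces the uncertain polynomial-coefficient check and is the one idea missing from your write-up. Everything else — including the parenthetical that $s_0\ge 2r/\upperboundL$ holds by a direct computation (it reduces to $(p+2)(p+3)\ge 3p$, true for all $p$) and the verification of the hypotheses of \Cref{lemma:asymptotic_lower_bound_step_size} — is sound and matches the paper.
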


\begin{proof}
    We first show that \eqref{ineq:adanag_AB_condition_for_step_size_lowerbound} holds.  
Name $r_k^L = \Big( \frac{A_{k}}{B_{k}} + \frac{B_{k+1} + \alpha_{k+1}^2\tau_{k+1}^2}{A_{k}} \Big)^{-1}$.  
Using elementary calculations, we can verify: 
\begin{subequations}
\begin{align}
    \alpha_k &= \frac{(k+3)^2}{2 (k+p+2)^2} \in (0,1] \label{eq:adanag_g_12_alpha} \\
    \frac{\tau_{k}^2}{\tau_{k+1}(\tau_{k+1}-1)} &= \frac{(k+p+2)^2}{(k+3) (k+p+3)} = 1 + \frac{(p-2)(k+p+3)+1}{(k+3) (k+p+3)} \ge 1  \label{eq:adanag_g_12_growth_rate} \\
    \alpha_{k+1} r_k^L  
    &= \frac{k+3}{2 (k+p+3)} \left( 2\pr{\frac{k+p+2}{k+3}}^2+1\right)^{-1}  
    \ge \alpha_{1} r_0^L   
    =: \rbound. \label{eq:adanag_g_12_r}
\end{align}
\end{subequations}
\eqref{eq:adanag_g_12_growth_rate} holds since $p>2$.
\eqref{eq:adanag_g_12_r} can be shown by proving that $\alpha_{k+1} r_k^L$ is an increasing sequence, which follows from the facts that $\frac{k+3}{k+p+3} = 1 - \frac{p}{k+p+3}$ is increasing and $\frac{k+p+2}{k+3} = 1 + \frac{p - 1}{k+3}$ is decreasing. 
Next, note that $\alpha_k\tau_k^2 = \Theta\pr{k^2}$. 
Therefore, we obtain the convergence rates \eqref{eq:convergence_rate_generalized_p} by \Cref{thm:adanag_g}. 
Also, from \Cref{thm:adanag_g} and \eqref{eq:adanag_g_12_alpha}, we have $s_k \alpha_k \geq \frac{\rbound}{\upperboundL}$ and $\alpha_k \leq \frac{1}{2}$, which together imply $s_k \geq \frac{2\rbound}{\upperboundL}$. 
We obtain $r=\frac{27}{2 (p+3) \left(2 p^2+8 p+17\right)}$ by substituting $k=0$ to \eqref{eq:adanag_g_12_r}. 

Next, we check the assumptions of \Cref{lemma:asymptotic_lower_bound_step_size}. Since $\bar{\alpha}:=\lim_{k\to\infty}\alpha_k = \frac{1}{2}$ and $\lim_{k\to\infty} \frac{\tau_{k+1}}{\tau_k} = 1$, we obtain:
\begin{equation*}
    \bar{r} 
     = \lim_{k\to\infty} \pr{ \frac{A_{k}}{B_{k}} + \frac{B_{k+1} + \alpha_{k+1}^2\tau_{k+1}^2}{A_{k}} }^{-1}
     = \pr{ \frac{1}{\bar{\alpha}} + \bar{\alpha} + \bar{\alpha} }^{-1}
     = \pr{ 2 + \frac{1}{2} + \frac{1}{2} }^{-1}
     = \frac{1}{3}.
\end{equation*}
The condition $\prod_{k=1}^{\infty} \frac{\tau_k^2}{\tau_{k+1}(\tau_{k+1}-1)} =\infty$ follows from \eqref{eq:p>2_growth_infinity}. 
Finally, since $p>2$, there exists $N>0$ such that $(N+3)(p-2)>1$. 
For such $N$, we can verify that
\begin{equation*}
    \frac{A_{k-1}+\alpha_{k}\tau_{k}}{A_{k}} 
    = \frac{(k+3) (k+p+3)}{(k+4)^2}
    = 1 + \frac{(k+3) (p-2)-1}{(k+4)^2}
    \ge 1, \qquad \forall k\ge N.
\end{equation*}
We conclude $\liminf_{k\to\infty} s_k \ge \frac{1}{3} \frac{1}{\upperboundL}$ by \Cref{lemma:asymptotic_lower_bound_step_size}. 
\end{proof}

\begin{proof}   [Proof of Corollary~\ref{cor:adanag_12}]
    The proof can be obtained by substituting \( p=12 \) into Proposition~\ref{prop:ananaga_g_p}. 
\end{proof}

\subsection{Proof of Corollary~\ref{cor:adanag_half}}    \label{appendix:adanag_half_proof}

We first show that \eqref{ineq:adanag_AB_condition_for_step_size_lowerbound} holds.  
Name $r_k^L = \Big( \frac{A_{k}}{B_{k}} + \frac{B_{k+1} + \alpha_{k+1}^2\tau_{k+1}^2}{A_{k}} \Big)^{-1}$.  
Using elementary calculations, we can verify that:
\begin{equation*}
    \begin{aligned}
        \alpha_k &= \frac{1}{2}  \in (0,1] \\
        \frac{\tau_{k}^2}{\tau_{k+1}(\tau_{k+1}-1)} 
        &= \frac{2 (k+3)}{2 (k+4) -\sqrt{k+4}}
        = 1 + \frac{\sqrt{k+4}-2}{2 (k+4) -\sqrt{k+4}} 
        \ge 1 \\
        \alpha_{k+1} r_k^L   
        &\ge \alpha_{1} r_0^L \approx 0.102 > \frac{1}{10} =: \rbound.
    \end{aligned}
\end{equation*}
Therefore, we obtain $s_k\alpha_k \ge \frac{\rbound}{\upperboundL} \ge \frac{1}{10} \frac{1}{\upperboundL}$ from \Cref{thm:adanag_g}, and we conclude that $s_k \ge \frac{1}{5} \frac{1}{\upperboundL}$. 
Also, from \Cref{thm:adanag_g} and the fact $\alpha_k\tau_k^2 = \Theta\pr{k}$,  we obtain the convergence results of Corollary~\ref{cor:adanag_half}.

Next, we verify the assumptions of \Cref{lemma:asymptotic_lower_bound_step_size}. Since $\alpha_k = \frac{1}{2}$ and $\lim_{k\to\infty} \frac{\tau_{k+1}}{\tau_k} = 1$, we have:
\begin{equation*}
    \bar{r} 
     = \lim_{k\to\infty} \pr{ \frac{A_{k}}{B_{k}} + \frac{B_{k+1} + \alpha_{k+1}^2\tau_{k+1}^2}{A_{k}} }^{-1}
     = \pr{ 2 + \frac{1}{2} + \frac{1}{2} }^{-1}
     = \frac{1}{3}.
\end{equation*}
Also, since $\alpha_k = \alpha_{k+1}$, from the second inequality of \eqref{ineq:adanag_AB_condition_for_step_size_lowerbound} we obtain:
\begin{equation*}
    \frac{A_{k-1}+\alpha_{k}\tau_{k}}{A_{k}} 
    = \frac{\alpha_k}{\alpha_{k+1}} \frac{\tau_k^2}{\tau_{k+1}(\tau_{k+1}-1)} 
    = \frac{\tau_k^2}{\tau_{k+1}(\tau_{k+1}-1)} 
    \ge 1, \qquad \forall k\ge0.
\end{equation*}
Finally, from $\frac{\sqrt{k+4}-2}{2 (k+4) -\sqrt{k+4}}\ge0$ we have:
\begin{equation*}
    \prod_{k=1}^{\infty} \frac{\tau_k^2}{\tau_{k+1}(\tau_{k+1}-1)} 
    = \prod_{k=1}^{\infty} \pr{ 1 + \frac{\sqrt{k+4}-2}{2 (k+4) -\sqrt{k+4}} } 
    \ge  1 + \sum_{k=1}^{\infty} \frac{\sqrt{k+4}-2}{2 (k+4) -\sqrt{k+4}}
    = \infty.
\end{equation*} 
This completes the proof.
\qed

\end{document}